\renewcommand\subsubsection{\@startsection{subsubsection}{3}{\z@}%
                                     {-3.25ex\@plus -1ex \@minus -.2ex}%
                                     {-0.5em}
                                     {\normalfont\normalsize\bfseries}}
\theoremstyle{plain}
\newtheorem{Thm}{Theorem}
\newtheorem{Prop}[Thm]{Proposition}
\newtheorem{Lem}[Thm]{Lemma}
\newtheorem{Cor}[Thm]{Corollary}
\newtheorem{Con}[Thm]{Conjecture}
\theoremstyle{definition}
\newtheorem{Nota}[Thm]{Notation}
\newtheorem{Rem}[Thm]{Remark}
\newtheorem{Defn}[Thm]{Definition}
\newtheoremstyle{named}{}{}{}{}{\bfseries}{.}{.5em}{\thmnote{#3}#1}
\theoremstyle{named}
\DeclareMathOperator{\ad}{ad}
\DeclareMathOperator{\Aut}{Aut}
\DeclareMathOperator{\ch}{char}
\DeclareMathOperator{\Ch}{Ch}
\DeclareMathOperator{\sCh}{\mathbf{Ch}}
\DeclareMathOperator{\diag}{diag}
\DeclareMathOperator{\Exp}{Exp}
\DeclareMathOperator{\Gal}{Gal}
\DeclareMathOperator{\GL}{GL}
\DeclareMathOperator{\Gr}{Gr}
\DeclareMathOperator{\Hom}{Hom}
\DeclareMathOperator{\ii}{\mathfrak{i}}
\DeclareMathOperator{\IC}{\textbf{IC}}
\DeclareMathOperator{\Id}{Id}
\DeclareMathOperator{\Irr}{Irr}
\DeclareMathOperator{\isom}{\!\!\smash{\begin{array}{c}\sim\\[-1em]
\rightarrow\end{array}}\!\!}
\DeclareMathOperator{\lisom}{\!\!\smash{\begin{array}{c}\sim\\[-1em]
\longrightarrow\end{array}}\!\!}
\DeclareMathOperator{\lisomLR}{\!\!\smash{\begin{array}{c}\sim\\[-1em]
\longleftrightarrow\end{array}}\!\!}
\DeclareMathOperator{\kk}{\mathds{k}}
\DeclareMathOperator{\ladic}{\bar{\mathbb{Q}}_{\ell}}
\DeclareMathOperator{\Log}{Log}
\DeclareMathOperator{\Ort}{O}
\DeclareMathOperator{\PGL}{PGL}
\DeclareMathOperator{\reg}{reg}
\DeclareMathOperator{\rk}{rk}
\DeclareMathOperator{\Rep}{Rep}
\DeclareMathOperator{\sRep}{\mathbf{Rep}}
\DeclareMathOperator{\sgn}{sgn}
\DeclareMathOperator{\SL}{SL}
\DeclareMathOperator{\SO}{SO}
\DeclareMathOperator{\Sp}{Sp}
\DeclareMathOperator{\Spec}{Spec}
\DeclareMathOperator{\Stab}{Stab}
\DeclareMathOperator{\Sym}{Sym}
\DeclareMathOperator{\SymF}{\mathbf{Sym}}
\DeclareMathOperator{\Uni}{U}
\DeclareMathOperator{\X}{\mathbf{x}}
\DeclareMathOperator{\Xo}{\mathbf{x}^{(0)}}
\DeclareMathOperator{\Xl}{\mathbf{x}^{(1)}}
\DeclareMathOperator{\Xk}{\mathbf{x}^{(k)}}
\DeclareMathOperator{\Y}{\mathbf{y}}
\DeclareMathOperator{\Z}{\mathbf{z}}
\DeclareMathOperator{\zz}{\mathfrak{z}}
\DeclareMathOperator{\lb}{\!<\!}
\DeclareMathOperator{\rb}{\!>\!}
\DeclareMathOperator{\ds}{\!/\mkern-2mu/\mkern-2mu}
\newcommand*\circled[1]{
  \tikz[baseline=(char.base)]\node[shape=circle,draw,inner sep=0.2pt,font=\tiny,minimum size=8pt] (char) {#1};}
\newcommand{\hooklongrightarrow}{\lhook\joinrel\longrightarrow}
\newcommand*{\rom}[1]{\expandafter\@slowromancap\romannumeral #1@}
\newcommand{\rn}[2]{
    \tikz[remember picture,baseline=(#1.base)]\node [inner sep=0] (#1) {$#2$};%
}
\renewcommand{\descriptionlabel}[1]{\hspace\labelsep\upshape\bfseries #1.}
\let\orgdescriptionlabel\descriptionlabel
\renewcommand*{\descriptionlabel}[1]{%
  \let\orglabel\label
  \let\label\@gobble
  \phantomsection
  \edef\@currentlabel{#1}%
  \let\label\orglabel
  \orgdescriptionlabel{#1}%
}
\title{E-Polynomials of Generic $\mathbf{\GL_n\rtimes\lb\sigma\rb}~$-Character Varieties: Branched Case}
\author{Cheng Shu}
\colorlet{ivory}{Apricot!30!}
\colorlet{space}{black!85!}
\definecolor{bgc}{RGB}{29, 44, 46}
\definecolor{txt}{RGB}{223, 222, 189}
\definecolor{cmd}{RGB}{206, 151, 88}
\begin{document}
\let\bs\boldsymbol
\maketitle
\begin{abstract}
For any branched double covering of compact Riemann surfaces, we consider the associated character varieties that are unitary in the global sense, which we call $\GL_n\rtimes\lb\sigma\rb~$-character varieties. We restrict the monodromies around the branch points to generic semi-simple conjugacy classes contained in $\GL_n\sigma$, and compute the E-polynomials of these character varieties using the character table of $\GL_n(q)\rtimes\lb\sigma\rb$. The result is expressed as the inner product of certain symmetric functions associated to the wreath product $(\mathbb{Z}/2\mathbb{Z})^N\rtimes\mathfrak{S}_N$. We are then led to a conjectural formula for the mixed Hodge polynomial, which involves (modified) Macdonald polynomials and wreath Macdonald polynomials. 
\end{abstract}

\tableofcontents
\addtocontents{toc}{\protect\setcounter{tocdepth}{-1}}
\setcounter{tocdepth}{1}
\numberwithin{Thm}{section}
\numberwithin{equation}{subsection}
\addtocontents{toc}{\protect\setcounter{tocdepth}{1}}
\section{Introduction}
\subsection{Mixed Hodge Polynomial}\hfill

The E-polynomial of a complex algebraic variety encodes a part of the dimensional information of its mixed Hodge structure. Mirror partners are expected to have the same E-polynomial, with the caveat that one must suitably modify its definition to include the contribution of singularities. Therefore, it is an important invariant in the study of mirror symmetry. See \cite{HT}. 

According to Deligne (\cite{D1}, \cite{D2}), to any possibly singular and not necessarily projective complex algebraic variety $X$, we can associate a mixed Hodge structure on each of its rational cohomology groups $H^j(X,\mathbb{Q})$ in a functorial manner. The mixed Hodge structure consists of the following data:
\begin{itemize}
\item[(1)] a weight filtration $W_{\bullet}$ on $H^j(X,\mathbb{Q})$:
$$
\{0\}=W_{-1}\subset W_0\subset\cdots\subset W_{2j}=H^j(X,\mathbb{Q});
$$
\item[(2)] a Hodge filtration $F^{\bullet}$ on $H^j(X,\mathbb{C})$:
$$
H^j(X,\mathbb{C})=F^0\supset F^1\supset\cdots\supset F^m=\{0\}.
$$
\end{itemize}
On each complexified graded space $(W_i/W_{i-1})\otimes_{\mathbb{Q}}\mathbb{C}$, the Hodge filtration induces a pure Hodge structure of weight $i$. Similarly, one can also define a mixed Hodge structure on each compactly supported cohomology group $H^j_c(X,\mathbb{Q})$.

The generating series of the dimensions of the graded spaces of the mixed Hodge structure on the compactly supported cohomology groups is called the \textit{mixed Hodge polynomial}. More concretely, for any non-negative integers $p$, $q$ and $j$, put
$$
h^{p,q,j}_c(X)=\dim_{\mathbb{C}}\Gr_p^{F^{\bullet}}\Gr_{p+q}^{W_{\bullet}}H^j_c(X,\mathbb{C}),
$$
where $\Gr^{W_{\bullet}}_i:=W_i/W_{i-1}$ and $\Gr^{F^{\bullet}}_j\Gr^{W_{\bullet}}_i:=F_j\Gr^{W_{\bullet}}_i/F_{j+1}\Gr^{W_{\bullet}}_i$ for any $i$ and $j$. Then the mixed Hodge polynomial of $X$ is defined by
$$
H_c(X;x,y,t):=\sum_{p,q,j}h_c^{p.q,j}(X)x^py^qt^j,
$$
and the E-polynomial of $X$ is defined by
$$
E(X;x,y):=H_c(X;x,y,-1).
$$

According to a theorem of Katz \cite{HRV}, in many cases the E-polynomial of $X$ can be computed by counting points over finite fields. This means the following. Suppose that there is a ring $R\subset \mathbb{C}$ and a scheme $\mathfrak{X}$ over $R$ such that the base change of $\mathfrak{X}$ to $\mathbb{C}$ is isomorphic to $X$. Then for any homomorphism $\phi:R\rightarrow\mathbb{F}_q$ to a finite field, we consider the base change of $\mathfrak{X}$ via $\phi$, which is a variety $X_{\phi}$ over $\mathbb{F}_q$. Suppose that there is a polynomial $P_X(t)\in\mathbb{Z}[t]$ such that $|X_{\phi}(\mathbb{F}_q)|=P_X(q)$, which is independent of $\phi$. Then we have
$$
E(X;x,y)=P_X(xy).
$$
It happens that character varieties satisfy these assumptions, and the first application of this theorem of Katz is \cite{HRV}.

\subsection{The Conjecture of Hausel-Letellier-Rodriguez-Villegas}\hfill

Let $\Sigma$ be a Riemann surface of genus $g$ with $k>0$ punctures. Let $n\in\mathbb{Z}_{>0}$ and let $\mathcal{C}=(C_j)_{1\le j\le k}$ be a tuple of semi-simple conjugacy classes of $\GL_n$. In \cite{HLR}, the authors studied the character variety 
$$
\mathcal{M}_{\mathcal{C}}:=\left\{(A_i,B_i)_i(X_j)_j\in \GL_n^{2g}\times \prod_{j=1}^k C_j\mid\prod_{i=1}^g[A_i,B_i]\prod_{j=1}^kX_j=1\right\}\ds\GL_n,
$$where the bracket denotes the commutator and the quotient is with respect to conjugation. Their work led to a conjectural formula for the mixed Hodge polynomial $H_c(\mathcal{M}_{\mathcal{C}};x,y,t)$. We recall their conjectural formula below, which involves Macdonald polynomials. In this article, we will call Macdonald polynomials what are usually called modified Macdonald polynomials in the literature, since only this version of Macdonald polynomial appears.

Denote by $\mathcal{P}$ the set of all partitions, including the empty one. For any $\lambda\in\mathcal{P}$, denote by $H_{\lambda}(\Z;z,w)$ the Macdonald polynomial. It lives in the ring $\Sym[\Z]$ of symmetric functions over the field $\mathbb{Q}(z,w)$ of rational functions in $z$ and $w$. This ring is equipped with an inner product $\langle-,-\rangle$, called the Hall inner product. It can be deformed into another inner product $\langle-,-\rangle_{z,w}$.  Denote by $$N_{\lambda}(z,w)=\langle H_{\lambda}(\Z;z,w),H_{\lambda}(\Z;z,w)\rangle_{z,w}$$the self-pairing of a Macdonald polynomial for the deformed inner product. We will also need a deformation of it: $N_{\lambda}(u,z,w)$, with $N_{\lambda}(1,z,w)=N_{\lambda}(z,w)$. It is defined by an explicit formula (See \S \ref{subs-Mac}.)

The core of the conjecture of Hausel-Letellier-Rodriguez-Villegas is a mysterious generating series, the shape of which suggests a topological field theory behind. For any $g\ge 0$ and $k>0$, define 
\begin{equation}
\Omega_{HLRV,g,k}(z,w):=\sum_{\lambda\in\mathcal{P}}\frac{N_{\lambda}(zw,z^2,w^2)^g}{N_{\lambda}(z^2,w^2)}\prod_{j=1}^{k}\tilde{H}_{\lambda}(\mathbf{z}_j;z^2,w^2),
\end{equation}
where $\{\mathbf{z}_1,\ldots,\mathbf{z}_k\}$ are independent variables. We may omit $g$ and $k$ from the notation if their values are clear from the context. For each $1\le j\le k$, let $\mu_j$ be the partition of $n$ that encodes the multiplicities of the eigenvalues of $C_j$. Write $\bs\mu=(\mu_1,\ldots,\mu_k)$. Define a rational function $\mathbb{H}_{\bs\mu}(z,w)$ by taking the Hall inner product:
\begin{equation}
\mathbb{H}_{\bs\mu}(z,w):=(z^2-1)(1-w^2)\left\langle\Log\Omega_{HLRV}(z,w),\prod_{j=1}^kh_{\mu_j}\right\rangle,
\end{equation}
where $h_{\mu_j}$ is the \textit{complete symmetric function} and $\Log$ is the plethystic $\Log$ operator as defined in \cite{HRV}.

Let $d_{\bs\mu}$ be the dimension of $\mathcal{M}_{\mathcal{C}}$, which only depends on $\bs\mu$, $g$ and $n$. 
\begin{Con}(\cite{HLR})\label{Conj-HLRV}
Suppose that $\mathcal{C}$ is generic in the sense of \cite[Definition 2.1.1]{HLR}. Then the following statements are true.
\begin{itemize}
\item[(i)] The rational function $\mathbb{H}_{\mu}(z,w)$ is a polynomial. It has degree $d_{\mu}$ in each variable, and $\mathbb{H}_{\mu}(-z,w)$ has nonnegative integer coefficients.
\item[(ii)] The mixed Hodge polynomial $H_c(\mathcal{M}_{\mathcal{C}};x,y,t)$ is a polynomial in $q:=xy$ and $t$.
\item[(iii)] The mixed Hodge polynomial is given by $$
H_c(\mathcal{M}_{\mathcal{C}};q,t)=(t\sqrt{q})^{d_{\bs\mu}}\mathbb{H}_{\bs\mu}(-t\sqrt{q},\frac{1}{\sqrt{q}}).$$ In particular, it only depends on $\bs\mu$, and not on the generic eigenvalues of $\mathcal{C}$.
\end{itemize}
\end{Con}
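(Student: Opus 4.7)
The plan is to follow the arithmetic approach pioneered by Hausel and Rodriguez-Villegas \cite{HRV} for closed surfaces and extended in \cite{HLR} to the parabolic setting. The central tool is a theorem of Katz: if $|\mathcal{M}_{\mathcal{C}}(\mathbb{F}_q)|$ is a polynomial in $q$ independent of the spreading-out of $\mathcal{M}_{\mathcal{C}}$ over $\Spec \mathbb{Z}$, then this polynomial coincides with the E-polynomial $E(\mathcal{M}_{\mathcal{C}};q)$; assuming part (ii), this in turn coincides with $H_c(\mathcal{M}_{\mathcal{C}};q,-1)$. So the first step is to compute $|\mathcal{M}_{\mathcal{C}}(\mathbb{F}_q)|$ in closed form and check independence of the spreading-out.

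For this I would use Frobenius's formula, which re-expresses the number of solutions to $\prod_i [A_i,B_i]\prod_j X_j = 1$ with $X_j$ varying over $C_j(\mathbb{F}_q)$ as a weighted sum over $\Irr \GL_n(\mathbb{F}_q)$, with weight built from $\chi(1)^{2-2g-k}$ and $\prod_j \chi(c_j)$ for chosen class representatives $c_j$. The irreducibles are parametrized via Green's theory by \emph{types}, namely functions from Frobenius orbits in $\overline{\mathbb{F}}_q^{\times}$ to partitions summing to $n$. The second step is to re-express each character value via Deligne-Lusztig theory and Green polynomials, then translate the result through the plethystic substitutions that carry Hall-Littlewood polynomials to Macdonald polynomials.

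The third step is the symmetric-function identity itself: using the genericity of the eigenvalues of $C_j$ to force all Frobenius orbits appearing to be singletons with distinct labels, one shows that the double sum over types collapses to a single sum over a partition $\lambda$ of $n$, producing the denominator $N_{\lambda}(z^2,w^2)$ (from Macdonald's hook-length normalization) and the genus factor $N_{\lambda}(zw,z^2,w^2)^g$ (from the cost of the $g$ commutator pairs). Part (i) then reduces to a combinatorial positivity question for modified Macdonald polynomials, which one would attack via the Haglund-Haiman-Loehr formula together with an explicit product expansion of $N_{\lambda}(u,z,w)$; part (ii), purity in a form strong enough to read $H_c(\mathcal{M}_{\mathcal{C}};q,-1)$ off the point count, requires independent geometric input.

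The hard part will be (iii) in its full $t$-refined form. Point counting sees only the specialization $t=-1$, so the mixed Hodge polynomial is genuinely out of reach of arithmetic methods alone. The expected route is non-abelian Hodge theory, which identifies $\mathcal{M}_{\mathcal{C}}$ diffeomorphically with a moduli space of parabolic Higgs bundles equipped with the perverse filtration coming from the Hitchin map; the $\mathrm{P}=\mathrm{W}$ conjecture would then translate the weight filtration on $H_c(\mathcal{M}_{\mathcal{C}})$ into the perverse filtration on the Higgs side, and the Hitchin fibration combinatorics would supply the missing $t$-grading. Establishing $\mathrm{P}=\mathrm{W}$ in this generality and promoting the character-theoretic computation to a cohomological statement compatible with the perverse filtration is the principal obstacle, and is essentially the reason Conjecture \ref{Conj-HLRV} remains open.
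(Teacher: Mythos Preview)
The statement is Conjecture~\ref{Conj-HLRV}, and the paper does \emph{not} prove it. It is recalled as background (the conjecture of Hausel--Letellier--Rodriguez-Villegas) before the paper formulates its own analogous Conjecture~\ref{The-Conj} for $\GL_n\langle\sigma\rangle$-character varieties. The paper notes that Mellit has established the polynomial property in part~(i) and all of part~(ii), and that Mellit and Schiffmann have verified the Poincar\'e-polynomial specialisation of part~(iii), but the full mixed Hodge statement remains open.

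Your proposal is not a proof but a research outline, and you say as much in your final paragraph. The arithmetic strategy you sketch (Katz's theorem, Frobenius's mass formula, Deligne--Lusztig/Green-function input, translation into the Macdonald framework) is exactly how \cite{HLR} obtains the $t=-1$ specialisation, and the paper follows the same template for its own character varieties. But you correctly identify that this only reaches the E-polynomial, and that promoting it to the full $t$-graded statement requires geometric input (P$=$W or equivalent) that is not available. So there is no discrepancy to report: neither you nor the paper proves the conjecture, and both are explicit about why.
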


\subsection{Around the mixed Hodge polynomial}\hfill

The case where $k=1$ and $C_1$ is central was studied in \cite{HRV}. Since the character variety $\mathcal{M}_{\mathcal{C}}$ can be defined over a subring of $\mathbb{C}$ that is of finite type over $\mathbb{Z}$, it admits base changes to finite fields. As mentioned above, a theorem of Katz reduces the computation of the E-polynomial to counting points over finite fields. The computation is implemented using the character table of $\GL_n(q)$. The general case of arbitrary $k>0$ was studied in \cite{HLR}. The resulting E-polynomials led to the conjecture that we have just recalled.

In \cite{HRV}, a symmetry in the conjectural mixed Hodge polynomial was observed, which is called the curious Poincar\'e duality. Hausel and Rodriguez-Villegas further conjectured that this symmetry can be upgraded to a symmetry in the mixed Hodge structure, which they call the curious Hard Lefschetz.

Then a remarkable observation was made by de Cataldo, Hausel and Migliorini, that the curious Hard Lefschetz for character varieties resembles the relative Hard Lefschetz for Hitchin fibrations. This led to the P=W conjecture \cite{dCHM}, claiming that the perverse filtration on the cohomology of Dolbeault moduli space coincides with the weight filtration on the cohomology of character variety under the non abelian Hodge correspondence. This conjecture has recently been settled by Maulik-Shen \cite{MS} and Hausel-Mellit-Minets-Schiffmann \cite{HMMS} indenpendently. 

The assertion in part (i) of Conjecture \ref{Conj-HLRV} concerning the polynomial property of $\mathbb{H}_{\bs\mu}(z,w)$ has been proved by Mellit in \cite{Me1}. Part (ii) of Conjecture \ref{Conj-HLRV} has been proved by Mellit in \cite{Me2}. The specialisation of part (iii) to Poincar\'e polynomials has been proved by Mellit and Schiffmann by counting Higgs bundles over finite fields. See \cite{Me3}, \cite{Me4} and \cite{Sch}.

The goal of this article is to study the mixed Hodge polynomial of a new family of character varieties.

\subsection{$\GL_n\lb\sigma\rb~$-Character Varieties}\label{subsec-Char-Var}\hfill

We study character varieties that are unitary in the global sense. This is what we call $\GL_n\rtimes\lb\sigma\rb~$-character varieties.  We will avoid calling them unitary character varieties since in the community of character variety, they often refer to character varieties with structure group $\Uni_n(\mathbb{R})$.

Let $\sigma$ be an order 2 exterior automorphism of $\GL_n$. We will denote by $\GL_n\lb\sigma\rb$ the semi-direct product $\GL_n\rtimes\lb\sigma\rb$. Let $p':\tilde{\Sigma}'\rightarrow\Sigma'$ be a double covering of compact Riemann surfaces that is branched at $2k$-points, with $k>0$. Let $p:\tilde{\Sigma}\rightarrow\Sigma$ be the restriction of $p'$ to its unbranched part, so that the punctures of $\Sigma$ are exactly the branch points in $\Sigma'$. Denote by $g$ the genus of $\Sigma$. Let $\mathcal{C}=(C_j)_{1\le j\le 2k}$ be a tuple of semi-simple conjugacy classes contained in $\GL_n\sigma$. Our $\GL_n\lb\sigma\rb~$-character variety is defined by
$$
\Ch_{\mathcal{C}}(\Sigma):=\left\{(A_i,B_i)_i(X_j)_j\in \GL_n^{2g}\times \prod_{j=1}^{2k} C_j\mid\prod_{i=1}^g[A_i,B_i]\prod_{j=1}^{2k}X_j=1\right\}\ds\GL_n.
$$ This is the moduli space of the homomorphisms $\rho$ that fit into the following commutative diagram:
$$
\begin{tikzcd}[row sep=2.5em, column sep=0.2em]
\pi_1(\Sigma) \arrow[dr, swap, "q_1"] \arrow[rr, "\rho"] && \GL_n\lb\sigma\rb \arrow[dl, "q_2"]\\
& \Gal(\tilde{\Sigma}/\Sigma)
\end{tikzcd}
$$
where $\Gal(\tilde{\Sigma}/\Sigma)\cong\mathbb{Z}/2\mathbb{Z}$ is the group of covering transformations, $q_1$ is the quotient by $\pi_1(\tilde{\Sigma})$, and $q_2$ is the quotient by the identity component. Of course, the monodromy of $\rho$ at the punctures must lie in the given conjugacy classes. 

The character variety $\Ch_{\mathcal{C}}(\Sigma)$, via the non-abelian Hodge theory\footnote{Non abelian Hodge correspondence should hold in the generality of Boalch-Yamakawa's twisted wild character varieties \cite{BY}. However, it seems that this has not been written down in the literature.},  corresponds to the moduli space of (parabolic) unitary Higgs bundles, that is, torsors under the unitary group scheme over $\Sigma$ equipped with a Higgs field. Torsors under the unitary group scheme can alternatively be described as vector bundles $\mathcal{E}$ on the the covering space $\tilde{\Sigma}$ equipped with an isomorphism $\mathcal{E}\isom\tau^{\ast}\mathcal{E}^{\vee}$, where $\mathcal{E}^{\vee}$ is the dual vector bundle and $\tau$ is the non-trivial covering transformation. Recent interests in this kind of moduli spaces grew out its connection to representation theory, e.g. the work of Laumon and Ng\^o \cite{LN} on the fundamental lemma, where they work with \'etale coverings of curves. We insist on branched coverings. As we will see, the most interesting phenomenon, i.e. insertion of wreath Macdonald polynomial, arises from the branch points.

This character variety also appears as a special case of the twisted character varieties studied by Boalch and Yamakawa \cite{BY}.

\subsection{Wreath Macdonald Polynomials}\hfill

In \cite{Hai}, Haiman conjectured the existence of a family of symmetric functions that are called \textit{wreath Macdonald polynomials}. Their symmetry is the wreath product $(\mathbb{Z}/r\mathbb{Z})^n\rtimes\mathfrak{S}_n$, with $r>0$, generalising the symmetric group $\mathfrak{S}_n$ for Macdonald polynomials. The existence and Schur-positivity of these symmetric functions were proved by Bezrukavnikov and Finkelberg in \cite{BF}. Let $\X=(\Xo,\Xl)$ be two independent sets of infinitely many variables and write $$\SymF[\Xo,\Xl]=\SymF[\Xo]\otimes\SymF[\Xl],$$ with coefficients in the ring $\mathbb{Q}(z,w)$ of rational functions. In this article, $r=2$, and the wreath Macdonald polynomials live in $\SymF[\Xo,\Xl]$. Below we explain some essence of wreath Macdonald polynomials and introduce some notations.

A 2-core is a partition of the form $(d,d-1,\ldots,1,0)$ for some $d\in\mathbb{Z}_{\ge0}$. A 2-partition is simply an element of $\mathcal{P}^2=\mathcal{P}\times\mathcal{P}$. Any partition determines a 2-core, and a 2-partition which is called its 2-quotient, and these data determine the original partition. For any $\bs\alpha=(\alpha^{(0)},\alpha^{(1)})\in\mathcal{P}^2$, its size is defined to be $|\alpha^{(0)}|+|\alpha^{(1)}|$. If $\lambda_c$ and $\bs\lambda_q$ are the 2-core and 2-quotient of $\lambda$ respectively, then we have $|\lambda|=|\lambda_c|+2|\bs\lambda_q|$. Fixing a 2-core of size $m$ and an integer $n>m$, we have a bijection $$\{\text{Partitions of size $n$ with the given 2-core}\}\lisomLR\{\text{2-Partitions of size $(n-m)/2$}\}$$ 
This bijection induces from the set of partitions an order on the set of 2-partitions. Wreath Macdonald polynomials are uniquely determined by two triangularity conditions and one normalisation condition. The order used in the triangularity conditions is the one induced from partitions. Different 2-cores may induce different orders on $\mathcal{P}^2$, and so define different wreath Macdonald polynomials. 

In this article, only the 2-cores $(0)$ and $(1)$ appear. For any $\bs\alpha\in\mathcal{P}^2$, we will denote by $\{\bs\alpha\}_0$ (resp. $\{\bs\alpha\}_1$) the partition which has $\bs\alpha$ as its 2-quotient, and has $(0)$ (resp. $(1)$) as its 2-core. With a fixed 2-core, we will denote by $\tilde{H}_{\bs\alpha}(\X;z,w)$ the wreath Macdonald polynomial associated to $\bs\alpha\in\mathcal{P}^2$. We will denote by $$\tilde{N}_{\bs\alpha}(z,w)=\langle \tilde{H}_{\bs\alpha}(\X;z,w),\tilde{H}_{\bs\alpha}(\X;z,w)\rangle_{z,w},$$the self-pairing of wreath Macdonald polynomials for the deformed inner product. We will also need a deformed version $\tilde{N}_{\bs\alpha}(u,z,w)$, with $\tilde{N}_{\bs\alpha}(1,z,w)=\tilde{N}_{\bs\alpha}(z,w)$. (See \S \ref{w-q,t-inner}.)

\subsection{From character varieties to wreath Macdonald polynomials}\hfill

Let $q$ be a prime power. Macdonald polynomials appear in the conjectural formula for the mixed Hodge polynomials of $\GL_n$-character varieties. The first step towards the mixed Hodge polynomial is to compute the $E$-polynomial by counting points over finite fields. In counting points, the representation theory of $\GL_n(q)$ provides the necessary combinatorial context in which the connection to the Macdonald polynomials can be observed. Each unipotent character of $\GL_n(q)$ is a linear combination of Deligne-Lusztig characters, and the coefficients are given by an irreducible character of the symmetric group $\mathfrak{S}_n$, which is the Weyl group of $\GL_n$. These Weyl group characters parametrise unipotent characters of $\GL_n(q)$. The computation of a Deligne-Lusztig character is reduced to the Green functions, and the Green functions are the bridge to symmetric functions. In our problem, we use the irreducible characters of $\GL_n(q)\lb\sigma\rb$, which will be described in terms of Deligne-Lusztig characters of non-connected groups. The bridge to symmetric functions is again provided by Green functions, using Shoji's results. In \cite{S01}, Shoji gives a description of Green functions, or rather, Kostka polynomials, associated to symplectic groups and orthogonal groups, in terms of symmetric functions in $\SymF[\Xo,\Xl]$.

Regard $\GL_n$ as an algebraic group over $\bar{\mathbb{F}}_q$, equipped with the Frobenius endomorphism $F$ that sends each entry of a matrix to its $q$-th power. Then the finite group $\GL_n(q)$ consists of the fixed points of $F$. Let $T_1$ be the maximal torus consisting of diagonal matrices and let $W\cong\mathfrak{S}_n$ be the Weyl group of $\GL_n$ defined by $T_1$. The automorphism $\sigma$ induces an automorphism of $W$. (See \S \ref{Section-GLnsigma} for the precise definition of $\sigma$.) Denote by $W^{\sigma}$ the fixed points of $\sigma$. In fact, we have $$W^{\sigma}\cong\mathfrak{W}_N:=(\mathbb{Z}/2\mathbb{Z})^N\rtimes\mathfrak{S}_N$$ with $N=[n/2]$. For each $w\in W^{\sigma}$, we have the generalised Deligne-Lusztig character $R_{T_w\sigma}\mathbf{1}$, as defined in \cite{DM94}, which is a function on $\GL_n(q)\sigma$ invariant under conjugation by $\GL_n(q)$. 

The clue that eventually leads to wreath Macdonald polynomials is the following theorem of Digne and Michel, which was a conjecture of Malle in \cite{Mal}.
\begin{Thm}(\cite[Th\'eor\`eme 5.2]{DM94})\label{Intro-DM}
Let $\lambda$ be a partition of $n$, and denote by $m$ the size of the 2-core of $\lambda$. Suppose that $m\le 1$. Then, the extension\footnote{That is, a character of $\GL_n(q)\lb\sigma\rb$ that restricts to the given character.} to $\GL_n(q)\sigma$ of the unipotent character of $\GL_n(q)$ corresponding to $\lambda$ is given by $$\pm|W^{\sigma}|^{-1}\sum_{w\in W^{\sigma}}\varphi(w)R_{T_w\sigma}\mathbf{1},$$where $\varphi$ is the irreducible character of $W^{\sigma}$ corresponding to the 2-quotient of $\lambda$.
\end{Thm}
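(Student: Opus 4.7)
The plan is to follow the strategy of Digne-Michel by treating $\GL_n\langle\sigma\rangle$ as a (disconnected) reductive algebraic group over $\bar{\mathbb{F}}_q$ and extending the uniform-character techniques of Deligne-Lusztig theory to the non-identity coset, using the generalized Deligne-Lusztig characters $R_{T_w\sigma}\mathbf{1}$ for $w\in W^\sigma$ as the basic building blocks.

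First I would recall the classical expansion
$$\chi_\lambda = |W|^{-1}\sum_{w\in W}\chi^\lambda(w)\, R_{T_w}\mathbf{1},$$
for the unipotent character of $\GL_n(q)$ indexed by a partition $\lambda$, where $\chi^\lambda$ is the irreducible character of $W\cong\mathfrak{S}_n$ attached to $\lambda$. The hypothesis $m\le 1$ forces the 2-quotient $\bs\alpha$ of $\lambda$ to have total size exactly $N=[n/2]$, makes $\lambda$ (equivalently $\chi^\lambda$) $\sigma$-stable, and ensures that $\chi_\lambda$ admits an extension to $\GL_n(q)\langle\sigma\rangle$. Letting $\varphi\in\Irr(W^\sigma)$ denote the irreducible character attached to $\bs\alpha$ under the Clifford-theoretic parametrization of $\Irr(\mathfrak{W}_N)$ by pairs of partitions of total size $N$, define the candidate extension on the coset $\GL_n(q)\sigma$ by
$$\tilde\chi_\lambda := |W^\sigma|^{-1}\sum_{w\in W^\sigma}\varphi(w)\, R_{T_w\sigma}\mathbf{1}.$$

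The heart of the argument is an inner-product computation on the coset $\GL_n(q)\sigma$. Using the Mackey-type formula for generalized Deligne-Lusztig characters of disconnected groups established in \cite{DM94}, the pairing
$$\langle R_{T_w\sigma}\mathbf{1},\, R_{T_{w'}\sigma}\mathbf{1}\rangle_{\GL_n(q)\sigma}$$
reduces to a counting of $W^\sigma$-conjugators sending $w\sigma$ to $w'\sigma$. The orthogonality of irreducible characters of $W^\sigma$ then yields $\langle\tilde\chi_\lambda,\tilde\chi_\lambda\rangle=1$, and together with integrality this forces $\pm\tilde\chi_\lambda$ to be an irreducible character of $\GL_n(q)\langle\sigma\rangle$ supported on the coset. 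It remains to identify its restriction to $\GL_n(q)$ with $\chi_\lambda$; this follows from the standard restriction formula for $R_{T_w\sigma}\mathbf{1}$ combined with the compatibility between the parametrizations of $\sigma$-stable elements of $\Irr(W)$ and of $\Irr(W^\sigma)$ via the 2-quotient bijection.

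The main obstacle is the Mackey-type inner-product formula for $R_{T_w\sigma}\mathbf{1}$ on the non-connected group $\GL_n\langle\sigma\rangle$, together with the classification of the $F\sigma$-conjugacy classes of $\sigma$-stable maximal tori and their identification with the ordinary conjugacy classes of $W^\sigma\cong\mathfrak{W}_N$. Once these geometric ingredients are available, the rest of the proof is purely combinatorial: the orthogonality in $\Irr(W^\sigma)$ and the explicit 2-quotient bijection pin down the sign in $\pm\tilde\chi_\lambda$ and complete the identification with the required extension of $\chi_\lambda$.
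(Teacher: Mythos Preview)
The paper does not prove this statement; it is quoted verbatim from \cite[Th\'eor\`eme 5.2]{DM94} and used as input. So there is no in-paper argument to compare your sketch against, and what you are really proposing is a reconstruction of the Digne--Michel proof.

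Your outline follows the right strategy (norm-one via the Mackey formula for the generalised $R_{T_w\sigma}$), but two points need correction. First, the role of $m\le 1$ is misstated: every unipotent character of $\GL_n(q)$ is $\sigma$-stable and hence extends to $\GL_n(q)\langle\sigma\rangle$, regardless of the 2-core. What $m\le 1$ guarantees is that the extension is \emph{uniform} on the coset, i.e.\ lies in the span of the $R_{T_w\sigma}\mathbf{1}$; for larger 2-cores the extension exists but involves inductions of cuspidal functions from proper Levi subgroups (cf.\ Proposition~\ref{Shu2Cor11.1.2} and the remarks following Theorem~\ref{Intro-DM} in the paper).

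Second, your identification step has a real gap. The function $\tilde\chi_\lambda$ is defined only on the coset $G^F\sigma$, so ``its restriction to $\GL_n(q)$'' is not meaningful, and there is no ``standard restriction formula'' taking $R_{T_w\sigma}\mathbf{1}$ to $R_{T_w}\mathbf{1}$ in the way you suggest. The norm-one computation only shows that $\pm\tilde\chi_\lambda$ is the coset part of \emph{some} extended irreducible; pinning down that it extends $\chi_\lambda$ (and not $\chi_{\lambda'}$ for a different $\lambda'$ with the same 2-core size) is exactly where the 2-quotient combinatorics does nontrivial work. In \cite{DM94} this identification is carried out by comparing values at carefully chosen semi-simple elements of $G^F\sigma$ and invoking the explicit parametrisation of $\Irr(W^\sigma)$; it is not a formal consequence of the norm computation plus Clifford theory, and your sketch does not supply this step.
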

\noindent
This theorem was then extended to \textit{quadratic-unipotent characters} by Waldspurger in \cite{W1}. In fact, Waldspurger's theorem imposes no restriction on the 2-cores. It says if the 2-core is larger than $(1)$, then the above expression would be a linear combination of inductions of cuspidal functions on Levi subgroups that are not tori.\footnote{Here we really mean "Levi" and "tori" in the twisted component $\GL_n\sigma$.} Then a theorem in \cite{DM15} shows that they must vanish on semi-simple conjugacy classes. Therefore, only the partitions with 2-core $(0)$ or $(1)$ have non-trivial contributions to the point counting of character varieties. 

The full character table of $\GL_n(q)\lb\sigma\rb$ has been completely determined in \cite{Shu2}, using the main theorem of \cite{W1}. Unlike $\GL_n(q)$, the irreducible characters of $\GL_n(q)\lb\sigma\rb$ may not be a linear combination of Deligne-Lusztig characters. Because of this, the determination of the irreducible characters is much more difficult than $\GL_n(q)$ and uses character sheaves on non connected groups developed by Lusztig.  The results of \cite{Shu2} will eventually allow us to compute the E-polynomial of $\GL_n\lb\sigma\rb~$-character varieties.

\subsection{The generating series}\label{subsec-series}\hfill

The main discovery of this article is the following infinite series that are built out of Macdonald polynomials, wreath Macdonald polynomials, and their self-pairings. They are expected to dominate the mixed Hodge polynomials of $\GL_n\lb\sigma\rb~$-character varieties.

For $e\in\{0,1\}$, define
\begingroup
\allowdisplaybreaks
\begin{align*}
\Omega_{e}(z,w):=&\sum_{\bs\alpha\in\mathcal{P}^2}\frac{N_{\{\bs\alpha\}_e}(zw,z^2,w^2)^{g+k-1}}{\tilde{N}_{\bs\alpha}(z^2,w^2)\tilde{N}_{\bs\alpha}(zw,z^2,w^2)^{k-1}}\prod_{j=1}^{2k}\tilde{H}_{\bs\alpha}(\mathbf{x}_j;z^2,w^2),\\
\Omega_{\ast}(z,w):
=&\sum_{\alpha\in\mathcal{P}}\frac{N_{\alpha}(zw,z^2,w^2)^{2g+k-1}}{N_{\alpha}(z^2,w^2)}\prod_{j=1}^{2k}H_{\alpha}(\mathbf{x}^{(0)}_j+\mathbf{x}^{(1)}_j;z^2,w^2).
\end{align*}
\endgroup
Beware that in $\Omega_e$, the wreath Macdonald polynomials and the (deformed) self-pairings are subordinate to the 2-core $(e)$. In $\Omega_{\ast}$ the symmetric functions involved are Macdonald polynomials in the variable $\Xo+\Xl:=\Xo\sqcup\Xl$.

\begin{Rem}
The series $\Omega_0$ and $\Omega_1$ have their origin in the quadratic-unipotent characters of $\GL_n(q)$.
\end{Rem}

\begin{Rem}
Note that $$\Omega_{\ast}(z,w)=\Omega_{HLRV,\tilde{g},2k}(z,w),$$where $\tilde{g}=2g+k-1$ is the genus of $\tilde{\Sigma}$. 
\end{Rem}

\begin{Rem}
If there are further $k'$ unbranched punctures, then each summand of the above series should be multiplied by $$\prod^{k'}_{j=1}H_{\{\bs\alpha\}_0}(\mathbf{z}_j;z^2,w^2),\quad\prod^{k'}_{j=1}H_{\{\bs\alpha\}_1}(\mathbf{z}_j;z^2,w^2),\quad\prod^{k'}_{j=1}H_{\alpha}(\mathbf{z}_j;z^2,w^2)^2,$$respectively, where each $\mathbf{z}_j$ is an independent family of variables. In this article, we will not introduce unbranched punctures.
\end{Rem}

\subsection{The Conjectural Mixed Hodge Polynomial}\label{subsec-Conjecture}\hfill

The infinite series that we have defined only depend on $g$ and $k$, and not on the conjugacy classes. Now we introduce the symmetric functions associated to the conjugacy classes.

At the end of \S \ref{Section-GLnsigma}, we will define the set $\tilde{\mathfrak{T}}_s$ of certain combinatorial data, called \textit{types}, encoding the multiplicities of the "eigenvalues" of semi-simple conjugacy classes in $\GL_n\sigma$. Given a tuple of semi-simple conjugacy classes $\mathcal{C}=(C_j)_{1\le j\le 2k}$ contained in $\GL_n\sigma$, let $\bs\beta_j$ be the type of $C_j$, and let $h_{\bs\beta_j^{\ast}}(\mathbf{x}_j)$ be the symmetric function associated to the dual $\bs\beta_j^{\ast}$ (See \S \ref{SymF-Types}), which is essentially the complete symmetric function. Write $\mathbf{B}=(\bs\beta_j)_{1\le j\le 2k}$. Let $\Ch_{\mathcal{C}}$ be the $\GL_n\lb\sigma\rb~$-character variety associated to $\mathcal{C}$. Denote by $d$ the dimension of $\Ch_{\mathcal{C}}$, which only depend on $\mathbf{B}$, $g$ and $n$.

The summand in $\Omega_{\ast}$ corresponding to the empty partition is equal to 1, therefore we can apply the formal expansion $$\frac{1}{1+x}=\sum_{m\ge 0}(-1)^mx^m$$ to $1+x=\Omega_{\ast}$.
Define the rational functions in $z$ and $w$:
\begingroup
\allowdisplaybreaks
\begin{align}
\mathbb{H}_{\mathbf{B}}(z,w):=&~\left\langle\frac{\Omega_{1}(z,w)\Omega_{0}(z,w)}{\Omega_{\ast}(z,w)},\prod^{2k}_{j=1}h_{\bs\beta^{\ast}_j}(\X_j)\right\rangle,\text{ if $n$ is odd,}\\
\mathbb{H}_{\mathbf{B}}(z,w):=&~\left\langle\frac{\Omega_{0}(z,w)^2}{\Omega_{\ast}(z,w)},\prod^{2k}_{j=1}h_{\bs\beta^{\ast}_j}(\X_j)\right\rangle,\text{ if $n$ is even.}
\end{align}
\endgroup
\begin{Rem}
A peculiar feature of these expressions is the absence of plethystic exponential or logarithm, which is by far ubiquitous in counting Higgs bundles and quiver representations.
\end{Rem}

\begin{Con}\label{The-Conj}
Let $\mathcal{C}=(C_j)_{1\le j\le 2k}$ be a strongly generic tuple of semi-simple conjugacy classes contained in $\GL_n\sigma$ which satisfies \ref{CCL}.\footnote{These notions will be defined in \S \ref{Section-Char-Var}.} Then 
\begin{itemize}
\item[(i)] The rational function $\mathbb{H}_{\mathbf{B}}(z,w)$ is a polynomial. It has degree $d$ in each variable, and each monomial in it has even degree. Moreover, $\mathbb{H}_{\mathbf{B}}(-z,w)$ has non-negative integer coefficients.
\item[(ii)] The mixed Hodge polynomial $H_c(\Ch_{\mathcal{C}};x,y,t)$ is a polynomial in $q:=xy$ and $t$.
\item[(iii)] The mixed Hodge polynomial is given by the following formula:
\begin{equation*}
H_c(\Ch_{\mathcal{C}};q,t)=(t\sqrt{q})^d\mathbb{H}_{\mathbf{B}}(-t\sqrt{q},\frac{1}{\sqrt{q}}).
\end{equation*}
In particular, it only depends on $\mathbf{B}$, and not on the generic eigenvalues of $\mathcal{C}$.
\end{itemize}
\end{Con}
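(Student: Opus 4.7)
The plan is to follow the two-stage strategy that succeeded for the original Hausel--Letellier--Rodriguez-Villegas conjecture: first settle the $t=-1$ specialisation (the E-polynomial computation that forms the technical core of the present paper), and then upgrade to the full mixed Hodge polynomial via a purity argument on the corresponding moduli of parabolic unitary Higgs bundles. For the E-polynomial step, one shows that $\Ch_{\mathcal{C}}$ is polynomial-count and that $|\Ch_{\mathcal{C}}(\mathbb{F}_q)|$ equals the $t=-1$ specialisation of the right-hand side, so that Katz's theorem yields the claim. Frobenius' mass formula reduces this count to a weighted sum over the irreducible characters of $\GL_n(q)\langle\sigma\rangle$. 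Inserting the character table of \cite{Shu2}, applying Theorem \ref{Intro-DM} together with its extension by Waldspurger to quadratic-unipotent characters, and using the vanishing of non-toral induction on semi-simple classes from \cite{DM15}, one is reduced to contributions of partitions with 2-cores $(0)$ or $(1)$ and to symmetric pairs on the double cover. Shoji's realisation of the associated Green functions inside $\SymF[\Xo,\Xl]$ then converts each contribution into a Hall-type inner product, producing the summands of $\Omega_0$, $\Omega_1$, and $\Omega_{\ast}$; assembly into the ratios $\Omega_0\Omega_1/\Omega_{\ast}$ or $\Omega_0^2/\Omega_{\ast}$ uses the formal expansion $1/(1+x)=\sum_{m\ge 0}(-1)^m x^m$ indicated in \S\ref{subsec-Conjecture}.

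Once the E-polynomial is pinned down, the $w=0$ specialisation of $\mathbb{H}_{\mathbf{B}}(z,w)$ is a polynomial of the predicted degree, so a one-variable shadow of part (i) is in hand. To obtain the polynomiality, the full degree bounds and the positivity of $\mathbb{H}_{\mathbf{B}}(-z,w)$ jointly in $z$ and $w$, I would adapt Mellit's method from \cite{Me1}: construct $\mathbb{H}_{\mathbf{B}}$ as the graded character of a virtual module over a shuffle or elliptic Hall-type algebra acting on $\SymF[\Xo,\Xl]$, with manifestly positive integer structure constants on a suitable basis, and deduce polynomiality and positivity simultaneously. The evenness of the monomial degrees should follow from the $\Xo\leftrightarrow\Xl$ exchange symmetry visible in $\Omega_{\ast}$ combined with the product over an even number of branch points in $\Omega_0$ and $\Omega_1$.

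For parts (ii) and (iii), the route is non-abelian Hodge theory. The character variety $\Ch_{\mathcal{C}}$ is diffeomorphic to a moduli space $\mathcal{M}_{\mathrm{Higgs}}$ of parabolic unitary Higgs bundles on the branched cover $\tilde\Sigma'$, i.e.\ Higgs pairs $(\mathcal{E},\theta)$ on $\tilde\Sigma$ equipped with an isomorphism $\mathcal{E}\cong\tau^{\ast}\mathcal{E}^{\vee}$ and prescribed residues at the ramification points. The dilation $\theta\mapsto s\theta$ yields a $\mathbb{C}^{\ast}$-action with proper fixed-point locus (the nilpotent cone), so $H^{\ast}_c(\mathcal{M}_{\mathrm{Higgs}})$ is pure and is determined by its Poincar\'e polynomial. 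Counting parabolic unitary Higgs bundles over $\mathbb{F}_q$, in the spirit of \cite{Me3,Sch} but for $\sigma$-twisted bundles, should reproduce the predicted Poincar\'e polynomial, and a $P=W$-type argument on the Hitchin fibration would upgrade the identity to the full two-variable mixed Hodge polynomial, yielding (ii) and (iii).

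The principal obstacle is that wreath Macdonald polynomials are considerably less well-understood than their $\mathfrak{S}_n$ counterparts: no combinatorial formula of Haglund--Haiman--Loehr type is available, no shuffle-algebra model of the relevant module of the quantum toroidal algebra of type $A_1^{(1)}$ has been worked out in the form needed, and the explicit shape of $\tilde{N}_{\bs\alpha}(u,z,w)$ does not yet admit the rational-function manipulations that drive Mellit's argument. Consequently, both the positivity assertion in part (i) and the Higgs-bundle point count feeding parts (ii) and (iii) will require genuinely new input from the representation theory of wreath-affine or quantum toroidal algebras, or a direct cohomological integrality statement for ramified unitary Higgs moduli.
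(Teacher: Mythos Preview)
The statement you are attempting to prove is labeled \emph{Conjecture} in the paper, and for good reason: the paper does not prove it. What the paper establishes is the $t=-1$ specialisation of part (iii), namely that $H_c(\Ch_{\mathcal{C}};q,-1)=(\sqrt{q})^d\mathbb{H}_{\mathbf{B}}(\sqrt{q},1/\sqrt{q})$, together with the corresponding specialisation of the curious Poincar\'e duality. Beyond this, the paper offers only evidence: the case $n=1$ is checked directly, the case $n=2$ is reduced to the Hausel--Letellier--Rodriguez-Villegas conjecture via the isomorphism (\ref{Ch-rank-2}), and part (i) is verified numerically for a list of small $(n,g,k,\mathcal{C})$.

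Your first paragraph is a reasonable sketch of the E-polynomial computation and tracks the paper's actual argument fairly closely (Frobenius mass formula, restriction to characters with 2-core $(0)$ or $(1)$ via Waldspurger and \cite{DM15}, Shoji's Green functions in $\SymF[\Xo,\Xl]$, assembly into the $\Omega$ series). But the remainder of your proposal is a research programme, not a proof, and you say so yourself in the final paragraph: the shuffle-algebra or elliptic-Hall model needed to run a Mellit-style argument for wreath Macdonald polynomials does not exist, no point count for parabolic unitary Higgs bundles over $\mathbb{F}_q$ in this ramified setting has been carried out, and invoking a ``$P=W$-type argument'' for a class of varieties where $P=W$ is itself open cannot close the gap. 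Each of the three ingredients you appeal to for parts (i)--(iii) is an open problem. A proof proposal that ends by listing the genuine obstructions to its own completion is an honest assessment of the state of the art, but it is not a proof.
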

Part (i) is a sufficient condition for the formula in part (iii) to be a polynomial in $q$ and $t$ with non negative integer coefficients. Part (ii) says that the mixed Hodge structure is Hodge-Tate.
\begin{Rem}
In this article, we only need wreath Macdonald polynomials associated to the wreath product $W^{\sigma}$, which is the Weyl group of the centraliser of $\sigma$. Since wreath products $(\mathbb{Z}/r\mathbb{Z})^n\rtimes\mathfrak{S}_n$ with $r>2$ are not the Weyl groups of any algebraic groups, their corresponding wreath Macdonald polynomials play no role in any character varieties. It is interesting to see if there is some imaginary mixed Hodge polynomials dominated by more general wreath Macdonald polynomials. Then the purely combinatorial part (i) of the conjecture may have a generalisation. 
\end{Rem}
\begin{Rem}
We may also consider a cyclic Galois covering $\tilde{\Sigma}'\rightarrow\Sigma'$ of degree $r>2$, and take for $\sigma$ an order $r$ exterior automorphism of $\GL_n$. The ramification index at a branch point determines which connected component of $\GL_n\lb\sigma\rb$ the local monodromy around that point lies in. The representation theory on a connected component $\GL_n\sigma^i$ is controlled by $\mathfrak{S}_n$ or $\mathfrak{W}_N$, according to the parity of $i$, or rather, whether $\sigma^i$ is inner or exterior. The combinatorial consequence of such a dichotomy of Weyl groups is the different symmetric functions inserted at this point - the usual Macdonald polynomial or the wreath Macdonald polynomial. We expect that essentially no new phenomenon arises when $r>2$. We do not study cyclic coverings of higher degrees because the character table of  $\GL_n(q)\lb\sigma\rb$ is only written down in the case $r=2$.
\end{Rem}

Part (iii) of the above conjecture, combined with some fundamental symmetries in Macdonald polynomials and wreath Macdonald polynomials, implies the following.
\begin{Con}(Curious Poincar\'e Duality)\label{Cur-Poin}
We have
\begin{equation*}
H_c(\Ch_{\mathcal{C}};\frac{1}{qt^2},t)=(qt)^{-d}H_c(\Ch_{\mathcal{C}};q,t).
\end{equation*}
\end{Con}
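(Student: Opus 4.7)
The plan is to deduce Conjecture~\ref{Cur-Poin} as a formal consequence of Conjecture~\ref{The-Conj}(iii). Substituting $q'=1/(qt^2)$ into the formula $H_c(\Ch_{\mathcal{C}};q,t)=(t\sqrt{q})^d\mathbb{H}_{\mathbf{B}}(-t\sqrt{q},1/\sqrt{q})$, one computes $t\sqrt{q'}=1/\sqrt{q}$ and $1/\sqrt{q'}=t\sqrt{q}$, hence
\[
H_c(\Ch_{\mathcal{C}};1/(qt^2),t)=(1/\sqrt{q})^d\,\mathbb{H}_{\mathbf{B}}(-1/\sqrt{q},\,t\sqrt{q}),
\]
while a similar manipulation gives $(qt)^{-d}H_c(\Ch_{\mathcal{C}};q,t)=(1/\sqrt{q})^d\,\mathbb{H}_{\mathbf{B}}(-t\sqrt{q},1/\sqrt{q})$. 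Setting $z=t\sqrt{q}$ and $w=1/\sqrt{q}$, Conjecture~\ref{Cur-Poin} is therefore equivalent to the identity $\mathbb{H}_{\mathbf{B}}(-w,z)=\mathbb{H}_{\mathbf{B}}(-z,w)$. Using the even-parity statement of Conjecture~\ref{The-Conj}(i), which gives $\mathbb{H}_{\mathbf{B}}(-z,-w)=\mathbb{H}_{\mathbf{B}}(z,w)$, this reduces to the cleaner swap symmetry
\[
\mathbb{H}_{\mathbf{B}}(z,w)=\mathbb{H}_{\mathbf{B}}(w,z).
\]

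The remainder of the argument proves this swap symmetry at the level of the three generating series $\Omega_0$, $\Omega_1$, $\Omega_\ast$. Each summand involves its parameters only through the combinations $z^2$, $w^2$, and $zw$; the last is already $z\leftrightarrow w$ invariant. The classical transpose symmetry for modified Macdonald polynomials gives $\tilde H_\lambda(\X;q,t)=\tilde H_{\lambda'}(\X;t,q)$, and induces $N_\lambda(q,t)=N_{\lambda'}(t,q)$ together with $N_\lambda(u,q,t)=N_{\lambda'}(u,t,q)$. For the wreath factors one needs an analogue: because both 2-cores $(0)$ and $(1)$ are self-conjugate, transposition preserves the set of partitions with a fixed 2-core $(e)$, and consequently induces an involution $\bs\alpha\mapsto\bs\alpha^{\sharp}$ on $\mathcal{P}^2$. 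The expected identities are $\tilde H_{\bs\alpha}(\X;q,t)=\tilde H_{\bs\alpha^{\sharp}}(\X;t,q)$, $\tilde N_{\bs\alpha}(z,w)=\tilde N_{\bs\alpha^{\sharp}}(w,z)$, and $\tilde N_{\bs\alpha}(u,z,w)=\tilde N_{\bs\alpha^{\sharp}}(u,w,z)$. Granting these, applying $z\leftrightarrow w$ termwise and re-indexing the sums by $\bs\alpha\mapsto\bs\alpha^{\sharp}$ (respectively $\alpha\mapsto\alpha'$) yields $\Omega_e(w,z)=\Omega_e(z,w)$ for $e\in\{0,1\}$ and $\Omega_\ast(w,z)=\Omega_\ast(z,w)$. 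Since the Hall inner product is independent of $z,w$ and the symmetric functions $h_{\bs\beta_j^\ast}$ are independent of $z,w$, the desired symmetry of $\mathbb{H}_{\mathbf{B}}$ follows at once from either defining formula.

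The main obstacle will be setting up the transpose symmetry for wreath Macdonald polynomials and their deformed self-pairings. For $\tilde H_{\bs\alpha}$ one can appeal to the Haiman--Bezrukavnikov--Finkelberg characterisation by triangularity with respect to an order on $\mathcal{P}^2$ depending on the chosen 2-core; it then suffices to check that this order is reversed by $\bs\alpha\mapsto\bs\alpha^{\sharp}$ when one simultaneously swaps the parameters $q$ and $t$, which in turn reduces to the fact that dominance order on partitions with fixed 2-core reverses under transpose. For $\tilde N_{\bs\alpha}(u,z,w)$ the verification is more concrete: the explicit arm/leg formula recalled in \S\ref{w-q,t-inner} must be shown to transform as claimed, cell by cell on the Young diagram $\{\bs\alpha\}_e$. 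A point that demands care is that transposition of $\{\bs\alpha\}_e$ may exchange the two components of the 2-quotient, so the identities above have to be interpreted with the appropriate identification of components and their associated variables $\Xo$, $\Xl$.
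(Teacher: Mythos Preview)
Your approach is correct and essentially identical to the paper's: reduce Conjecture~\ref{Cur-Poin} (assuming Conjecture~\ref{The-Conj}(iii)) to the two identities $\mathbb{H}_{\mathbf{B}}(z,w)=\mathbb{H}_{\mathbf{B}}(w,z)$ and $\mathbb{H}_{\mathbf{B}}(z,w)=\mathbb{H}_{\mathbf{B}}(-z,-w)$, and deduce these from the swap symmetry $\Omega_e(z,w)=\Omega_e(w,z)$, $\Omega_\ast(z,w)=\Omega_\ast(w,z)$ via the transpose symmetries of (wreath) Macdonald polynomials and their (deformed) self-pairings. The paper simply invokes Remark~\ref{Sym-Mac} and Remark~\ref{Sym-wMac} for these symmetries; your involution $\bs\alpha\mapsto\bs\alpha^{\sharp}$ is precisely the paper's $\bs\alpha\mapsto\bs\alpha^{\ast}=(\alpha^{(1)\ast},\alpha^{(0)\ast})$ from \S\ref{partitions}, so your concern about swapping $\Xo$ and $\Xl$ is already absorbed into that definition and no extra bookkeeping is needed.
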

This is the starting point of the Curious Hard Lefschetz conjecture and the P=W conjecture.

\subsection{Main Theorem and Evidences of the Conjecture}\label{Intro-Main-Thm}\hfill

Our main theorem gives a formula for the E-polynomial, which is a main evidence of Conjecture \ref{The-Conj}.
\begin{Thm}\label{Main-Thm-intro}
The $t=-1$ specialisation of part (iii) of Conjecture \ref{The-Conj} holds:
$$
H_c(\Ch_{\mathcal{C}};q,-1)=(\sqrt{q})^d\mathbb{H}_{\mathbf{B}}(\sqrt{q},\frac{1}{\sqrt{q}}).
$$
\end{Thm}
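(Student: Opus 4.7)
The overall strategy adapts the arithmetic approach of \cite{HRV, HLR} to the non-connected group $\GL_n(q)\lb\sigma\rb$. Using strong genericity of $\mathcal{C}$ to ensure that the $\GL_n$-action on the affine representation variety is set-theoretically free, one spreads $\Ch_{\mathcal{C}}$ out over a ring of finite type over $\mathbb{Z}$ and applies Katz's theorem to identify the E-polynomial $H_c(\Ch_{\mathcal{C}};q,-1)$ with the $\mathbb{F}_q$-point-counting polynomial. A Frobenius-type mass formula, applied to the non-connected group $\GL_n(q)\lb\sigma\rb$, then rewrites
\[
|\Ch_{\mathcal{C}}(\mathbb{F}_q)|\;=\;C(q,\mathbf{B})\sum_{\chi}\frac{\chi(C_1)\cdots\chi(C_{2k})}{\chi(1)^{2g+2k-2}},
\]
where $C(q,\mathbf{B})$ is an explicit factor involving $|\GL_n(q)|$ and $|C_j(\mathbb{F}_q)|$, and $\chi$ runs over irreducible characters of $\GL_n(q)\lb\sigma\rb$ whose restriction to the $\sigma$-coset is nonzero.

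The second step is to use the complete character table of $\GL_n(q)\lb\sigma\rb$ from \cite{Shu2}: the relevant $\chi$ are parametrised by pairs $(\lambda^{(0)},\lambda^{(1)})$ of partitions with $|\lambda^{(0)}|+|\lambda^{(1)}|=n$ (the quadratic-unipotent parametrisation of Lusztig-Waldspurger). By Waldspurger's extension \cite{W1} of Theorem \ref{Intro-DM}, on the $\sigma$-coset each such $\chi$ is given by $\pm|W^\sigma|^{-1}\sum_{w\in W^\sigma}\varphi(w)R_{T_w\sigma}\mathbf{1}$ plus inductions from proper twisted Levis, which vanish on semi-simple classes by \cite{DM15}; here $\varphi\in\Irr(\mathfrak{W}_N)$ is indexed by the pair of 2-quotients of $(\lambda^{(0)},\lambda^{(1)})$. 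Only pairs whose two 2-cores both have size at most $1$ survive. Evaluating each $R_{T_w\sigma}\mathbf{1}$ on a semi-simple class via the generalised character formula reduces everything to Green functions of centralisers, which by Shoji \cite{S01} become explicit symmetric functions in $\SymF[\Xo,\Xl]$ appearing in the definition of wreath Macdonald polynomials.

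Grouping the resulting sum by the pair of 2-cores $(e_0,e_1)\in\{0,1\}^2$, each factor is an $\Omega$-like series whose $(z,w)=(\sqrt q,1/\sqrt q)$ specialisation is $\Omega_{e_i}$. Since $|\lambda^{(i)}|\equiv e_i\pmod 2$ and $|\lambda^{(0)}|+|\lambda^{(1)}|=n$, odd $n$ forces $\{e_0,e_1\}=\{0,1\}$, yielding the product $\Omega_0\Omega_1$, while even $n$ forces $(e_0,e_1)=(0,0)$, yielding $\Omega_0^2$. The dimension factor $\chi(1)^{2g+2k-2}$, evaluated via the hook-length formula on the full partitions $\{\bs\alpha^{(i)}\}_{e_i}$ rather than on their 2-quotients, produces the extra denominator $\Omega_\ast$. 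The pairing with the semi-simple type data $\mathbf{B}$ becomes the inner product against $\prod h_{\bs\beta_j^*}(\X_j)$, yielding the claimed formula at $(z,w)=(\sqrt q,1/\sqrt q)$.

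The hardest step is this final combinatorial assembly, particularly identifying the denominator $\Omega_\ast$, which is striking for the absence of any plethystic logarithm in contrast to the HLR formula. I expect it to emerge from a hook-length identity: the hooks of the full partition $\{\bs\alpha\}_e$ decompose into those crossed by the 2-core and those internal to the 2-quotient, producing precisely the ratio between the $N_{\{\bs\alpha\}_e}$ factors (which assemble into $\Omega_\ast$) and the $\tilde{N}_{\bs\alpha}$ factors (which assemble into $\Omega_e$). Tracking consistently the signs arising from Waldspurger's theorem, and verifying the polynomial-count hypothesis required by Katz's theorem, are the remaining technical components.
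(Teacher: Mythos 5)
Your route---Frobenius formula, Waldspurger/Digne--Michel decomposition into Deligne--Lusztig characters, Shoji's symmetric-function realisation of Green functions, reassembly as an inner product with $\Omega$-series---is indeed the one the paper follows, and the parity argument giving $\Omega_1\Omega_0$ for odd $n$ versus $\Omega_0^2$ for even $n$ is correct. But you restrict the character sum to quadratic-unipotent characters of $\GL_n(q)$, indexed by pairs $(\lambda^{(0)},\lambda^{(1)})$ with $|\lambda^{(0)}|+|\lambda^{(1)}|=n$. The $\sigma$-stable characters that must appear in the Frobenius sum form a strictly larger family: by Theorem \ref{sig-st-chi} they are all $\epsilon_G\epsilon_M R^G_M(\chi_1\boxtimes\chi_0)$ with $M=M_0\times M_1$ a $\sigma$-stable standard Levi, $\chi_0$ quadratic-unipotent on $M_0\cong\GL_{n_0}$, and $\chi_1$ a $\sigma$-stable character of $M_1\cong\prod_i(\GL_{n_i}\times\GL_{n_i})$ built from a regular linear character $\theta$. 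The component $\bs\omega_\ast=(\omega_i)_i$ of the type is carried by $\chi_1$ and is exactly what your parametrisation drops---and it is what produces $\Omega_\ast$. Also, the reduction to \emph{standard} Levi subgroups is not automatic; it is Lemma \ref{restrictTypes}, which is where strong genericity actually bites (not, as you write, in making the $\GL_n$-action free---ordinary genericity already does that).

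Consequently your proposed explanation of the denominator cannot work. $\Omega_\ast$ is a sum over ordinary partitions indexing the $\omega_i$, not a factor produced by separating the hooks of the single partition $\{\bs\alpha\}_e$ into core-crossing and quotient-internal ones; no such hook identity yields an \emph{independent} generating series to divide by. In the paper the denominator arises from the coefficient $K(\bs\omega_\ast)/N(\bs\omega_\ast)=(-1)^{l}\,l!/\prod_\lambda m_\lambda!$, which comes from two steps your outline does not anticipate: (a) the M\"obius-inversion computation of Lemma \ref{sumofreg}, giving $\sum_{\theta\in\Irr^\sigma_{\reg}(M_1^F)}\prod_j\tilde\theta_j(h_js_j\sigma h_j^{-1})=(-2)^l l!\,\Delta_{\epsilon,\sgn\mathcal{C}}$, and (b) the fibre count $2^lN(\bs\omega_\ast)$ of the surjection $\Irr^\sigma_{\reg}(M_1^F)\to\Irr^\sigma_{\epsilon\bs\omega}$. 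Only after dividing these does the sum over $\bs\omega_\ast$ resum to the formal inverse $\Omega_\ast^{-1}=\sum_{m\ge0}(-1)^m(\Omega_\ast-1)^m$. Step (a) is also where the hypothesis \ref{CCL} enters, and where the $\Delta_{\epsilon,\sgn\mathcal{C}}$ factor, summed over $\mathbf{A}(\mathcal{C})$, kills the $\epsilon=-1$ contribution so that a single product $\Omega_1\Omega_0$ survives rather than a sum over both ways of assigning the nontrivial 2-core. Without these pieces the combinatorial assembly does not close.
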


Then the following theorem follows from this formula.
\begin{Thm}\label{E-Cur-Poin}
The $t=-1$ specialisation of Conjecture \ref{Cur-Poin} holds:
$$E(\Ch_{\mathcal{C}};q)=q^dE(\Ch_{\mathcal{C}};q^{-1}).$$
\end{Thm}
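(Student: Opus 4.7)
The plan is to deduce the symmetry of the E-polynomial from Theorem~\ref{Main-Thm-intro} by establishing the rational-function identity
\begin{equation*}
\mathbb{H}_{\mathbf{B}}(z,w)=\mathbb{H}_{\mathbf{B}}(w,z).
\end{equation*}
Granting this, evaluating the formula of Theorem~\ref{Main-Thm-intro} at $q$ and then at $q^{-1}$ gives
$$q^{d}E(\Ch_{\mathcal{C}};q^{-1}) \;=\; q^{d/2}\mathbb{H}_{\mathbf{B}}(q^{-1/2},q^{1/2}) \;=\; q^{d/2}\mathbb{H}_{\mathbf{B}}(q^{1/2},q^{-1/2}) \;=\; E(\Ch_{\mathcal{C}};q),$$
as required.

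To prove the symmetry of $\mathbb{H}_{\mathbf{B}}$, I would establish, for each of the three series $\Omega_{0}$, $\Omega_{1}$, $\Omega_{\ast}$ of \S\ref{subsec-series}, an identity $\Omega(w,z)=\Omega(z,w)$ obtained by reindexing the sum through a suitable involution on the indexing set. For $\Omega_{\ast}$ the relevant involution is ordinary transposition $\lambda\mapsto\lambda'$, and the identity then follows from the classical $(q,t)\leftrightarrow(t,q)$ symmetry $\tilde{H}_{\lambda}(\X;q,t)=\tilde{H}_{\lambda'}(\X;t,q)$ of modified Macdonald polynomials, together with the matching transposition rule for the deformed self-pairing $N_{\lambda}(u,z,w)$. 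For $\Omega_{0}$ and $\Omega_{1}$ one needs the analogous symmetry for wreath Macdonald polynomials, of the form $\tilde{H}_{\bs\alpha}(\X;q,t)=\tilde{H}_{\bs\alpha^{\vee}}(\X;t,q)$, where $\bs\alpha^{\vee}$ is the 2-quotient of the transposed partition $\{\bs\alpha\}_{e}'$. The point is that both 2-cores $(0)$ and $(1)$ are self-conjugate, so this involution preserves the condition of having 2-core $(e)$, and the sums reindex cleanly. The complete-symmetric factors $h_{\bs\beta^{\ast}_{j}}(\X_{j})$ in the Hall inner product are independent of $(z,w)$ and therefore play no role in the symmetry argument.

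The main obstacle is to nail down the precise $(q,t)\leftrightarrow(t,q)$ transformation rule for the wreath Macdonald polynomials and, more delicately, for the deformed self-pairings $\tilde{N}_{\bs\alpha}(u,z,w)$. Haiman's triangularity characterisation behaves correctly under $(q,t)\leftrightarrow(t,q)$ because the dominance order on $\mathcal{P}^{2}$ induced from partitions with fixed 2-core reverses under transposition, so transposed wreath Macdonald polynomials match the opposite-order characterisation. The deformed self-pairing, defined in \S\ref{w-q,t-inner} as an explicit product over cells of $\{\bs\alpha\}_{e}$ involving arm and leg data, must then be matched term by term under transposition; this is a direct check exploiting the fact that the arm and leg statistics get exchanged under conjugation of partitions.

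Once the three invariances $\Omega_{e}(w,z)=\Omega_{e}(z,w)$ and $\Omega_{\ast}(w,z)=\Omega_{\ast}(z,w)$ are established, the ratio $\Omega_{1}\Omega_{0}/\Omega_{\ast}$ (resp.\ $\Omega_{0}^{2}/\Omega_{\ast}$ in the even case) is manifestly symmetric in $(z,w)$, and pairing with the $(z,w)$-free symmetric functions $\prod_{j} h_{\bs\beta^{\ast}_{j}}(\X_{j})$ under the Hall inner product gives $\mathbb{H}_{\mathbf{B}}(z,w)=\mathbb{H}_{\mathbf{B}}(w,z)$, completing the proof.
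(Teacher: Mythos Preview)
Your proposal is correct and follows essentially the same route as the paper. The paper proves $\Omega_e(z,w)=\Omega_e(w,z)$ and $\Omega_{\ast}(z,w)=\Omega_{\ast}(w,z)$ by invoking the transposition symmetries recorded in Remark~\ref{Sym-Mac} and Remark~\ref{Sym-wMac} (namely $H_{\lambda}(\Z;t,q)=H_{\lambda^{\ast}}(\Z;q,t)$, $\tilde{H}_{\bs\alpha}(\X;t,q)=\tilde{H}_{\bs\alpha^{\ast}}(\X;q,t)$, and the analogous identities for the (deformed) self-pairings), deduces $\mathbb{H}_{\mathbf{B}}(z,w)=\mathbb{H}_{\mathbf{B}}(w,z)$, and then combines this with Theorem~\ref{Main-Thm-intro} exactly as you do; the involution you call $\bs\alpha\mapsto\bs\alpha^{\vee}$ coincides with the paper's $\bs\alpha\mapsto\bs\alpha^{\ast}=(\alpha^{(1)\ast},\alpha^{(0)\ast})$, and the self-conjugacy of the 2-cores $(0)$ and $(1)$ ensures $\{\bs\alpha^{\ast}\}_e=(\{\bs\alpha\}_e)^{\ast}$, which handles the numerator factor $N_{\{\bs\alpha\}_e}$.
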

Other than the E-polynomial, we also have the following evidences. When $n=1$, the character variety is simply the torus $(\mathbb{C}^{\ast})^{2(g+k-1)}$, and we can confirm our conjecture in this case. When $n=2$, we do not know how to compute the mixed Hodge polynomial. However, we find a simple relation between $\GL_2\lb\sigma\rb~$-character varieties and $\GL_2$-character varieties, for any $g$ and $k$. We then prove that under this relation, the conjecture of Hausel-Letellier-Rodriguez-Villegas and our conjecture predict the same mixed Hodge polynomial.

When $n>2$, we are unable to say anything about the mixed Hodge polynomial, but we can focus on part (i) of Conjecture \ref{The-Conj}, which is purely combinatorial and non-trivial. To be precise, we have computed\footnote{The author uses MATLAB.} $\mathbb{H}_{\mathbf{B}}(z,w)$ in the following situations:
\begin{itemize}
\item $n=3$
\begin{itemize}
\item[] $g=1$, $k=1$, $\mathcal{C}=(C_1,C_2)$
\begin{quote}
$C_1$ regular, $C_2$ arbitrary
\end{quote}
\item[] $g=0$, $k=2$, $\mathcal{C}=(C_1,C_2,C_3,C_4)$
\begin{quote}
$C_1$ regular, $C_2$ arbitrary, $C_3=C_4\sim\sigma$, i.e. conjugacy class of $\sigma$
\end{quote}
\end{itemize}

\item $n=4$
\begin{itemize}
\item[] $g=1$, $k=1$, $\mathcal{C}=(C_1,C_2)$
\begin{quote}
$C_1$ regular, $C_2$ arbitrary;\\
$C_1\sim\diag(a,1,1,a^{-1})\sigma)$, $C_2\sim\diag(b,b,b^{-1},b^{-1})\sigma$
\end{quote}
\item[] $g=0$, $k=2$, $\mathcal{C}=(C_1,C_2,C_3,C_4)$
\begin{quote}
$C_1$ regular, $C_2$ arbitrary, $C_3=C_4\sim\sigma$;\\
$C_1\sim\diag(a,1,1,a^{-1})\sigma)$, $C_2\sim\diag(b,b,b^{-1},b^{-1})\sigma$, $C_3=C_4\sim\sigma$
\end{quote}
\end{itemize}

\item $n=5$
\begin{itemize}
\item[] $g=1$, $k=1$, $\mathcal{C}=(C_1,C_2)$
\begin{quote}
$C_1$ regular, $C_2$ arbitrary;\\
$C_1\sim\diag(a,1,1,1,a^{-1})\sigma)$, $C_2\sim\diag(b,b,1,b^{-1},b^{-1})\sigma$
\end{quote}
\item[] $g=0$, $k=2$, $\mathcal{C}=(C_1,C_2,C_3,C_4)$
\begin{quote}
$C_1$ regular, $C_2$ arbitrary, $C_3=C_4\sim\sigma$;\\
$C_1\sim\diag(a,1,1,1,a^{-1})\sigma)$, $C_2\sim\diag(b,b,1,b^{-1},b^{-1})\sigma$, $C_3=C_4\sim\sigma$
\end{quote}
\end{itemize}
\end{itemize}
and part (i) of Conjecture \ref{The-Conj} is always true.

\subsection*{Organisation of the Article}
In Sections \ref{Section-Pre}, \ref{Section-GLnsigma} and \ref{Section-Char-Var}, we recall and prove some results concerning character varieties and the representation theory of $\GL_n(q)\lb\sigma\rb$ that will be used later on. These sections contain three main ingredients in computing the E-polynomial:
\begin{enumerate}
\item Formula (\ref{Frob-Form-eq}) that reduces the point-counting problem to the evaluation of irreducible characters $\tilde{\chi}$.
\item Formula (\ref{thm-decIrr-eq}) that expresses $\tilde{\chi}$ as a linear combination of Deligne-Lusztig characters $R^{G\sigma}_{T_{w_1,\mathbf{w}}\sigma}\tilde{\theta}_{w_1,\mathbf{w}}$.
\item Formula (\ref{eq-char-formula}) that reduces the computation of $R^{G\sigma}_{T_{w_1,\mathbf{w}}\sigma}\tilde{\theta}_{w_1,\mathbf{w}}$ to the Green function $\smash{Q^{C_G(s\sigma)^{\circ}}_{C_{h^{-1}T_wh}(s\sigma)^{\circ}}(u)}$ and the linear character $\tilde{\theta}_{w_1,\mathbf{w}}$.
\end{enumerate}
Section \ref{Section-Lem} contains some technical lemmas. We prove that with strongly generic conjugacy classes, only a small subset of the characters of $\GL_n(q)\lb\sigma\rb$ can have non trivial contributions to the E-polynomial. This section also contains a computation of linear characters $\tilde{\theta}_{w_1,\mathbf{w}}$ using M\"obius inversion function. In Section \ref{Section-SymF} we recall the essence of symmetric functions associated to wreath products, notably the works of Shoji, and prepare some lemmas. In particular, we relate the Green function in formula (\ref{eq-char-formula}) to symmetric functions. In Section \ref{Section-E-poly} we compute the E-polynomial, combining results from all previous sections. We also make some sample computations by directly using the irreducible characters of $\GL_2(q)\lb\sigma\rb$ and $\GL_3(q)\lb\sigma\rb$. From these computations we extract a conjecture on the numbers of connected components of $\GL_n\lb\sigma\rb~$-character varieties. In Section \ref{Section-Proof}, some basic aspects of (wreath) Macdonald polynomials are recalled. Then we prove that the specialisation of the conjectural formula indeed agrees with our computation of the E-polynomial, concluding the proof of Theorem \ref{Main-Thm-intro}. Appendix \ref{App-WMac1-2} gives the explicit expressions of wreath Macdonald polynomials of degree 1 and degree 2 in terms of Schur functions. These formulae will be used in Appendix \ref{App-12} and Appendix \ref{App-45}. The reader will also observe some key features of wreath Macdonald polynomials. In Appendix \ref{App-12}, we verify our conjecture in the cases $n=1$ and $n=2$, admitting the conjecture of Hausel-Letellier-Rodriguez-Villegas. To prove that our claims about $\mathbb{H}_{\mathbf{B}}(z,w)$ in \S \ref{Intro-Main-Thm} are not bluffing, we give two sample computations in Appendix \ref{App-45}.

\subsection*{Acknowledgement.}
A good part of this article was prepared at Universit\'e de Paris as part of my thesis. The possibility of this work was suggested by Pierre Deligne to Fernando Rodriguez-Villegas in a private letter, which was then passed to my advisor Emmanuel Letellier. I thank them for sharing this idea. I thank Florent Schaffhauser and my advisor for some discussions. I thank Jean Michel and Philip Boalch for answering some questions. I thank Fran\c cois Digne for carefully reading and giving many comments on an earlier version of the article at the stage of thesis. I have also benefited from some results collected in the preprint \cite{C} of Camb\`o.  I thank Mark Shimozono and Daniel Orr for bringing to my attention wreath Macdonald polynomials and answering some questions. I thank the referee for pointing out many typos and giving some suggestions. Part of this article was prepared when I was unemployed at home. I thank my family for their support.

\numberwithin{Thm}{subsection}
\numberwithin{equation}{subsubsection}
\section{Preliminaries}\label{Section-Pre}
\subsection{General notations and terminology}\label{general-notations}\hfill

The 2-element group will be denoted by $\bs\mu_2$. If $G$ is a finite group, we will denote by $\Irr(G)$ the set of irreducible complex characters of $G$. If $\kk$ is a given algebraically closed field of characteristic different from 2, we will denote by $\mathfrak{i}$ a chosen square root of $-1$. In most part of the article, $n$ will denote the rank of our ambient group $\GL_n$, and $N$ will denote $[n/2]$.

Given an abstract group $G$ and an automorphism $\sigma$ of $G$, we will denote by $G^{\sigma}$ the subgroup of the fixed points of $\sigma$. If $G$ is abelian, then $[G,\sigma]:=\{g\sigma(g^{-1})\mid g\in G\}$ is also a subgroup of $G$. A $\sigma$-conjugacy class of $G$ is an orbit in $G$ under the action $g:x\mapsto gx\sigma(g)^{-1}$, for any $g$, $x\in G$. If $X\subset G$ is a subset, then we say that $X$ is $\sigma$-stable if $\sigma(X)=X$. 

If $X\subset G$ is a subset and $H\subset G$ is a subgroup, we will denote by $C_H(X)$ the centraliser of $X$ in $H$. If $H$, $X$ and $Y$ are subsets of $G$, we will denote by $N_H(X)$ the normaliser of $X$ in $H$ and by $N_H(X,Y)=N_H(X)\cap N_H(Y)$ the subset of $H$ that simultaneously normalises $X$ and $Y$. The centre of a group $G$ will be denoted by $Z_G$.

If $G_0\subset G$ is a subgroup and $G_1\subset G$ is a subset that is normalised by $G_0$, then by a $G_0$-conjugacy class in $G_1$, we mean a subset of the form $\{gxg^{-1}\mid g\in G_0\}$ for some $x\in G_1$.  Similarly, we can talk about a $G_0$-conjugate of an element or a subset of $G$. When $G$ is an algebraic group, the above notions make sense in the obvious way. If $G$ is an algebraic group, we will denote by $G^{\circ}$ its identity component. The subgroup $G_0$ as above is typically $G^{\circ}$ or a finite subgroup of $G$.

\subsection{Partitions}
\subsubsection{}\label{partitions}
For any $n\in\mathbb{Z}_{>0}$, we denote by $\mathfrak{S}_n$ the group of permutations of the set $$\mathbb{I}(n):=\{1,2,\ldots,n\}.$$ Each permutation $\tau\in\mathfrak{S}_n$ can be decomposed into a product of cycles $c_{I_1}\cdots c_{I_l}$, where the disjoint subsets $I_r\subset \mathbb{I}(n)$ form a partition of $\mathbb{I}(n)$ and $c_{I_r}$ is a circular permutation on $I_r$. For each $1\le r\le l$, put $\tau_r=|c_{I_r}|$, the size of $I_r$, then the conjugacy class of $\tau$ is determined by the partition $(\tau_1,\ldots,\tau_l)$. 

Let $n\in\mathbb{Z}_{\ge 0}$. A partition of $n$ will often be written as a non increasing sequence of non negative integers $\lambda=(\lambda_1\ge\lambda_2\ge\cdots)$ such that $n=\sum_k\lambda_k$, and $0$ only has the empty partition $(0,0,\ldots)$. The length of $\lambda$ will be denoted by $l(\lambda)$ and $n$ is called the size of $\lambda$. For each partition $\lambda$ we define the integer $n(\lambda):=\sum_i(i-1)\lambda_i$. The dual of a partition $\lambda$ will be denoted by $\lambda^{\ast}$. The set of partitions of $n$ will be denoted by $\mathcal{P}(n)$. Write $\mathcal{P}=\sqcup_{n\in\mathbb{Z}_{\ge 0}}\mathcal{P}(n)$. The irreducible characters and the conjugacy classes of $\mathfrak{S}_n$ are both parametrised by $\mathcal{P}(n)$. For any $\lambda\in\mathcal{P}(n)$, we will denote by $\chi^{\lambda}$ the corresponding irreducible character, and by $\chi^{\lambda}_{\mu}$ the value of this character on the conjugacy class corresponding to $\mu\in\mathcal{P}(n)$.

Write $\mathcal{P}^2=\mathcal{P}\times\mathcal{P}$. The elements of $\mathcal{P}^2$ will be called \textit{2-partitions}. For any $\bs\lambda=(\lambda^{(0)},\lambda^{(1)})\in\mathcal{P}^2$, its size is defined by $|\bs\lambda|:=|\lambda^{(0)}|+|\lambda^{(1)}|$, and its length is defined by $l(\bs\lambda):=l(\lambda^{(0)})+l(\lambda^{(1)})$. The dual of a 2-partition $\bs\lambda=(\lambda^{(0)},\lambda^{(1)})$ is defined by $\bs\lambda^{\ast}:=(\lambda^{(1)\ast},\lambda^{(0)\ast})$. The set of 2-partitions of size $n$ will be denoted by $\mathcal{P}^2(n)$.

\subsubsection{}\label{quotcore}
Given a partition $\lambda=(\lambda_1,\lambda_2,\ldots)$ of size $n$, we take $r\ge l(\lambda)$, and we put $$\delta_r=(r-1,r-2,\ldots,1,0).$$ Let $(2y_1>\cdots>2y_{l_0})$ and $(2y_1'+1>\cdots>2y'_{l_1}+1)$ be the even parts and the odd parts of $\lambda+\delta_r$, where the sum is taken term by term. Denote by $\lambda'$ the partition that has as its parts the numbers $2s+t$, $0\le s\le l_t-1$, $t=0,1$. We have $l(\lambda')=l(\lambda)$. The \textit{2-core} of $\lambda$ is the partition defined by $(\lambda'_k-l(\lambda)+k)_{1\le k\le l(\lambda)}$. It is independent of $r$ and necessarily of the form $(d,d-1,\ldots,2,1,0)$, for some $d\in\mathbb{Z}_{\ge0}$. Denote by $\smash{\lambda^{(0)}}$ the partition defined by $\smash{\lambda^{(0)}_k}=y_k-l_0+k$ and denote by $\smash{\lambda^{(1)}}$ the partition defined by $\smash{\lambda^{(1)}_k}=y'_k-l_1+k$. Then $(\lambda^{(0)},\lambda^{(1)})_r$ is a 2-partition that depends on the parity of $r$, which we call the \textit{2-quotient} of $\lambda$. Changing the parity of $r$ will permute $\lambda^{(0)}$ and $\lambda^{(1)}$. We make the convention that $r\equiv 1\mod 2$ if the 2-core is $(0)$ and $r\equiv 0\mod 2$ if the 2-core is $(1)$.
\begin{Rem}\label{conv-2-quot}
In \cite{Hai}, the 2-quotient is defined in terms of the residues of the contents of the boxes. The two definitions of 2-quotient agree when the 2-core is $(0)$, but not when the 2-core is $(1)$. In fact, Haiman's definition is equivalent to setting $r\equiv 1\mod 2$ in all cases. For example, let $\lambda=(3)$, then our definition gives the 2-quotient $((1),\varnothing)$, while Haiman's definition gives $(\varnothing,(1))$. Our convention is coherent with Theorem \ref{Intro-DM}, where the trivial character of $W^{\sigma}$ corresponds to the trivial character of $W$.
\end{Rem}

 The above constructions give a bijection 
\begin{equation}\label{eq-quotcore}
\left\{\begin{array}{c}\text{Partitions of $n$}\\\text{with 2-core $(d,d-1,\ldots,1,0)$}\end{array}\right\}\longleftrightarrow\left\{\text{2-partitions of $\frac{1}{2}(n-\frac{d(d+1)}{2})$}\right\}.
\end{equation}
We call this bijection the quotient-core decomposition of partitions.

\subsection{Wreath Products}
\subsubsection{}
Given a finite group $\Gamma$ and a positive integer $m$, the symmetric group $\mathfrak{S}_m$ acts on the direct product $\Gamma^m$ of $m$ copies of $\Gamma$ by permuting the factors. This defines a semi-direct product $\Gamma^m\rtimes\mathfrak{S}_m$, called a wreath product. We will denote by $\mathfrak{W}_m$ the wreath product defined by $\Gamma=\mathbb{Z}/2\mathbb{Z}$. It is the Weyl group of $\Sp_{2m}$ and $\SO_{2m+1}$. More general wreath products $(\mathbb{Z}/r\mathbb{Z})^m\rtimes\mathfrak{S}_m$ for $r>2$ can not be realised as the Weyl groups of any algebraic groups. We will always identify $\mathbb{Z}/2\mathbb{Z}$ with $\bs\mu_2$ and write its elements in the multiplicative form.

\subsubsection{}
Denote by $w_0$ the permutation $(1,-1)(2,-2)\cdots(m,-m)$ of the set $$\bar{\mathbb{I}}(m):=\{1,\ldots,m,-m,\ldots,-1\}.$$We can identify $\mathfrak{W}_m$ as the subgroup of the group of permutations on $\bar{\mathbb{I}}(m)$ consisting of elements commuting with $w_0$. Then each element $\tau\in\mathfrak{S}_m\subset\mathfrak{W}_m$ is identified with the permutation $$i\mapsto \tau(i),\quad-i\mapsto\tau(-i)=-\tau(i),$$and for any $i\in\{1,\ldots,m\}$, the element $(e_1,\ldots,e_m)\in(\mathbb{Z}/2\mathbb{Z})^m\subset\mathfrak{W}_m$, with $e_i=-1$ and $e_j=1$ if $j\ne i$, is identified with the permutation $(i,-i)$.

\subsubsection{}\label{signed-partitions}
Let $w=((e_1,\ldots,e_m),\tau)\in(\mathbb{Z}/2\mathbb{Z})^m\rtimes\mathfrak{S}_m$, then $\tau$ can be written as $\tau=c_{I_1}\cdots c_{I_l}$ as in \S \ref{partitions}. For each $1\le r\le l$, put $\bar{e}_r=\prod_{k\in I_r}e_k$. For any $1\le r\le l$, we will call $((e_r)_{r\in I_r},c_{I_r})$ a \textit{positive} (resp. \textit{negative}) \textit{cycle} in $w$ if $\bar{e}_r=1$ (resp. $\bar{e}_r=-1$), so that $w$ is a product of signed cycles. The size of a signed cycle in $w$ is defined as the size of the corresponding cycle in $\tau$. Define the permutations
\begin{equation}
\tau^{(0)}=\prod_{\bar{\epsilon}_r=1}c_{I_r},\quad
\tau^{(1)}=\prod_{\bar{\epsilon}_r=-1}c_{I_r},
\end{equation}
so that $\tau=\tau^{(0)}\tau^{(1)}$. Also denote by $\tau^{(0)}$ and $\tau^{(1)}$ the associated partitions by abuse of notation. We then have a 2-partition $(\tau^{(0)},\tau^{(1)})$, which is sometimes called a signed partition of $m$. This 2-partition determines the conjugacy class of $w$. The conjugacy classes and irreducible characters of $\mathfrak{W}_m$ are both parametrised by $\mathcal{P}^2(m)$. For any $\bs\lambda\in\mathcal{P}^2(m)$, we will denote by $\chi^{\bs\lambda}$ the corresponding irreducible character, and by $\chi^{\bs\lambda}_{\bs\mu}$ the value of this character on the conjugacy class corresponding to $\bs\mu\in\mathcal{P}^2(m)$.

Regarded as a permutation on the set $\bar{\mathbb{I}}(m)$, an element $w$ such that $\tau=\tau^{(0)}=(1,2,\ldots,m)$ is typically of the form:
\begin{equation}\label{w_+}
\begin{tikzpicture}[node distance=1, on grid, cir/.style={circle, minimum size=0.6cm, inner sep=0pt}]
\node[cir] (1) {1};
\node[cir, right=of 1] (2) {2};
\node[cir, right=of 2] (dt1) {$\cdots$};
\node[cir, right=of dt1] (m) {m};
\node[cir, right=of m] (m') {-m};
\node[cir, right=of m'] (dt2) {$\cdots$};
\node[cir, right=of dt2] (2') {-2};
\node[cir, right=of 2'] (1') {-1};
\draw [->] (1) to [out=290,in=250, looseness=1.3] (2);
\draw [->] (2) to [out=290,in=250, looseness=1.3] (dt1);
\draw [->] (dt1) to [out=290,in=250, looseness=1.3] (m);
\draw [->] (m) to [out=110,in=70, looseness=0.8] (1);
\draw [->] (1') to [out=250,in=290, looseness=1.3] (2');
\draw [->] (2') to [out=250,in=290, looseness=1.3] (dt2);
\draw [->] (dt2) to [out=250,in=290, looseness=1.3] (m');
\draw [->] (m') to [out=70,in=110, looseness=0.8] (1');
\end{tikzpicture}
\end{equation}
and an element $w$ such that $\tau=\tau^{(1)}=(1,2,\ldots,m)$ is typically of the form:
\begin{equation}\label{w_-}
\begin{tikzpicture}[node distance=1, on grid, cir/.style={circle, minimum size=0.6cm, inner sep=0pt}]
\node[cir] (1) {1};
\node[cir, right=of 1] (2) {2};
\node[cir, right=of 2] (dt1) {$\cdots$};
\node[cir, right=of dt1] (m) {m};
\node[cir, right=of m] (m') {-m};
\node[cir, right=of m'] (dt2) {$\cdots$};
\node[cir, right=of dt2] (2') {-2};
\node[cir, right=of 2'] (1') {-1};
\draw [->] (1) to [out=290,in=250, looseness=1.3] (2);
\draw [->] (2) to [out=290,in=250, looseness=1.3] (dt1);
\draw [->] (dt1) to [out=290,in=250, looseness=1.3] (m);
\draw [->] (m) to [out=290,in=250, looseness=0.8] (1');
\draw [->] (1') to [out=110,in=70, looseness=1.3] (2');
\draw [->] (2') to [out=110,in=70, looseness=1.3] (dt2);
\draw [->] (dt2) to [out=110,in=70, looseness=1.3] (m');
\draw [->] (m') to [out=110,in=70, looseness=0.8] (1);
\end{tikzpicture}
\end{equation}

\subsection{Springer correspondence and symbols}
\subsubsection{}
Let $G$ be a connected reductive group and let $W$ be the Weyl group of $G$ defined by a maximal torus. Denote by $\mathscr{N}$ the set consisting of pairs $(C,\phi)$, where $C\subset G$ is a unipotent conjugacy class and $\phi$ is an irreducible character of $C_G(u)/C_G(u)^{\circ}$ for some $u\in C$. The Springer correspondence is an injection (\cite[\S 4]{S87}) 
\begin{equation}\label{Spr-Corr}
\Irr(W)\hooklongrightarrow \mathscr{N}.
\end{equation}
There is an equivalence relation on $\mathscr{N}$ that identifies $(C_1,\phi_1)$ and $(C_2,\phi_2)$ whenever $C_1=C_2$, in which case we say $(C_1,\phi_1)$ and $(C_2,\phi_2)$ are \textit{similar}. Such an equivalence class is called a similarity class. 

\textit{Symbols} are certain combinatorial data that are in bijection with the image of (\ref{Spr-Corr}). If $G=\Sp_{2m}$ or $\SO_{2m+1}$, then $\Irr(W)$ is in bijection with $\mathcal{P}^2(m)$. Therefore, the symbols are also in bijection with $\mathcal{P}^2(m)$. Similarity classes in $\mathcal{P}^2(m)$ or in the set of symbols are induced from $\mathscr{N}$. If $\bs\alpha$ is a 2-partition, we will denote by $\bs\Lambda(\bs\alpha)$ the corresponding symbol. In this article, we do not need the notion of symbols in an essential way. We use them only to match the notations in \cite{S01}.

\subsubsection{}
For any symbol $\bs{\Lambda}$, let $a(\bs{\Lambda})$ be the integer defined by \cite[(1.2.2)]{S01}. This is the analogue of $n(\lambda)$ for partitions. Its value is constant on the similarity classes of symbols. If we denote by $u$ an element of the unipotent conjugacy class corresponding to $\bs{\Lambda}$, then 
\begin{equation}\label{a=dimBu}
a(\bs{\Lambda})=\dim\mathcal{B}_u,
\end{equation}
where $\mathcal{B}_u$ is the Springer fiber over $u$. For any 2-partition $\bs\alpha$, we define $a(\bs{\alpha}):=a(\bs{\Lambda}(\bs{\alpha}))$. We will fix once and for all a total order $\prec$ on the set of symbols in such a way that, if $C_1$ (resp. $C_2$) is the unipotent class corresponding to $\bs\Lambda_1$ (resp. $\bs\Lambda_2$), then $\bs\Lambda_2\prec\bs\Lambda_1$ if $C_2\subset\bar{C}_1$, and each similarity class forms an interval. We will write $\bs\Lambda_1\sim\bs\Lambda_2$ if $\bs\Lambda_1$ and $\bs\Lambda_2$ are similar. Note that $a(\bs\Lambda_1)\le a(\bs\Lambda_2)$ whenever $\bs\Lambda_2\prec\bs\Lambda_1$. The set of 2-partitions acquires a total order via the bijection with symbols, so that $a(\bs{\alpha})\le a(\bs{\beta})$ whenever $\bs{\beta}\prec\bs{\alpha}$. In particular, the element $\bs{\alpha}_0:=(\varnothing,(1^n))$ corresponding to the the identity of the finite classical group, is minimal, and it is alone in its similarity class.

\subsection{Non-Connected Algebraic Groups}
\subsubsection{}
Let $G$ be a not necessarily connected linear algebraic group over an algebraically closed field $\kk$. We say that $G$ is reductive if $G^{\circ}$ is reductive. Let $B\subset G^{\circ}$ be a Borel subgroup and let $T\subset B$ be a maximal torus. Then $\bar{T}:=N_G(T,B)$ has non empty intersection with every connected component of $G$, since the conjugation by any element of $G$ sends $T$ and $B$ to some other maximal torus and Borel subgroup, which are conjugate to $T$ and $B$ by an element of  $G^{\circ}$. An element of $G$ is called \textit{quasi-semi-simple} if it lies in $\bar{T}$ for some $T\subset B$. It is known that semi-simple elements are always quasi-semi-simple (\cite[Theorem 7.5]{St}). We will always assume that $\ch\kk\nmid |G/G^{\circ}|$. Under this assumption, all unipotent elements of $G$ are contained in $G^{\circ}$ and all quasi-semi-simple elements are semi-simple (\cite[Remarque 2.7]{DM94}).

\subsubsection{}
Let $G^1$ be a connected component of $G$ and let $\sigma\in G^1$ be a semi-simple element. Let $T\subset G^{\circ}$ be a maximal torus that is normalised by $\sigma$. Denote by $W=W_{G^{\circ}}(T)$ the Weyl group of $G^{\circ}$ defined by $T$. Then $\sigma$ induces an action on $W$. Denote by $W^{\sigma}$ the subgroup of $\sigma$-fixed points. The subtori $(T^{\sigma})^{\circ}$ and $[T,\sigma]$ are preserved by the action of $W^{\sigma}$ on $T$.

\begin{Prop}(\cite{DM18}[Proposition 1.16])\label{DM1.16}
Every semi-simple $G^{\circ}$-conjugacy class in $G^1$ has a representative in $(T^{\sigma})^{\circ}\sigma$. Two elements $t\sigma$ and $t'\sigma$ with $t$, $t'\in (T^{\sigma})^{\circ}$, represent the same class if and only if $t$ and $t'$, when passing to the quotient $(T^{\sigma})^{\circ}/(T^{\sigma})^{\circ}\cap[T,\sigma]$, belong to the same $W^{\sigma}$-orbit.
\end{Prop}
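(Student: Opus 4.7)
The plan has two halves mirroring the statement: existence of a representative in $(T^{\sigma})^{\circ}\sigma$, then the $W^{\sigma}$-characterisation. Both rest on the decomposition of $T$ as a $\sigma$-module together with the Weyl group theory of the reductive centraliser $H:=C_{G^{\circ}}(\sigma)^{\circ}$.

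For existence, let $s\in G^1$ be semi-simple. Under the standing assumption $\ch\kk\nmid|G/G^{\circ}|$, Steinberg's theorem \cite[Theorem 7.5]{St} gives that $s$ is quasi-semi-simple, hence normalises some Borel pair of $G^{\circ}$, which after $G^{\circ}$-conjugation we may take to be $(T,B)$. Since $N_{G}(T,B)\cap G^{\circ}=T$ and $\sigma\in N_G(T,B)\cap G^1$, we have $s=t\sigma$ for some $t\in T$. Next, $\Lie(T)$ splits $\sigma$-equivariantly as $\Lie(T)^{\sigma}\oplus(1-\sigma)\Lie(T)$, so the multiplication $(T^{\sigma})^{\circ}\times[T,\sigma]\to T$ is surjective. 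Writing $t=t_0\cdot y\sigma(y)^{-1}$ with $t_0\in(T^{\sigma})^{\circ}$ and $y\in T$, commutativity of $T$ gives
\begin{equation*}
y^{-1}(t\sigma)y \;=\; t\cdot y^{-1}\sigma(y)\cdot\sigma \;=\; t_0\,\sigma\in(T^{\sigma})^{\circ}\sigma.
\end{equation*}

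For the ``if'' direction of the characterisation, the $T$-conjugation above handles the $[T,\sigma]\cap(T^{\sigma})^{\circ}$-part. For $W^{\sigma}$, I would invoke the Steinberg-type isomorphism $W^{\sigma}\cong W_H=N_H((T^{\sigma})^{\circ})/(T^{\sigma})^{\circ}$ and lift $w\in W^{\sigma}$ to $\dot w\in N_H((T^{\sigma})^{\circ})\subset C_{G^{\circ}}(\sigma)$; since $\dot w$ commutes with $\sigma$, one gets $\dot w(t\sigma)\dot w^{-1}=w(t)\sigma$.

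The ``only if'' direction is the main obstacle. Suppose $g\in G^{\circ}$ carries $t\sigma$ to $t'\sigma$. Because $t\in(T^{\sigma})^{\circ}$ is central in $T$, conjugation by $t\sigma$ on $T$ equals that of $\sigma$; hence $(T^{\sigma})^{\circ}$ is a maximal torus of the reductive centraliser $C_{G^{\circ}}(t\sigma)^{\circ}$, and likewise for $t'\sigma$. Applying conjugacy of maximal tori inside $C_{G^{\circ}}(t'\sigma)^{\circ}$, I modify $g$ on the left by some element there so that $g\in N_{G^{\circ}}((T^{\sigma})^{\circ})$. The remaining step---which I expect to be the delicate point---is to factor such a $g$ as $\dot w\cdot c$ with $\dot w$ a lift of some $w\in W^{\sigma}\cong W_H$ and $c\in C_{G^{\circ}}((T^{\sigma})^{\circ})$, and to show that the $c$-conjugation on $t\sigma$ contributes only a translation of $t$ inside $[T,\sigma]\cap(T^{\sigma})^{\circ}$. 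This amounts to surjectivity of $W_H\to N_{G^{\circ}}((T^{\sigma})^{\circ})/C_{G^{\circ}}((T^{\sigma})^{\circ})$ and an analysis of the $\sigma$-stable Levi $C_{G^{\circ}}((T^{\sigma})^{\circ})$ and its intersection with $T\sigma$, which is precisely where the compatibility between the Weyl groups of $G^{\circ}$ and of $H$ is needed.
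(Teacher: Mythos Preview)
The paper does not prove this proposition; it is quoted from \cite{DM18}. So there is no ``paper's own proof'' to compare against, and your attempt must be judged on its own.

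Your existence argument is correct: semi-simple implies quasi-semi-simple, hence after $G^{\circ}$-conjugation $s=t\sigma$ with $t\in T$; the semisimplicity of $\mathrm{Ad}(\sigma)$ on $\Lie(T)$ gives the surjectivity of $(T^{\sigma})^{\circ}\times[T,\sigma]\to T$, and the $T$-conjugation you wrote down finishes it.

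There is, however, a genuine gap in both the ``if'' and ``only if'' directions: you invoke the isomorphism $W^{\sigma}\cong W_H$ with $H=C_{G^{\circ}}(\sigma)^{\circ}$, but this is exactly the characterisation of \emph{quasi-central} elements (see Theorem~\ref{1.15DM94} and Remark~\ref{Wsigma=WGsigma}). For an arbitrary semi-simple $\sigma$ normalising $(T,B)$, one only has an injection $W_H\hookrightarrow W^{\sigma}$, so a given $w\in W^{\sigma}$ need not lift to $N_H((T^{\sigma})^{\circ})$. The fix is a reduction you have not made: replace $\sigma$ by a quasi-central element $\sigma'\in T\sigma$ (such exist, and after a further $T$-conjugation one may take $\sigma'\in(T^{\sigma})^{\circ}\sigma$). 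Since $\sigma$ and $\sigma'$ induce the same action on $T$ and on $W$, one has $(T^{\sigma})^{\circ}=(T^{\sigma'})^{\circ}$, $[T,\sigma]=[T,\sigma']$, $W^{\sigma}=W^{\sigma'}$, and $(T^{\sigma})^{\circ}\sigma=(T^{\sigma'})^{\circ}\sigma'$; hence the whole statement for $\sigma$ reduces to that for $\sigma'$.

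For the ``only if'' direction you explicitly stop short of the crux. With $\sigma$ quasi-central the missing steps are: (a) $C_{G^{\circ}}((T^{\sigma})^{\circ})=T$ (this is \cite[Proposition~1.23]{DM94}), whence your modified $g$ lies in $N_{G^{\circ}}(T)$; (b) from $g(t\sigma)g^{-1}=t'\sigma$ one gets $g\sigma(g)^{-1}\in T$, so the image $w$ of $g$ in $W$ lies in $W^{\sigma}$ (cf.\ Lemma~\ref{Shu210.2.2b}); (c) choosing a lift $\dot w\in C_{G^{\circ}}(\sigma)^{\circ}$ of $w$ (possible by Theorem~\ref{1.15DM94}) and writing $g=\dot w c$ with $c\in T$, one computes $g\sigma(g)^{-1}=\dot w\,(c\sigma(c)^{-1})\,\dot w^{-1}\in[T,\sigma]$, so $t'=w(t)\cdot s$ with $s\in[T,\sigma]\cap(T^{\sigma})^{\circ}$. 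Your outline gestures at (a)--(c) but does not supply them; in particular the ``surjectivity of $W_H\to N_{G^{\circ}}((T^{\sigma})^{\circ})/C_{G^{\circ}}((T^{\sigma})^{\circ})$'' you mention is precisely (a) combined with the quasi-central hypothesis, neither of which you have secured.
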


A semi-simple element $\sigma$ is \textit{quasi-central} in $G^1$ if there is no element $g\in G^{\circ}$ such that $\dim C_{G^{\circ}}(\sigma)^{\circ}<\dim C_{G^{\circ}}(g\sigma)^{\circ}.$

\begin{Thm}(\cite[Th\'eor\`eme 1.15]{DM94})\label{1.15DM94}
A semi-simple element $\sigma\in G^1$ is quasi-central if and only if for every $\sigma$-stable maximal torus $T$ contained in a $\sigma$-stable Borel subgroup of $G^{\circ}$, every $\sigma$-stable element of $N_{G^{\circ}}(T)/T$ has a representative in $C_{G^{\circ}}(\sigma)^{\circ}$. 
\end{Thm}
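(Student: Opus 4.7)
Plan. Fix a $\sigma$-stable maximal torus $T$ inside a $\sigma$-stable Borel subgroup $B$ of $G^\circ$, and set $H := C_{G^\circ}(\sigma)^\circ$. Since $\sigma$ is semi-simple, $S := (T^\sigma)^\circ$ is a maximal torus of $H$ whose centraliser in $G^\circ$ is exactly $T$. Hence any element normalising $S$ in $H$ also normalises $T$ and commutes with $\sigma$, so there is a natural injection
\[
\iota_\sigma : W(\sigma) := N_H(S)/S \hookrightarrow W^\sigma, \qquad W := N_{G^\circ}(T)/T,
\]
whose image is precisely the set of $w \in W^\sigma$ admitting a representative in $H$. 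The theorem is therefore equivalent to the assertion that $\iota_\sigma$ is surjective for every $\sigma$-stable pair $(T,B)$ if and only if $\sigma$ is quasi-central. The plan is to prove this by combining a dimension count with a matching between the reflections in $W(\sigma)$ and those in $W^\sigma$.

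For the dimension count, use the identity $\dim H = \dim S + |\Phi_\sigma|$, where $\Phi_\sigma$ denotes the root system of $H$ relative to $S$. Any $g\sigma$ with $g \in G^\circ$ can be replaced, by $G^\circ$-conjugation, with an element of $T\sigma$, say $t\sigma$; one verifies that $\dim(T^{t\sigma})^\circ = \dim S$, and that $\Phi_{C_{G^\circ}(t\sigma)^\circ}$ is a sub-root-system of $\Phi_\sigma$, consisting of those roots whose $\sigma$-orbit-sum vanishes on $t$. Therefore $\dim C_{G^\circ}(t\sigma)^\circ \leq \dim H$, with equality iff $\Phi_{C_{G^\circ}(t\sigma)^\circ} = \Phi_\sigma$. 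Quasi-centrality of $\sigma$ is thus equivalent to $|\Phi_\sigma|$ attaining its absolute maximum over the coset $G^\circ\sigma$.

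The connection between this maximum and $\iota_\sigma$ is a reflection matching. Each root in $\Phi_\sigma$ determines a reflection in $W(\sigma)$, whose image in $W^\sigma$ corresponds to a reflection coming from a $\sigma$-orbit on $\Phi := \Phi(G^\circ,T)$. The key combinatorial input---essentially Steinberg's theorem on fixed-point subgroups under a semi-simple automorphism of a reductive group---identifies $W^\sigma$ with the group generated by these orbit-reflections and shows that the number of $\sigma$-orbits on $\Phi$ contributing a reflection to $W^\sigma$ equals the number contributing a root to $\Phi_\sigma$ exactly when $|\Phi_\sigma|$ is maximal. Thus surjectivity of $\iota_\sigma$ is equivalent to quasi-centrality of $\sigma$, proving both directions simultaneously.

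The main obstacle is the orbit-matching step. One must distinguish $\sigma$-orbits of type $A_1^r$, which always contribute a single reflection both to $W^\sigma$ and to $\Phi_\sigma$, from ``folded'' orbits such as those of type $A_2$, where the contribution to $\Phi_\sigma$ can be suppressed by replacing $\sigma$ by $t\sigma$ while the underlying reflection in $W^\sigma$ remains present. Pinning the failure of surjectivity of $\iota_\sigma$ precisely on non-quasi-central elements of $G^\circ\sigma$ requires the delicate case-by-case analysis of these orbits, going back to Steinberg and adapted to the disconnected setting by Digne--Michel.
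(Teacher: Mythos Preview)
The paper does not supply its own proof of this statement: it is quoted verbatim from \cite[Th\'eor\`eme 1.15]{DM94} and used as a black box. The only related content in the paper is Remark~\ref{Wsigma=WGsigma}, which records that the condition in the theorem is equivalent to the injection $W_{(G^{\sigma})^{\circ}}((T^{\sigma})^{\circ})\hookrightarrow W_{G^{\circ}}(T)^{\sigma}$ of Corollary~\ref{Nsigma-in-N-1} being an isomorphism. So there is no in-paper argument against which to compare your proposal.

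Regarding the proposal on its own merits: the architecture is the right one and matches what Digne--Michel actually do --- recast the statement as surjectivity of $\iota_\sigma$, then tie surjectivity to a maximality condition on the root count via an orbit-by-orbit analysis. Two points, however, need correction. First, your claim that $\Phi_{C_{G^{\circ}}(t\sigma)^{\circ}}$ is a sub-root-system of $\Phi_\sigma$ is false as stated. Already for $G^{\circ}=\GL_5$ with the standard involution $\sigma$, one has $C_{G^{\circ}}(\sigma)^{\circ}\cong\SO_5$ with root system of type $B_2$ in $X^{\ast}(S)$, while for $t=\diag(\mathfrak{i},\mathfrak{i},1,-\mathfrak{i},-\mathfrak{i})$ one gets $C_{G^{\circ}}(t\sigma)^{\circ}\cong\Sp_4$ with root system of type $C_2$; neither set of roots contains the other (the short roots $\pm f_i$ are traded for the long roots $\pm 2f_i$), yet both elements are quasi-central. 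What is true, and what the argument actually needs, is an \emph{upper bound} on $|\Phi_{t\sigma}|$ in terms of the $\sigma$-orbit structure on $\Phi(G^{\circ},T)$, with the bound attained precisely at quasi-central elements; containment is neither true nor required. Second, your description of which orbits contribute as ``those whose $\sigma$-orbit-sum vanishes on $t$'' is also not accurate: the contribution of an orbit depends on whether the restricted root subgroup has nontrivial $t\sigma$-fixed points, which is governed by a product (not a sum) of root values along the orbit and by the internal combinatorics of the orbit (e.g.\ the $A_2$-type folding you mention).

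Finally, your closing paragraph defers the decisive orbit-matching step to ``Steinberg\ldots\ adapted\ldots\ by Digne--Michel'', which is a citation of the very source of the theorem. So what you have written is a correct and well-organised \emph{outline} of the Digne--Michel proof, with one incorrect intermediate assertion, rather than an independent argument.
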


\subsubsection{}\label{Para-of-G}
A closed subgroup of $G$ is a parabolic subgroup if $G/P$ is proper. According to \cite[Lemma 6.2.4]{Spr}, a subgroup $P\subset G$ is parabolic if and only if $P^{\circ}$ is parabolic in $G^{\circ}$. For example, if $P$ is a parabolic subgroup of $G^{\circ}$, then $P$ and $N_G(P)$ are both parabolic subgroups of $G$. It is obvious that the intersection of $N_G(P)$ with any connected component of $G$ is isomorphic to $P$ if it is non-empty. We want to determine which connected components of $G$ have non-empty intersection with $N_G(P)$. 

There is a well-defined action of $G/G^{\circ}$ on the set of $G^{\circ}$-conjugacy classes of parabolic subgroups of $G^{\circ}$, induced by the conjugation action of $G$. Let $G^1\subset G$ be a connected component and $P\subset G^{\circ}$ a parabolic subgroup. Then $N_G(P)$ meets $G^1$ if and only if the $G^{\circ}$-conjugacy class of $P$ is $G^1$-stable. Let $L\subset P$ be a Levi factor. If $N_G(P)$ meets a connected component $G^1$ of $G$, then $N_G(L,P)$ also meets $G^1$, since any two Levi factors of $P$ are conjugate under $P$. 

Fix a semi-simple element $\sigma\in G^1$, then by \cite[Proposition 1.6]{DM94} and the above discussions, $N_G(P)$ meets $G^1$ if and only if there exists a $G^{\circ}$-conjugate of $P$ that is $\sigma$-stable, and $N_G(L,P)$ meets $G^1$ if and only if there exists a $G^{\circ}$-conjugate of the pair $(L,P)$ that is $\sigma$-stable. The following propositions describe the set of $\sigma$-stable parabolic subgroups and the set of $\sigma$-stable Levi factors of $\sigma$-stable parabolic subgroups.
\begin{Prop}(\cite[Proposition 1.11 (ii)]{DM94})\label{1.11DM94}
Let $s\in G$ be semi-simple. Let $L$ be an $s$-stable Levi factor of an $s$-stable parabolic subgroup $P\subset G^{\circ}$. Then $(P^s)^{\circ}\subset (G^s)^{\circ}$ is a parabolic subgroup and $(L^s)^{\circ}\subset (P^s)^{\circ}$ is a Levi factor.
\end{Prop}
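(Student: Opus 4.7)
The plan is to split the proposition into the Levi assertion and the parabolic assertion, reducing each to standard results on fixed points of semisimple automorphisms of connected reductive groups.

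For the Levi assertion, I would first write $P = L \ltimes U$ with $U$ the unipotent radical. Both $L$ and $U$ are $s$-stable — the former by hypothesis, the latter because $s$ preserves $P$ and hence its unipotent radical. Uniqueness of the Levi decomposition then forces $P^s = L^s \ltimes U^s$. The standing hypothesis $\ch\kk \nmid |G/G^{\circ}|$ makes the order of $s$ prime to $\ch\kk$, so the fixed-point subgroup $U^s$ of the connected unipotent $U$ is connected (a standard consequence of decomposing $U$ along an $s$-stable central series and reducing to the abelian case). Taking identity components yields $(P^s)^{\circ} = (L^s)^{\circ} \ltimes U^s$, and Steinberg's theorem applied to $s$ acting on the connected reductive group $L$ gives that $(L^s)^{\circ}$ is reductive. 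Thus $U^s$ is a connected normal unipotent subgroup of $(P^s)^{\circ}$ with reductive quotient $(L^s)^{\circ}$, which identifies $U^s$ as the unipotent radical and $(L^s)^{\circ}$ as a Levi factor of $(P^s)^{\circ}$ — once I know that $(P^s)^{\circ}$ is parabolic in $(G^s)^{\circ}$.

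For the parabolic assertion, the strategy is to exhibit a Borel subgroup of $(G^s)^{\circ}$ inside $(P^s)^{\circ}$. Applying Steinberg's theorem inside $L$, I obtain an $s$-stable Borel $B_L$ of $L$ containing an $s$-stable maximal torus $T$. Then $B := B_L \cdot U$ is an $s$-stable Borel subgroup of $G^{\circ}$ contained in $P$. The version of Steinberg's fixed-point theorem asserting that, for any $s$-stable Borel $B'$ of a connected reductive group, $(B'^s)^{\circ}$ is a Borel of the identity component of the fixed-point subgroup — applied to $B \subset G^{\circ}$ — yields that $(B^s)^{\circ}$ is a Borel of $(G^s)^{\circ}$. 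Since $B \subset P$, one has $(B^s)^{\circ} \subset (P^s)^{\circ}$, and a closed subgroup of a connected reductive group containing a Borel is parabolic; hence $(P^s)^{\circ}$ is parabolic in $(G^s)^{\circ}$.

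The main obstacle is the invocation of Steinberg's fixed-point theorem in its several guises: one needs the full package stating that for a semisimple automorphism $s$ (of order prime to $\ch\kk$) of a connected reductive group, $(G^s)^{\circ}$ is reductive and admits compatible Borel and maximal torus structures induced from any $s$-stable choice upstairs. The situation where $s$ itself comes from the non-connected ambient group $G$ is precisely what the standing characteristic hypothesis $\ch\kk \nmid |G/G^{\circ}|$ is imposed to accommodate.
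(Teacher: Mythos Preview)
The paper does not supply its own proof of this proposition: it is quoted verbatim from \cite[Proposition 1.11 (ii)]{DM94} and invoked as a black box. So there is no in-paper argument to compare against.

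Your proposal is correct and follows the standard route (essentially the one in Digne--Michel): reduce to Steinberg's fixed-point theorem for semisimple automorphisms, use connectedness of $U^s$ to get the Levi decomposition $(P^s)^{\circ}=(L^s)^{\circ}\ltimes U^s$, and produce an $s$-stable Borel $B\subset P$ so that $(B^s)^{\circ}$ is a Borel of $(G^s)^{\circ}$ contained in $(P^s)^{\circ}$. The only point worth tightening is the sentence ``uniqueness of the Levi decomposition then forces $P^s=L^s\ltimes U^s$'': this does not need uniqueness at all, only that $L\cap U=\{1\}$ and that both factors are $s$-stable, so any $s$-fixed $p=lu$ has $l^{-1}\,{}^s l = u\,({}^s u)^{-1}\in L\cap U$, whence $l\in L^s$ and $u\in U^s$. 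Otherwise the argument is complete.
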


\begin{Prop}(\cite[Corollaire 1.25]{DM94})\label{1.25DM94}
Let $\sigma\in G$ be quasi-central.
\begin{itemize}
\item[(1)] The map $P\mapsto(P^{\sigma})^{\circ}$  defines a bijection between the $\sigma$-stable parabolic subgroups of $G^{\circ}$ and the parabolic subgroups of $(G^{\sigma})^{\circ}$.
\item[(2)] Then map $L\mapsto(L^{\sigma})^{\circ}$  defines a bijection between the $\sigma$-stable Levi factors of $\sigma$-stable parabolic subgroups of $G^{\circ}$ and the Levi subgroups of $(G^{\sigma})^{\circ}$. The inverse is given by $L'\mapsto C_{G^{\circ}}(Z^{\circ}_{L'})$ for $L'\subset (G^{\sigma})^{\circ}$.
\end{itemize}
\end{Prop}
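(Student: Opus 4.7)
The plan is to use the quasi-centrality of $\sigma$ to reduce everything to a combinatorial statement on a $\sigma$-stable frame. Fix once and for all a $\sigma$-stable Borel $B\subset G^\circ$ containing a $\sigma$-stable maximal torus $T$, and set $T_0:=(T^\sigma)^\circ$, $B_0:=(B^\sigma)^\circ$. By Proposition \ref{1.11DM94} applied to $T$ and to $B$, $T_0$ is a maximal torus of $(G^\sigma)^\circ$ contained in the Borel $B_0$. The essential input from quasi-centrality is Theorem \ref{1.15DM94}: the natural homomorphism $W^\sigma\to W_{(G^\sigma)^\circ}(T_0)$, where $W:=N_{G^\circ}(T)/T$, is an isomorphism. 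Throughout, well-definedness of the maps in (1) and (2) is already given by Proposition \ref{1.11DM94}.

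For part (1), I would first handle standard parabolics with respect to $(T,B)$. Writing $\Delta$ for the set of simple roots of $(G^\circ,T,B)$, the $\sigma$-stable standard parabolics $P_I$ correspond to $\sigma$-stable subsets $I\subset\Delta$. The action of $\sigma$ on $\Delta$ is compatible with the folding identifying the simple roots of $((G^\sigma)^\circ,T_0,B_0)$ with the $\sigma$-orbits in $\Delta$ (restricted to $T_0$); unwinding this on root subgroups shows that $(P_I^\sigma)^\circ$ is the standard parabolic of $(G^\sigma)^\circ$ attached to the family of $\sigma$-orbits in $I$, giving the bijection on standard parabolics. To reduce the general case to this one, observe that every $\sigma$-stable parabolic of $G^\circ$ contains a $\sigma$-stable maximal torus (so is $(G^\sigma)^\circ$-conjugate to a standard one, using the Weyl-group isomorphism above to move any two $\sigma$-stable maximal tori into each other), and dually every parabolic of $(G^\sigma)^\circ$ is $(G^\sigma)^\circ$-conjugate to a standard one. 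Equivariance of $P\mapsto(P^\sigma)^\circ$ under $(G^\sigma)^\circ$-conjugation then transports the standard-case bijection to the desired global one. Injectivity comes from the folding dictionary: the Lie algebra of $P$ is the sum of the root spaces for the roots whose restriction to $T_0$ is a root of $(P^\sigma)^\circ$, so $(P^\sigma)^\circ$ recovers $P$.

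For part (2), the forward map $L\mapsto(L^\sigma)^\circ$ is well defined by Proposition \ref{1.11DM94}. For the inverse, given a Levi $L'$ of $(G^\sigma)^\circ$, its connected centre $Z^\circ_{L'}$ is a subtorus of $T_0\subset T$, so $M:=C_{G^\circ}(Z^\circ_{L'})$ is a Levi subgroup of $G^\circ$ (centraliser of a subtorus of a maximal torus), and manifestly $\sigma$-stable. That $(M^\sigma)^\circ=L'$ follows by comparing root systems: the roots of $M$ with respect to $T$ are those vanishing on $Z^\circ_{L'}$, and under the folding dictionary these match the roots of $(M^\sigma)^\circ$ with respect to $T_0$, which also coincide with the roots of $L'$. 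In the other direction, starting from a $\sigma$-stable $L$ one uses that $Z^\circ_L$ already recovers $L$ as its own centraliser in $G^\circ$, and quasi-centrality propagates this equality to the $\sigma$-fixed subtorus $Z^\circ_{(L^\sigma)^\circ}$. The main obstacle I anticipate is precisely the reduction-to-standard step in part (1): transferring the clean combinatorial bijection on standard parabolics to all parabolics requires that $(G^\sigma)^\circ$-conjugacy suffices to move any $\sigma$-stable parabolic to a standard one, which is exactly the content of Theorem \ref{1.15DM94}; without quasi-centrality, elements of $W^\sigma$ need not lift to $C_{G^\circ}(\sigma)^\circ$ and the proposed bijection would fail to be surjective.
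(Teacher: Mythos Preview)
The paper does not prove this proposition at all: it is quoted verbatim as \cite[Corollaire 1.25]{DM94} and used as a black box, so there is no ``paper's own proof'' to compare against. Your sketch follows the standard strategy (reduce to a $\sigma$-stable frame $(T,B)$, use the folding dictionary on simple roots, and invoke Theorem~\ref{1.15DM94} to move between $\sigma$-stable parabolics by $(G^\sigma)^\circ$-conjugation), which is essentially how Digne--Michel argue in the cited source; if you want feedback on the details you should compare directly with \cite{DM94} rather than with this paper.
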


\subsubsection{}
The following proposition will be used via its corollaries.
\begin{Prop}\label{Nsigma-in-N}
Let $s\in G$ be semi-simple and let $L$ be an $s$-stable Levi factor of an $s$-stable parabolic subgroup of $G^{\circ}$. Suppose that $L=C_{G^{\circ}}(Z^{\circ}_{(L^s)^{\circ}})$. Then $N_{(G^{s})^{\circ}}((L^{s})^{\circ})$ is contained in  $N_{G^{\circ}}(L)$. Moreover, each connected component of $N_{G^{\circ}}(L)$ contains exactly one connected component of $N_{(G^{s})^{\circ}}((L^{s})^{\circ})$.
\end{Prop}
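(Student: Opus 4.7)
The first statement is almost immediate. If $g \in N_{(G^s)^\circ}((L^s)^\circ)$, then conjugation by $g$ preserves the characteristic subgroup $Z^\circ_{(L^s)^\circ}$ of $(L^s)^\circ$, hence also its $G^\circ$-centralizer $C_{G^\circ}(Z^\circ_{(L^s)^\circ})$, which equals $L$ by hypothesis. Combined with $g\in(G^s)^\circ\subset G^\circ$, this gives $g\in N_{G^\circ}(L)$.

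For the second statement, I would reformulate it as the bijectivity of a map of finite groups. Since $L$ is a Levi of $G^\circ$, one has $N_{G^\circ}(L)^\circ=L$, and by Proposition \ref{1.11DM94} the subgroup $(L^s)^\circ$ is a Levi of $(G^s)^\circ$, so $N_{(G^s)^\circ}((L^s)^\circ)^\circ=(L^s)^\circ$. The components on both sides are therefore cosets of these Levis, and the statement reduces to showing that the map $N_{(G^s)^\circ}((L^s)^\circ)/(L^s)^\circ\to N_{G^\circ}(L)/L$ induced by inclusion is a bijection.

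For injectivity, the key is the sharper identity $L\cap(G^s)^\circ=(L^s)^\circ$. Any $g\in L\cap(G^s)^\circ$ centralizes $Z^\circ_{(L^s)^\circ}$ (because $L$ is its $G^\circ$-centralizer) and lies in $(G^s)^\circ$, so $g\in C_{(G^s)^\circ}(Z^\circ_{(L^s)^\circ})$. Proposition \ref{1.11DM94} presents $(L^s)^\circ$ as a Levi of $(G^s)^\circ$, hence $C_{(G^s)^\circ}(Z^\circ_{(L^s)^\circ})=(L^s)^\circ$, and $g\in(L^s)^\circ$. This forces two elements of $N_{(G^s)^\circ}((L^s)^\circ)$ lying in the same $L$-coset to lie in the same $(L^s)^\circ$-coset.

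Surjectivity is the main obstacle. Given $g\in N_{G^\circ}(L)$, one must find $g'\in gL\cap(G^s)^\circ$; such a $g'$ will then automatically normalize $(L^s)^\circ$, because any element of $(G^s)^\circ$ that normalizes $L$ commutes with $s$ and therefore preserves $L^s$ and its identity component. The existence of $g'$ is equivalent to finding an $s$-fixed point on the $s$-stable $L$-torsor $gL$, i.e.\ to solving the twisted equation $l\cdot s(l)^{-1}=g^{-1}s(g)$ in $L$. Since the twisted Lang map $l\mapsto l\cdot s(l)^{-1}$ is generally not surjective on $L$ over an algebraically closed field, my plan is to invoke the Levi correspondence of Proposition \ref{1.25DM94}, available after conjugating $s$ to a quasi-central representative --- a reduction enabled by the hypothesis $L=C_{G^\circ}(Z^\circ_{(L^s)^\circ})$, which is precisely the form of the inverse bijection in that proposition --- to identify the two relative Weyl groups directly, bypassing any pointwise Lang-type argument.
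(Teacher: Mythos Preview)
Your proofs of the inclusion $N_{(G^s)^\circ}((L^s)^\circ)\subset N_{G^\circ}(L)$ and of injectivity on component groups are correct and match the paper's approach; your injectivity argument is in fact slightly cleaner, since you observe directly that $L\cap(G^s)^\circ\subset C_{(G^s)^\circ}(Z^\circ_{(L^s)^\circ})=(L^s)^\circ$ without the intermediate step $(Z_L^s)^\circ=Z^\circ_{(L^s)^\circ}$ that the paper records.

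Where you go astray is in reading the second assertion as a bijection. The paper's own proof says ``it suffices to prove that $N_{(G^{s})^{\circ}}((L^{s})^{\circ})\cap L=(L^{s})^{\circ}$'', which is exactly injectivity of the map on component groups and nothing more; the two corollaries that follow likewise assert only injective maps $W_{(G^{\sigma})^{\circ}}((L^{\sigma})^{\circ})\hookrightarrow W_{G^{\circ}}(L)^{\sigma}$, and Remark~\ref{Wsigma=WGsigma} makes clear that surjectivity onto $W^\sigma$ is a genuine extra condition (equivalent to quasi-centrality when $L=T$). So ``exactly one'' in the proposition should be read as ``at most one''; surjectivity onto all of $N_{G^\circ}(L)/L$ is simply false in general. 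For instance, with $G=\GL_4$, $L=T$, and $s$ an orthogonal-type involution so that $(G^s)^\circ\cong\SO_4$, the left-hand side has Weyl group of order $4$ while $W_G(T)=\mathfrak S_4$ has order $24$.

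Consequently your fourth paragraph is chasing a claim that is neither true nor intended. Your proposed fix---conjugating $s$ to a quasi-central representative to invoke Proposition~\ref{1.25DM94}---cannot work in any case: a general semi-simple $s\in G^1$ need not lie in the $G^\circ$-conjugacy class of any quasi-central element (its centraliser may have strictly smaller dimension), so there is no such conjugation available.
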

\begin{proof}
The inclusion $N_{(G^{s})^{\circ}}((L^{s})^{\circ})\subset N_{G^{\circ}}(L)$ follows from the assumption on $L$ and $s$. To prove the second part, it suffices to prove that $N_{(G^{s})^{\circ}}((L^{s})^{\circ})\cap L=(L^{s})^{\circ}$. This is achieved by considering the conjugation action of $\xi\in N_{(G^{s})^{\circ}}((L^{s})^{\circ})\cap L$ on $Z_{(L^{s})^{\circ}}^{\circ}$.

We have $(Z_L^{s})^{\circ}=Z_{(L^{s})^{\circ}}^{\circ}$. The inclusion $(Z_L^{s})^{\circ}\subset Z_{(L^{s})^{\circ}}^{\circ}$ is obvious. Inclusion in the other direction follows again from the assumption on $L$ and $s$. Since $\xi$ lies in $L$, the conjugation action of $\xi$ on $(Z_L^{s})^{\circ}$ must be trivial. 

It is well-known that if $M$ is a Levi subgroup of a connected reductive group $H$, then $M=C_H(Z^{\circ}_M)$. By Proposition \ref{1.11DM94}, $(L^{s})^{\circ}$ is a Levi subgroup of $(G^{s})^{\circ}$. We conclude that $\xi$ must lie in $(L^{\sigma})^{\circ}$.
\end{proof}
With the notion of \textit{isolated elements}, the following two corollaries could be integrated into one uniform result. But we prefer not to introduce more notions.
\begin{Cor}\label{Nsigma-in-N-1}
Let $\sigma\in G$ be quasi-central and let $L$ be a $\sigma$-stable Levi factor of a $\sigma$-stable parabolic subgroup of $G^{\circ}$. Write $$W_{(G^{\sigma})^{\circ}}((L^{\sigma})^{\circ}):=N_{(G^{\sigma})^{\circ}}((L^{\sigma})^{\circ})/(L^{\sigma})^{\circ},\quad W_{G^{\circ}}(L):=N_{G^{\circ}}(L)/L.$$Then there is a natural injective map $$W_{(G^{\sigma})^{\circ}}((L^{\sigma})^{\circ})\hookrightarrow W_{G^{\circ}}(L)^{\sigma}.$$
\end{Cor}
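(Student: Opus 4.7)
The plan is to deduce the corollary directly from Proposition \ref{Nsigma-in-N}, once we verify that its hypothesis is automatically satisfied in the quasi-central setting. More precisely, since $\sigma$ is quasi-central, part (2) of Proposition \ref{1.25DM94} provides the bijection $L' \mapsto C_{G^{\circ}}(Z^{\circ}_{L'})$ between Levi subgroups of $(G^{\sigma})^{\circ}$ and $\sigma$-stable Levi factors of $\sigma$-stable parabolic subgroups of $G^{\circ}$. Applying this bijection to $L' = (L^{\sigma})^{\circ}$, I would first observe that $L = C_{G^{\circ}}(Z^{\circ}_{(L^{\sigma})^{\circ}})$, which is exactly the hypothesis of Proposition \ref{Nsigma-in-N} with $s = \sigma$.

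Next, I would invoke Proposition \ref{Nsigma-in-N} to obtain the inclusion $N_{(G^{\sigma})^{\circ}}((L^{\sigma})^{\circ}) \subset N_{G^{\circ}}(L)$, together with the key equality $N_{(G^{\sigma})^{\circ}}((L^{\sigma})^{\circ}) \cap L = (L^{\sigma})^{\circ}$ obtained in its proof. Passing to quotients, this inclusion induces a natural map
$$
W_{(G^{\sigma})^{\circ}}((L^{\sigma})^{\circ}) = N_{(G^{\sigma})^{\circ}}((L^{\sigma})^{\circ})/(L^{\sigma})^{\circ} \longrightarrow N_{G^{\circ}}(L)/L = W_{G^{\circ}}(L),
$$
and the intersection equality is precisely what guarantees injectivity: an element of $N_{(G^{\sigma})^{\circ}}((L^{\sigma})^{\circ})$ lying in $L$ already lies in $(L^{\sigma})^{\circ}$.

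To conclude, I would check that the image is $\sigma$-fixed. Any representative $n \in N_{(G^{\sigma})^{\circ}}((L^{\sigma})^{\circ})$ satisfies $\sigma(n) = n$, hence its class in $W_{G^{\circ}}(L)$ lies in $W_{G^{\circ}}(L)^{\sigma}$. This gives the desired factorisation $W_{(G^{\sigma})^{\circ}}((L^{\sigma})^{\circ}) \hookrightarrow W_{G^{\circ}}(L)^{\sigma}$.

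I do not expect any genuine obstacle here: the entire argument is a formal consequence of the already-established Proposition \ref{Nsigma-in-N} together with Proposition \ref{1.25DM94}(2). The only point requiring a small amount of care is verifying that the hypothesis $L = C_{G^{\circ}}(Z^{\circ}_{(L^{\sigma})^{\circ}})$ of Proposition \ref{Nsigma-in-N} is automatic for $\sigma$ quasi-central; this is what makes the quasi-central hypothesis essential and is the reason the corollary follows so cleanly.
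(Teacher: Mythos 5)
Your argument is correct and follows essentially the same route as the paper: establish the hypothesis $L = C_{G^{\circ}}(Z^{\circ}_{(L^{\sigma})^{\circ}})$ of Proposition \ref{Nsigma-in-N}, apply that proposition, and observe that $\sigma$-fixedness of the image is automatic. The only cosmetic difference is that the paper cites \cite[Proposition 1.23]{DM94} for this hypothesis, while you derive it from Proposition \ref{1.25DM94}(2) (i.e.\ \cite[Corollaire 1.25]{DM94}), which is in fact the cleaner choice since that statement is already reproduced in the text and its ``inverse'' clause gives exactly $L = C_{G^{\circ}}(Z^{\circ}_{(L^{\sigma})^{\circ}})$.
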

\begin{proof}
By \cite[Proposition 1.23]{DM94}, $L$ and $\sigma$ satisfy the assumption on $L$ and $s$ in Proposition \ref{Nsigma-in-N}. The image is automatically contained in the $\sigma$-fixed part.
\end{proof}
\begin{Cor}\label{Nsigma-in-N-2}
Let $\sigma\in G$ be quasi-central. Let $T\subset G^{\circ}$ be a $\sigma$-stable maximal torus and let $s\in T$. Write $$W_{(G^{s\sigma})^{\circ}}((T^{s\sigma})^{\circ}):=N_{(G^{s\sigma})^{\circ}}((T^{s\sigma})^{\circ})/(T^{s\sigma})^{\circ},\quad W_{G^{\circ}}(T):=N_{G^{\circ}}(T)/T.$$Then there is a natural injective map $$W_{(G^{s\sigma})^{\circ}}((T^{s\sigma})^{\circ})\hookrightarrow W_{G^{\circ}}(T)^{\sigma}.$$
\end{Cor}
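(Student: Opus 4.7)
The plan is to apply Proposition \ref{Nsigma-in-N} with $L = T$ and the semi-simple element there taken to be $s\sigma$. Since $s\sigma \in T\sigma \subset \bar{T}$, it is quasi-semi-simple and hence semi-simple under our standing assumption on $\ch\kk$; the torus $T$ is $s\sigma$-stable because it is $\sigma$-stable and $s\in T$, and an $s\sigma$-stable Borel containing $T$ exists because $s\sigma$ normalises $T$, so $T$ genuinely serves as an $s\sigma$-stable Levi factor of an $s\sigma$-stable parabolic subgroup of $G^\circ$.

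The first crucial observation is that, because $T$ is abelian and $s\in T$, conjugation by $s\sigma$ on $T$ coincides with conjugation by $\sigma$: for $t\in T$, $(s\sigma)t(s\sigma)^{-1}=s\,\sigma(t)\,s^{-1}=\sigma(t)$. In particular $T^{s\sigma}=T^\sigma$, hence $(T^{s\sigma})^\circ=(T^\sigma)^\circ$, and the hypothesis $T=C_{G^\circ}(Z^\circ_{(T^{s\sigma})^\circ})$ of Proposition \ref{Nsigma-in-N} reduces to $T=C_{G^\circ}((T^\sigma)^\circ)$. This last equality is the content of \cite[Proposition 1.23]{DM94} applied to the quasi-central $\sigma$ and the $\sigma$-stable maximal torus $T$, exactly as used in the proof of Corollary \ref{Nsigma-in-N-1}.

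Applying Proposition \ref{Nsigma-in-N} then produces the inclusion $N_{(G^{s\sigma})^\circ}((T^{s\sigma})^\circ)\subset N_{G^\circ}(T)$ together with the equality $N_{(G^{s\sigma})^\circ}((T^{s\sigma})^\circ)\cap T=(T^{s\sigma})^\circ$, so passing to the quotient yields an injection
\begin{equation*}
W_{(G^{s\sigma})^\circ}((T^{s\sigma})^\circ)\hookrightarrow W_{G^\circ}(T).
\end{equation*}
To see that the image lands in the $\sigma$-fixed part, let $n\in N_{(G^{s\sigma})^\circ}((T^{s\sigma})^\circ)$; the commutation $n\cdot s\sigma=s\sigma\cdot n$ rewrites as $\sigma(n)=s^{-1}ns$, and since $s\in T$ the classes of $n$ and $\sigma(n)$ coincide in $W_{G^\circ}(T)=N_{G^\circ}(T)/T$, i.e. the class of $n$ is $\sigma$-fixed.

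The only potentially delicate point is verifying the Levi hypothesis of Proposition \ref{Nsigma-in-N} for $L=T$, and this is precisely where the identification $T^{s\sigma}=T^\sigma$ together with the quasi-centrality of $\sigma$ does the work; once that reduction is made the remainder is a routine computation in quotient groups. I do not anticipate any further obstruction.
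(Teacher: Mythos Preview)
Your proposal is correct and follows essentially the same route as the paper: reduce to Proposition~\ref{Nsigma-in-N} with $L=T$ by first observing $T^{s\sigma}=T^{\sigma}$ (since $s\in T$), then verifying the hypothesis $T=C_{G^{\circ}}((T^{\sigma})^{\circ})$ via quasi-centrality of $\sigma$. The paper cites Proposition~\ref{1.25DM94} for this last step rather than \cite[Proposition 1.23]{DM94}, but these are closely related results in \cite{DM94} and either suffices; your explicit computation showing the image lands in $W_{G^{\circ}}(T)^{\sigma}$ spells out what the paper leaves as ``easy to see.'' One small caveat: your claim that ``an $s\sigma$-stable Borel containing $T$ exists because $s\sigma$ normalises $T$'' is not quite a valid justification on its own (a semi-simple element normalising a torus need not fix any Borel through it), but if $T$ lies in a $\sigma$-stable Borel $B$---as is implicit throughout the paper and in all applications---then $s\sigma$ automatically stabilises that same $B$ since $s\in T\subset B$.
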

\begin{proof}
We have $T^{s\sigma}=T^{\sigma}$. Thus by Proposition \ref{1.25DM94}, $T$ and $s\sigma$ satisfy the assumption on $L$ and $s$ in Proposition \ref{Nsigma-in-N}. It is easy to see that the image is contained in the $\sigma$-fixed part.
\end{proof}
\begin{Rem}\label{Wsigma=WGsigma}
The special case where $L=T$ is a maximal torus implies that the condition in Theorem \ref{1.15DM94} is equivalent to $W_{(G^{\sigma})^{\circ}}((T^{\sigma})^{\circ})\cong W_{G^{\circ}}(T)^{\sigma}$. 
\end{Rem}

\subsection{Algebraic Groups Defined over a Finite Field}\hfill

Let $q$ be a power of a prime number. In this section $\kk$ denotes an algebraic closure of $\mathbb{F}_q$. 
\subsubsection{}
An algebraic group $G$ over $\kk$ is defined over $\mathbb{F}_q$ if there is an algebraic group $G_0$ over $\mathbb{F}_q$ such that $G_0\otimes_{\mathbb{F}_q}\kk\cong G$. Given $G_0$, the geometric Frobenius endomorphism on $G$ is denoted by $F$. The set of fixed points of $F$, denoted by $G^F$, is a finite group, which we will sometimes write as $G(q)$. If $X\subset G$ is a subvariety, then we say that $X$ is $F$-stable if $F(X)=X$.

\subsubsection{}\label{ConnG-L_I-F}
Let $G$ be a connected reductive group defined over $\mathbb{F}_q$. Let $T\subset G$ be an $F$-stable maximal torus. Then $F$-acts naturally on $W_G(T)$ and so we can talk about the $F$-conjugacy classes in $W_G(T)$ as defined in \S \ref{general-notations}. The $G^F$-conjugacy classes of $F$-stable maximal tori are in bijection with the $F$-conjugacy classes of $W_G(T)$. This bijection is described as follows. Given $w\in W_G(T)$, let $\dot{w}\in G$ be a representative of $w$. By Lang-Steinberg theorem, there exists $g\in G$ such that $g^{-1}F(g)=\dot{w}$. Then $gTg^{-1}$ is again an $F$-stable maximal torus, and its $G^F$-conjugacy class only depends on the $F$-conjugacy class of $w$. Conversely, if $gTg^{-1}$ is an $F$-stable maximal torus for some $g\in G$, then $g^{-1}F(g)$ normalises $T$ and so defines an element of $W_G(T)$. The two constructions are inverse to each other.

Fix an $F$-stable maximal torus $T$ and a Borel subgroup $B$ containing $T$. Let $\Delta=\Delta(T,B)$ be the set of simple roots defined by $T$ and $B$. The parabolic subgroups containing $B$, called \textit{standard parabolic subgroups}, are parametrised by the set of the subsets of $\Delta$. For any $I\subset\Delta$, denote by $P_I$ the corresponding standard parabolic subgroup. Its unique Levi factor containing $T$ is called a \textit{standard Levi subgroup}, denoted by $L_I$. Every Levi subgroup of $G$ is conjugate to a standard Levi subgroup. Fix $I\subset\Delta$. The $G^F$-conjugacy classes of the $G$-conjugates of $L_I$ are in bijection with the $F$-conjugacy classes of $N_G(L_I)/L_I$. The construction of this correspondence resembles the case of maximal tori.

\subsubsection{}\label{Prop1.40}
Suppose that $G$ is a reductive group defined over $\mathbb{F}_q$ and that $G/G^{\circ}$ is a cyclic group. Let $\sigma\in G$ be an $F$-stable quasi-central element such that $G/G^{\circ}$ is generated by the component of $\sigma$.
\begin{Prop}(\cite[Proposition 1.40]{DM94})\label{1.40}
The $G^F$-conjugacy classes of the $F$-stable subgroups $\bar{L}=N_G(L,P)$ defined by some $F$-stable Levi factor $L$ of some parabolic subgroup $P\subset G^{\circ}$,  satisfying $\bar{L}\cap G^{\circ}\sigma\ne\emptyset$, are in bijection with the $((G^{\sigma})^{\circ})^F$-conjugacy classes of the $F$-stable Levi subgroups of $(G^{\sigma})^{\circ}$ in the following manner. Each $\bar{L}$ has a $G^F$-conjugate $\bar{L}_1$ containing $\sigma$, and the bijection associates the $((G^{\sigma})^{\circ})^F$-class of $((\bar{L}_1)^{\sigma})^{\circ}$  to the $G^F$-class of $\bar{L}$.
\end{Prop}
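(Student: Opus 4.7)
The plan is to refine the non-equivariant bijection of Proposition \ref{1.25DM94} into an $F$-equivariant bijection between $G^F$-conjugacy classes, by repeated applications of the Lang-Steinberg theorem to connected torsors built from the various stabilizers.

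First I would establish well-definedness. Given $\bar L$ satisfying the hypothesis, the discussion in \S\ref{Para-of-G} shows that the $G^\circ$-conjugacy class of $(L,P)$ is $\sigma$-stable, so there exists $g\in G^\circ$ with $g\bar L g^{-1}\ni\sigma$. The variety of such $g$ modulo $N_{G^\circ}(L,P)=L$ is $F$-stable and, using Proposition \ref{1.25DM94} to translate the condition of $\sigma$-stability into a condition inside $(G^\sigma)^\circ$, is naturally a torsor under the connected quotient $(G^\sigma)^\circ/(L^\sigma)^\circ$; Lang-Steinberg then produces an $F$-fixed point, giving $\bar L_1=g\bar L g^{-1}$ with $g\in G^F$ and $\sigma\in\bar L_1$. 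To see that $((\bar L_1)^\sigma)^\circ$ is independent of this choice up to $((G^\sigma)^\circ)^F$-conjugacy, suppose $\bar L_2=h\bar L_1 h^{-1}$ with $h\in G^F$ also contains $\sigma$. Then $h^{-1}\sigma h\in\bar L_1$ and lies in the same component as $\sigma$; applying Proposition \ref{DM1.16} inside $\bar L_1$ together with an additional Lang-Steinberg step in $L_1$, one corrects $h$ by a suitable element of $L_1^F$ so that the modified $h$ belongs to $((G^\sigma)^\circ)^F$, which is exactly what is needed.

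For surjectivity, given an $F$-stable Levi $M\subset(G^\sigma)^\circ$, set $L:=C_{G^\circ}(Z_M^\circ)$: this is $F$-stable (as $Z_M^\circ$ is) and $\sigma$-stable, with $(L^\sigma)^\circ=M$ by Proposition \ref{1.25DM94}. Using the bijection in Proposition \ref{1.25DM94} between $\sigma$-stable parabolics of $G^\circ$ having $L$ as a Levi factor and parabolics of $(G^\sigma)^\circ$ having $M$ as a Levi factor, I would pick an $F$-stable parabolic of $(G^\sigma)^\circ$ containing $M$ (the variety of such is a projective homogeneous space under a connected group, so Lang-Steinberg produces an $F$-fixed point) and let $P$ be its preimage; then $\bar L:=N_G(L,P)$ is $F$-stable, contains $\sigma$, and maps to $M$. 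For injectivity, if $\bar L_1,\bar L_2$ both contain $\sigma$ and $((\bar L_i)^\sigma)^\circ$ are $((G^\sigma)^\circ)^F$-conjugate, Proposition \ref{1.25DM94} promotes this into a $G^\circ$-conjugacy between $(L_1,P_1)$ and $(L_2,P_2)$ realised by an element of $(G^\sigma)^\circ$, and a final Lang-Steinberg application to the torsor of such conjugators yields a conjugator in $((G^\sigma)^\circ)^F\subset G^F$.

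The main obstacle I anticipate is verifying at each Lang-Steinberg step that the group acting on the relevant torsor is genuinely connected. This is where the quasi-centrality of $\sigma$ plays the crucial role: through Theorem \ref{1.15DM94} and the reformulation in Remark \ref{Wsigma=WGsigma} one has $W_{(G^\sigma)^\circ}((L^\sigma)^\circ)\cong W_{G^\circ}(L)^\sigma$, which lets one identify the various stabilizers appearing on the two sides of the bijection with the right connected subgroups and confirm that Lang-Steinberg is indeed applicable. Without quasi-centrality, extra components in $W_{G^\circ}(L)^\sigma$ could obstruct the $F$-rational lifting and the bijection would fail.
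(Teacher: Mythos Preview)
The paper does not prove this proposition; it is quoted from \cite[Proposition 1.40]{DM94} and used as a black box, with the text following it in \S\ref{Prop1.40} giving only an explicit description of the correspondence, not an argument. So there is nothing in the paper to compare your proof against.

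Assessed on its own, your sketch has the right architecture: Proposition \ref{1.25DM94} supplies the geometric bijection and repeated Lang--Steinberg arguments promote it to an $F$-rational one. Two points would need tightening. First, the set of $g\in G^\circ$ with $\sigma\in g\bar Lg^{-1}$, taken modulo $L$, is not literally a torsor under $(G^\sigma)^\circ/(L^\sigma)^\circ$: different such $g$ can carry $\bar L$ to different $\sigma$-stable subgroups, so what you obtain via Proposition \ref{1.25DM94} is a homogeneous space for $(G^\sigma)^\circ$ (a partial flag variety) rather than a principal homogeneous space. Lang--Steinberg still applies because the relevant stabilizers are connected, but the wording should reflect this. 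Second, the isomorphism $W_{(G^\sigma)^\circ}((L^\sigma)^\circ)\cong W_{G^\circ}(L)^\sigma$ that you invoke for the connectedness checks is stronger than what the paper establishes: Corollary \ref{Nsigma-in-N-1} only gives an injection, Remark \ref{Wsigma=WGsigma} gives the isomorphism only for $L=T$, and the general Levi case is verified in the paper only for $\GL_n$ (\S\ref{GLnSigma-F-Levi}). For the abstract statement you would need to import this from \cite{DM94} directly.
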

We give a more concrete description of this correspondence. Fix an $F$-stable and $\sigma$-stable maximal torus $T$ and a $\sigma$-stable Borel subgroup $B\subset G^{\circ}$ that contains $T$. Let $I$ be a subset of the set of simple roots defined by $(T,B)$. Let $L_I$ be the standard Levi subgroup with respect to $(T,B)$. Suppose that $I$ is $\sigma$-stable and so $(L_I^{\sigma})^{\circ}$ is a standard Levi subgroup of $(G^{\sigma})^{\circ}$ with respect to $((T^{\sigma})^{\circ},(B^{\sigma})^{\circ})$. Then the $((G^{\sigma})^{\circ})^F$-conjugacy classes of the $(G^{\sigma})^{\circ}$-conjugates of $(L_I^{\sigma})^{\circ}$ are in bijection with the $F$-conjugacy classes in $W_{(G^{\sigma})^{\circ}}((L_I^{\sigma})^{\circ})$. For any $w\in W_{(G^{\sigma})^{\circ}}((L_I^{\sigma})^{\circ})$, the procedure of \S \ref{ConnG-L_I-F} gives an $F$-stable Levi factor $L'_{I,w}$ of some parabolic subgroup $P'_{I,w}\subset (G^{\sigma})^{\circ}$, which is $(G^{\sigma})^{\circ}$-conjugate to $(L_I^{\sigma})^{\circ}$ and whose $((G^{\sigma})^{\circ})^F$-conjugacy class corresponds to the $F$-conjugacy class of $w$. By Proposition \ref{1.25DM94}, the pair $(L'_{I,w},P'_{I,w})$ determines an $F$-stable and $\sigma$-stable Levi factor $L_{I,w}$ of a $\sigma$-stable parabolic subgroup $P_{I,w}$. In fact, if $\dot{w}$, $g\in(G^{\sigma})^{\circ}$ are as in \S \ref{ConnG-L_I-F}, then $L_{I,w}=gL_Ig^{-1}$. Put $$\bar{L}_{I,w}:=N_G(L_{I,w},P_{I,w})=L_{I,w}\sqcup L_{I,w}\sigma.$$ It is $F$-stable and meets the connected component $G^{\circ}\sigma$. By Proposition \ref{1.40}, such groups for varying $I$ and $w$ exploit all $G^F$-conjugacy classes of $F$-stable subgroups of the form $\bar{L}$ that meet $G^{\circ}\sigma$. This implies in particular that the $G^F$-conjugacy classes of the $F$-stable subgroups $N_G(T',B')$, defined by an $F$-stable maximal torus $T'$ contained in a Borel subgroup $B'\subset G^{\circ}$, are parametrised by the $F$-conjugacy classes of $W_{G^{\circ}}(T)^{\sigma}$ (See Remark \ref{Wsigma=WGsigma}). 

Define another Frobenius $F_w:=\ad\dot{w}\circ F$ on $G$. Then there is a commutative diagram
\begin{equation}\label{F_w}
\begin{CD}
L_I @>\ad g>> L_{I,w}\\
@VF_wVV @VVFV\\
L_I @>>\ad g> L_{I,w}
\end{CD}
\end{equation}
We will simply say that $(L_I,F_w)$ is isomorphic to $(L_{I,w},F)$ via $\ad g$.

\subsection{Frobenius Formula}
\subsubsection{}
Let $H$ be a finite group and let $H_0\lhd H$ be a normal subgroup of index 2. Let $\sigma$ be an element of $H\setminus H_0$. Then $\sigma$ defines an involution on $\Irr(H_0)$ by composing a character with the conjugation by $\sigma$. This involution does not depend on the choice of $\sigma$ in $H\setminus H_0$. Denote by $\Irr(H_0)^{\sigma}$ the subset of $\sigma$-fixed elements. By Clifford theory, each element $\chi\in\Irr(H_0)^{\sigma}$ admits an extension to $H$, i.e. some $\chi_H\in\Irr(H)$ such that $(\chi_H)|_{H_0}=\chi$. Moreover, there are precisely two such extensions, and they differ by $-1$ on $H\setminus H_0$. For every $\chi\in\Irr(H_0)^{\sigma}$, we choose a preferred extension, denoted by $\tilde{\chi}$. If the restriction of $\chi_H\in\Irr(H)$ to $H_0$ is not irreducible (or rather, $\chi_H$ is not an extension of any $\chi\in\Irr(H_0)$), then $\chi_H$ must vanish on $H\setminus H_0$.
\begin{Prop}\label{Frob-Form}
Let $\mathcal{C}=(C_j)_{1\le j\le 2k}$ be an arbitrary $2k$-tuple of $H_0$-conjugacy classes contained in $H\setminus H_0$. Then we have the following counting formula:
\begin{equation}\label{Frob-Form-eq}
\begin{split}
&\#\left\{((A_i,B_i)_{1\le i\le g},(X_j)_{1\le j\le 2k})\in H_0^{2g}\times \prod_{j=1}^{2k}C_j\Big\arrowvert\prod_{i=1}^{g}[A_i,B_i]\prod_{j=1}^{2k}X_j=1\right\}\\
=&|H_0|\sum_{\chi\in\Irr(H_0)^{\sigma}}\Big(\frac{|H_0|}{\chi(1)}\Big)^{2g-2}\prod_{i=1}^{2k}
\frac{|C_i|\tilde{\chi}(C_i)}{\chi(1)}.
\end{split}
\end{equation}
\end{Prop}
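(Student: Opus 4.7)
\smallskip

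My plan is to run the classical Frobenius-type argument on the whole group $H$ (not just on $N$) and to exploit the structure of $\Irr(H)$ relative to $\Irr(N)$ described just above the statement.

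\smallskip

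First, I would observe that although $A_i,B_i\in N$ and $X_j\in H\setminus N$, the product $\prod_i[A_i,B_i]\prod_j X_j$ does lie in $N$ (because there are an even number $2k$ of factors in $H\setminus N$), so orthogonality in $H$ gives the starting identity
\begin{equation*}
\#\bigl\{\cdots\bigr\}=\frac{1}{|H|}\sum_{\rho\in\Irr(H)}\chi_{\rho}(1)\sum\chi_{\rho}\!\left(\textstyle\prod_i[A_i,B_i]\prod_j X_j\right).
\end{equation*}
For each irreducible $\rho$ of $H$ I would introduce, in the underlying space $V$ of $\rho$, the class sums $T_j:=\sum_{X\in C_j}\rho(X)$ and the operator $M_1:=\sum_{A_i,B_i\in N}\rho\bigl(\prod_i[A_i,B_i]\bigr)$. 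The inner sum above is then $\tr(M_1\,T_1\cdots T_{2k})$.

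\smallskip

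Next I would split $\Irr(H)$ into the two families recalled in the paragraph preceding the statement: extensions $\tilde\chi$ of characters $\chi\in\Irr(N)^{\sigma}$, and characters $\rho$ induced from $\chi\in\Irr(N)\setminus\Irr(N)^{\sigma}$, which vanish on $H\setminus N$. For the second family, I claim the contribution is zero. The point is that each $T_j$ commutes with $\rho|_N$ (because $C_j$ is $N$-stable), and $\rho|_N=\chi\oplus\chi^\sigma$ with $\chi\not\cong\chi^\sigma$, so by Schur's lemma $T_j$ preserves the two isotypic components $V_{\chi}$, $V_{\chi^\sigma}$. On the other hand, each $\rho(X)$ with $X\in H\setminus N$ intertwines $\chi$ and $\chi^\sigma$ and therefore sends $V_{\chi}\leftrightarrow V_{\chi^\sigma}$, so $T_j$ is off-diagonal; combined, $T_j=0$, and hence $\tr(M_1\,T_1\cdots T_{2k})=0$.

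\smallskip

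For the first family ($\rho=\tilde\chi$ with $\chi\in\Irr(N)^\sigma$), the restriction $\tilde\chi|_N=\chi$ is irreducible, so Schur's lemma gives
\begin{equation*}
T_j=\frac{|C_j|\,\tilde\chi(C_j)}{\chi(1)}\cdot I,\qquad \sum_{A,B\in N}\rho([A,B])=\frac{|N|^{2}}{\chi(1)^{2}}\cdot I,
\end{equation*}
the second coming from the standard two-step Schur computation. Iterating the commutator identity $g$ times yields $M_1=\dfrac{|N|^{2g}}{\chi(1)^{2g}}\cdot I$, and therefore
\begin{equation*}
\tr(M_1 T_1\cdots T_{2k})=\chi(1)\cdot\frac{|N|^{2g}}{\chi(1)^{2g}}\cdot\prod_{j=1}^{2k}\frac{|C_j|\,\tilde\chi(C_j)}{\chi(1)}.
\end{equation*}
Finally, each $\chi\in\Irr(N)^\sigma$ admits exactly two extensions to $H$ differing by a sign on $H\setminus N$; since there are $2k$ factors from $H\setminus N$, the product $\prod_j\tilde\chi(C_j)$ is unchanged under $\tilde\chi\mapsto\tilde\chi'$, so the two extensions contribute equally. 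Summing over both extensions produces a factor $2$ that cancels with $|H|=2|N|$ in the prefactor $1/|H|$, and collecting everything gives exactly the claimed formula (\ref{Frob-Form-eq}).

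\smallskip

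No single step is really an obstacle, but the one that requires the most care is the vanishing of $T_j$ in the induced (non-extension) case, i.e.\ verifying that the two constraints ``commutes with $\rho|_N$'' and ``is a sum of operators swapping the two isotypic components'' force $T_j=0$; everything else is bookkeeping with Schur's lemma and the multiplicativity of the commutator identity.
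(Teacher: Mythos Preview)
Your proof is correct and follows the same Frobenius-type strategy as the paper: expand the count over $\Irr(H)$, handle the extensions $\tilde\chi$ of $\chi\in\Irr(N)^\sigma$ via Schur's lemma (two extensions per $\chi$, giving the factor $2$ that cancels $|H|=2|N|$), and discard the induced characters. The only difference is packaging. The paper sets up a convolution/Fourier-transform formalism on $\mathcal{C}(H)$, writes the count as $(n^g\ast 1_{C_1}\ast\cdots\ast 1_{C_{2k}})(1)$, quotes $\mathcal{F}(n^1)(\chi)=(|N|/\chi(1))^2$ from \cite{HLR}, and kills the induced terms simply by invoking the fact (stated just before the proposition) that such $\chi_H$ vanish on $H\setminus N$, so $\chi_H(C_j)=0$. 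You instead work directly with the operators $M_1,T_j$ and, for the induced case, prove the stronger statement $T_j=0$ by playing the block-diagonal constraint (commutes with $\rho|_N$) against the block-antidiagonal constraint (each $\rho(X)$ swaps the isotypic pieces). That argument is a nice self-contained replacement for the citation, but it is more than strictly needed: the paper's one-line ``$\chi_H(C_j)=0$'' already suffices once the count is written as $\frac{1}{|H|}\sum_{\chi_H}\chi_H(1)^2(\mathcal{F}(n^1)(\chi_H))^g\prod_j\frac{|C_j|\chi_H(C_j)}{\chi_H(1)}$.
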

Note that the value of the counting formula is independent of the choices of $\tilde{\chi}$ due to the product of $2k$ terms. The proof is given below.

\subsubsection{}
Let us prepare some notations of finite groups. 

Denote by $\mathcal{C}(H)$ the vector space of complex valued class functions on $H$. Put $\hat{H}=\Irr(H)$. Denote by $\mathcal{C}(\hat{H})$ the vector space of linear combinations $\sum_{\chi\in\hat{H}}a_{\chi}\chi$ of irreducible characters, which is the same as $\mathcal{C}(H)$ but will be equipped with a different product operation.

The convolution product $\ast$ in $\mathcal{C}(H)$ is defined  by
\begin{equation}
(f_1\ast f_2)(x)=\sum_{yz=x}f_1(y)f_2(z),\quad f_1,f_2\in\mathcal{C}(H).
\end{equation}

The dot product $\cdot$ in $\mathcal{C}(\hat{H})$ is defined by
\begin{equation}
F_1\cdot F_2(\chi)=F_1(\chi)F_2(\chi),\quad F_1,F_2\in\mathcal{C}(\hat{H}),
\end{equation}
i.e. the coefficient of $\chi$ in the product is the product of the coefficients of $\chi$. 

The Fourier transform $\mathcal{F}:\mathcal{C}(H)\rightarrow\mathcal{C}(\hat{H})$ is defined by
\begin{equation}
\mathcal{F}(f)(\chi)=\sum_{h\in H}\frac{f(h)\chi(h)}{\chi(1)},\quad f\in\mathcal{C}(H).
\end{equation}

We also have the transform $\mathbb{F}:\mathcal{C}(\hat{H})\rightarrow\mathcal{C}(H)$ defined by
\begin{equation}
\mathbb{F}(F)(h)=\sum_{\chi\in\hat{H}}F(\chi)\chi(1)\overline{\chi(h)}.
\end{equation}

We have
\begin{equation}
\mathbb{F}\circ\mathcal{F}=|H|\cdot \Id_{\mathcal{C}(H)},\quad \mathcal{F}\circ\mathbb{F}=|H|\cdot\Id_{\mathcal{C}(\hat{H})}.
\end{equation}

They are compatible with the product operations:
\begin{equation}\label{prod/trans}
\mathcal{F}(f_1)\cdot\mathcal{F}(f_2)=\mathcal{F}(f_1\ast f_2),\quad \mathbb{F}(F_1)\ast\mathbb{F}(F_2)=|H|\cdot\mathbb{F}(F_1\cdot F_2).
\end{equation}

From the equality $\mathbb{F}\circ\mathcal{F}=|H|\cdot \Id_{\mathcal{C}(H)}$ and the definition of the transforms, we deduce that 
\begin{equation}\label{f(1)}
f(1)=\frac{1}{|H|}\sum_{\chi\in\hat{H}}\chi(1)^2\mathcal{F}(f)(\chi).
\end{equation}
for any class function $f$.

\subsubsection{}
We define the function $\mathfrak{n}^g:H\rightarrow\mathbb{C}$ by
\begin{equation}
\mathfrak{n}^g(x)=\#\left\{(A_1,B_1,\cdots,A_g,B_g)\in H_0^{2g}\Big\arrowvert\prod_{i=1}^g[A_i,B_i]=x\right\}.
\end{equation}
We find that $\mathfrak{n}^g\in\mathcal{C}(H)$(identically zero on $H\setminus H_0$) and that $\mathfrak{n}^g=\mathfrak{n}^1\ast\cdots\ast \mathfrak{n}^1$. Denote by $1_{C_i}$ the characteristic function of the class $C_i$. Then
\begin{equation}
\begin{split}
&\#\left\{((A_i,B_i)_{1\le i\le g},(X_j)_{1\le j\le 2k})\in H_0^{2g}\times \prod_{j=1}^{2k}C_j\Big\arrowvert\prod_{i=1}^{g}[A_i,B_i]\prod_{j=1}^{2k}X_j=1\right\}\\
=&(\mathfrak{n}^g\ast 1_{C_1}\ast\cdots\ast 1_{C_{2k}})(1).
\end{split}
\end{equation}
By (\ref{prod/trans}) and (\ref{f(1)}), we have
\begin{equation}
\begin{split}
&(\mathfrak{n}^g\ast 1_{C_1}\ast\cdots\ast 1_{C_{2k}})(1)\\
=&\frac{1}{|H|}\sum_{\chi\in\Irr(H)}\chi(1)^2(\mathcal{F}(\mathfrak{n}^1)(\chi))^g\prod_{i=1}^{2k}\frac{|C_i|\chi(C_i)}{\chi(1)}.
\end{split}
\end{equation}

It is known (\cite[Lemma 3.1.3]{HLR}) that
\begin{equation}
\mathcal{F}(\mathfrak{n}^1)(\chi)=
\big(\frac{|H_0|}{\chi(1)}\big)^2.
\end{equation}

Therefore,
\begin{equation}
\begin{split}
&\frac{1}{|H|}\sum_{\chi_H\in\Irr(H)}\chi_H(1)^2(\mathcal{F}(\mathfrak{n}^1)(\chi_H))^g\prod_{i=1}^{2k}\frac{|C_i|\chi_H(C_i)}{\chi_H(1)}\\
=&|H_0|\sum_{\chi\in\Irr(H_0)^{\sigma}}\left(\frac{|H_0|}{\chi(1)}\right)^{2g-2}\prod_{i=1}^{2k}\frac{|C_i|\tilde{\chi}(C_i)}{\chi(1)},
\end{split}
\end{equation}
where we have used the fact that $\chi_H$ vanishes on $H\setminus H_0$ if it is not an extension of any element of $\Irr(H_0)$. This completes the proof of Proposition \ref{Frob-Form}.

\section{The Group $\GL_n(q)\lb\sigma\rb$}\label{Section-GLnsigma}
In the rest of the article we will write $G=\GL_n$. We will denote by $T$ the maximal torus of diagonal matrices and by $B$ the Borel subgroup of upper triangular matrices. The Weyl group of $G$ defined by $T$ will be denoted by $W=W_G(T)$. We understand that standard Levi subgroups are defined with respect to $T$ and $B$. Recall that $N=[n/2]$.
\subsection{The Group $\GL_n\lb\sigma\rb$ over $\kk$}\hfill

In this subsection, we work over an algebraically closed field $\kk$. 
\subsubsection{}\label{autostand}
Denote by $J_n$ the matrix
$$
(J_n)_{ij}=\delta_{i,n+1-j}
=
\left(
\begin{array}{ccc}
& & 1\\
&\reflectbox{$\ddots$}&\\
1& &
\end{array}
\right)
$$
If $n$ is even, put $t_0=\diag(a_1,\ldots,a_n)$ with $a_i=1$ if $i\le N$ and $a_i=-1$ otherwise. Put $J'_n=t_0J_n$. Write 
\begin{equation}
\mathscr{J}_n:=
\begin{cases}
J'_n & \text{if $n$ is even}\\
J_n & \text{if $n$ is odd}
\end{cases},
\quad
\mathscr{J}'_n:=J_n \text{ if $n$ is even}
\end{equation}
Define $\sigma\in\Aut G$ as sending $g$ to $\mathscr{J}_ng^{-t}\mathscr{J}^{-1}_n$, where $g^{-t}$ is the transpose-inverse of $g$, and define $\sigma'\in\Aut G$ by replacing $\mathscr{J}_n$ with $\mathscr{J}'_n$ in the definition of $\sigma$. They are exterior involutions of $G$. Their fixed points in $G$ are described as follows:
\begin{equation}
\text{ if $n=2N$, then }
\begin{cases}
(G^{\sigma})^{\circ}\cong\Sp_{2N}\\
(G^{\sigma'})^{\circ}\cong\SO_{2N}
\end{cases},\quad
\text{ if $n=2N+1$, then }
(G^{\sigma})^{\circ}\cong\SO_{2N+1}.
\end{equation}
We say that an automorphism is of symplectic type or orthogonal type according to the type of its centraliser.

\subsubsection{}\label{barG}
An involution $\tau$ of $G$ defines a semi-direct product $G\rtimes\lb\tau\rb$. When $n$ is even, there are two $\PGL_n$-conjugacy classes of exterior involutions in $\Aut G$, represented by $\sigma$ and $\sigma'$ respectively (\cite[Lemma 2.9]{LiSe}). We will write $\leftidx{^s\!}{\bar{G}}=G\rtimes\lb\sigma\rb$ and $\leftidx{^o\!}{\bar{G}}=G\rtimes\lb\sigma'\rb$, in order to indicate that the type of the defining automorphism is symplectic or orthogonal. These two semi-direct products are not isomorphic over any field of characteristic different from $2$ (This follows from \cite[Theorem 1.12]{Shu1}).  It is convenient to regard $\sigma$ also as an element of $\leftidx{^o\!}{\bar{G}}$, identified with $t_0\sigma'$, since $\sigma$ and $t_0\sigma'$ induce the same automorphism of $G$. Note however that in $\leftidx{^o\!}{\bar{G}}$ we have $\sigma^2=-1$. Therefore we write $G\lb\sigma\rb$ instead of $G\rtimes\lb\sigma'\rb$. We will write $\bar{G}=G\lb\sigma\rb$ if $n$ is odd or if there is no need to distinguish $\leftidx{^o\!}{\bar{G}}$ and $\leftidx{^s\!}{\bar{G}}$. 

Since $\sigma$ normalises $T$ and $B$, it is semi-simple as an element of $\bar{G}$. Moreover, according to \cite[Proposition 1.22]{DM94}, $\sigma$ is a quasi-central element of $\bar{G}$. It is easy to see that the action of $\sigma$ on $G$ commutes with $F$, and so $F$ can be extended to $\bar{G}$ in such a way that $\sigma$ is an $F$-stable element.

\subsubsection{}\label{s.s.GLnsigma}
Now suppose $\ch\kk\ne 2$. For the moment, we do not distinguish $\leftidx{^o\!}{\bar{G}}$ and $\leftidx{^s\!}{\bar{G}}$. We apply Proposition \ref{DM1.16} to the connected component $G^1=G\sigma$. 

The subtorus $(T^{\sigma})^{\circ}$ consists of the matrices
\begingroup
\allowdisplaybreaks
\begin{align}
\diag(a_1,a_2,\ldots,a_N,a_N^{-1},\ldots,a_2^{-1},a_1^{-1}), &\text{ if $n=2N$,}\\
\diag(a_1,a_2,\ldots,a_N,1,a_N^{-1},\ldots,a_2^{-1},a_1^{-1}), &\text{ if $n=2N+1$,}
\end{align}
\endgroup
with $a_i\in \kk^{\ast}$ for all $i$, and the commutator $[T,\sigma]$ consists of the matrices
\begingroup
\allowdisplaybreaks
\begin{align}
\diag(b_1,b_2,\ldots,b_N,b_N,\ldots,b_2,b_1),&\text{ if $n=2N$,}\\
\diag(b_1,b_2,\ldots,b_N,b_{N+1},b_N,\ldots,b_2,b_1),&\text{ if $n=2N+1$,}
\end{align}
\endgroup
with $b_i\in \kk^{\ast}$ for all $i$. So the elements of  $[T,\sigma]\cap(T^{\sigma})^{\circ}$ are the matrices 
\begingroup
\allowdisplaybreaks
\begin{align}
\diag(e_1,e_2,\ldots,e_N,e_N,\ldots,e_2,e_1),&\text{ if $n=2N$,}\\
\diag(e_1,e_2,\ldots,e_N,1,e_N,\ldots,e_2,e_1),&\text{ if $n=2N+1$,}
\end{align}
\endgroup 
with $e_i=\pm1$ for all $i$. 

The parametrisation of semi-simple conjugacy classes in $G\sigma$ is then given as follows. Denote by $\hat{\kk}$ the set of $(\bs\mu_2\times\bs\mu_2)$-orbits in $\kk^{\ast}$ for the action:
\begin{equation}
\begin{split}
(-1,1):a\mapsto a^{-1},\quad
(1,-1):a\mapsto -a.
\end{split}
\end{equation} 
Then the semi-simple conjugacy classes in $G\sigma$ are parametrised by the $\mathfrak{S}_N$-orbits in $\hat{\kk}{}^N$, or rather, the $N$-tuples $(\hat{a}_1,\ldots,\hat{a}_N)$, $\hat{a}_i\in\hat{\kk}$, up to permutation, whether $n=2N$ or $n=2N+1$. We thus regard the elements of $\hat{\kk}$ as the eigenvalues of a semi-simple element. For example, we may say that $\sigma$ has eigenvalues $(\hat{1},\ldots,\hat{1})$. Beware that the eigenvalues depend on the choice of $\sigma$ in Proposition \ref{DM1.16}. For example, if $n$ is even and if we had used $\sigma'$ in Proposition \ref{DM1.16}, then $\sigma$ would have eigenvalues $(\hat{\ii},\ldots,\hat{\ii})$. In the rest of the article, we will always use $\sigma\in G^1$ when we specify a semi-simple element in terms of elements of $\hat{\kk}$.
\begin{Lem}\label{C-C^2}
If $C\subset G\sigma$ is a $G$-conjugacy class, we write $C^2=\{g^2\mid g\in C\}$, which is a well-defined conjugacy class in $G$. Then,
\begin{itemize}
\item[(i)] $C$ is semi-simple if and only if $C^2$ is semi-simple;
\item[(ii)] The map $C\mapsto C^2$ defines an injection from the set of semi-simple $G$-conjugacy classes in $G\sigma$ to the set of $\sigma$-stable semi-simple conjugacy classes in $G$.
\end{itemize}
\end{Lem}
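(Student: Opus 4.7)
Both parts rely on the parametrization of semi-simple $\GL_n$-classes in $\GL_n\sigma$ by $\mathfrak{S}_N$-orbits in $\hat{\kk}^N$ from \S\ref{s.s.GLnsigma}, combined with the Jordan decomposition in the non-connected group $\bar{G}$ for part (i) and an explicit computation of the squaring map in eigenvalue coordinates for part (ii).

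For part (i), I would invoke the Jordan decomposition $g = g_s g_u$ in $\bar{G}$. Since $\ch\kk \ne 2 = |\bar{G}/G^{\circ}|$, all unipotent elements of $\bar{G}$ lie in $G^{\circ} = \GL_n$, so $g_u \in \GL_n$ and hence $g_s \in \GL_n\sigma$. Then $g^2 = g_s^2 g_u^2$ is a decomposition into commuting semi-simple and unipotent parts, so by uniqueness of Jordan decomposition it is the Jordan decomposition of $g^2$. Consequently ``$g^2$ is semi-simple'' is equivalent to $g_u^2 = 1$; in characteristic $\ne 2$ this forces $g_u = 1$, since $(g_u-1)(g_u+1)=0$ with $g_u+1$ invertible when $g_u$ is unipotent. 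The converse direction is immediate.

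For part (ii), by Proposition \ref{DM1.16} every semi-simple class has a representative $g = x\sigma$ with $x \in (T^{\sigma})^{\circ}$. Since $\sigma(x)=x$ by definition of $(T^{\sigma})^{\circ}$, this gives $g^2 = x\sigma(x)\sigma^2 = \epsilon\, x^2$, where $\epsilon := \sigma^2 \in \{\pm I\}\subset Z_{\GL_n}$ is a fixed central scalar depending only on whether one works in $\leftidx{^s\!}{\bar{G}}$, $\leftidx{^o\!}{\bar{G}}$, or odd-rank case. Reading off eigenvalues, the multiset of $g^2$ is $\{\epsilon a_i^2,\epsilon a_i^{-2}\}_{i=1}^N$ (with an extra central entry $\epsilon$ when $n$ is odd), and each pair $\{\epsilon a_i^2,\epsilon a_i^{-2}\}$ is of the form $\{b,b^{-1}\}$ because $(\epsilon a_i^2)(\epsilon a_i^{-2})=\epsilon^2=1$. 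Hence $C^2$ is semi-simple with inverse-closed eigenvalue multiset, proving that it is a $\sigma$-stable semi-simple $\GL_n$-class.

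For injectivity, the key observation is that the induced squaring map $\hat{\kk} \to \check{\kk} := \kk^{\ast}/(b\sim b^{-1})$ sending $\hat{a} \mapsto \check{a^2}$ is injective: $\check{a^2}=\check{b^2}$ forces $a^2 = b^{\pm 2}$, hence $a \in \{\pm b,\pm b^{-1}\}$, i.e.\ $\hat{a} = \hat{b}$. Given $C^2$, I would recover its eigenvalue multiset, divide by the known scalar $\epsilon$, and uniquely (up to $\mathfrak{S}_N$) decompose the resulting inverse-closed multiset into $N$ pairs $\{b_i,b_i^{-1}\}$, the even multiplicities at $\pm 1$ being automatic from the explicit form of $x^2$. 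This produces an $\mathfrak{S}_N$-orbit in $\check{\kk}^N$, which by injectivity of the squaring map on $\hat{\kk}$ lifts uniquely to the orbit $(\hat{a}_1,\ldots,\hat{a}_N)$ parametrizing $C$. The main bookkeeping subtlety is the global central twist by $\epsilon$ in the orthogonal case, but since it is a single fixed scalar across all classes it causes no real obstruction.
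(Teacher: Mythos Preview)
Your proposal is correct and takes essentially the same approach as the paper: the paper's proof reads in its entirety ``Part (i) is true for characteristic reason. Part (ii) follows from the above parametrisation,'' and you have simply unpacked both of these hints, using the Jordan decomposition and the $\ch\kk\ne 2$ hypothesis for (i), and the $\mathfrak{S}_N$-orbit parametrisation of semi-simple classes by $\hat{\kk}^N$ together with the injectivity of $\hat{a}\mapsto[a^2]$ for (ii). The handling of the central scalar $\epsilon=\sigma^2$ is a detail the paper leaves implicit (and in any case the paper later restricts to $\leftidx{^s\!}{\bar{G}}$ for even $n$, where $\epsilon=1$), but your treatment of it is fine.
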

\begin{proof}
Part (i) is true for characteristic reason. Part (ii) follows from the above parametrisation.
\end{proof}

\subsubsection{}\label{sigstaLevi}
Let $P\subset G$ be a parabolic subgroup and let $L\subset P$ be a Levi factor. We will be interested in those $L\subset P$ such that $N_{\bar{G}}(L,P)$ meets the connected component $G\sigma$. According to \S \ref{Para-of-G}, this is to consider the $\sigma$-stable Levi factors of $\sigma$-stable parabolic subgroups. If $P$ is a $\sigma$-stable standard parabolic subgroup with respect to $B$, then its unique Levi factor containing $T$ is of the form

\begin{equation}\label{LI}
  \renewcommand{\arraystretch}{1.3}
\left(\!
  \begin{array}{cccccccc}
    \multicolumn{1}{c|}{L_1} & & & & & & &\\
    \cline{1-1}
    & \ddots & & & & & &\\
    \cline{3-3}
    & & \multicolumn{1}{|c|}{L_s\!} &  \multicolumn{2}{c}{} & & & \\
    \cline{3-5}
    & & & \multicolumn{2}{|c|}{} & \multirow{2}*{} & & \\
    & & & \multicolumn{2}{|c|}{\raisebox{.6\normalbaselineskip}[0pt][0pt]{$~\GL_{n_0}~$}} & & & \\
    \cline{4-6}
    & & & & & \multicolumn{1}{|c|}{L_s\!} & & \\
    \cline{6-6}
    & & & & & & \ddots & \\
    \cline{8-8}
    & & & & & & &\multicolumn{1}{|c}{L_1\!\!}\\
     \end{array}
  \right),
\end{equation}
where $L_i\cong\GL_{n_i}$, for some integers $n_i$, $1\le i\le s$. Therefore, up to the conjugation by $G$, we only need to consider these Levi subgroups. Note that for these groups, $N_{\bar{G}}(L,P)=L\sqcup L\sigma$.

\subsection{The Group $\GL_n\lb\sigma\rb$ over $\mathbb{F}_q$}\hfill

In the the rest of this section, we will fix an odd prime power $q$, $\kk$ will denote an algebraic closure of $\mathbb{F}_q$ and $G=\GL_n$ over $\kk$ is equipped with the Frobenius endomorphism $F$ that sends each entry of a matrix to its $q$-th power. Taking the fixed points of $F$ in $G\lb\sigma\rb$, we get a finite group $G(q)\lb\sigma\rb$. Note that $F$ acts trivially on $N_G(L)/L$ if $L$ is a Levi subgroup containing $T$, in particular, it acts trivially on $W$.

\subsubsection{}\label{GLnSigma-F-Class}
By the theorem of Lang-Steinberg, a $G$-conjugacy class in $G\sigma$ contains an element of $G^F\sigma$ if and only if it is $F$-stable. Let $C\subset G\sigma$ be a semi-simple conjugacy class with a representative $t\sigma\in(T^{\sigma})^{\circ}\sigma$. According to the parametrisation of semi-simple conjugacy classes, $C$ is $F$-stable if and only if there exist $w\in W^{\sigma}$ and $s\in [T,\sigma]\cap(T^{\sigma})^{\circ}$ such that $F(t)=wtw^{-1}s$ (See \S \ref{s.s.GLnsigma}). We will however only be concerned with those semi-simple conjugacy classes that have representatives in $(T^{\sigma})^{\circ F}\sigma$. That is, $w=1$ and $s=1$ in the above equation. Now let $C$ be such a class with representative $t\sigma\in (T^{\sigma})^{\circ F}\sigma$. If we represent $C$ by the $N$-tuple $(\hat{a}_1,\ldots,\hat{a}_N)$, $\hat{a}_i\in\hat{\kk}$, then $a_i\in\mathbb{F}_q^{\ast}$ for all $1\le i\le N$. 

\begin{Lem}(\cite[Lemma 6.2.1]{Shu2})\label{Shu2Lem6.2.1}
Denote by $N_+$ (resp. $N_-$) the multiplicity of $\hat{1}$ (resp. $\hat{\mathfrak{i}}$) in $(\hat{a}_1,\ldots,\hat{a}_N)$. Then the centraliser of $t\sigma$ in $G$ is isomorphic to
\begingroup
\allowdisplaybreaks
\begin{align*}
\Sp_{2N_+}\times\Ort_{2N_-}\times\prod_i\GL_{n_i} &\text{ if $n$ is even and,}\\
\Ort_{2N_++1}\times\Sp_{2N_-}\times\prod_i\GL_{n_i} &\text{ if $n$ is odd,}
\end{align*}
\endgroup
for some integers $n_i$.
\end{Lem}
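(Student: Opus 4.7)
The plan is to realise $C_G(t\sigma)$ as the isometry group of an explicit bilinear form on $V=\kk^n$ and then decompose this form block by block according to the eigenspaces of $(t\sigma)^2=t^2$.

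First, I would rewrite the centralizer condition. An element $g\in G$ commutes with $t\sigma$ in $\bar{G}$ if and only if $\sigma(g)=t^{-1}gt$. Substituting $\sigma(g)=\mathscr{J}_n g^{-t}\mathscr{J}_n^{-1}$ and using the $\sigma$-fixedness of $t$ (which gives $t\mathscr{J}_n=\mathscr{J}_n t^{-1}$), the condition rearranges to $gMg^t=M$ where $M:=t\mathscr{J}_n$. So $C_G(t\sigma)$ is the full isometry group of the bilinear form on $V$ with matrix $M$.

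The next step is the decomposition $V=\bigoplus_\mu W_\mu$ into $t^2$-eigenspaces (where $W_\mu$ should not be confused with the Weyl group). Since $t$ is itself an isometry of $\mathscr{J}_n$, the form $\mathscr{J}_n$ (and hence $M$) annihilates $W_{\mu_1}\times W_{\mu_2}$ unless $\mu_1\mu_2=1$. Moreover, any $g\in C_G(t\sigma)$ commutes with $(t\sigma)^2=t^2$, so it preserves each $W_\mu$. The isometry group therefore factors as a product indexed by unordered pairs $\{\mu,\mu^{-1}\}$. For $\mu\neq\pm 1$ the pairing $M\colon W_\mu\times W_{\mu^{-1}}\to\kk$ is perfect, so an isometry is freely parametrised by $g|_{W_\mu}\in\GL(W_\mu)$ with $g|_{W_{\mu^{-1}}}$ determined by the adjoint condition. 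Matching eigenvalue multiplicities with the combinatorics of \S \ref{s.s.GLnsigma} shows this contributes one factor $\GL_{n_i}$ per orbit $\hat a\neq\hat 1,\hat{\ii}$ of multiplicity $n_i$ in the tuple.

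The decisive step is the analysis of the self-paired pieces $W_1$ and $W_{-1}$. Writing $\mathscr{J}_n^t=\epsilon\mathscr{J}_n$ (so $\epsilon=-1$ in the symplectic case $n$ even and $\epsilon=+1$ in the symmetric case $n$ odd), a short calculation using $\mathscr{J}_n t=t^{-1}\mathscr{J}_n$ yields $M^t|_{W_\mu}=\epsilon\mu^{-1}M|_{W_\mu}$. Hence on $W_1$ the form $M$ has the same symmetry type as $\mathscr{J}_n$, while on $W_{-1}$ the type is flipped. This gives, on $W_1$, the symplectic group $\Sp_{2N_+}$ when $n$ is even and the orthogonal group $\Ort_{2N_++1}$ when $n$ is odd; and, on $W_{-1}$, the orthogonal group $\Ort_{2N_-}$ when $n$ is even and the symplectic group $\Sp_{2N_-}$ when $n$ is odd. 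The dimensions follow from the block decompositions $W_1=V_1\oplus V_{-1}$ and $W_{-1}=V_{\ii}\oplus V_{-\ii}$ into $t$-eigenspaces, together with the parametrisation recalled in \S \ref{s.s.GLnsigma} (which lets us choose the $t$-eigenvalue representatives of $\hat 1$ and $\hat{\ii}$ to be $\pm 1$ and $\pm\ii$ respectively). The main technical obstacle is getting this symmetry flip correct: it is precisely what forces the alternation between symplectic and orthogonal factors, and explains why the roles of $N_+$ and $N_-$ swap between the $n$ even and $n$ odd cases in the statement.
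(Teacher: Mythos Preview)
The paper does not give its own proof of this lemma; it merely cites \cite[Lemma 6.2.1]{Shu2}. Your argument is correct and is in fact the standard one: realising $C_G(t\sigma)$ as the isometry group of the bilinear form $M=t\mathscr{J}_n$, decomposing $V$ into $t^2$-eigenspaces, and reading off $\GL$-factors from the hyperbolic pairs $W_\mu\oplus W_{\mu^{-1}}$ with $\mu\ne\pm1$ and the classical factors from the self-paired blocks $W_{\pm1}$. One cosmetic point: your formula $M^t|_{W_\mu}=\epsilon\mu^{-1}M|_{W_\mu}$ should read $\epsilon\mu$ (since $M$ carries $W_\mu$ into $W_{\mu^{-1}}$, on which $t^{-2}$ acts by $\mu$), but as you only use it for $\mu=\pm1$ this has no effect on the conclusion.
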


In general $C^F$ is not a single $G^F$-conjugacy class. By \cite[Proposition 4.2.14]{DM20}, the number of $G^F$-conjugacy classes contained in $C^F$ is equal to the number of connected components of $C_G(t\sigma)$. If $n$ is odd, then $C_G(t\sigma)$ always has two connected components. The $G^F$-class of $t\sigma$ has a representative of the form
\begin{equation}\label{C_+}
\diag(a_1,a_2,\ldots,a_N,1,a_N^{-1},\ldots,a_2^{-1},a_1^{-1})\sigma,
\end{equation}
with every $a_i\in\mathbb{F}_q$, while the other $G^F$-class in $C^F$ is represented by
\begin{equation}\label{C_-}
\diag(a_1,a_2,\ldots,a_N,b,a_N^{-1},\ldots,a_2^{-1},a_1^{-1})\sigma,
\end{equation}
with $b\in\mathbb{F}_q^{\ast}\setminus(\mathbb{F}_q^{\ast})^2$.

\subsubsection{}\label{GLnSigma-F-Levi}
Let $I$ be a subset of $\Delta(T,B)$ and let $L_I$ be the corresponding standard Levi subgroup. There are positive integers $\{n_i\mid i\in\Gamma_I\}$ indexed by a finite set $\Gamma_I$, such that $L_I\cong\prod_{i\in\Gamma_I}\GL_{n_i}$. Then $Z_{L_I}\cong \mathbb{G}_m^{\Gamma_I}$, i.e. direct product of copies of $\mathbb{G}_m$ indexed by $\Gamma_I$. For any $r\in\mathbb{Z}_{>0}$, put $\Gamma_{I,r}=\{i\in\Gamma_I\mid n_i=r\}$ and put $N_r=|\Gamma_{I,r}|$. Then the $G$-conjugacy class of $L_I$ is uniquely determined by the sequence $(N_r)_{r\in\mathbb{Z}_{>0}}$, and $W_G(L_I)\cong\prod_r\mathfrak{S}_{N_r}$.

Suppose that $I$ is $\sigma$-stable, so that the associated $\sigma$-stable standard Levi subgroup $L_I$ is of the form (\ref{LI}). Now the action of $\sigma$ on $Z_{L_I}$ induces an involution on $\Gamma_I$. There is at most one element of $\Gamma_I$ fixed by $\sigma$, and we denote it by $0$ so that it corresponds to the direct factor $\GL_{n_0}$ in (\ref{LI}). In case no element of $\Gamma_I$ is fixed by $\sigma$, we define $n_0=0$. For any $r\in\mathbb{Z}_{>0}$, put $N_r'=N_r/2$ if $n_0\ne r$ and $N_r'=(N_r-1)/2$ if $n_0=r$, then $W_{G}(L_I)^{\sigma}\cong\prod_r\mathfrak{W}_{N_r'}$. 

The action of an element of $W_{G}(L_I)^{\sigma}$ on $L_I$ (up to an inner automorphism of $L_I$) can be easily visualised. Let $r\in\mathbb{Z}_{>0}$ and let $\tau$ be a signed cycle of size $d$ in $\mathfrak{W}_{N_r'}$. Then it acts on a subgroup of $L_I$ that is isomorphic to $(\GL_r\times\GL_r)^d$. Depending on whether $\tau$ is a positive cycle or a negative cycle (see \S \ref{signed-partitions}), its action can be schematically described as 
\begin{equation}\label{ActionSymetrique}
  \renewcommand{\arraystretch}{1.3}
\left(\!
  \begin{array}{cccccc}
    \multicolumn{1}{c|}{\rn{1}{\!\substack{\GL_{r}}}\!\!} & & & & & \\
    \cline{1-1}
    & \rn{2}{\ddots} & & & & \\
    \cline{3-3}
    & & \multicolumn{1}{|c|}{\rn{3}{\!\substack{\GL_{r}}}\!\!} &  \multicolumn{2}{c}{} & \\
    \cline{3-4}
    & & &  \multicolumn{1}{|c|}{\rn{4}{\!\substack{\GL_{r}}}\!\!} & & \\
    \cline{4-4}
    & & & & \rn{5}{\ddots} & \\
    \cline{6-6}
    & & & & & \multicolumn{1}{|c}{\rn{6}{\!\!\substack{\GL_{r}}}\!\!}\\
     \end{array}
  \right),\quad\text{ and }
  \left(\!
  \begin{array}{cccccc}
    \multicolumn{1}{c|}{\rn{1'}{\!\substack{\GL_{r}}}\!\!} & & & & & \\
    \cline{1-1}
    & \rn{2'}{\ddots} & & & & \\
    \cline{3-3}
    & & \multicolumn{1}{|c|}{\rn{3'}{\!\substack{\GL_{r}}}\!\!} &  \multicolumn{2}{c}{} & \\
    \cline{3-4}
    & & &  \multicolumn{1}{|c|}{\rn{4'}{\!\substack{\GL_{r}}}\!\!} & & \\
    \cline{4-4}
    & & & & \rn{5'}{\ddots} & \\
    \cline{6-6}
    & & & & & \multicolumn{1}{|c}{\rn{6'}{\!\!\substack{\GL_{r}}}\!\!}\\
     \end{array}
  \right),
  \begin{tikzpicture}[overlay,remember picture]
 \node [below= -0.1 of 1] (1a) {};
 \node [right= 0 of 1] (1b) {};
 \node [left=-0.2 of 2] (2a') {};
 \node [below=-0.15 of 2a'] (2a) {};
 \node [below= -0.2 of 2] (2b') {};
 \node [left= -0.15 of 2b'] (2b) {};
 \node [left= 0 of 3] (3a') {};
 \node [above= -0.1 of 3a'] (3a) {};
 \node [above= 0.1 of 3] (3b) {};
 
 \node [below= -0.1 of 4] (4a) {};
 \node [right= 0 of 4] (4b') {};
 \node [above= -0.1 of 4b'] (4b) {};
 \node [left=-0.2 of 5] (5a') {};
 \node [below=-0.15 of 5a'] (5a) {};
 \node [below= -0.2 of 5] (5b') {};
 \node [left= -0.2 of 5b'] (5b) {};
 \node [left= 0 of 6] (6a') {};
 \node [above= -0.1 of 6a'] (6a) {};
 \node [above= 0.1 of 6] (6b) {};
 
 \node [below= -0.1 of 1'] (1'a) {};
 \node [right= 0 of 1'] (1'b) {};
 \node [left=-0.2 of 2'] (2'a') {};
 \node [below=-0.15 of 2'a'] (2'a) {};
 \node [below= -0.2 of 2'] (2'b') {};
 \node [left= -0.15 of 2'b'] (2'b) {};
 \node [left= 0 of 3'] (3'a') {};
 \node [above= -0.1 of 3'a'] (3'a) {};
 \node [below= 0 of 3'] (3'b) {};
 
 \node [right= -0.1 of 4'] (4'a) {};
 \node [above= 0 of 4'] (4'b) {};
 \node [above= -0.45 of 5'] (5'a') {};
 \node [right= -0.2 of 5'a'] (5'a) {};
 \node [right= -0.2 of 5'] (5'b') {};
 \node [below= -0.2 of 5'b'] (5'b) {};
 \node [left= 0 of 6'] (6'a) {};
 \node [above= 0 of 6'] (6'b) {};
 
        \draw [->] (1a) to [out=265,in=190, looseness=1.3] (2a);
        \draw [->] (2b) to [out=265,in=190, looseness=1.3] (3a);
        \draw [->] (3b) to [out=85,in=10, looseness=1.3] (1b);
        
        \draw [<-] (4a) to [out=265,in=190, looseness=1.3] (5a);
        \draw [<-] (5b) to [out=260,in=190, looseness=1.3] (6a);
        \draw [<-] (6b) to [out=85,in=2, looseness=1.3] (4b);
        
        \draw [->] (1'a) to [out=265,in=190, looseness=1.3] (2'a);
        \draw [->] (2'b) to [out=265,in=190, looseness=1.3] (3'a);
        \draw [->] (3'b) to [out=265,in=185, looseness=1.3] (6'a);
        
        \draw [<-] (4'a) to [out=10,in=80, looseness=1.3] (5'a);
        \draw [<-] (5'b) to [out=15,in=80, looseness=1.3] (6'b);
        \draw [->] (4'b) to [out=85,in=5, looseness=1.3] (1'b);
 \end{tikzpicture}
\end{equation}
respectively. If $r=n_0$, then $\tau$ simply ignores the component $\GL_{n_0}$.

For each $w\in W_{G}(L_I)^{\sigma}$, there exists $\dot{w}\in(G^{\sigma})^{\circ}$ that represents $w$. For example, it is easy to see that in (\ref{ActionSymetrique}), one can find explicit matrices in $\GL_{2rd}$ fixed by $\sigma$ representing the corresponding cycles in $\mathfrak{W}_{N_r'}$. This implies that the inclusion $$W_{(G^{\sigma})^{\circ}}((L_I^{\sigma})^{\circ})\hookrightarrow W_{G}(L_I)^{\sigma}$$is in fact an isomorphism.  Now the groups $L_{I,w}$ defined for $w\in W_{(G^{\sigma})^{\circ}}((L_I^{\sigma})^{\circ})$ in \S \ref{Prop1.40} make sense if we regard $w$ as an element of $W_{G}(L_I)^{\sigma}$. The discussions in \S \ref{Prop1.40} give:
\begin{Prop}
The $G^F$-conjugacy classes of the $F$-stable $G$-conjugates of $L_I\lb\sigma\rb$ are in bijection with the conjugacy classes of $W_{G}(L_I)^{\sigma}$. 
\end{Prop}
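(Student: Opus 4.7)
The plan is to chain together three bijections that have already been set up in the preceding discussion. First, I would apply Proposition \ref{1.40} to the subgroup $\bar{L}=L_I\lb\sigma\rb=N_G(L_I,P_I)$, where $P_I$ is the standard parabolic with Levi factor $L_I$. Since $L_I$ and $P_I$ are both $\sigma$-stable and $\bar{L}$ meets $G^{\circ}\sigma$, this reduces the parametrisation of the $G^F$-conjugacy classes of the $F$-stable $G$-conjugates of $L_I\lb\sigma\rb$ (that still meet $G^{\circ}\sigma$, which is automatic by the $F$-stability of the component $\sigma G^{\circ}$) to the parametrisation of the $((G^{\sigma})^{\circ})^F$-conjugacy classes of the $F$-stable Levi subgroups of $(G^{\sigma})^{\circ}$ that are $(G^{\sigma})^{\circ}$-conjugate to $(L_I^{\sigma})^{\circ}$.

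Next, I would invoke the standard parametrisation recalled in \S \ref{ConnG-L_I-F}: the $((G^{\sigma})^{\circ})^F$-conjugacy classes of the $(G^{\sigma})^{\circ}$-conjugates of $(L_I^{\sigma})^{\circ}$ are in bijection with the $F$-conjugacy classes of $W_{(G^{\sigma})^{\circ}}((L_I^{\sigma})^{\circ})$. Then I would use the isomorphism
\[
W_{(G^{\sigma})^{\circ}}((L_I^{\sigma})^{\circ})\;\cong\; W_{G}(L_I)^{\sigma},
\]
which has just been established in \S \ref{GLnSigma-F-Levi} by exhibiting, for every signed cycle in the relevant wreath product, an explicit representative inside $(G^{\sigma})^{\circ}$ via the diagrams in \eqref{ActionSymetrique}. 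This upgrades the injection of Corollary \ref{Nsigma-in-N-1} to an isomorphism in our situation.

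Finally, because $F$ acts trivially on $N_G(L_I)/L_I\cong W_G(L_I)$ (since $F$ acts trivially on $W$ and $L_I\supset T$ is standard), it also acts trivially on the $\sigma$-fixed subgroup $W_G(L_I)^{\sigma}$. Hence $F$-conjugacy in $W_G(L_I)^{\sigma}$ collapses to ordinary conjugacy. Composing the three bijections yields the desired identification between $G^F$-conjugacy classes of $F$-stable $G$-conjugates of $L_I\lb\sigma\rb$ and conjugacy classes in $W_G(L_I)^{\sigma}$.

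The only point that is not completely formal is the isomorphism $W_{(G^{\sigma})^{\circ}}((L_I^{\sigma})^{\circ})\cong W_G(L_I)^{\sigma}$; this is really where the quasi-central hypothesis on $\sigma$ and the explicit $\GL_n$-computation enter, and I would regard it as the main (already handled) ingredient. Everything else amounts to transporting structure through Proposition \ref{1.40} and the Lang--Steinberg parametrisation of \S \ref{ConnG-L_I-F}.
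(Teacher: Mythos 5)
Your argument is correct and is exactly the paper's implicit proof: chain Proposition \ref{1.40} with the Lang--Steinberg parametrisation of $F$-stable Levi subgroups from \S\ref{ConnG-L_I-F}, use the isomorphism $W_{(G^{\sigma})^{\circ}}((L_I^{\sigma})^{\circ})\cong W_{G}(L_I)^{\sigma}$ established in \S\ref{GLnSigma-F-Levi}, and observe that $F$ acts trivially on $W_G(L_I)$ since $L_I\supset T$ is standard, so $F$-conjugacy collapses to ordinary conjugacy. The paper presents the statement as a summary of this discussion rather than with a displayed proof, but your reasoning reconstructs it faithfully, including correctly locating the one non-formal step in the upgrade of Corollary \ref{Nsigma-in-N-1} to an isomorphism via the explicit $\GL_n$ matrices.
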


\subsection{Deligne-Lusztig inductions}
\subsubsection{}\label{DL-Induction}
Let $H$ be a connected reductive group defined over $\mathbb{F}_q$ equipped with a geometric Frobenius endomorphism $F$. Let $L\subset H$ be an $F$-stable Levi subgroup. The \textit{Deligne-Lusztig induction} $R^H_L$ (\cite[\S 9.1]{DM20}) is a $\ladic$-linear map from the set of $L^F$-invariant functions on $L^F$ to the set of $H^F$-invariant functions on $H^F$ (invariant for the conjugation actions). 

Let $T_H$ be an $F$-stable maximal torus of $H$. Denote by $W_H$ the Weyl group of $H$ defined by $T_H$. By \S \ref{ConnG-L_I-F}, to each $w\in W_H$, we can associate an $F$-stable maximal torus $T_w\subset H$ in such a way that $T_H$ is associated to $1\in W_H$. For any $w\in W_H$, we denote by $\mathbf{1}$ the trivial character of $T_w^F$, then the \textit{Green function} is an $H^F$-invariant function on the subset $H^F_u\subset H^F$ of unipotent elements, defined by
\begin{equation}\label{Green-Function-DL}
Q_{T_w}^H(u):=R_{T_w}^H\mathbf{1}|_{H^F_u}.
\end{equation}
Such a function only depends on the $H^F$-conjugacy class of $T_w$, or rather, the $F$-conjugacy class of $w$.

Let $\tau$ be an automorphism of finite order of $H$ that commutes with $F$ and let $L\subset H$ be an $F$-stable and $\tau$-stable Levi factor of a $\tau$-stable parabolic subgroup $P\subset H$. Then $N_{H\lb\tau\rb}(L,P)=L\lb\tau\rb$. The \textit{generalised Deligne-Lusztig induction} $R^{H\tau}_{L\tau}$ (\cite[\S 2]{DM94}) is a $\ladic$-linear map from the set of $L^F$-invariant functions on $L^F\tau$ to the set of $H^F$-invariant functions on $H^F\tau$.

\subsubsection{}
By \S \ref{Prop1.40} and Remark \ref{Wsigma=WGsigma}, to each $w\in W^{\sigma}$, we can associate an $F$-stable and $\sigma$-stable maximal torus  $T_w$ contained in a $\sigma$-stable Borel subgroup of $G$, in such a way that $T$ is associated to $1\in W^{\sigma}$.  If $B_w\subset G$ is some $\sigma$-stable Borel subgroup containing $T_w$, then $\bar{T}_w:=N_{\bar{G}}(T_w,B_w)=T_w\sqcup T_w\sigma$ and the $G^F$-conjugacy class of $\bar{T}_w$ is determined by the conjugacy class of $w\in W^{\sigma}$. Let $\theta\in\Irr(T_w^F)^{\sigma}$ and denote by $\tilde{\theta}\in\Irr(T_w^F\lb\sigma\rb)$ an extension of $\theta$. By abuse of notation, we may also denote by $\tilde{\theta}$ its restriction on $T_w^F\sigma$. Computation of Deligne-Lusztig inductions is reduced to Green functions according to the character formula below.
\begin{Prop}(\cite[Proposition 2.6]{DM94})\label{char-formula}
Let $s\sigma\in G^F\sigma$ be semi-simple and let $u\in C_G(s\sigma)^{\circ F}$. Then
\begin{equation}\label{eq-char-formula}
R^{G\sigma}_{T_w\sigma}\tilde{\theta}(us\sigma)=\frac{|(T_w^{\sigma})^{\circ F}|}{|T_w^F|\cdot|C_G(s\sigma)^{\circ F}|}\sum_{\{h\in G^{F}\mid hs\sigma h^{-1}\in T_w\sigma\}}Q^{C_G(s\sigma)^{\circ}}_{C_{h^{-1}T_wh}(s\sigma)^{\circ}}(u)\tilde{\theta}(hs\sigma h^{-1}),
\end{equation}
where $Q^{C_G(s\sigma)^{\circ}}_{C_{h^{-1}T_wh}(s\sigma)^{\circ}}(u)$ is the Green function associated to the connected reductive group $C_G(s\sigma)^{\circ}$ and its $F$-stable maximal torus $C_{h^{-1}T_wh}(s\sigma)^{\circ}$.
\end{Prop}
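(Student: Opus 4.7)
The plan is to mimic the classical Deligne--Lusztig character formula for connected reductive groups, but carried out on the appropriate Deligne--Lusztig variety associated to the non-connected group $\bar{G}$. Fix a $\sigma$-stable Borel subgroup $B_w$ containing the $F$- and $\sigma$-stable maximal torus $T_w$, and let $U_w$ be its unipotent radical. The generalised Deligne--Lusztig induction $R^{G\sigma}_{T_w\sigma}$ is defined through the $\ell$-adic cohomology of the variety
\[
\mathbf{Y}_{w} := \{ gU_w \in G/U_w \mid g^{-1}\,F(g) \in U_w\,\sigma\,F(U_w) \},
\]
equipped with commuting actions of $G^F$ (left multiplication) and $T_w^F\langle\sigma\rangle$ (right multiplication via the identification $N_{\bar G}(T_w,B_w)/U_w \cong \bar T_w$). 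Then $R^{G\sigma}_{T_w\sigma}\tilde\theta$ is, up to a sign, the virtual character whose value at $g\sigma\in G^F\sigma$ is
\[
\frac{1}{|T_w^F|}\sum_{t\in T_w^F}\tilde\theta(t\sigma)\,\operatorname{Tr}\!\bigl((g\sigma,t\sigma)\mid H^*_c(\mathbf{Y}_w,\ladic)\bigr).
\]
The first step is to verify this description using the definition in \cite[\S 2]{DM94} and reduce the statement to computing the Lefschetz number of $(us\sigma,t\sigma)$ acting on $\mathbf{Y}_w$.

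The second step is a fixed-point analysis. Write $y = us\sigma$. Its Jordan decomposition in $\bar G^F$ has semisimple part $s\sigma$ (after applying Lang--Steinberg, one may assume $u\in C_G(s\sigma)^{\circ F}$, which matches the hypothesis) and unipotent part $u$, which is an honest unipotent element lying in $C_G(s\sigma)^\circ$ by the standing assumption $\operatorname{char}\kk\nmid |\bar G/G|$. The standard Deligne--Lusztig trick is to use the decomposition $y = (s\sigma)\,u = u\,(s\sigma)$: the semisimple part $s\sigma$ acts with finite order prime to $\ell$, so by a theorem of Deligne--Lusztig the Lefschetz trace of $y$ equals the trace of $u$ on the cohomology of the $(s\sigma,t\sigma)$-fixed subvariety $\mathbf{Y}_w^{(s\sigma,t\sigma)}$. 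The next task is to describe this fixed locus explicitly. A point $gU_w$ lies in $\mathbf{Y}_w^{(s\sigma,t\sigma)}$ iff $s\sigma g U_w = g(t\sigma)U_w$, i.e. $g^{-1}(s\sigma)g \in t\sigma\,U_w$. Combined with the defining relation $g^{-1}F(g)\in U_w\sigma F(U_w)$, one obtains a decomposition
\[
\mathbf{Y}_w^{(s\sigma,t\sigma)} \;=\; \bigsqcup_{h}\, h\,\mathbf{Y}_w^{C_G(s\sigma)^\circ},
\]
where $h$ runs over a system of representatives in $G^F$ for the condition $hs\sigma h^{-1}\in T_w\sigma$, after identifying $t\sigma$ with $hs\sigma h^{-1}$, and $\mathbf{Y}_w^{C_G(s\sigma)^\circ}$ denotes the classical Deligne--Lusztig variety for the connected group $C_G(s\sigma)^\circ$ with respect to its $F$-stable maximal torus $C_{h^{-1}T_wh}(s\sigma)^\circ$; that this centraliser is indeed a maximal torus of $C_G(s\sigma)^\circ$ follows from Proposition \ref{1.11DM94} applied to $s\sigma$.

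The third step is to assemble the contributions. Summing over $t\in T_w^F$, only those $t$ satisfying $t\sigma = hs\sigma h^{-1}$ for some admissible $h$ survive, which brings out the factor $\tilde\theta(hs\sigma h^{-1})$. On each piece, the trace of $u$ on the cohomology of the connected Deligne--Lusztig variety for $C_G(s\sigma)^\circ$ is, by definition, the Green function $Q^{C_G(s\sigma)^\circ}_{C_{h^{-1}T_wh}(s\sigma)^\circ}(u)$. The normalising prefactor $|(T_w^\sigma)^{\circ F}|/(|T_w^F|\cdot |C_G(s\sigma)^{\circ F}|)$ appears as the ratio between the $T_w^F$-averaging used in the induction formula and the order of the fibres of the map $h \mapsto hs\sigma h^{-1}$ (whose fibres are cosets of $C_{G}(s\sigma)^F$, while $(T_w^\sigma)^{\circ F}$ controls the stabiliser inside $T_w^F\langle\sigma\rangle$).

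The main obstacle will be justifying the fixed-point decomposition rigorously in the non-connected setting: one must show that the only components of $\mathbf{Y}_w^{(s\sigma,t\sigma)}$ that contribute are those indexed by the coset of $h$ described above, and that the resulting smaller variety is \emph{exactly} the classical Deligne--Lusztig variety for $C_G(s\sigma)^\circ$ (and not some twisted variant). This requires the fact that $s\sigma$ is quasi-semisimple together with the Proposition \ref{1.11DM94} description $(B_w^{s\sigma})^\circ \subset (G^{s\sigma})^\circ$ as a Borel subgroup with unipotent radical $(U_w^{s\sigma})^\circ$. Once this geometric identification is in hand, the proof is a bookkeeping of the sign, the normalisation constants and the $\tilde\theta$-twist, following verbatim the connected case of Deligne--Lusztig.
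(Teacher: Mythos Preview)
The paper does not give its own proof of this proposition: it is quoted verbatim from \cite[Proposition 2.6]{DM94}, with only a remark afterwards explaining that the extra factor $|(T_w^\sigma)^{\circ F}|$ arises from a different normalisation of the Green functions. So there is nothing in the paper to compare your argument against.

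That said, your sketch is essentially the argument one expects in \cite{DM94}: it is the non-connected analogue of the original Deligne--Lusztig character formula, proved by the same fixed-point/Lefschetz method. The key inputs you identify---that unipotent elements lie in $G^\circ$ under the characteristic hypothesis, that $(T_w^{s\sigma})^\circ$ is a maximal torus of $C_G(s\sigma)^\circ$ by Proposition~\ref{1.11DM94}, and that the fixed locus of the semisimple part decomposes into classical Deligne--Lusztig varieties for the centraliser---are exactly right. One point to tighten: in your bookkeeping of the prefactor you should be explicit that the stabiliser in $T_w^F$ of the coset $T_w\sigma$-translate giving $hs\sigma h^{-1}$ has order $|(T_w^\sigma)^{\circ F}|$ (this is where the paper's remark on normalisation bites), and that the sum over $t\in T_w^F$ collapses correctly once you parametrise by $h$ rather than by $t$. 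Otherwise the outline is sound.
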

\begin{Rem}
The extra factor $|(T_w^{\sigma})^{\circ F}|$ compared with \cite[Proposition 2.6]{DM94} is due to different normalisations of the Green functions $Q^{C_G(s\sigma)^{\circ}}_{C_{h^{-1}T_wh}(s\sigma)^{\circ}}(u)$.
\end{Rem}

An invariant function on $G^F\sigma$ is called \textit{uniform} if it is a linear combination of $R_{T_w\sigma}^{G\sigma}\tilde{\theta}$ for various $w$ and $\tilde{\theta}$.
\begin{Prop}(\cite[Proposition 6.4]{DM15})\label{s.s.unif}
The characteristic function of a semi-simple conjugacy class in $G^F\sigma$ is uniform.
\end{Prop}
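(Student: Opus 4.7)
The plan is to construct an explicit uniform function that coincides with the characteristic function $\mathbf{1}_{\mathcal{O}}$ of a given semi-simple $G^F$-conjugacy class $\mathcal{O} \subset G^F\sigma$. Fix a representative $s\sigma \in \mathcal{O}$ and set $H := C_G(s\sigma)^{\circ}$, which is a connected reductive group defined over $\mathbb{F}_q$ by quasi-centrality of $\sigma$. The character formula (\ref{eq-char-formula}) decouples the Jordan decomposition of a test point $us'\sigma$ into a torus factor $\tilde\theta(hs'\sigma h^{-1})$ and a unipotent factor $Q^{C_G(s'\sigma)^\circ}_{C_{h^{-1}T_wh}(s'\sigma)^\circ}(u)$. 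The strategy is to pick coefficients on each side independently, then combine.

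\textbf{Torus side (localisation at $s\sigma$).} For a fixed $w \in W^\sigma$ such that $T_w\sigma$ meets the $G^F$-conjugacy class of $s\sigma$, Fourier inversion on the finite abelian group $T_w^F\lb\sigma\rb$ produces coefficients $c_{\tilde\theta}$ for which $\sum_{\tilde\theta} c_{\tilde\theta}\,\tilde\theta(x) = 0$ unless $x$ is $T_w^F\lb\sigma\rb$-conjugate to $s\sigma$. Combined with the outer summation over $h \in G^F$ in (\ref{eq-char-formula}), this forces the semi-simple part $s'\sigma$ to lie in $\mathcal{O}$.

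\textbf{Unipotent side (picking out $u=1$).} For the connected reductive group $H$ the characteristic function of the unipotent identity is a known uniform combination of Green functions: there exist coefficients $a_{w_H}$ with $\sum_{w_H} a_{w_H}\,Q^H_{T_{w_H}} = \mathbf{1}_{\{1\}}$ on the unipotent variety of $H^F$, by the classical Deligne-Lusztig orthogonality applied to the connected group $H$. By Corollary \ref{Nsigma-in-N-2}, the $F$-stable maximal tori of $H$ appearing in (\ref{eq-char-formula}) as $C_{h^{-1}T_wh}(s\sigma)^\circ$ correspond (up to $H^F$-conjugacy) to $F$-conjugacy classes in the image of $W_{(G^{s\sigma})^\circ}((T^{s\sigma})^\circ) \hookrightarrow W^\sigma$. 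Summing the inductions $R^{G\sigma}_{T_w\sigma}\tilde\theta$ over $w \in W^\sigma$ restricted to this image, with weights matching the $a_{w_H}$, and multiplying by the torus coefficients from the previous step, produces a uniform function whose value at $us'\sigma$ equals $\mathbf{1}_{s'\sigma \in \mathcal{O}}\cdot\mathbf{1}_{u=1}$, i.e. exactly $\mathbf{1}_{\mathcal{O}}(us'\sigma)$ after invoking Jordan decomposition in $G^F\sigma$.

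The main obstacle is establishing surjectivity of the correspondence used in the unipotent step: every $F$-stable maximal torus of $H$ must actually arise as $C_{h^{-1}T_wh}(s\sigma)^\circ$ for some $h \in G^F$ and some $w \in W^\sigma$, not merely inject into it. This surjectivity follows from the quasi-centrality of $\sigma$, Proposition \ref{1.25DM94} matching $\sigma$-stable Levi data in $G$ with Levi data in $H$, and a Lang-Steinberg argument applied to $H$. A secondary bookkeeping task is to reconcile the prefactor $|(T_w^\sigma)^{\circ F}|/|T_w^F|$ appearing in (\ref{eq-char-formula}), together with the signs that arise when passing from $R^H_{T_{w_H}}\mathbf{1}$ to $Q^H_{T_{w_H}}$, with the orthogonality weights $a_{w_H}$ on $H$. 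This is routine, but it must be carried out precisely so as to obtain the equality $f = \mathbf{1}_{\mathcal{O}}$ rather than merely a nonzero scalar multiple.
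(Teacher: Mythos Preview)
The paper does not give its own proof of this proposition: it is stated with a citation to \cite[Proposition 6.4]{DM15} and used as a black box. So there is nothing in the paper to compare your argument against.

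That said, your strategy is the standard one and is in the right spirit: use the character formula (\ref{eq-char-formula}) to separate semi-simple and unipotent parts, then Fourier-invert on the torus side to localise at the semi-simple class and use Green-function orthogonality on the connected centraliser $H=C_G(s\sigma)^\circ$ to pick out $u=1$. Two places need tightening. First, your appeal to Proposition~\ref{1.25DM94} for the surjectivity step is misplaced: that result is about the quasi-central element $\sigma$ and the bijection between $\sigma$-stable Levi data in $G$ and Levi data in $(G^{\sigma})^{\circ}$, whereas here you need the analogue for $s\sigma$, which is in general \emph{not} quasi-central. What you actually want is that every ($F$-stable) maximal torus of $H$ arises as $(T')^{s\sigma\circ}$ for some $F$-stable, $s\sigma$-stable maximal torus $T'$ of $G$ contained in an $s\sigma$-stable Borel; this is in \cite{DM94} (around their Proposition~1.11 and its corollaries), not in Proposition~\ref{1.25DM94}. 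Corollary~\ref{Nsigma-in-N-2} gives you only an injection of Weyl groups, not the surjectivity on tori that you need. Second, the claim that $\mathbf{1}_{\{u=1\}}$ on $H^F_u$ is a linear combination of Green functions of $H$ is correct (it follows from Green-function orthogonality for the connected group $H$), but you should write down the actual coefficients rather than assert their existence; otherwise the ``routine bookkeeping'' you defer may hide a sign or normalisation error when combined with the prefactor $|(T_w^{\sigma})^{\circ F}|/|T_w^F|$ in (\ref{eq-char-formula}).
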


\subsubsection{}\label{A^F->B}
Fix semi-simple element $s\sigma\in G^F\sigma$ and write $L':=C_G(s\sigma)^{\circ}$. Let $\tau\in W^{\sigma}$. Write $$A_{\tau}=\{h\in G\mid hs\sigma h^{-1}\in T_{\tau}\sigma\},\quad A_{\tau}^F=A_{\tau}\cap G^F.$$If $A^F_{\tau}$ is non empty, we may assume that $s\sigma\in T_{\tau}\sigma$, since the problem we will consider is not affected by conjugation by $G^F$. So in particular $A^F_{\tau}$ contains the unit $1$. For any $h\in A_{\tau}^F$, $C_{h^{-1}T_{\tau}h}(s\sigma)^{\circ}$ is an $F$-stable maximal torus of $L'$ by Proposition \ref{1.11DM94}. In particular, $T'_{\tau}:=C_{T_{\tau}}(s\sigma)^{\circ}$ is an $F$-stable maximal torus of $L'$. The $L^{\prime F}$-conjugacy classes of the $F$-stable maximal tori of $L'$ are parametrised by the $F$-conjugacy classes of $W':=W_{L'}(T_{\tau}')$. Then the map $h\mapsto C_{h^{-1}T_{\tau}h}(s\sigma)^{\circ}$  induces a map from $A_{\tau}^F$ to the set of $F$-conjugacy classes of $W'$. Denote by $B_{\tau}$ the image of this map so that we have a surjection $A_{\tau}^F\rightarrow B_{\tau}$. Let $\nu\in W'$ represent an $F$-conjugacy class of $W'$ and denote by $A^F_{\tau,\nu}$ the inverse image in $A_{\tau}^F$ of this $F$-conjugacy class.

We will fix $g_{\tau}$ such that $T_{\tau}=g_{\tau}Tg^{-1}_{\tau}$. Write $\dot{\tau}=g_{\tau}^{-1}F(g_{\tau})$ and $s_0\sigma=g_{\tau}^{-1}s\sigma g_{\tau}$ and $L'_0=C_G(s_0\sigma)^{\circ}$. Let $W'_0$ denote the Weyl group of $L'_0$ defined by $C_T(s_0\sigma)^{\circ}$. Then the $F$-conjugacy classes of $W'$ are in natural bijection with the $\tau$-conjugacy classes of $W'_0$. We will therefore regard $B_{\tau}$ as a set of $\tau$-conjugacy classes of $W'_0$. By Corollary \ref{Nsigma-in-N-2}, $W'_0$ is naturally a subgroup of $W^{\sigma}$.

\subsubsection{}
We will need the following results.
\begin{Lem}\label{Shu210.2.2b}
There is a natural isomorphism: $$N_G(T\sigma)/T\lisom W^{\sigma}.$$
\end{Lem}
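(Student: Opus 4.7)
The plan is to compute the conjugation action of $g \in G$ on $T\sigma$ explicitly inside the semi-direct product $\bar{G} = G\rtimes\lb\sigma\rb$, extract from this the defining condition for $g$ to lie in $N_G(T\sigma)$, and then construct the isomorphism with $W^{\sigma}$ directly.

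First I would observe that for any $g \in G$ and $t \in T$, the multiplication rule in $\bar{G}$ gives
\begin{equation*}
g(t\sigma)g^{-1} = gt\,\sigma(g^{-1})\,\sigma,
\end{equation*}
so that $g(T\sigma)g^{-1} = T\sigma$ if and only if $gT\sigma(g^{-1}) = T$, equivalently $(gTg^{-1})\cdot h = T$ where $h = g\sigma(g^{-1}) \in G$. Since the left-hand side is a left coset of the torus $gTg^{-1}$, this equality forces both $gTg^{-1}=T$ and $h \in T$. This gives the inclusion $N_G(T\sigma) \subseteq N_G(T)$ together with the clean description
\begin{equation*}
N_G(T\sigma) = \bigl\{g \in N_G(T) \,\bigm|\, g\sigma(g^{-1}) \in T \bigr\}.
\end{equation*}

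Next I would consider the natural projection $N_G(T\sigma) \to W$, $g \mapsto [g]$. Its kernel is $N_G(T\sigma) \cap T$, and one checks that $T \subseteq N_G(T\sigma)$ (for $t \in T$, $t\sigma(t^{-1}) \in T$ since $\sigma$ preserves $T$), so the kernel equals $T$. To identify the image, note that since $\sigma$ normalises $T$, the element $g\sigma(g^{-1})$ lies in $N_G(T)$ whenever $g$ does, and its class in $W$ equals $w\,\sigma(w)^{-1}$, where $w=[g]$ and $\sigma$ acts on $W$ as described in \S\ref{barG}. Hence $g\sigma(g^{-1}) \in T$ if and only if $w = \sigma(w)$, i.e.\ $w \in W^{\sigma}$. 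Thus the image is contained in $W^{\sigma}$.

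Finally, for surjectivity I would lift: given any $w \in W^{\sigma}$, choose any representative $g \in N_G(T)$ of $w$; the previous paragraph shows $g\sigma(g^{-1}) \in T$ automatically, so $g \in N_G(T\sigma)$. Combining the three steps, the short exact sequence $1 \to T \to N_G(T\sigma) \to W^{\sigma} \to 1$ yields the desired natural isomorphism $N_G(T\sigma)/T \isom W^{\sigma}$. The only delicate point is the coset argument at the very beginning showing $N_G(T\sigma) \subseteq N_G(T)$; everything else is a direct consequence of the definition of the $\sigma$-action on $W$.
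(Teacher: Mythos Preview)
Your proof is correct and follows essentially the same route as the paper: first show $N_G(T\sigma)\subseteq N_G(T)$, then characterise membership in $N_G(T\sigma)$ for $n\in N_G(T)$ by the condition $n\sigma(n)^{-1}\in T$, which descends to the condition $[n]\in W^{\sigma}$. The only cosmetic difference is in the first step: the paper observes $T\sigma T\sigma=T$ (so normalising $T\sigma$ forces normalising $T$), whereas you use the coset argument that $(gTg^{-1})\cdot g\sigma(g^{-1})=T$ forces $gTg^{-1}=T$; both arguments are equally elementary.
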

\begin{proof}
Since $T\sigma T\sigma= T$, we have  $N_G(T\sigma)\subset N_G(T)$. It is easy to see that $\xi T\sigma \xi^{-1}=T\sigma$ is equivalent to $\xi\sigma(\xi)^{-1}\in T$ for any $\xi\in N_G(T)$.
\end{proof}
\begin{Lem}\label{Shu210.2.2a}
We have $$\{x\in G\mid xs_0\sigma x^{-1}\in T\sigma\}=N_{G}(T\sigma)L_0'.$$
\end{Lem}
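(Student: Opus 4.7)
The inclusion $N_G(T\sigma)L_0'\subseteq\{x\in G\mid xs_0\sigma x^{-1}\in T\sigma\}$ is straightforward: any $n\in N_G(T\sigma)$ satisfies $ns_0\sigma n^{-1}\in nT\sigma n^{-1}=T\sigma$ because $s_0\sigma\in T\sigma$, while any $l\in L_0'=C_G(s_0\sigma)^{\circ}$ fixes $s_0\sigma$ under conjugation; combining the two gives the inclusion. So the content of the lemma lies in the reverse inclusion.

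For the reverse inclusion, take $x\in G$ with $xs_0\sigma x^{-1}=s'\sigma\in T\sigma$. Both $T$ and $x^{-1}Tx$ are maximal tori of $G$ that are stable under the semi-simple element $s_0\sigma$: the first because $s_0\sigma\in T\sigma\subset N_G(T)$, the second because $s'\sigma$ normalises $T$ and conjugation by $x^{-1}$ transports this to $s_0\sigma$ normalising $x^{-1}Tx$. I would then invoke the standard fact (see \cite[Th\'eor\`eme 1.8]{DM94}) that any two maximal tori of $G$ stable under a semi-simple automorphism are conjugate by the connected centraliser, which in our situation is $L_0'$. This yields $l_0\in L_0'$ with $l_0Tl_0^{-1}=x^{-1}Tx$; equivalently $n:=xl_0\in N_G(T)$.

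It remains to upgrade $n\in N_G(T)$ to $n\in N_G(T\sigma)$. Since $nTn^{-1}=T$, it suffices to check $n\sigma n^{-1}\in T\sigma$, i.e.\ $n\sigma(n)^{-1}\in T$, where $\sigma(n)=\sigma n\sigma^{-1}$ denotes the $\sigma$-action on $G$. Because $l_0$ centralises $s_0\sigma$, we still have
\[
ns_0\sigma n^{-1}=xs_0\sigma x^{-1}=s'\sigma.
\]
Rewriting the left-hand side as $(ns_0\sigma(n)^{-1})\sigma$ and comparing with $s'\sigma$ gives $ns_0\sigma(n)^{-1}=s'$, hence
\[
n\sigma(n)^{-1}=ns_0^{-1}n^{-1}\cdot s'.
\]
Now $ns_0^{-1}n^{-1}\in T$ since $s_0\in T$ and $n\in N_G(T)$, and $s'\in T$ by construction, so $n\sigma(n)^{-1}\in T$. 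Thus $n\in N_G(T\sigma)$, and $x=n\,l_0^{-1}\in N_G(T\sigma)L_0'$, as desired.

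The only non-trivial ingredient is the $L_0'$-conjugacy of $s_0\sigma$-stable maximal tori of $G$; everything else is a bookkeeping computation inside $N_G(T)$ using the fact that $s_0$ and $s'$ already lie in $T$. The main potential obstacle would be if one of $s_0$ or $s'$ were not in $T$, but here both are by the hypothesis $s_0\sigma,s'\sigma\in T\sigma$, which is precisely what makes the final $n\sigma(n)^{-1}$ calculation land in $T$.
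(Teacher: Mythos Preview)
Your argument has a genuine gap at the step where you invoke ``the standard fact that any two maximal tori of $G$ stable under a semi-simple automorphism are conjugate by the connected centraliser.'' This is false in general: in $G=\SL_2$ with the (inner) semi-simple element $s=\left(\begin{smallmatrix}0&1\\-1&0\end{smallmatrix}\right)$, the diagonal torus is $s$-stable ($s$ acts on it by inversion) but it is not $C_G(s)^{\circ}$-conjugate to $C_G(s)^{\circ}$ itself, the unique maximal torus containing $s$. What \cite[Th\'eor\`eme~1.8]{DM94} provides is the $(G^{s})^{\circ}$-conjugacy of $s$-stable \emph{pairs} $T\subset B$, not of bare $s$-stable tori; an $s$-stable maximal torus need not lie in any $s$-stable Borel.

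The repair is short but not automatic: you must observe that $x^{-1}Tx$ does lie in an $s_0\sigma$-stable Borel. Indeed every element of $T\sigma$ normalises the standard Borel $B$ (since $\sigma$ does and $T\subset B$), so $xs_0\sigma x^{-1}\in T\sigma$ normalises $B$, hence $s_0\sigma$ normalises $x^{-1}Bx$. Now $(T,B)$ and $(x^{-1}Tx,x^{-1}Bx)$ are both $s_0\sigma$-stable pairs, the correct form of the conjugacy result applies, and the remainder of your proof (the $n\sigma(n)^{-1}$ computation) goes through unchanged. The paper's proof sidesteps the issue differently: it passes to the smaller tori $T'=(T^{\sigma})^{\circ}$ and $x^{-1}T'x=C_{x^{-1}Tx}(s_0\sigma)^{\circ}$, notes that both are maximal tori of $L_0'$ (the second directly, as a torus of the correct dimension inside $C_G(s_0\sigma)^{\circ}$), conjugates them by some $l\in L_0'$, and then recovers $xl\in N_G(T)$ from $T=C_G(T')$ via Proposition~\ref{1.25DM94}.
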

\begin{proof}
Since $xs_0\sigma x^{-1}$ lies in $T\sigma$, it normalises $T$ and besides, $$C_T(xs_0\sigma x^{-1})^{\circ}=C_T(s_0\sigma)^{\circ}=T':=C_T(\sigma)^{\circ}.$$By Proposition \ref{1.11DM94}, $T'$ is a maximal torus of $L'$. Now $s\sigma$ normalises $x^{-1}Tx$ and so $C_{x^{-1}Tx}(s_0\sigma)^{\circ}$ is also a maximal torus of $L'$. There exists $l\in L'$ such that $C_{x^{-1}Tx}(s_0\sigma)^{\circ}=lT'l^{-1}$. Note that $$C_{x^{-1}Tx}(s_0\sigma)^{\circ}=x^{-1}C_T(xs_0\sigma x^{-1})^{\circ}x=x^{-1}T'x.$$ We deduce that $xl$ normalises $T'$, thus normalises $T$ by Proposition \ref{1.25DM94}. Write $\xi=xl$. From the relation $xs_0\sigma x^{-1}\in T\sigma$, we see that $\sigma(\xi)\in \xi T$. By the proof of Lemma \ref{Shu210.2.2b}, we have $\xi\in N_G(T\sigma)$. Therefore, $x\in N_G(T\sigma)L'$. Conversely, it is easy to see that every element $x\in N_G(T\sigma)L'$ satisfies $xs_0\sigma x^{-1}\in T\sigma$.
\end{proof}

\begin{Prop}(\cite[Proposition 10.2.13 (ii)]{Shu2})\label{Prop-A^F->B}
We have $$|A^F_{\tau,\nu}|=|T_{\tau}^F|\cdot|L^{\prime F}|\cdot|(T_{\tau}^{\sigma})^{F}|^{-1}\zz_{\tau}\zz_{\nu}^{-1},$$where $\zz_{\tau}$ is the cardinality of the centraliser of $\tau$ in $W^{\sigma}$ and $\zz_{\nu}$ is the cardinality of the stabiliser of an element of the $\tau$-conjugacy class $\nu$ under the $\tau$-twisted action.
\end{Prop}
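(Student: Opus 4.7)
The plan is to transport the setup to the split torus $T$ by conjugation with $g_\tau$ and then count $F'$-fixed points using Lang--Steinberg, where $F':=\ad\dot\tau\circ F$. By Lemma \ref{Shu210.2.2a} applied with $s_0\sigma$ one has $\{x\in G:xs_0\sigma x^{-1}\in T\sigma\}=N_G(T\sigma)L'_0$, so conjugation by $g_\tau$ identifies $A_\tau$ with $N_G(T\sigma)L'_0$ while intertwining $F$ with $F'$, and hence $A^F_\tau\cong (N_G(T\sigma)L'_0)^{F'}$. For a decomposition $x=nl$ with $n\in N_G(T\sigma)$ and $l\in L'_0$, the map $h\mapsto C_{h^{-1}T_\tau h}(s\sigma)^\circ$ translates into $x\mapsto l^{-1}T'_0 l$, whose $(L'_0)^{F'}$-conjugacy class is determined by the image $\bar k\in W'_0$ of $k:=lF'(l)^{-1}=n^{-1}F'(n)\in K:=N_G(T\sigma)\cap L'_0$. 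Applying Proposition \ref{Nsigma-in-N} with $L=T$, $s=s_0\sigma$ together with $T=C_G(T'_0)$, I verify $K=N_{L'_0}(T'_0)$, which fits into a short exact sequence $1\to T'_0\to K\to W'_0\to 1$.

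Now fix $\nu\in B_\tau$, choose a representative $w\in\nu$ with lift $\dot w\in K$, and set $F'_w:=\ad\dot w\circ F'$. The free right $K$-action $k_0\cdot(n,l)=(nk_0,k_0^{-1}l)$ twists the value $k\mapsto k_0^{-1}kF'(k_0)$; by Lang--Steinberg on the connected torus $T'_0$, every $x\in (N_G(T\sigma)L'_0)^{F'}_\nu$ admits a representative $(n,l)$ with $k=\dot w$ exactly. The equation $lF'(l)^{-1}=\dot w$ admits $|(L'_0)^{F'}|=|L'^F|$ solutions by Lang--Steinberg on the connected group $L'_0$, and the equation $n^{-1}F'(n)=\dot w$ is solvable (with $|N_G(T\sigma)^{F'}|$ solutions) precisely when $w\tau\sim_{W^\sigma}\tau$, which is exactly the condition $\nu\in B_\tau$. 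Over each $x$, the set of such $(n,l)$ is a torsor under the stabiliser $K^{F'_w}$, yielding
\[
|A^F_{\tau,\nu}|=\frac{|N_G(T\sigma)^{F'}|\cdot|(L'_0)^{F'}|}{|K^{F'_w}|}.
\]

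To conclude, I evaluate each factor. The sequence $1\to T\to N_G(T\sigma)\to W^\sigma\to 1$ of Lemma \ref{Shu210.2.2b}, combined with Lang--Steinberg on the connected torus $T$, gives $|N_G(T\sigma)^{F'}|=|T^{F'}|\cdot|(W^\sigma)^{F'}|$; since $F$ acts trivially on $W$, $F'$ on $W^\sigma$ equals $\ad\tau$, so $|(W^\sigma)^{F'}|=|C_{W^\sigma}(\tau)|=z_\tau$, and $|T^{F'}|=|T_\tau^F|$ via $g_\tau$. The sequence $1\to T'_0\to K\to W'_0\to 1$ gives $|K^{F'_w}|=|(T'_0)^{F'_w}|\cdot|(W'_0)^{F'_w}|$; a direct calculation identifies the induced action of $F'_w$ on $W'_0$ as conjugation by $w\tau$, so $|(W'_0)^{F'_w}|=|C_{W'_0}(w\tau)|=z_\nu$. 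The principal technical step is proving $|(T'_0)^{F'_w}|=|(T'_0)^{F'}|=|(T_\tau^\sigma)^F|$: on $T'_0$, $F'_w$ acts as $\ad(w\tau)\circ F$, and the hypothesis $\nu\in B_\tau$ supplies $u\in W^\sigma$ with $u^{-1}(w\tau)u=\tau$; since $F$ acts trivially on $W$, the automorphism $\ad u$ intertwines $\ad(w\tau)\circ F$ with $\ad\tau\circ F=F'$ on $T$, and since $u\in W^\sigma$ preserves $T'_0=(T^\sigma)^\circ$, restriction yields a bijection $(T'_0)^{F'_w}\leftrightarrow(T'_0)^{F'}$, whose cardinality equals $|(T_\tau^\sigma)^F|$ via $g_\tau$ (using connectedness of $T^\sigma$ in the $\GL_n$ setting). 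Substituting into the displayed formula gives $|A^F_{\tau,\nu}|=|T_\tau^F|\cdot|L'^F|\cdot|(T_\tau^\sigma)^F|^{-1}z_\tau z_\nu^{-1}$, as claimed.
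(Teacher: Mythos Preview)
The paper does not prove this proposition here; it is quoted from the companion paper \cite{Shu2}. Your argument is correct and follows the natural strategy: transport everything by $g_\tau$ to the split torus $T$ with the twisted Frobenius $F'=\ad\dot\tau\circ F$, invoke Lemma~\ref{Shu210.2.2a} to identify $A_\tau$ with $N_G(T\sigma)L'_0$, and then count pairs $(n,l)$ with prescribed Lang-map value $\dot w$, using the free $K$-action to pass between pairs and points $x$. The evaluations of $|N_G(T\sigma)^{F'}|$ and $|K^{F'_w}|$ via the short exact sequences with kernels $T$ and $T'_0$, and the identification $|(T'_0)^{F'_w}|=|(T'_0)^{F'}|$ via the $W^\sigma$-conjugacy $u^{-1}(w\tau)u=\tau$, are all sound.

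One small correction: your closing remark that $T^\sigma$ is connected in the $\GL_n$ setting is false for odd $n$ (the middle diagonal entry may be $\pm1$, cf.\ \S\ref{s.s.GLnsigma}). What your argument actually yields is the formula with $|(T_\tau^\sigma)^{\circ F}|$ rather than $|(T_\tau^\sigma)^F|$. This is harmless: the version with the connected component is precisely what is used downstream (compare the factor $|(T_w^\sigma)^{\circ F}|$ in Proposition~\ref{char-formula} and equality~\circled{3} in the proof of Theorem~\ref{Main-Thm}), so the statement as printed in the paper appears to drop a superscript~$\circ$.
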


\subsection{Irreducible characters of $\GL_n(q)\lb\sigma\rb$}
\subsubsection{}\label{unip-char-H}
Let $H=\prod_i\GL_{n_i}$ for some positive integers $n_i$ and suppose that $H$ is defined over $\mathbb{F}_q$ equipped with a Frobenius $F$. Fix an $F$-stable maximal torus $T_H\subset H$ and denote by $W_H$ the Weyl group of $H$ defined by $T_H$. By \S \ref{ConnG-L_I-F}, to each $w\in W_H$, we can associate an $F$-stable maximal torus $T_w$ of $H$ in such a way that $T_H$ is associated to $1\in W$. The $H^F$-conjugacy class of $T_w$ is determined by the $F$-conjugacy class of $w\in W_H$. The natural action of $F$ on $W_H$ induces an action on $\Irr(W_H)$. Denote by $\Irr(W_H)^F$ the set of $F$-stable irreducible characters of $W_H$. Let $\varphi\in\Irr(W_H)^F$, then it can be extended to $\tilde{\varphi}\in\Irr(W_H\rtimes\lb F\rb)$. Such an extension is not unique. We choose such an extension and put $$R^{H}_{\varphi}\mathbf{1}:=|W_H|^{-1}\sum_{w\in W_H}\tilde{\varphi}(wF)R_{T_w}^{H}\mathbf{1},$$where $\mathbf{1}$ is the trivial character of $T_w^F$.
\begin{Thm}(\cite[Theorem 2.2]{LS})
For any $\varphi\in\Irr(W_H)^F$ and some choice of $\tilde{\varphi}$, the virtual character $R^H_{\varphi}\mathbf{1}$ is an irreducible character of $H^F$, and $R^H_{\varphi}\mathbf{1}\ne R^H_{\varphi'}\mathbf{1}$ if $\varphi\ne\varphi'$.
\end{Thm}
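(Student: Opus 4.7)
My strategy is to prove orthonormality of the virtual characters $\{R^H_\varphi\mathbf{1}\}_{\varphi\in\Irr(W_H)^F}$ inside the space of virtual characters of $H^F$. Once this is done, each $R^H_\varphi\mathbf{1}$ will be a $\mathbb{Z}$-linear combination of irreducible characters of $H^F$ of norm $1$, hence equal to $\pm\chi$ for some $\chi\in\Irr(H^F)$; the sign will be absorbed into the choice of extension $\tilde\varphi$. Distinctness for $\varphi\neq\varphi'$ then follows directly from orthogonality.

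\textbf{Step 1: inner products of Deligne-Lusztig characters.} The first input is the Mackey-type inner product formula for Deligne-Lusztig induction applied to trivial characters: for $w,w'\in W_H$,
\begin{equation*}
\langle R^H_{T_w}\mathbf{1},R^H_{T_{w'}}\mathbf{1}\rangle_{H^F}=\#\{x\in W_H\mid x^{-1}wF(x)=w'\}.
\end{equation*}
Plugging this into the definition of $R^H_\varphi\mathbf{1}$ gives
\begin{equation*}
\langle R^H_\varphi\mathbf{1},R^H_{\varphi'}\mathbf{1}\rangle=\frac{1}{|W_H|^2}\sum_{w,x}\tilde\varphi(wF)\overline{\tilde\varphi'\bigl(x^{-1}wF(x)F\bigr)}.
\end{equation*}
Since $x^{-1}wF(x)F=x^{-1}(wF)x$ in $W_H\rtimes\langle F\rangle$ and $\tilde\varphi'$ is a class function on the full semi-direct product, the sum over $x$ reduces to a factor of $|W_H|$, yielding
\begin{equation*}
\langle R^H_\varphi\mathbf{1},R^H_{\varphi'}\mathbf{1}\rangle=\frac{1}{|W_H|}\sum_{w\in W_H}\tilde\varphi(wF)\overline{\tilde\varphi'(wF)}.
\end{equation*}

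\textbf{Step 2: coset orthogonality of extensions.} Let $d$ denote the order of $F$ acting on $W_H$, and set $W':=W_H\rtimes\langle F\rangle$, so that $|W'|=d|W_H|$. By Clifford theory, each $F$-stable $\varphi$ admits exactly $d$ extensions to $W'$, any two differing by twisting with a character of $\langle F\rangle$. Standard orthogonality on $W'$ gives
\begin{equation*}
\sum_{i=0}^{d-1}\frac{1}{|W_H|}\sum_{w\in W_H}\tilde\varphi(wF^i)\overline{\tilde\varphi'(wF^i)}=d\cdot\delta_{\tilde\varphi,\tilde\varphi'}.
\end{equation*}
Replacing $\tilde\varphi,\tilde\varphi'$ by their twists $\tilde\varphi\otimes\eta^j,\tilde\varphi'\otimes\eta^k$ (with $\eta$ the character of $W'$ sending $F$ to a primitive $d$-th root of unity $\zeta$) and denoting $A_i:=|W_H|^{-1}\sum_w\tilde\varphi(wF^i)\overline{\tilde\varphi'(wF^i)}$ produces the system
\begin{equation*}
\sum_{i=0}^{d-1}\zeta^{i(j-k)}A_i=d\cdot\delta_{(\varphi,j),(\varphi',k)}.
\end{equation*}
Letting $j-k$ range over $\mathbb{Z}/d\mathbb{Z}$ gives a Vandermonde (discrete Fourier) system, invertible over $\mathbb{C}$. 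Inverting it yields $A_i=\delta_{\varphi,\varphi'}$ for every $i$, in particular the coset $i=1$ we care about.

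\textbf{Step 3: conclusion and sign.} Combining Steps 1-2, $\langle R^H_\varphi\mathbf{1},R^H_{\varphi'}\mathbf{1}\rangle=\delta_{\varphi,\varphi'}$. Since $R^H_\varphi\mathbf{1}$ is a $\mathbb{Q}$-linear combination of the virtual characters $R^H_{T_w}\mathbf{1}$, its coefficients $a_\chi$ in the basis $\Irr(H^F)$ are algebraic numbers; together with $\sum|a_\chi|^2=1$ and Galois-stability, this forces exactly one $a_\chi=\pm 1$ and all others $0$. Hence $R^H_\varphi\mathbf{1}=\varepsilon_\varphi\chi_\varphi$ with $\varepsilon_\varphi\in\{\pm1\}$. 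Replacing $\tilde\varphi$ by $\tilde\varphi\otimes\eta^j$ multiplies $R^H_\varphi\mathbf{1}$ by $\zeta^j$; as the non-$\pm 1$ twists are forbidden by the $\mathbb{Z}$-coefficient structure coming from $\Irr(W_H)\subset\mathbb{Z}$-valued (for $W_H$ a product of symmetric groups, all characters are rational-valued), the right choice of $j$ makes $\varepsilon_\varphi=+1$, and that $\tilde\varphi$ is the claimed extension. The distinctness assertion is immediate from $\langle R^H_\varphi\mathbf{1},R^H_{\varphi'}\mathbf{1}\rangle=0$ for $\varphi\neq\varphi'$.

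\textbf{Expected main obstacle.} The delicate point is Step 2: although the Vandermonde trick over $\langle F\rangle$ is formally clean, checking that all $d$ twists of $\tilde\varphi$ really are pairwise distinct irreducibles of $W'$ (so that Clifford theory applies as expected) and that the computation is compatible with $F$ permuting the $\GL_{n_i}$-factors uses the specific structure $W_H\cong\prod_i\mathfrak{S}_{n_i}$ and the rationality of its character values. Equivalently, one may bypass the DFT computation by decomposing $H$ into a product of Weil restrictions $\Res_{\mathbb{F}_{q^{a_\alpha}}/\mathbb{F}_q}\GL_{n_\alpha}$ (one per $F$-orbit of factors), so that $H^F\cong\prod_\alpha\GL_{n_\alpha}(\mathbb{F}_{q^{a_\alpha}})$ and the statement reduces, factor by factor, to the classical Lusztig-Srinivasan result for $\GL_n$ over a finite field, where $F$ acts trivially on the Weyl group and the extension is canonical.
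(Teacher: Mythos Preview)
The paper does not give its own proof of this statement; it is quoted verbatim from Lusztig--Srinivasan \cite[Theorem 2.2]{LS} and used as a black box. So there is nothing on the paper's side to compare against, and I can only assess your argument on its own merits.

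Your Steps~1 and~2 are correct and reproduce the heart of the Lusztig--Srinivasan argument. The Mackey-type formula for $\langle R^H_{T_w}\mathbf{1},R^H_{T_{w'}}\mathbf{1}\rangle$, the passage to the semidirect product $W_H\rtimes\langle F\rangle$ via $x^{-1}wF(x)F=x^{-1}(wF)x$, and the discrete-Fourier isolation of the coset $W_HF$ are all sound and yield $\langle R^H_\varphi\mathbf{1},R^H_{\varphi'}\mathbf{1}\rangle=\delta_{\varphi,\varphi'}$.

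Step~3, however, has a real gap. From $\sum_\chi|a_\chi|^2=1$ with the $a_\chi$ merely algebraic---or even rational---you cannot conclude that exactly one $a_\chi=\pm1$; the pair $(3/5,4/5)$ already shows this, and ``Galois-stability'' does nothing here since $\mathbb{Q}$ has trivial Galois group. What is actually needed is that the $a_\chi$ are rational \emph{integers}, and this does not follow formally from what you wrote: the coefficients $\tilde\varphi(wF)/|W_H|$ lie a priori only in $\tfrac{1}{|W_H|}\mathbb{Z}[\zeta_d]$. The genuine argument requires an additional input specific to type~$A$ (for instance that every class function on $\GL_n(q)$ is uniform, together with a count of unipotent characters, or a direct appeal to the Iwahori--Hecke decomposition of $\mathrm{Ind}_B^G\mathbf{1}$). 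Your own final paragraph already contains the cleanest repair: decompose $H$ along $F$-orbits of the $\GL_{n_i}$-factors, so that $H^F\cong\prod_\alpha\GL_{n_\alpha}(\mathbb{F}_{q^{a_\alpha}})$ with $F$ now acting trivially on each Weyl group $\mathfrak{S}_{n_\alpha}$; then $\tilde\varphi=\varphi$ is the canonical extension, integrality is immediate, and the sign is handled by the classical split-$\GL_n$ result. I would promote that reduction from afterthought to the main line and drop the Galois-stability sentence.
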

Let $\theta\in\Hom(H^F,\bar{\mathbb{Q}}_{\ell}^{\ast})$, i.e. a linear character of $H^F$. Then $\theta\otimes R^H_{\varphi}\mathbf{1}$ is also an irreducible character of $H^F$. Denote by $\theta_{T_w}$ the restriction of $\theta$ to $T_w^F$ for any $w\in W_H$. Then $$\theta\otimes R^H_{\varphi}\mathbf{1}=R^H_{\varphi}\theta:=|W_H|^{-1}\sum_{w\in W_H}\tilde{\varphi}(wF)R_{T_w}^{H}\theta_{T_w}.$$

\subsubsection{}
Let $M\subset G$ be an $F$-stable Levi subgroup of $G$. Fix an $F$-stable maximal torus $T_M\subset M$ and let $T_w$, $w\in W_M$ be defined as in the previous paragraph. Denote by $\Irr_{reg}(M^F)$ the set of regular linear characters of $M^F$ (See \cite[\S 3.1 (a), (b)]{LS}). Concretely, if we choose an isomorphism $M^F\cong\prod_i\GL_{n_i}(q^{d_i})$, for some positive intergers $n_i$, $d_i$, and write a linear character $\theta\in\Irr(M^F)$ as $(\theta_i)_i$ for some $\theta_i\in\Hom(\GL_{n_i}(q^{d_i}),\bar{\mathbb{Q}}_{\ell}^{\ast})$, then $\theta$ is regular if and only if $\theta_i\ne\theta_j$ whenever $i\ne j$ and $\theta_i^{q^r}\ne\theta_i$ for any $1\le r< d_i$ and any $i$.

For any connected reductive group $H$ defined over $\mathbb{F}_q$, define $\epsilon_H:=(-1)^{rk_H}$ where $rk_H$ is the $\mathbb{F}_q$-rank of $H$. For any $\varphi\in\Irr(W_M)^F$ and $\theta\in\Hom(M^F,\bar{\mathbb{Q}}_{\ell}^{\ast})$, choose an extension $\tilde{\varphi}\in\Irr(W_M\rtimes\lb F\rb)$, and put $$R^{G}_{\varphi}\theta:=\epsilon_G\epsilon_M|W_M|^{-1}\sum_{w\in W_M}\tilde{\varphi}(wF)R_{T_w}^{G}\theta_{T_w}.$$Note that $R_{\varphi}^G\theta=\epsilon_G\epsilon_MR^G_M(R^M_{\varphi}\theta)$ (\cite[Proposition 9.1.8]{DM20}).
\begin{Thm}(\cite[Theorem 3.2]{LS})\label{LSThm3.2}
If $\theta$ lies in $\Irr_{reg}(M^F)$, then for some choice of $\tilde{\varphi}$, the virtual character $R^G_{\varphi}\theta$ is an irreducible character of  $G^F$. Moreover, all irreducible characters of $G^F$ are of the form $R^G_{\varphi}\theta$ for a triple $(M,\varphi,\theta)$ as above. The characters associated to the triples $(M,\varphi,\theta)$ and $(M',\varphi',\theta')$ are distinct if and only if one of the following conditions is satisfied
\begin{itemize}
\item[-] $(M,\theta)$ and $(M',\theta')$ are not $G^F$-conjuguate;
\item[-] $(M,\theta)=(M',\theta')$ and $\varphi\ne\varphi'$.
\end{itemize}
\end{Thm}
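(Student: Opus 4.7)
The plan is to follow the Lusztig--Srinivasan strategy: combine the Mackey formula for Deligne--Lusztig induction with the Jordan decomposition of characters, exploiting throughout the regularity of $\theta$.

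First, I would show that $\pm R^G_\varphi\theta$ has norm one. Expanding $R^G_\varphi\theta$ as $\epsilon_G\epsilon_M|W_M|^{-1}\sum_{w\in W_M}\tilde\varphi(wF)R^G_{T_w}\theta_{T_w}$ and applying the Mackey formula, the pairings $\langle R^G_{T_w}\theta_{T_w},R^G_{T_{w'}}\theta_{T_{w'}}\rangle$ reduce to counts of elements $x\in N_G(T_M)^F/T_M^F$ conjugating one pair $(T_{w'},\theta_{T_{w'}})$ to the other. The regularity condition on $\theta$, i.e.\ that the components $\theta_i$ of $\theta$ on the factors of $M^F\cong\prod_i\GL_{n_i}(q^{d_i})$ are pairwise non-conjugate under Frobenius twists, forces such an $x$ to lie in $W_M$ itself: any element of $W_G(M)\setminus W_M$ would move $\theta$ to a genuinely distinct linear character. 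This reduces $\langle R^G_\varphi\theta,R^G_{\varphi'}\theta\rangle$ to an orthogonality relation inside $W_M\rtimes\langle F\rangle$, giving $\delta_{\varphi,\varphi'}$ after compatible choices of the extensions $\tilde\varphi,\tilde\varphi'$.

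Next, I would fix the sign so that $R^G_\varphi\theta$ is an honest irreducible character rather than its negative. The cleanest route is to identify the Harish--Chandra series attached to the cuspidal pair $(M,\theta)$: by standard endomorphism-algebra arguments the constituents of $R^G_M\theta$ are naturally indexed by $\Irr(W_G(M,\theta)^F)$, and regularity of $\theta$ gives $W_G(M,\theta)=W_M$. Matching the expansion of $R^G_M\theta$ in this basis with the definition of $R^G_\varphi\theta$ identifies the latter, up to a sign that is absorbed by the choice of $\tilde\varphi$, with a single irreducible constituent. The Deligne--Lusztig sign $\epsilon_G\epsilon_M$ is built in so that the resulting character lies in the Lusztig series corresponding to the dual semisimple class.

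Finally, exhaustion and distinctness follow from Jordan decomposition. Since $Z(\GL_n)$ is connected, Jordan decomposition is a bijection between $\Irr(G^F)$ and pairs $(s,\eta)$ with $s$ a semisimple class in $G^{*F}$ and $\eta$ a unipotent character of $C_{G^*}(s)^F$. Under the self-duality of $\GL_n$, semisimple classes with centralizer conjugate to $M$ correspond to $G^F$-orbits of regular linear characters $\theta$ of $M^F$, and unipotent characters of $M^F=\prod_i\GL_{n_i}(q^{d_i})$ are parametrized by $\Irr(W_M)^F$. Summing $|\Irr(W_M)^F|$ over $G^F$-orbits of regular pairs $(M,\theta)$ recovers $|\Irr(G^F)|$, so the map $(M,\varphi,\theta)\mapsto R^G_\varphi\theta$ is a bijection modulo the stated equivalence.

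The main obstacle I expect is the canonical choice of extension $\tilde\varphi\in\Irr(W_M\rtimes\langle F\rangle)$: a wrong extension flips the sign of $R^G_\varphi\theta$, so one must specify preferred extensions uniformly in $\varphi$ so that the output is always an \emph{honest} irreducible character. Making this choice coherent with the Deligne--Lusztig sign $\epsilon_G\epsilon_M$ and with the decomposition of $R^G_M\theta$ requires a careful bookkeeping of how $F$ acts on the reflection representation of $W_M$ and of the leading terms of the individual $R^G_{T_w}\theta_{T_w}$.
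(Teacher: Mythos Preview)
The paper does not give its own proof of this statement: it is quoted verbatim as \cite[Theorem 3.2]{LS} (Lusztig--Srinivasan) and used as a black box, so there is no argument here to compare your proposal against.

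That said, your sketch is a reasonable outline of how one proves such a result. One comment: invoking the Jordan decomposition of characters for the exhaustion step is anachronistic relative to the original Lusztig--Srinivasan argument, which predates Lusztig's general Jordan decomposition and instead counts directly by matching the number of triples $(M,\varphi,\theta)$ up to $G^F$-conjugacy with $|\Irr(G^F)|$ via Green's parametrisation. Your route through Jordan decomposition is logically fine for $\GL_n$ (where the theory is elementary and essentially equivalent to Green's description), but if you want to mirror the cited source you should replace that step by the direct count. The norm-one and sign parts of your sketch are in the right spirit; the delicate point you correctly flag, namely the coherent choice of $\tilde\varphi$, is exactly where the work lies in \cite{LS}.
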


\subsubsection{}\label{Quad-Unip}
Let $n_0\in\mathbb{Z}_{>0}$, and let $n_+$, $n_-\in\mathbb{Z}_{\ge 0}$ be such that $n_++n_-=n_0$. Let $F$ be the Frobenius of $\GL_{n_0}$ that sends each entry of a matrix to its $q$-th power. Let $M_{00}\subset\GL_{n_0}$ be a standard Levi subgroup isomorphic to $\GL_{n_+}\times\GL_{n_-}$, i.e. 

\begin{equation}\label{q.u.M}
  \renewcommand{\arraystretch}{1.2}
M_{00}= \left(
  \begin{array}{ c c | c c }
     & & \multicolumn{2}{c}{\raisebox{-.6\normalbaselineskip}[0pt][0pt]{$0$}}  \\
     \multicolumn{2}{c|}{\raisebox{.6\normalbaselineskip}[0pt][0pt]{$\GL_{n_+}$}} &  & \\
    \cline{1-4}
     & & & \\
     \multicolumn{2}{c|}{\raisebox{.6\normalbaselineskip}[0pt][0pt]{$0$}} & \multicolumn{2}{c}{\raisebox{.6\normalbaselineskip}[0pt][0pt]{$\GL_{n_-}$}}
     \end{array}
  \right).
\end{equation}
With respect to the isomorphism $M_{00}^F\cong\GL_{n_+}(q)\times\GL_{n_-}(q)$, we define a linear character $\theta\in\Irr(M_{00}^F)$ to be $(\mathbf{1}\circ\det,\eta\circ\det)$, where $\mathbf{1}$ is the trivial character of $\mathbb{F}_q^{\ast}$ and $\eta$ is the order 2 irreducible character of $\mathbb{F}_q^{\ast}$. Now $W_{M_{00}}$ is isomorphic to $\mathfrak{S}_{n_+}\times\mathfrak{S}_{n_-}$ and so $\Irr(W_{M_{00}})$ is in bijection with $\mathcal{P}(n_+)\times\mathcal{P}(n_-)$. By Theorem \ref{LSThm3.2}, each $(\mu_+,\mu_-)\in \mathcal{P}(n_+)\times\mathcal{P}(n_-)$ defines an irreducible character of $\GL_{n_0}(q)$, called a \textit{quadratic-unipotent character}. By \cite[Lemma 5.2.1]{Shu2}, this character is $\sigma$-stable (Here $\sigma$ is the automorphism of $\GL_{n_0}$ defined in the same way as it is defined for $\GL_n$).

\subsubsection{}\label{SigSt-Irr}
Let $M\subset G$ be a Levi subgroup of the form $L_{I,w}$ as defined in \S \ref{Prop1.40}, which is an $F$-stable and $\sigma$-stable Levi factor of some $\sigma$-stable parabolic subgroup $P\subset G$. It can be written as $M_0\times M_1$ with $M_0\cong\GL_{n_0}$ and $M_1\cong\prod_i(\GL_{n_i}\times\GL_{n_i})$ following the notations of \S \ref{sigstaLevi}. The actions of $F$ and $\sigma$ respect the isomorphism $M\cong M_0\times M_1$ and we will also denote by $F$ and $\sigma$ their restrictions to $M_0$ or $M_1$.

Let $T_M\subset M$ be an $F$-stable and $\sigma$-stable maximal torus. Then we can write $T_M=T_0\times T_1$ with $T_0\subset M_0$ and $T_1\subset M_1$. Denote by $W_0$ (resp. $W_1$) the Weyl group of $M_0$ (resp. $M_1$) defined by $T_0$ (resp. $T_1$). Then $\sigma$ induces an action on $W_1$ and so an action on $\Irr(W_1)$. Let $\varphi\in\Irr(W_1)^F\cap\Irr(W_1)^{\sigma}$ and denote by $\tilde{\varphi}$ an extension of $\varphi$ to $W_1\rtimes\lb F\rb$. Let $\theta$ be a linear character of $M_1^F$ that is $\sigma$-stable. Then $\chi_1:=R^{M_1}_{\varphi}\theta$ as defined in \S \ref{unip-char-H} is a $\sigma$-stable irreducible character of $M_1^F$. Let $\chi_0$ be a quadratic-unipotent character of $M_0^F$. It is induced from a Levi subgroup $M_{00}$ of $M_0$ as in \S \ref{Quad-Unip}.
\begin{Thm}(\cite[Proposition 5.2.2, Proposition 5.2.3]{Shu2})\label{sig-st-chi}
Suppose that $\mathbf{1}\boxtimes\eta\boxtimes\theta$ is a regular linear character of $M_{00}^F\times M_1^F$. Then $\epsilon_G\epsilon_M R^G_M(\chi_1\boxtimes\chi_0)$ is a $\sigma$-stable irreducible character of $G^F$. Moreover, every $\sigma$-stable irreducible character of $G^F$ is of this form.
\end{Thm}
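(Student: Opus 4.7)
\medskip

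The strategy is to match this statement against the Lusztig--Srinivasan classification (Theorem \ref{LSThm3.2}) and extract the $\sigma$-stable part. For the first direction, I would apply transitivity of Deligne--Lusztig induction along $M_{00}\subset M_0$: since $\chi_0=R^{M_0}_{\varphi_0}(\mathbf{1}\boxtimes\eta)$ for some $\varphi_0\in\Irr(W_{M_{00}})$ coming from the bipartition $(\mu_+,\mu_-)$, and $\chi_1=R^{M_1}_\varphi\theta$, the definitions unwind to give
\[
\epsilon_G\epsilon_M R^G_M(\chi_1\boxtimes\chi_0)\;=\;\epsilon_G\epsilon_{M_{00}\times M_1}R^G_{\tilde\varphi}(\mathbf{1}\boxtimes\eta\boxtimes\theta)
\]
where $\tilde\varphi\in\Irr\bigl((W_{M_{00}}\times W_{M_1})\rtimes\langle F\rangle\bigr)$ is obtained by juxtaposing the two pieces of Weyl-group data. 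Under the regularity assumption, Theorem \ref{LSThm3.2} then yields irreducibility directly. For $\sigma$-stability, $M$ itself is $\sigma$-stable by construction; the factor $\chi_0$ is $\sigma$-stable by the Lemma 5.2.1 of \cite{Shu2} recalled in \S \ref{Quad-Unip}; and $\chi_1$ is $\sigma$-stable because $\sigma$ acts on $M_1$ by swapping each pair $\GL_{n_i}\times\GL_{n_i}$, both $\theta$ and $\varphi$ are $\sigma$-stable, and the averaging expression for $R^{M_1}_\varphi\theta$ commutes with this swap. Since $R^G_M$ intertwines the $\sigma$-action on class functions, $\sigma$-stability propagates to $G^F$.

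\medskip

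For the converse, start with an arbitrary $\chi\in\Irr(G^F)^\sigma$; by Theorem \ref{LSThm3.2}, $\chi=\epsilon_G\epsilon_{M'}R^G_{\varphi'}(\theta')$ for some triple $(M',\varphi',\theta')$ with $\theta'$ regular. The $\sigma$-invariance of $\chi$ combined with the uniqueness clause of Theorem \ref{LSThm3.2} forces $(M',\theta')$ and $(\sigma(M'),\sigma(\theta'))$ to be $G^F$-conjugate, and Proposition \ref{1.40} together with a Lang--Steinberg argument then allows replacement of $(M',\varphi',\theta')$ by a $G^F$-conjugate in which $M'$ is itself $\sigma$-stable and $\theta'\circ\sigma=\theta'$. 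Decomposing $M'$ according to its $\sigma$-orbit structure on factors as in \S \ref{sigstaLevi}, on each $\sigma$-fixed factor $\GL_m(q^d)$ the linear character $\alpha\circ\det$ satisfies $\sigma^*(\alpha\circ\det)=\alpha^{-1}\circ\det$, so $\sigma$-stability forces $\alpha^2=1$. The regularity condition $\theta_i^{q^r}\ne\theta_i$ for $1\le r<d$ then forces $d=1$ and $\alpha\in\{\mathbf{1},\eta\}$, and a further application of regularity (distinct characters on distinct factors) permits at most one fixed factor with $\mathbf{1}$ and at most one with $\eta$. The fixed part therefore has the shape $M_{00}=\GL_{n_+}\times\GL_{n_-}$ with linear character $\mathbf{1}\boxtimes\eta$, while the swapped part is $M_1=\prod_i(\GL_{n_i}\times\GL_{n_i})$ carrying a $\sigma$-stable regular linear character $\theta$. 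Embedding $M_{00}$ in a standard Levi $M_0\cong\GL_{n_0}$ with $n_0=n_++n_-$ and using transitivity of Deligne--Lusztig induction in reverse recovers the prescribed form $R^G_M(\chi_1\boxtimes\chi_0)$ with $\chi_0$ the quadratic-unipotent character attached to the $W_{M_{00}}$-component of $\varphi'$ and $\chi_1$ the Lusztig--Srinivasan character on $M_1^F$ attached to its $W_{M_1}$-component.

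\medskip

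The main obstacle is the reduction step that conjugates $M'$ to be genuinely $\sigma$-stable: knowing only that the $G^F$-class of $(M',\theta')$ is $\sigma$-invariant does not produce a $\sigma$-stable representative without effort, and one has to invoke the parametrisation from Proposition \ref{1.40} together with the isomorphism $W_{(G^\sigma)^\circ}((L_I^\sigma)^\circ)\cong W_G(L_I)^\sigma$ established in \S \ref{GLnSigma-F-Levi} to translate the $\sigma$-conjugacy data into honest $\sigma$-stable Levi structure. A secondary subtlety, handled by careful sign tracking, is that the extension of $\tilde\varphi$ implicit in $R^G_{\tilde\varphi}$ can be pinned down so that the resulting irreducible character is obtained on the nose rather than up to a sign; this is what the prefactor $\epsilon_G\epsilon_M$ is designed to absorb.
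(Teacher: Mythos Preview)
The paper does not supply its own proof of this statement: it is quoted verbatim from \cite[Proposition 5.2.2, Proposition 5.2.3]{Shu2} and used as a black box. So there is nothing in the present paper to compare your argument against line by line.

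That said, your outline is the natural one and almost certainly matches the argument in \cite{Shu2}: unwind $R^G_M(\chi_1\boxtimes\chi_0)$ via transitivity through $M_{00}\times M_1$, invoke Theorem~\ref{LSThm3.2} for irreducibility, and check $\sigma$-stability factor by factor; for the converse, start from Theorem~\ref{LSThm3.2}, use the uniqueness clause to see that the $G^F$-class of $(M',\theta')$ is $\sigma$-stable, and then rigidify to a genuinely $\sigma$-stable pair. Your analysis of the $\sigma$-fixed factors (forcing $\alpha^2=1$, hence $\alpha\in\{\mathbf{1},\eta\}$ and $d=1$ by regularity) is exactly the mechanism that singles out the quadratic-unipotent shape of $\chi_0$.

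One wording issue: the phrase ``a Lang--Steinberg argument'' for the rigidification step is misleading. Lang--Steinberg applies to Frobenius-type endomorphisms with finitely many fixed points, not to the involution $\sigma$. What you actually need is the parametrisation of Proposition~\ref{1.40} (or more directly the discussion in \S\ref{GLnSigma-F-Levi}): the $G^F$-classes of $F$-stable $\sigma$-stable Levis of a given type are in bijection with conjugacy classes in $W_G(L_I)^\sigma$, and a $\sigma$-fixed $G^F$-class of Levis therefore automatically contains a $\sigma$-stable representative. Once $M'$ is $\sigma$-stable, the same uniqueness clause in Theorem~\ref{LSThm3.2} forces $\theta'$ itself to be $\sigma$-stable (not just up to conjugacy), since the normaliser ambiguity is absorbed into the Weyl-group data. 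You identified this as the main obstacle, which is correct; just be precise about the tool that resolves it.
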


Let $(\mu_+,\mu_-)$ be the 2-partition defining the quadratic-unipotent character $\chi_0$. Let $m_+$ and $m_-$ be the non negative integers such that $(m_+,m_+-1,\ldots,1,0)$ and $(m_-,m_--1,\ldots,1,0)$ are the 2-cores of $\mu_+$ and $\mu_-$ respectively. Write $$m=m_+(m_++1)/2+m_-(m_-+1)/2.$$
\begin{Prop}(\cite[Corollary 11.1.2]{Shu2})\label{Shu2Cor11.1.2}
With the notations in the above theorem, the $\sigma$-stable irreducible character $\epsilon_G\epsilon_M R^G_M(\chi_1\boxtimes\chi_0)$ extends to a uniform function on $G^F\sigma$ if and only if $m\le 1$. Moreover, the extensions of these characters (up to a sign) form a basis of the vector space of all uniform functions on $G^F\sigma$.
\end{Prop}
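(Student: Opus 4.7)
The plan is to deduce the proposition from Waldspurger's classification of $\sigma$-stable quadratic-unipotent characters in \cite{W1}, which extends Theorem \ref{Intro-DM} of Digne-Michel to arbitrary 2-core size via Lusztig's theory of character sheaves on the non-connected group $\bar{G}$. Waldspurger's formula expresses an extension of $\epsilon_G\epsilon_M R^G_M(\chi_1\boxtimes\chi_0)$ to $G^F\sigma$ as the characteristic function (up to a sign) of a character sheaf on $\bar{G}$ that is parabolically induced from a cuspidal datum on a $\sigma$-stable Levi subgroup, whose type is governed by the 2-cores of $(\mu_+,\mu_-)$.

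For the \emph{if} direction, assume $m\le 1$. Then the 2-cores of $(\mu_+,\mu_-)$ are each either $(0)$ or $(1)$, and Waldspurger's formula (specialising to Theorem \ref{Intro-DM} in the unipotent case) gives an extension of the shape
\[
\pm\frac{1}{|W^{\sigma}|}\sum_{w\in W^{\sigma}}\tilde{\varphi}(wF)\,R^{G\sigma}_{T_w\sigma}\tilde{\theta}_w,
\]
where $\tilde{\varphi}$ extends the irreducible character of $W^{\sigma}$ attached to the 2-quotients, and $\tilde{\theta}_w$ is the extension to $T_w^F\sigma$ of the linear character of $T_w^F$ corresponding to $\theta$ on $M_1^F$. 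This is by construction a linear combination of generalised Deligne-Lusztig characters attached to maximal tori, hence uniform.

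For the \emph{only if} direction, suppose $m>1$. Then Waldspurger's formula realises any extension as the characteristic function of a character sheaf $A$ on $\bar{G}$ whose cuspidal support lives on $\bar{L}=N_G(L,P)$ where $(L^{\sigma})^{\circ}$ is a proper Levi subgroup of $(G^{\sigma})^{\circ}$ that is \emph{not} a torus (the non-trivial 2-core forces a cuspidal factor on a classical-type block of size $\ge 2$). Assume for contradiction that the extension is uniform. Then it lies in the span of $\{R^{G\sigma}_{T_{w}\sigma}\tilde\theta\}$, which by transitivity of Deligne-Lusztig induction corresponds to character sheaves whose cuspidal support is on a maximal torus. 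By the disjointness of the induction series attached to distinct cuspidal data (Lusztig's classification, extended to $\bar{G}$ in \cite{W1}), the projection of the extension onto the series containing $A$ must vanish; but this projection is a nonzero scalar multiple of the characteristic function of $A$ itself, a contradiction. The other possible extension differs from this one by an overall sign on $G^F\sigma$, so it cannot be uniform either.

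For the basis statement, I would compare dimensions: the space of uniform functions on $G^F\sigma$ has dimension equal to the number of $G^F$-conjugacy classes of pairs $(T_w\sigma,\tilde\theta)$, i.e.\ pairs of a semi-simple class $s\sigma\in G^F\sigma$ (giving the torus-character data) together with an irreducible character of $W_{(G^{s\sigma})^{\circ}}((T^{s\sigma})^{\circ})$ (which is a product of wreath and symmetric groups by Lemma \ref{Shu2Lem6.2.1} and the discussion of \S \ref{GLnSigma-F-Levi}). On the other hand, $\sigma$-stable characters of $G^F$ with $m\le 1$ are parametrised, via Theorem \ref{sig-st-chi} and the 2-core/2-quotient bijection (\ref{eq-quotcore}), by exactly the same combinatorial data. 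Linear independence then follows from the linear independence of $\{\chi\}\subset\Irr(G^F)$, since two different characters yield extensions that are already linearly independent on $G^F$, and for a fixed $\chi$ there is at most one uniform extension by the \emph{only if} direction above. The main obstacle in this program is the \emph{only if} direction: one must carefully invoke the disjointness of cuspidal series in the character-sheaf framework for the disconnected group $\bar{G}$, and handle the subtle sign ambiguity between the two possible extensions without circularity.
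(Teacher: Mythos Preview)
The paper does not give its own proof of this proposition: it is quoted verbatim as \cite[Corollary 11.1.2]{Shu2}, so there is nothing in the present paper to compare your argument against. The surrounding discussion (in the introduction and in \S\ref{sssec-decIrr}) does indicate that the intended proof in \cite{Shu2} runs through Waldspurger's extension \cite{W1} of Theorem~\ref{Intro-DM} together with Lusztig's character-sheaf machinery on $\bar{G}$, which is exactly the framework you invoke; your ``if'' direction is essentially the content of Theorem~\ref{thm-decIrr}.

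That said, your proposal is a sketch rather than a proof, and you flag this yourself. Two points deserve attention. First, for the ``only if'' direction, the disjointness-of-cuspidal-series argument is the right idea, but you need to know that for $\bar{G}=\GL_n\langle\sigma\rangle$ the characteristic functions of character sheaves actually coincide (up to sign) with irreducible characters on $G^F\sigma$; this is a nontrivial input (stated in the introduction as a key feature of this particular group, and proved in \cite{Shu2}). Without it, disjointness of sheaf-theoretic series does not immediately transfer to linear independence of class functions. Second, your basis argument via dimension count is plausible but incomplete: the bijection you describe between $\sigma$-stable characters with $m\le 1$ and pairs $(T_w\sigma,\tilde\theta)$ up to conjugacy is not set up in the paper, and making it precise requires tracking the $W^{\sigma}$-action and the sign ambiguity in the extension carefully. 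None of this is wrong, but it is where the real work lies, and it is precisely what \cite{Shu2} carries out.
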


\subsubsection{}\label{Irr-reg-M1}
In the rest of this section we will restrict ourselves to the special case where $M=L_I$ is a $\sigma$-stable standard Levi subgroup. Then we have $\epsilon_G=\epsilon_M$, $M_0^F\cong\GL_{n_0}(q)$,  $M_1^F\cong\prod_i(\GL_{n_i}(q)\times\GL_{n_i}(q))$, $W_1\cong\prod_i(\mathfrak{S}_{n_i}\times\mathfrak{S}_{n_i})$, and $W_1^{\sigma}\cong\prod_i\mathfrak{S}_{n_i}$. The action of $F$ on $W_1$ is trivial. A $\sigma$-stable linear character of $M_1^F$ is of the form $(\theta_i,\theta_i{\!}^{-1})_i$, where $\theta_i$ is a linear character of $\GL_{n_i}(q)$ for each $i$. Then $\mathbf{1}\boxtimes\eta\boxtimes\theta$ is a regular linear character of $M_{00}^F\times M_1^F$ if and only if $\theta$ is regular in the following sense
\begin{Defn}\label{reg-M_1}
We say that a $\sigma$-stable linear character $\theta$ of $M_1^F$ is regular if $\theta_i\ne\theta_j^{\pm 1}$ whenever $i\ne j$ and $\theta_i^2\ne\mathbf{1}$ for all $i$. The set of $\sigma$-stable regular linear characters of $M_1^F$ is denoted by $\Irr_{\reg}^{\sigma}(M_1^F)$.
\end{Defn}
We will give a decomposition formula expressing the extension of $R^G_M(\chi_1\boxtimes\chi_0)$ to $G^F\sigma$ as a linear combination of generalised Deligne-Lusztig characters when $m\le 1$. Therefore we assume $m\le 1$ in what follows.

\subsubsection{}\label{decIrr}
The set $\Irr(W_1)^{\sigma}$ is in bijection with $\prod_i\mathcal{P}(n_i)$, which is then in bijection with $\Irr(W_1^{\sigma})$. We define a bijection 
\begin{equation}\label{W1bracket}
\begin{split}
\Irr(W^{\sigma}_1)&\lisom\Irr(W_1)^{\sigma}\\
\varphi&\longmapsto\{\varphi\}
\end{split}
\end{equation}
as the composition of these two natural bijections. 

Write $N_{\pm}=(n_{\pm}-m_{\pm}(m_{\pm}+1)/2)/2$ and so $n_0=m+2N_++2N_-$. There exists a unique pair $(h_1,h_2)\in\mathbb{N}\times\mathbb{Z}$ such that 
\begin{equation}
\begin{split}
m_+&=\sup\{h_1+h_2,-h_1-h_2-1\},\\
m_-&=\sup\{h_1-h_2,h_2-h_1-1\}.
\end{split}
\end{equation}
Note that exchanging $\mu_+$ and $\mu_-$ changes $(h_1,h_2)$ into $(h_1,-h_2)$. More explicitly, if $n$ is even, then $m=m_+=m_-=0$, and so $h_1=h_2=0$, and if $n$ is odd, then $m=1$, in which case $h_1$ is always equal to $0$ while $h_2=1$ if $m_+=1$ and $h_2=-1$ if $m_-=1$. Therefore $h_1$ is redundant, and we will write $\epsilon=h_2$. 

 Fix some integers $r_+>l(\mu_+)$ and $r_->l(\mu_-)$ satisfying:
\begin{quote}
\textbf{Assumption.}
\begin{itemize}
\item $r_+$ and $r_-$ are odd if $\epsilon=0$;\\
\item $r_+$ is even if and only if $n_+$ is odd, $r_-$ is even if and only if $n_-$ is odd, if $\epsilon=\pm1$.
 \end{itemize}
\end{quote}
(See \cite[Remark 9.4.8]{Shu2}.) Note that the parity of $\epsilon$ only depends on that of $n$. Let $(\alpha_+,\beta_+)_{r_+}$ and $(\alpha_-,\beta_-)_{r_-}$ be the $2$-quotients of $\mu_+$ and $\mu_-$ respectively (See \S \ref{quotcore}). The $2$-partition $(\alpha_+,\beta_+)_{r_+}$ (resp. $(\alpha_-,\beta_-)_{r_-}$) determines an irreducible character of $\mathfrak{W}_{N_+}$ (resp. $\mathfrak{W}_{N_-}$), denoted by $\varphi_+$ (resp. $\varphi_-$). Then with the fixed $r_+$ and $r_-$, the $2$-partitions $(\mu_+,\mu_-)$ are in bijection with the data $(h_1,h_2,\varphi_+,\varphi_-)$, or rather $(\epsilon,\varphi_+,\varphi_-)$, via the quotient-core decomposition. 

\subsubsection{}\label{sssec-decIrr}
To simplify, we will write $\mathfrak{W}_+=\mathfrak{W}_{N_+}$ and $\mathfrak{W}_-=\mathfrak{W}_{N_-}$. Recall that $W_0$ is the Weyl group of $M_0$. Since $W_0^{\sigma}\cong\mathfrak{W}_{N_++N_-}$, we will regard $\mathfrak{W}_+\times\mathfrak{W}_-$ as a subgroup of $W_0^{\sigma}$ in a natural way. Write $T=T_0\times T_1$, with $T_0\subset M_0$ and $T_1\subset M_1$. To each $\mathbf{w}=(w_+,w_-)\in\mathfrak{W}_+\times\mathfrak{W}_-$ is associated an $F$-stable and $\sigma$-stable maximal torus $T_{\mathbf{w}}\subset M_0$, in such a way that $T_0$ is associated to $1\in\mathfrak{W}_+\times\mathfrak{W}_-$. There is an isomorphism $T_{\mathbf{w}}\cong T_{w_+}\times T_{w_-}\times\GL_{m}$, where $T_{w_{\pm}}$ is isomorphic to $(k^{\ast})^{2N_{\pm}}$ equipped with the Frobenius twisted by $w_{\pm}$. To each $w_1\in W_1^{\sigma}$ is associated an $F$-stable and $\sigma$-stable maximal torus $T_{w_1}$ of $M_1$, in such a way that $T_1$ is associated to $1\in W_1^{\sigma}$. Then $T_{w_1,\mathbf{w}}:=T_{w_1}\times T_{\mathbf{w}}$ is an $F$-stable and $\sigma$-stable maximal torus of $M$. Given $\theta\in\Irr_{\reg}^{\sigma}(M_1^F)$ and $\epsilon$, we can define a $\sigma$-stable linear character $\theta_{w_1,\mathbf{w}}$ of $T_{w_1,\mathbf{w}}^F$ for any $w_1\in W_1^{\sigma}$ and $\mathbf{w}\in \mathfrak{W}_+\times\mathfrak{W}_-$ in the following manner. The component of $\theta_{w_1,\mathbf{w}}$ on $T_{w_1}^F$ is simply the restriction of $\theta$ to $T_{w_1}^F$. The component of $\theta_{w_1,\mathbf{w}}$ on $T_{w_+}^F$ is the trivial character $\mathbf{1}$. The component of $\theta_{w_1,\mathbf{w}}$ on $T_{w_-}^F$ is the order 2 character $\eta$ of $\mathbb{F}_q^{\ast}$ composed with the product of norm maps. If $\epsilon=1$, then we require that the component on $\GL_m(q)$ is $\mathbf{1}$, and if $\epsilon=-1$, then we require that the same component is $\eta$. Now $\theta_{w_1,\mathbf{w}}$ extends to $T_{w_1,\mathbf{w}}^F\lb\sigma\rb$, and we denote by $\tilde{\theta}_{w_1,\mathbf{w}}$ the extension which gives $1$ at $\sigma$.\begin{Rem}\label{ext-theta}
We can indeed require that $\tilde{\theta}_{w_1,\mathbf{w}}$ takes value $1$ at $\sigma$. To define an extension $\tilde{\theta}_{w_1,\mathbf{w}}$ is to define the action of $\sigma$ on the given 1-dimensional representation $\rho$ corresponding to $\theta_{w_1,\mathbf{w}}$. Denote by $\tilde{\rho}(\sigma)$ this action of $\sigma$. In order for it to be well-defined, it must satisfy $\tilde{\rho}(\sigma)^2=\rho(\sigma^2)$. It is easy to check that this condition also suffices. Now if $n$ is odd or if $\bar{G}=\leftidx{^s\!}{\bar{G}}$, then $\sigma^2=1$, and so we can obviously put $\tilde{\rho}(\sigma)=1$. If $\bar{G}=\leftidx{^o\!}{\bar{G}}$ and $\sigma^2=-1$, we check that $\rho(-1)=1$, and so the same definition of $\tilde{\rho}(\sigma)$ works in this case as well.
\end{Rem}
\begin{Thm}(\cite[Theorem 11.1.1]{Shu2})\label{thm-decIrr}
Let $\chi$ be the $\sigma$-stable irreducible character $R^G_M(\chi_1\boxtimes\chi_0)$, and let $\tilde{\chi}$ be an extension of $\chi$ to $\bar{G}^F$. Suppose that $\chi_1$ is defined by $\theta\in\Irr^{\sigma}_{\reg}(M_1^F)$ and $\{\varphi\}\in\Irr(W_1)^{\sigma}$, and that $\chi_0$ is defined by a 2-partition which determines the data $N_+$, $N_-$ and $(\epsilon,\varphi_+,\varphi_-)$. Then for some choice of the extension $\tilde{\varphi}$, the following equality holds up to a sign,
\begin{equation}\label{thm-decIrr-eq}
\tilde{\chi}|_{G^F\sigma}=|\mathfrak{W}_+\times\mathfrak{W}_-\times W_1^{\sigma}|^{-1}\!\!\!\!\!\!\!\!\sum_{\substack{(w_+,w_-,w_1)\\\in\mathfrak{W}_+\times\mathfrak{W}_-\times W_1^{\sigma}}}\!\!\!\!\!\!\!\!\varphi_+(w_+)\varphi_-(w_-)\tilde{\varphi}(w_1F)R^{G\sigma}_{T_{w_1,\mathbf{w}}\sigma}\tilde{\theta}_{w_1,\mathbf{w}}.
\end{equation}
\end{Thm}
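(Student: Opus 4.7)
The strategy is to decompose the two tensor factors of $\chi = R^G_M(\chi_1\boxtimes\chi_0)$ separately on the twisted component, and then glue them together using transitivity of the generalised Deligne–Lusztig induction. Since $M=M_0\times M_1$ is a $\sigma$-stable direct product and both $\chi_0$ and $\chi_1$ are $\sigma$-stable, the extension $\tilde\chi$ restricted to $M^F\sigma$ factors as $\tilde\chi_0|_{M_0^F\sigma}\cdot\tilde\chi_1|_{M_1^F\sigma}$ (up to a sign absorbed by the choice of $\tilde\chi$). The transitivity statement $R^{G\sigma}_{M\sigma}\circ R^{M\sigma}_{T\sigma}=R^{G\sigma}_{T\sigma}$ (which follows from \cite[\S 2]{DM94} applied to $T\subset M$ a $\sigma$-stable Levi inside a $\sigma$-stable parabolic of $G$) then allows the outer $R^G_M$ to be swallowed into the generalised Deligne–Lusztig characters on $T_{w_1,\mathbf{w}}\sigma$.

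For the quadratic-unipotent factor $\tilde\chi_0|_{M_0^F\sigma}$, the hypothesis $m\le 1$ is exactly what is needed so that the extension is a uniform function (Proposition~\ref{Shu2Cor11.1.2}). In this case Waldspurger's extension \cite{W1} of Digne–Michel's Theorem~\ref{Intro-DM} applies: the quadratic-unipotent character of $M_0^F$ attached to $(\mu_+,\mu_-)$, whose 2-quotients correspond via (\ref{eq-quotcore}) to $\varphi_\pm\in\Irr(\mathfrak{W}_\pm)$ (with the choice of parities of $r_+,r_-$ dictated by the assumption in \S\ref{decIrr}), extends up to a sign to
\begin{equation*}
\pm|\mathfrak{W}_+\times\mathfrak{W}_-|^{-1}\!\!\!\!\!\sum_{(w_+,w_-)\in\mathfrak{W}_+\times\mathfrak{W}_-}\!\!\!\!\!\varphi_+(w_+)\varphi_-(w_-)\,R^{M_0\sigma}_{T_{\mathbf{w}}\sigma}\,\tilde{\theta}^{M_0}_{\mathbf{w}},
\end{equation*}
where $\tilde\theta^{M_0}_{\mathbf{w}}$ is the extension of the linear character whose components on $T_{w_+}^F$, $T_{w_-}^F$, and $\GL_m(q)^F$ are $\mathbf{1}$, $\eta\circ N$, and $\mathbf{1}$ or $\eta$ according to $\epsilon$, respectively. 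This identifies exactly the twisted tori and linear characters appearing in the statement.

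For the factor $\tilde\chi_1|_{M_1^F\sigma}$, since $\chi_1=R^{M_1}_\varphi\theta$ with $\theta\in\Irr^\sigma_{\mathrm{reg}}(M_1^F)$ and $\{\varphi\}\in\Irr(W_1)^\sigma$ corresponding to $\varphi\in\Irr(W_1^\sigma)$, the regularity assumption in Definition~\ref{reg-M_1} is preserved by the induction, so that this character admits a canonical uniform extension. Writing $\chi_1=|W_1|^{-1}\sum_{w\in W_1}\tilde\varphi(wF)R^{M_1}_{T_w}\theta_{T_w}$ from \S\ref{unip-char-H} and using the $\sigma$-invariance of $\varphi$, $\theta$, together with Mackey cancellation pairing each $w$ with $\sigma(w)$, the sum collapses to a sum over $W_1^\sigma$ and yields the extension
\begin{equation*}
\tilde\chi_1|_{M_1^F\sigma}=|W_1^\sigma|^{-1}\sum_{w_1\in W_1^\sigma}\tilde\varphi(w_1F)\,R^{M_1\sigma}_{T_{w_1}\sigma}\,\tilde\theta_{T_{w_1}},
\end{equation*}
for a suitable choice of $\tilde\varphi$. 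Combining these two decompositions, forming the external tensor product, and applying transitivity of $R^{G\sigma}_{M\sigma}$ to identify $R^{M\sigma}_{T_{w_1,\mathbf{w}}\sigma}\tilde\theta_{w_1,\mathbf{w}}$ with its image in $G^F\sigma$, gives the asserted formula.

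The main obstacle is bookkeeping of the extensions and signs. Each of $\tilde\chi$, $\tilde\varphi$, and $\tilde\theta_{w_1,\mathbf{w}}$ is only determined up to a sign (or a scalar on the cyclic piece), and the sign in Waldspurger's formula for $\tilde\chi_0$ is intrinsic, not easily predicted. The normalisation $\tilde\theta_{w_1,\mathbf{w}}(\sigma)=1$ made in Remark~\ref{ext-theta} fixes one such sign; the compatibility of this choice with the decomposition then forces a specific $\tilde\varphi$ in the statement. Verifying that these combined sign choices work out so that the identity holds in the stated form (rather than merely up to an overall sign absorbed into $\tilde\chi$) is the delicate point. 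One must check this by evaluating both sides at a convenient element (for example a regular semi-simple $s\sigma\in T^F\sigma$), applying the character formula (\ref{eq-char-formula}) on the right and the Mackey-type computation of $\tilde\chi(s\sigma)$ on the left, reducing the verification to the corresponding sign assertion in Waldspurger's theorem.
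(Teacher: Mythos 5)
The paper does not prove Theorem~\ref{thm-decIrr}; it quotes it directly from \cite[Theorem~11.1.1]{Shu2}, so there is no internal proof to compare your argument against. I can only assess your proposal on its own terms.

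Your high-level outline — split $M=M_0\times M_1$, handle the quadratic-unipotent factor via Waldspurger's extension of Digne--Michel, handle the $M_1$-factor using the Lusztig--Srinivasan description, then glue with transitivity of $R^{G\sigma}_{M\sigma}$ — is the right shape for a proof, and matches the supporting material the paper does provide (\S\ref{decIrr}, \S\ref{sssec-decIrr}, Proposition~\ref{Shu2Cor11.1.2}). However, there is a genuine gap in the treatment of the $M_1$-factor. You derive the $W_1^\sigma$-indexed formula by invoking ``Mackey cancellation pairing each $w$ with $\sigma(w)$'' inside the $W_1$-sum for $\chi_1=|W_1|^{-1}\sum_{w\in W_1}\tilde\varphi(wF)R^{M_1}_{T_w}\theta_{T_w}$. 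That mechanism is wrong: the $W_1$-sum is a formula for the character of the connected group $M_1^F$ itself, and the terms with $w\notin W_1^\sigma$ do not cancel — they contribute genuinely to $\chi_1$ on $M_1^F$. The $W_1^\sigma$-indexed expression for the extension $\tilde\chi_1|_{M_1^F\sigma}$ is a \emph{different} formula requiring its own argument, analogous to Theorem~\ref{Intro-DM} but for $M_1\cong\prod_i(\GL_{n_i}\times\GL_{n_i})$ with $\sigma$ acting as the factor swap composed with inverse-transpose. One must prove directly that the extension (normalised as in Remark~\ref{ext-theta}) decomposes into twisted Deligne--Lusztig characters $R^{M_1\sigma}_{T_{w_1}\sigma}\tilde\theta_{w_1}$ with coefficients given by evaluating the extended Weyl group character at $w_1\in W_1^\sigma$; for $H\times H$ with the flip this reduces to the standard ``trace of the swap'' identity $\widetilde{\psi\boxtimes\psi}\bigl((g_1,g_2)\tau\bigr)=\psi(g_1g_2)$, but you have to carry this through the Lusztig--Srinivasan formula and the Deligne--Lusztig functoriality rather than wave it off as cancellation.

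A second, subtler gap: you use $R^{G\sigma}_{M\sigma}$ of the extension equals the extension of $R^G_M$, i.e.\ that the diagram \emph{extend-then-induce} $=$ \emph{induce-then-extend} commutes up to sign. This is precisely what the uniformity statement Proposition~\ref{Shu2Cor11.1.2} encodes, and it is not automatic from transitivity alone — it is where the hypothesis $m\le 1$ (equivalently, that the 2-cores of $\mu_\pm$ are small) enters. As written, your proposal treats this compatibility as a formal consequence of \cite[\S 2]{DM94}, when in fact it is the non-trivial content that restricts the theorem to the branched case. The sign bookkeeping you flag at the end is, by contrast, not really a gap, since the theorem is only claimed up to a sign.
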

\begin{Rem}
Since the action of $F$ on $W_1$ is trivial, the extension $\tilde{\varphi}$ can in fact be chosen to be the trivial one, i.e. $\tilde{\varphi}(w_1F)=\varphi(w_1)$.
\end{Rem}

\subsection{Types}\hfill

We introduce some combinatorial data called \textit{types} that are used to describe $\sigma$-stable irreducible characters of $G(q)$.
\subsubsection{}\label{remaintype}
Types are data of the form $$\bs\omega=\bs\omega_+\bs\omega_-(\omega_i)_{1\le i\le l},$$ where $\bs\omega_+$ and $\bs\omega_-$ are 2-partitions, and $(\omega_i)$ is an unordered sequence of nontrivial partitions, the length of which could be $0$. We will often write $\bs\omega_{\ast}=(\omega_i)_{1\le i\le l}$ and $\bs\omega=\bs\omega_+\bs\omega_-\bs\omega_{\ast}$ for brevity. Such data can equally be written as $\bs\omega_+\bs\omega_-(m_{\lambda})_{\lambda\in\mathcal{P}}$, where $m_{\lambda}$ is the multiplicity of $\lambda$ in the sequence $\bs\omega_{\ast}$. Two types $\bs\omega_+\bs\omega_-(m_{\lambda})$ and $\bs\omega'_+\bs\omega'_-(m'_{\lambda})$ are regarded as the same if and only if $\bs\omega_+=\bs\omega_+'$, $\bs\omega_-=\bs\omega_-'$ and $m_{\lambda}=m'_{\lambda}$ for all $\lambda\in\mathcal{P}$. The size of a type is $|\bs\omega|:=|\bs\omega_+|+|\bs\omega_-|+\sum_i|\bs\omega_i|.$ The set of types of size $a$ will be denoted by $\mathfrak{T}(a)$ and we will write $\mathfrak{T}=\sqcup_{a\in\mathbb{Z}_{\ge0}}\mathfrak{T}(a)$. For any $\bs\omega\in\mathfrak{T}$, we will write $\bs{\Lambda}(\bs\omega)=\bs{\Lambda}(\bs\omega_+)\bs{\Lambda}(\bs\omega_-)\bs\omega_{\ast}$ (Recall that $\bs\Lambda(-)$ means the symbol corresponding to a given 2-partition). Denote by $\tilde{\mathfrak{T}}$ the set of \textit{ordered types}, i.e. the data $\bs\omega_+\bs\omega_-(\omega_i)$ with $(\omega_i)$ being an ordered sequence. There is an obvious map from $\tilde{\mathfrak{T}}$ to $\mathfrak{T}$, therefore anything that can be defined for elements of $\mathfrak{T}$ is naturally defined for elements of $\tilde{\mathfrak{T}}$. Given $\bs\alpha=\bs\alpha_+\bs\alpha_-(\alpha_i)_{1\le i\le l_1}$, $\bs\beta=\bs\beta_+\bs\beta_-(\beta_i)_{1\le i\le l_2}\in\tilde{\mathfrak{T}}$, we write $\bs\alpha\thickapprox\bs\beta$, if $l_1=l_2=l$ and for each $1\le i\le l$, we have $|\alpha_i|=|\beta_i|$, and moreover $|\bs\alpha_+|=|\bs\beta_+|$ and $|\bs\alpha_-|=|\bs\beta_-|$. A type $\bs\omega$ (ordered or unordered) can be augmented by $\epsilon\in\{-1,0,1\}$, the resulting data written as $\epsilon\bs\omega$. Define $|\epsilon\bs\omega|:=|\bs\omega|$.

We will denote by $\mathfrak{T}^{\circ}$ the set of unordered sequences $\lambda_+\lambda_-(\lambda_i)_{1\le i\le l}$, defined in a way similar to types except that $\lambda_+$ and $\lambda_-$ are partitions. We may also write such a sequence as $\lambda_+\lambda_-\bs\lambda_{\ast}$, with $\bs\lambda_{\ast}=(\lambda_i)$. The ordered version $\tilde{\mathfrak{T}}^{\circ}$ of these data can be defined in an obvious way. Define $|\bs\lambda|:=|\lambda_+|+|\lambda_-|+\sum_i|\lambda_i|$.

The subset $\{\bs\omega\in\mathfrak{T}\mid\bs\omega_+=\bs\omega_-=\varnothing\}$ will be denoted by $\mathfrak{T}_{\ast}$. We may also regard it as a subset of $\mathfrak{T}^{\circ}$, so that $\mathfrak{T}_{\ast}=\mathfrak{T}\cap\mathfrak{T}^{\circ}$. Thus for any $\bs\alpha\in\mathfrak{T}_{\ast}$, the size $|\bs\alpha|$ is automatically defined, and we will denote by $l(\bs\alpha)$ its length. For any $\bs\alpha=(m_{\lambda})_{\lambda\in\mathcal{P}}\in\mathfrak{T}_{\ast}$, define
\begin{equation}\label{Nomega}
N(\bs\alpha)=\prod_{\lambda}m_{\lambda}!,
\end{equation}
and
\begin{equation}\label{Komega}
K(\bs\alpha)=(-1)^{l(\bs\alpha)}l(\bs\alpha)!.
\end{equation}

If $\bs{\alpha}$ is a 2-partition, then we define $\{\bs{\alpha}\}_1$ to be the partition with 2-core $(1)$ and 2-quotient $\bs\alpha$, and define $\{\bs{\alpha}\}_0$ to be the partition with trivial 2-core and 2-quotient $\bs\alpha$.  For any $\bs\alpha=(m_{\lambda})_{\lambda\in\mathcal{P}}\in\mathfrak{T}_{\ast}$, define $\bs\alpha^2:=(2m_{\lambda})_{\lambda\in\mathcal{P}}\in\mathfrak{T}_{\ast}$. Define a natural map 
\begin{equation}\label{type-brace}
\begin{split}
\{~\}:\{-1,0,1\}\times\mathfrak{T}&\longrightarrow\mathfrak{T}^{\circ}\\
\epsilon\bs\omega&\longmapsto \lambda_+\lambda_-\bs\lambda_{\ast}=\{\epsilon\bs\omega\}
\end{split}
\end{equation}
as follows. The partition $\lambda_{\pm}$ has $\omega_{\pm}$ as the 2-quotient, and has trivial 2-core except when $\epsilon=+1$ and $\lambda_{+}=\{\bs\omega_+\}_1$, or when $\epsilon=-1$ and $\lambda_{-}=\{\bs\omega_-\}_1$. We require that $\bs\lambda_{\ast}=\bs\omega_{\ast}^2$.

We define a natural map 
\begin{equation}\label{type-bracket}
[~]:\mathfrak{T}\longrightarrow \mathcal{P}^2
\end{equation}
as follows. Let $\bs\omega=\bs\omega_+\bs\omega_-\bs\omega_{\ast}\in\mathfrak{T}$. Regarding $\mathcal{P}^2$ as signed partitions, it sends each part of $\omega_i$ to a positively signed part with the same size, and keeps the sign and size of each part of $\bs\omega_+$ and $\bs\omega_-$. The union of these signed parts is the image of $[~]$ and is denoted by $[\bs\omega]$.

Denote by $\tilde{\mathfrak{T}}_s$ the subset of $\tilde{\mathfrak{T}}$ consisting of elements of the form $$\bs\omega=(\varnothing,(1)^{m_+})(\varnothing,(1^{m_-}))((1^{m_i}))_i$$ where $m_+=|\bs\omega_+|$, $m_-=|\bs\omega_-|$ and $m_i=|\omega_i|$. Equivalently, the elements of $\tilde{\mathfrak{T}}_s$ can be written as a sequence of integers: $\bs\omega=m_+m_-(m_i)_i$ and $\bs\omega_{\ast}=(m_i)_i$. We define its dual type by $$\bs\omega^{\ast}=((m_+),\varnothing)((m_-),\varnothing)((m_i))_i.$$ Define the surjective map 
\begin{equation}
\tilde{S}:\tilde{\mathfrak{T}}\longrightarrow\tilde{\mathfrak{T}}_s
\end{equation}
by $\tilde{S}(\bs\omega):=|\bs\omega_+||\bs\omega_-|(|\omega_i|)_i$. If $\bs\omega\in\tilde{\mathfrak{T}}_s$, then we will denote by $\tilde{\mathfrak{T}}(\bs\omega)$ the inverse image of $\bs\omega$ under $\tilde{S}$. Two ordered types $\bs\alpha$ and $\bs\beta$ satisfy $\bs\alpha\thickapprox\bs\beta$ if and only if they lie in the same $\tilde{\mathfrak{T}}(\bs\omega)$ for some $\bs\omega\in\tilde{\mathfrak{T}}_s$. The unordered version of $S$ and $\mathfrak{T}_s$ can be similarly defined.

\subsubsection{}\label{relevance-types}
The relevance of types is as follows. The group $\mathfrak{W}_+\times\mathfrak{W}_-\times W_1^{\sigma}$ in Theorem \ref{thm-decIrr} is isomorphic to $\mathfrak{W}_{N_+}\times\mathfrak{W}_{N_-}\times\prod_{1\le i\le l}\mathfrak{S}_{n_i}$ for some integers $n_i$ and $l$. The irreducible characters and the conjugacy classes of this group are both parametrised by $\tilde{\mathfrak{T}}(\bs\omega)$, where $\bs\omega=N_+N_-(n_i)_i$. Therefore, the characters $(\varphi_+,\varphi_-,\varphi)$ and the conjugacy class of $(w_1,\mathbf{w})$ as in Theorem \ref{thm-decIrr} can be represented by an ordered type. In view of Lemma \ref{Shu2Lem6.2.1}, if $t\in(T^{\sigma})^{\circ}$ (and $N_-=0$ if $n$ is even), then the $C_G(t\sigma)^{\circ F}$-conjugacy classes of the $F$-stable maximal tori of $C_G(t\sigma)^{\circ}$ are also paremetrised by $\tilde{\mathfrak{T}}(\bs\omega)$. A semi-simple conjugacy class $C\subset G\sigma$ is determined by an $N$-tuple of elements of $\hat{\kk}$, thus we define the type of $C$ to be the element of $\mathfrak{T}_s$ encoding the multiplicities of "eigenvalues", with $m_+$ being the multiplicity of $1$ and $m_-$ that of $\mathfrak{i}$. 

Let $\chi\in\Irr(\GL_n(q))^{\sigma}$, then there exists some Levi subgroup $M$ such that $\chi=\epsilon_G\epsilon_M R^G_M(\chi_1\boxtimes\chi_0)$ as in Theorem \ref{sig-st-chi}. Denote by $\Irr_{st}^{\sigma}\subset \Irr(\GL_n(q))^{\sigma}$ the subset of characters such that $M$ can be chosen to be a standard Levi subgroup. Define a map
\begin{equation}
\pi_{\circ}:\Irr_{st}^{\sigma}\longrightarrow\mathfrak{T}^{\circ}
\end{equation}
as follows. We first fix a choice of $M$ and a realisation of $\chi$ as an induction: $\epsilon_G\epsilon_M R^G_M(\chi_1\boxtimes\chi_0)$. Let $(\lambda_+,\lambda_-)$ be the 2-partition determined by the quadratic-unipotent part $\chi_0$. The character $\chi_1$ of $M_1^F$ is determined by some $\varphi_1\in\Irr(W_1)$, which can be represented by a sequence of partitions $\bs\lambda_{\ast}\in\mathfrak{T}_{\ast}$, since $W_1$ is a product of symmetric groups. Now $\lambda_+\lambda_-\bs\lambda_{\ast}$ lies in $\mathfrak{T}^{\circ}$. This is the desired $\pi_{\circ}(\chi)$. Note that $\varphi_1$ in fact lies in $\Irr(W_1)^{\sigma}$, and so the multiplicity of any partition $\lambda$ in $\bs\lambda_{\ast}$ is an even number. Define a map
\begin{equation}
\pi_{\sigma}:\Irr_{st}^{\sigma}\longrightarrow\{-1,0,+1\}\times\mathfrak{T}
\end{equation}
Let $(\epsilon,\varphi_+,\varphi_-)$ be determined by $\chi_0$ as in \S \ref{decIrr}, and let $\varphi\in\Irr(W_1^{\sigma})$ be the inverse image of $\varphi_1$ under the bijection (\ref{W1bracket}). Now $(\varphi_+,\varphi_-,\varphi)$ is an irreducible character of $\mathfrak{W}_+\times\mathfrak{W}_-\times W_1^{\sigma}$, and so gives an element $\bs\omega\in\mathfrak{T}$. We then define $\pi_{\sigma}(\chi)=\epsilon\bs\omega$. In \S \ref{decIrr}, we have described a bijection between the data $(\epsilon,\varphi_+,\varphi_-)$ and $\mathcal{P}^2(n_0)$, and a bijection $\{~\}:\Irr(W^{\sigma}_1)\isom\Irr(W_1)^{\sigma}$. The map (\ref{type-brace}) should be though of as sending $(\epsilon,\varphi_+,\varphi_-,\varphi)$ to the character of the Weyl group of $M_{00}\times M_1$ that underlies $R^G_M(\chi_1\boxtimes\chi_0)$. Now we have a commutative diagram 
\begin{equation}\label{cd-brace}
\begin{tikzcd}[row sep=1em, column sep=1em]
&\{-1,0,+1\}\times\mathfrak{T} \arrow[dd, "\{~\}"] \\
\Irr_{st}^{\sigma} \arrow[ur, "\pi_{\sigma}"] \arrow[dr, swap, "\pi_{\circ}"]\\
& \mathfrak{T}^{\circ}
\end{tikzcd}
\end{equation}
Define 
\begin{equation}
\Irr^{\sigma}_{\bs\lambda}:=\pi_{\circ}^{-1}(\bs\lambda),\quad\Irr^{\sigma}_{\epsilon\bs\omega}:=\pi_{\sigma}^{-1}(\epsilon\bs\omega).
\end{equation}
Its elements are called $\sigma$-stable characters of type $\bs\lambda$ and type $\epsilon\bs\omega$ respectively. Note that $\Irr^{\sigma}_{\bs\lambda}$ is empty unless each partition in $\bs\lambda_{\ast}$ has even multiplicity.

By definition, all $\sigma$-stable characters of the same type can be obtained from a common $M$ as in Theorem \ref{sig-st-chi}, but with different $\theta\in \Irr_{\reg}^{\sigma}(M_1^F)$. Observe that the quadratic-unipotent part of a character is completely determined by its type. Therefore we have a surjective map
\begin{equation}\label{reg->eomega}
\Irr_{\reg}^{\sigma}(M_1^F)\longrightarrow\Irr^{\sigma}_{\epsilon\bs\omega}.
\end{equation}
The cardinality of the fibre of this map is equal to $2^{l(\bs\omega_{\ast})}N(\bs\omega_{\ast})$. If we write $\theta=(\theta_i)_{1\le i\le l}$, with $l=l(\bs\omega_{\ast})$, then the factor 2 comes from the permutation $\theta_i\leftrightarrow\theta_i^{-1}$, and the factor $m_{\lambda}!$ of $N(\bs\omega_{\ast})$ comes from the permutation of those $\theta_i$ corresponding to the same $\lambda$. 

We also have an explanation of the map $[~]$ (\ref{type-bracket}). Recall (\S \ref{sssec-decIrr}) that the group $\mathfrak{W}_+\times\mathfrak{W}_-\times W_1^{\sigma}$ is naturally a subgroup of $W_0^{\sigma}\times W_1^{\sigma}$, which is contained in $W^{\sigma}$. It is easy to see that $[~]$ is just the map between conjugacy classes induced by the inclusion $\mathfrak{W}_+\times\mathfrak{W}_-\times W_1^{\sigma}\rightarrow W^{\sigma}$. Alternatively, by Lemma \ref{Shu2Lem6.2.1}, the Weyl group $W'$ of the centraliser of a semi-simple element $C_G(t\sigma)^{\circ}$ is isomorphic to $\mathfrak{W}_{N_+}\times\mathfrak{W}_{N_-}\times\prod_{1\le i\le l}\mathfrak{S}_{n_i}$ for some integers $n_i$ and $l$ (assuming $N_-=0$ if $n$ is even). Corollary \ref{Nsigma-in-N-2} gives an injective map $W'\rightarrow W^{\sigma}$. Again, the map induced between conjugacy classes is just $[~]$.

Finally, the set $B_{\tau}$ that we introduced in \S \ref{A^F->B} to compute the Deligne-Lusztig characters has an alternative description in terms of types.
\begin{Lem}\label{comb-Btau}
Let $s\sigma\in T^F\sigma$ be a semi-simple element. Let $W'$ be the Weyl group of $C_G(s\sigma)^{\circ}$ defined by $(T^{\sigma})^{\circ}$. Suppose that there is an isomorphism $W'\isom\mathfrak{W}_{N_+}\times\mathfrak{W}_{N_-}\times\prod_{1\le i\le l}\mathfrak{S}_{n_i}$ so that its conjugacy classes are parametrised by $\tilde{\mathfrak{T}}(\bs\beta)$, with $\bs\beta=N_+N_-(n_i)_{1\le i\le l}\in\tilde{\mathfrak{T}}_s$ (and $N_-=0$ if $n$ is even). Let $\tau\in W^{\sigma}$ and represent its conjugacy class by a 2-partition $\bs\tau$. Then $$B_{\tau}=\{\bs\nu\in\tilde{\mathfrak{T}}(\bs\beta)\mid[\bs\nu]=\bs\tau\}.$$
\end{Lem}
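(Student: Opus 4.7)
The plan is to interpret $B_\tau$ geometrically as a set of $L'^F$-conjugacy classes of $F$-stable maximal tori of $L'$, and then match it with the combinatorial side using the inclusion $W_0' \hookrightarrow W^\sigma$ from Corollary \ref{Nsigma-in-N-2}.

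For the forward inclusion $B_\tau \subseteq \{\bs\nu \mid [\bs\nu] = \bs\tau\}$, I would observe that each $h \in A_\tau^F$ yields not only the $F$-stable maximal torus $T_\nu^{L'} := C_{h^{-1}T_\tau h}(s\sigma)^\circ$ of $L'$ (whose $L'^F$-conjugacy class defines the element of $B_\tau$), but also the ambient $F$-stable maximal torus $T_\nu^G := h^{-1}T_\tau h$ of $G$, which coincides with $C_G(T_\nu^{L'})$ since $(T^\sigma)^\circ$ is a regular subtorus of $T$. Writing $T_\nu^{L'} = l(T^\sigma)^\circ l^{-1}$ with $l \in L'$ and $l^{-1}F(l) = \dot\nu \in N_{L'}((T^\sigma)^\circ)$, we get $T_\nu^G = lTl^{-1}$, so the $W$-class of $T_\nu^G$ is the image of $\nu$ under $W_0' \subset W^\sigma \subset W$, the first inclusion coming from Corollary \ref{Nsigma-in-N-2}. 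Inspecting the block decomposition of Lemma \ref{Shu2Lem6.2.1} together with the signed-cycle conventions of \S \ref{signed-partitions} and \S \ref{GLnSigma-F-Levi}, this embedding sends the $W_0'$-class $\bs\nu$ exactly to the $W^\sigma$-class $[\bs\nu]$ defined in \S \ref{remaintype}: positively and negatively signed cycles inside $\mathfrak{W}_{N_\pm}$ are inherited, while each cycle of length $k$ inside $\mathfrak{S}_{n_i}$ becomes a positive signed cycle of length $k$. Since $h \in G^F$ conjugates $T_\tau$ to $T_\nu^G$, the $W^\sigma$-class of the latter is $\bs\tau$, yielding $[\bs\nu] = \bs\tau$.

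For the converse, given $\bs\nu$ with $[\bs\nu] = \bs\tau$, I would exhibit $h \in A_\tau^F$ realising it. Pick representatives $\nu \in W_0'$ of $\bs\nu$ and $u \in W^\sigma$ with $u\nu u^{-1} = \tau$ (which exists by $[\bs\nu] = \bs\tau$); choose $\tilde g_\tau \in (G^\sigma)^\circ$ with $\tilde g_\tau^{-1}F(\tilde g_\tau) = \dot\tau$, an element $l \in L'$ with $l^{-1}F(l) = \dot\nu \in N_{L'}((T^\sigma)^\circ)$, and an $F$-fixed lift $\dot u \in N_{(G^\sigma)^\circ}((T^\sigma)^\circ)^F$ of $u$. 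Setting $h := \tilde g_\tau \dot u l^{-1}$, a direct computation gives $h^{-1}T_\tau h = lTl^{-1}$, and hence $C_{h^{-1}T_\tau h}(s\sigma)^\circ = l(T^\sigma)^\circ l^{-1}$ lies in the $L'^F$-class of $\nu$. Using the $\sigma$-fixity of $\tilde g_\tau$ and $\dot u$ together with $ls\sigma(l)^{-1} = s$ (a reformulation of $l \in L'$), one checks $hs\sigma(h)^{-1} = \tilde g_\tau \dot u s \dot u^{-1} \tilde g_\tau^{-1} \in T_\tau$, so $h \in A_\tau$. The condition $F(h) = h$ finally reduces to the cocycle identity $\dot u \dot\nu \dot u^{-1} = \dot\tau$ in $N_G(T)$.

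The main obstacle is enforcing this cocycle identity on the nose in $N_G(T)$ rather than merely in $W$; a priori the two sides differ by some $t_0 \in T$. This is resolved by applying Lang--Steinberg to $T$ with its twisted Frobenius and using the freedom to modify $\dot\nu$ or $\dot\tau$ by $T$-multiples, at the possible cost of replacing $T_\tau$ by a $G^F$-conjugate or $\nu$ by a $W_0'$-conjugate; since both sides of the claimed equality $B_\tau = \{\bs\nu \mid [\bs\nu] = \bs\tau\}$ depend only on conjugacy classes of representatives, this adjustment does not affect the outcome and completes the proof.
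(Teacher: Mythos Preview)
Your argument is essentially correct and provides exactly the verification that the paper omits: in the paper, the entire proof reads ``This is simply unwinding the definitions.'' The author treats the statement as immediate from the preceding setup, namely the identification (stated just before the lemma in \S\ref{relevance-types}) of the bracket map $[\,\cdot\,]$ with the map on conjugacy classes induced by the inclusion $W'\hookrightarrow W^\sigma$ of Corollary~\ref{Nsigma-in-N-2}, together with the description $A_\tau=g_\tau\,N_G(T\sigma)\,L'$ furnished by Lemmas~\ref{Shu210.2.2a} and~\ref{Shu210.2.2b}.

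Your approach is the same in spirit but makes both inclusions explicit. One small refinement: rather than insisting that $\dot u$ be $F$-fixed and then wrestling with the cocycle identity, it is cleaner to take $h=g_\tau\,n\,l$ with $n\in N_G(T\sigma)$ an arbitrary lift of $u\in W^\sigma$ and absorb the discrepancy $t_0\in T$ by replacing $n$ with $tn$ for suitable $t\in T$; the map $t\mapsto t^{-1}\tau(F(t))$ is the Lang map for the twisted Frobenius on $T$, hence surjective onto $T$, and $tn$ remains in $N_G(T\sigma)$. This avoids the delicate point of keeping $\tilde g_\tau$ and $\dot u$ individually $\sigma$-fixed while also forcing $\dot\nu$ to lie in $N_{L'}((T^\sigma)^\circ)$. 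With that adjustment your reverse inclusion goes through without needing to replace $T_\tau$ by a $G^F$-conjugate.
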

\begin{proof}
This is simply unwinding the definitions.
\end{proof}

\section{$\GL_n\lb\sigma\rb~$-Character Varieties}\label{Section-Char-Var}
In this section, we work over an algebraically closed field $\kk$ with $\ch\kk\ne 2$.

\subsection{Definition of $\GL_n\lb\sigma\rb~$-Character Varieties}\label{Defn-Char.Var}

\subsubsection{}
Let $p':\tilde{\Sigma}'\rightarrow \Sigma'$ be a branched double covering of compact Riemann surfaces, and let $\mathcal{R}\subset\Sigma'$ be the ramification locus. Put $\Sigma=\Sigma'\setminus\mathcal{R}$ and $\tilde{\Sigma}=\tilde{\Sigma}'\setminus p^{\prime -1}(\mathcal{R})$ and denote by $p:\tilde{\Sigma}\rightarrow \Sigma$ the restriction of $p'$. Fix a base point $x\in \Sigma$ and let $\tilde{x}\in p^{-1}(x)$ be the base point of $\tilde{\Sigma}$. We will write $\pi_1(\Sigma)=\pi_1(\Sigma,x)$ and $\pi_1(\tilde{\Sigma})=\pi_1(\tilde{\Sigma},\tilde{x} )$. The representation variety $\Rep(\Sigma)$ is defined as the space of homomorphisms $\rho$ that make the following diagram commute:
$$
\begin{tikzcd}[row sep=2.5em, column sep=0.2em]
\pi_1(\Sigma) \arrow[dr, swap, "q_1"] \arrow[rr, "\rho"] && G\lb\sigma\rb \arrow[dl, "q_2"]\\
& \Gal(\tilde{\Sigma}/\Sigma)\cong\bs\mu_2
\end{tikzcd}
$$ 
where $q_1$ is the quotient by $\pi_1(\tilde{\Sigma})$ and $q_2$ is the quotient by the identity component $G$. The character variety $\Ch(\Sigma)$ is defined as the categorical quotient of $\Rep(\Sigma)$ for the conjugation action of $G$ on $G\lb\sigma\rb$. This is the $G\lb\sigma\rb~$-character variety.

\subsubsection{}
Denote by $g$ the genus of $\Sigma'$. By Riemann-Hurwitz formula, there are an even number of branch points. Let $2k=|\mathcal{R}|$ be the cardinality and write $\mathcal{R}=\{x_j\}_{1\le j\le 2k}$. We may choose the generators $\alpha_i$, $\beta_i$, $1\le i\le g$, and $\gamma_j$, $1\le j\le 2k$, of $\pi_1(X)$ that satisfy the relation 
\begin{equation}
\prod_{i=1}^g[\alpha_i,\beta_i]\prod_{j=1}^{2k}\gamma_j=1.
\end{equation}
The generators $\alpha_i$, $\beta_i$, $1\le i\le g$, are the images of some elements of $\pi_1(\tilde{\Sigma})$, while $\gamma_j$, $1\le j\le 2k$, being small loops around the $x_j$'s, lie in $\pi_1(\Sigma)\setminus\pi_1(\tilde{\Sigma})$. Let $\mathcal{C}=(C_j)_{1\le j\le 2k}$ be a tuple of semi-simple $G$-conjugacy classes contained in the connected component $G\sigma$. We define the subvariety $\Rep_{\mathcal{C}}(\Sigma)\subset\Rep(\Sigma)$ as consisting of $\rho\in\Rep(\Sigma)$ such that $\rho(\gamma_j)\in C_j$ for all $j$. Then we define $\Ch_{\mathcal{C}}(\Sigma):=\Rep_{\mathcal{C}}(\Sigma)\ds G$. The representation variety has the following presentation
\begin{equation}\label{CharVarEq}
\Rep_{\mathcal{C}}(\Sigma)=\{(A_i,B_i)_i(X_j)_j\in G^{2g}\times \prod_{j=1}^{2k} C_j\mid\prod_{i=1}^g[A_i,B_i]\prod_{j=1}^{2k}X_j=1\}.
\end{equation}

\subsubsection{}
There is a bijection between the semi-simple conjugacy classes in $\leftidx{^o\!}{\bar{G}}\setminus G$ and those in $\leftidx{^s\!}{\bar{G}}\setminus G$, sending the class of $t\sigma$ to that of $t\sigma$, with the $\sigma$ in $\leftidx{^o\!}{\bar{G}}$ interpreted in the appropriate sense. If $\mathcal{C}$ is a tuple of semi-simple conjugacy classes in $\leftidx{^s\!}{\bar{G}}\setminus G$, then we define a tuple $\mathcal{C}^{\ast}$ of conjugacy classes in $\leftidx{^o\!}{\bar{G}}\setminus G$ by applying this bijection componentwise. 

It is not difficult to see that the $\leftidx{^o\!}{\bar{G}}$-character variety $\Ch_{\mathcal{C}}$ is isomorphic to the $\leftidx{^s\!}{\bar{G}}$-character variety $\Ch_{\mathcal{C}^{\ast}}$. Therefore, if $n$ is even, we may, and will, only work with $\leftidx{^s\!}{\bar{G}}$-character varieties. 

\subsection{Generic Conjugacy Classes}

\subsubsection{}\label{GCC}
Let $\mathcal{C}=(C_j)_{j}$ be a $2k$-tuple of semi-simple conjugacy classes contained in $G\sigma$. According to \S \ref{s.s.GLnsigma}, each class $C_j$ has a representative $t_j\sigma\in(T^{\sigma})^{\circ}\sigma$. The following definition is a special case of \cite[Definition 3.3]{Shu1}, as is explained in \cite[\S 4.4]{Shu1}.

Each $t_j$ is given by an $N$-tuple $(a_{j,1},\ldots,a_{j,N})$ with each $a_{j,\gamma}\in\kk^{\ast}$. Write $\Lambda=\{1,\ldots,N\}$. For any $j$, any subset $\mathbf{A}\subset\Lambda$ and any $|\mathbf{A}|$-tuple of signs $\smash{\mathbf{e}=(e_{\gamma})_{\gamma\in\mathbf{A}}}$, $e_{\gamma}\in\{\pm 1\}$, write $\smash{[\mathbf{A},\mathbf{e}]_j=\prod_{\gamma\in\mathbf{A}}a_{j,\gamma}^{2e_{\gamma}}}$. We say that $\mathcal{C}$ is \textit{generic} if for any $1\le N' \le N$, any $2k$-tuple $(\mathbf{A}_1,\ldots,\mathbf{A}_{2k})$ of subsets of $\Lambda$ such that $|\mathbf{A}_1|=\cdots=|\mathbf{A}_{2k}|=N'$, and any $2k$-tuple of $N'$-tuples $(\mathbf{e}^1,\ldots,\mathbf{e}^{2k})$ of signs, we have
\begin{equation}\label{GenConGLn1}
[\mathbf{A}_1,\mathbf{e}^1]_1\cdots[\mathbf{A}_{2k},\mathbf{e}^{2k}]_{2k}\ne1.
\end{equation}
We say that $\mathcal{C}$ is \textit{strongly generic} if for any $N'$, $(\mathbf{A}_1,\ldots,\mathbf{A}_{2k})$, and $(\mathbf{e}^1,\ldots,\mathbf{e}^{2k})$ as above, we have
\begin{equation}\label{GenConGLn2}
[\mathbf{A}_1,\mathbf{e}^1]_1\cdots[\mathbf{A}_{2k},\mathbf{e}^{2k}]_{2k}\ne\pm1.
\end{equation}

\subsubsection{}\label{explicit-genconj-Fq}
Generic conjugacy classes in the finite group $G(q)\lb\sigma\rb$ can be defined in the same way, but only for some particular conjugacy classes.

Let $\mathcal{C}=(C_j)_{j}$ be a $2k$-tuple of semi-simple $G(q)$-conjugacy classes contained in $G(q)\sigma$. Assume that for each $1\le j\le 2k$, the $G$-conjugacy class containing $C_j$ has a representative $t_j\sigma\in(T^{\sigma})^{\circ F}\sigma$, that is, $t_j$ is determined by an $N$-tuple $(a_{j,1},\ldots,a_{j,N})$ with each $a_{j,\gamma}\in\mathbb{F}_q^{\ast}$. Note that we do not require $C_j$ itself to have a representative of this form. As is explained in \S \ref{GLnSigma-F-Class}, this is a very restrictive condition on the conjugacy classes. Generic condition can be defined only for these conjugacy classes.

With the same notations as in the previous paragraph, we say that $\mathcal{C}$ is generic (resp. strongly generic) if for any $1\le N' \le N$, $(\mathbf{A}_1,\ldots,\mathbf{A}_{2k})$, and $(\mathbf{e}^1,\ldots,\mathbf{e}^{2k})$, we have
\begin{equation}
[\mathbf{A}_1,\mathbf{e}^1]_1\cdots[\mathbf{A}_{2k},\mathbf{e}^{2k}]_{2k}\ne1\text{ (resp. $\pm1$)}.
\end{equation}
\begin{Lem}\label{disct-e.v.-genconj}
Let $(C_j)_{1\le j\le 2k}$ be a generic tuple of semi-simple conjugacy classes contained in $G\sigma$, with each $C_j$ given by an $N$-tuple $(a_{j,1},\ldots,a_{j,N})$ as in \S \ref{s.s.GLnsigma}. Let $N'$, $N''\in\{1,\ldots,N\}$. Let $(\mathbf{A}_1,\ldots,\mathbf{A}_{2k})$ and $(\mathbf{A}'_1,\ldots,\mathbf{A}'_{2k})$ be two $2k$-tuples of subsets of $\Lambda$ such that $|\mathbf{A}_1|=\cdots=|\mathbf{A}_{2k}|=N'$, $|\mathbf{A}'_1|=\cdots=|\mathbf{A}'_{2k}|=N''$ and for any $1\le j\le 2k$, $\mathbf{A}_j\cap \mathbf{A}'_j=\emptyset$. Let $(\mathbf{e}^1,\ldots,\mathbf{e}^{2k})$ (resp. $(\mathbf{e}^{\prime 1},\ldots,\mathbf{e}^{\prime2k})$) be a $2k$-tuple of $N'$-tuples (resp. $N''$-tuples) of signs. Then,$$[\mathbf{A}_1,\mathbf{e}^1]_1\cdots[\mathbf{A}_{2k},\mathbf{e}^{2k}]_{2k}\ne\left([\mathbf{A}'_1,\mathbf{e}^{\prime1}]_1\cdots[\mathbf{A}'_{2k},\mathbf{e}^{\prime2k}]_{2k}\right)^{\pm 1}.$$
\end{Lem}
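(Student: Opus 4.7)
The plan is to argue by contradiction and reduce the asserted non-equality directly to the genericity hypothesis \eqref{GenConGLn1}. The core observation is that when $\mathbf{A}_j$ and $\mathbf{A}'_j$ are disjoint in each coordinate $j$, the two separate products $\prod_j [\mathbf{A}_j,\mathbf{e}^j]_j$ and $\prod_j [\mathbf{A}'_j,\mathbf{e}^{\prime j}]_j^{\pm 1}$ can be amalgamated into a single product of the exact shape forbidden by \eqref{GenConGLn1}.

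Concretely, suppose for contradiction that equality holds for some exponent $\varepsilon \in \{+1,-1\}$. For each $1 \le j \le 2k$ I would set $\mathbf{B}_j := \mathbf{A}_j \sqcup \mathbf{A}'_j$, which is a well-defined subset of $\Lambda$ by the disjointness hypothesis, with $|\mathbf{B}_j| = M + M'$ independent of $j$. I then define a sign tuple $\mathbf{f}^j = (f^j_\gamma)_{\gamma \in \mathbf{B}_j}$ by
$$
f^j_\gamma := \begin{cases} e^j_\gamma & \text{if } \gamma \in \mathbf{A}_j, \\ -\varepsilon\, e^{\prime j}_\gamma & \text{if } \gamma \in \mathbf{A}'_j. \end{cases}
$$
Since $[-,-]_j$ is multiplicative in its index set and the two subsets are disjoint, one gets $[\mathbf{B}_j,\mathbf{f}^j]_j = [\mathbf{A}_j,\mathbf{e}^j]_j \cdot [\mathbf{A}'_j,\mathbf{e}^{\prime j}]_j^{-\varepsilon}$, so the assumed equality rearranges to $\prod_{j=1}^{2k} [\mathbf{B}_j,\mathbf{f}^j]_j = 1$.

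The only point to verify is that the common size $M+M'$ of the $\mathbf{B}_j$ lies in the range $\{1,\ldots,N\}$ to which the genericity condition applies; this is automatic, since $\mathbf{B}_j \subset \Lambda$ forces $M+M' \le N$, and $M,M' \ge 1$ gives $M+M' \ge 2$. Thus the relation above contradicts \eqref{GenConGLn1} for both choices of $\varepsilon$, ruling out either sign and finishing the proof. There is no serious obstacle here: the lemma is essentially a bookkeeping consequence of genericity, and the main thing to get right is the correct choice of signs on $\mathbf{A}'_j$ so that inversion on one side of the desired inequality translates into negation of signs on the amalgamated side.
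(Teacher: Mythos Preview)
Your proof is correct and follows essentially the same approach as the paper: assume equality with exponent $\varepsilon$, form the disjoint union $\mathbf{B}_j=\mathbf{A}_j\sqcup\mathbf{A}'_j$ with signs $e^j_\gamma$ on $\mathbf{A}_j$ and $-\varepsilon e^{\prime j}_\gamma$ on $\mathbf{A}'_j$, and contradict genericity. Your explicit check that $M+M'\le N$ is a small addition the paper leaves implicit.
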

\begin{proof}
Suppose $$[\mathbf{A}_1,\mathbf{e}^1]_1\cdots[\mathbf{A}_{2k},\mathbf{e}^{2k}]_{2k}=\left([\mathbf{A}'_1,\mathbf{e}^{\prime1}]_1\cdots[\mathbf{A}'_{2k},\mathbf{e}^{\prime2k}]_{2k}\right)^{\epsilon},$$with $\epsilon=\pm 1$. For each $j$, put $\tilde{\mathbf{A}}_j=\mathbf{A}_j\sqcup\mathbf{A}'_j$. For each $j$, define a $(N'+N'')$-tuple of signs $\tilde{\mathbf{e}}^j$ by $\tilde{e}^j_{\gamma}=e^j_{\gamma}$ if $\gamma\in\mathbf{A}_j$, and $\tilde{e}^j_{\gamma}=-\epsilon e^{\prime j}_{\gamma}$ if $\gamma\in\mathbf{A}'_j$. Then,$$[\tilde{\mathbf{A}}_1,\tilde{\mathbf{e}}^1]_1\cdots[\tilde{\mathbf{A}}_{2k},\tilde{\mathbf{e}}^{2k}]_{2k}=1,$$which is a contradiction.
\end{proof}

\subsubsection{}\label{CCCL}
Here we introduce another constraint on the tuple $\mathcal{C}=(C_j)_{1\le j\le 2k}$ of semi-simple conjugacy classes.  

\begin{description}
\item[(CCL)\label{CCL}] We say that a semi-simple conjugacy class $C\subset G\sigma$ satisfies \ref{CCL} if it has no "eigenvalue" equal to $\pm\mathfrak{i}$. We say that $\mathcal{C}$ satisfies \ref{CCL} if $C_j$ satisfies \ref{CCL} for all $j$. 
\end{description}

In view of Lemma \ref{Shu2Lem6.2.1}, that a semi-simple conjugacy class in $G\sigma$ satisfies \ref{CCL} essentially means that it has a representative $t\sigma\in(T^{\sigma})^{\circ}\sigma$ such that $C_{G}(t\sigma)^{\circ}$ is a Levi subgroup of $C_{G}(\sigma)^{\circ}$. The exceptional case is when the multiplicity of $\mathfrak{i}$ among the "eigenvalues" is equal to one, and $n$ is even, since $\SO_2\cong\mathbb{G}_m$.

Our main results in \S \ref{Section-E-poly}  will assume the \ref{CCL} property.

\subsection{The $R$-Model}\hfill

It is a general fact that $\Ch_{\mathcal{C}}$ is defined over a finite-type subring of $\mathbb{C}$. But we want to be specific about the base ring so that when passing to finite fields the conjugacy classes remain generic.
\subsubsection{}\label{R}
The conjugacy classes $(C_j)_j$ are represented by some $N$-tuples of complex numbers (\textit{cf.} \S \ref{s.s.GLnsigma}) $$\mathbf{E}^j=(1,\ldots,1,\mathfrak{i},\ldots,\mathfrak{i},a^j_1,\ldots,a^j_1,\ldots,a^j_{l_j},\ldots,a^j_{l_j}),$$satisfying
\begin{itemize}
\item[(i)] $a^j_r\notin\{a^j_s,(a^j_s)^{-1},-a^j_s,-(a^j_s)^{-1}\}$ for any $j$ and any $r\ne s$;
\item[(ii)] $(a^j_r)^4\ne1$ for any $j$ and $r$;
\item[(iii)] $[\mathbf{A}_1,\mathbf{e}^1]_1\cdots[\mathbf{A}^{2k},\mathbf{e}^{2k}]_{2k}\ne1$, for any integers $N'$ with $1\le N'\le N$, any $2k$-tuple of subsets $(\mathbf{A}_1,\ldots,\mathbf{A}_{2k})$ of $\{1,\ldots,N\}$, such that $|\mathbf{A}_1|=\cdots=|\mathbf{A}_{2k}|=N'$, and any $2k$-tuple of $N'$-tuples of signs $(\mathbf{e}^1,\ldots,\mathbf{e}^{2k})$ (\textit{cf.} \S \ref{GCC}).
\end{itemize} 
Denote by $R_0$ the subring of $\mathbb{C}$ generated by $\{(a^j_r)^{\pm 1}\mid\text{all }r,j\}$ and $\mathfrak{i}$ if any $\mathbf{E}^j$ contains $\mathfrak{i}$.  Let $S\subset R_0$ be the multiplicative subset generated by
\begin{itemize}
\item[(i)] $(a^j_{r})^2-(a^j_{s})^2$ and $(a^j_{r})^2-(a^j_{s})^{-2}$ for any $j$ and $r\ne s$;
\item[(ii)] $(a^j_{r})^4-1$ for any $j$ and $r$;
\item[(iii)] $[\mathbf{A}_1,\mathbf{e}^1]_1\cdots[\mathbf{A}^{2k},\mathbf{e}^{2k}]_{2k}-1$, for any $N'$, $(\mathbf{A}_1,\ldots,\mathbf{A}_{2k})$ and $(\mathbf{e}^1,\ldots,\mathbf{e}^{2k})$ as above.
\end{itemize}
Define the ring of generic eigenvalues as $R:=S^{-1}R_0$. We will see below that the character variety is defined over $R$. 

Let $\bs\mu^j=m^j_+m^j_-(m^j_r)\in\mathfrak{T}_s$ be the type of $C_j$, so that $l_j=l(\bs\mu_{\ast}^j)$. Write $n^j_-=2m^j_-$ and write $n^j_+=2m^j_+$ or $n^j_+=2m^j_++1$ according to the parity of $n$.

\subsubsection{}\label{A_0}
Let $\mathcal{A}'_0$ be the polynomial ring over $R$ with $n^2(2g+2k)$ indeterminates. These indeterminates should be thought of as the entries of some $n\times n$ matrices $A_1,B_1,\ldots A_g,B_g,X_1,\ldots X_{2k}$. Let $\mathcal{A}_0$ be the localisation of $\mathcal{A}'_0$ at the determinants $\det A_i$, $\det B_i$, $\det X_j$, $1\le i\le g$, $1\le j\le 2k$. Let $I_0\subset \mathcal{A}_0$ be the ideal generated by 
\begin{itemize}
\item[(i)] The entries of $[A_1,B_1]\cdots[A_g,B_g]X_1\sigma \cdots X_{2k}\sigma-\Id$ (Note that $X_1\sigma X_2\sigma=X_1\sigma(X_2)$ and $\sigma$ is defined over $\mathbb{Z}$);
\item[(ii)] For all $1\le j\le 2k$, the entries of
\begin{equation}
((X_j\sigma(X_j))^2-\Id)\prod_{r=1}^{l_j}(X_j\sigma(X_j)-(a^j_r)^2\Id)(X_j\sigma(X_j)-(a_r^j)^{-2}\Id);
\end{equation}
\item[(iii)] For all $1\le j\le 2k$, the entries of the coefficients of the following polynomial in an auxiliary variable $t$:
\begin{equation}
\det(t\Id-X_j\sigma(X_j))-(t-1)^{n^j_+}(t+1)^{n^j_-}\prod_r^{l_j}(t-(a^j_r)^2)^{m^j_r}\prod_r^{l_j}(t-(a^j_r)^{-2})^{m^j_r}.
\end{equation}
\end{itemize} 

Define $\mathcal{A}:=\mathcal{A}_0/\sqrt{I_0}$. By Lemma \ref{C-C^2}, the relations (ii) and (iii) guarantee that the base change to $\mathbb{C}$ recovers the complex representation variety with the correct conjugacy classes. Then $\sRep_{\mathcal{C}}:=\Spec\mathcal{A}$ is the $R$-model of $\Rep_{\mathcal{C}}$. Let $G$ act on $A_i$ and $B_i$, $1\le i\le g$, by conjugation, and act on $X_j$, $1\le j\le 2k$, by the $\sigma$-twisted conjugation. Then $\sCh_{\mathcal{C}}:=\Spec\mathcal{A}^{G(R)}$ is the $R$-model of $\Ch_{\mathcal{C}}$, since taking invariants commutes with flat base change (\cite[\S I.2 Lemma 2]{Ses77}).

\subsubsection{}
Let $\phi:R\rightarrow \mathbb{F}_q$ be any ring homomorphism and let $\bar{\phi}:R\rightarrow\bar{\mathbb{F}}_q$ be its composition with $\mathbb{F}_q\hookrightarrow\bar{\mathbb{F}}_q$. Regard $G=\GL_n$ as an algebraic group over $\bar{\mathbb{F}}_q$. For each $1\le j\le 2k$, denote by $C_j^{\phi}$ the subvariety of $G$ over $\mathbb{F}_q$ defined by \S \ref{A_0} (ii), (iii). Similarly denote by $C_j^{\bar{\phi}}$ the subvariety of $G$ over $\bar{\mathbb{F}}_q$. We may regard $C_j^{\bar{\phi}}$ as a semi-simple conjugacy class contained in $G\sigma$. We have $C_j^{\phi}\otimes_{\mathbb{F}_q}\bar{\mathbb{F}}_q \cong C_j^{\bar{\phi}}$. Each class $C_j^{\bar{\phi}}$ has the same type as the original complex conjugacy class, and has "eigenvalues" (\textit{cf.} \S \ref{s.s.GLnsigma}) $\phi(a^j_i)$ (resp. $1$, resp. $\phi(\mathfrak{i})$) of multiplicity $\mu^j_i$ (resp $\mu^j_+$, resp. $\mu^j_-$) in the corresponding conjugacy class. In particular, $C_j^{\bar{\phi}}$ has a representative in $(T^{\sigma})^{\circ F}\sigma$. Denote by $\Rep^{\phi}_{\mathcal{C}}$ and $\Rep^{\bar{\phi}}_{\mathcal{C}}$ the varieties over $\mathbb{F}_q$ and $\bar{\mathbb{F}}_q$ respectively obtained by base change from $\sRep_{\mathcal{C}}$. And similarly for $\sCh_{\mathcal{C}}$. The variety $\Rep^{\bar{\phi}}_{\mathcal{C}}$ is defined by the same equation as (\ref{CharVarEq}), but over $\bar{\mathbb{F}}_q$. Then $\Rep^{\phi}_{\mathcal{C}}(\mathbb{F}_q)$ can be identified with $\Rep^{\bar{\phi}}_{\mathcal{C}}(\bar{\mathbb{F}}_q)^F$:
\begin{equation}\label{RepCFq}
\Rep^{\phi}_{\mathcal{C}}(\mathbb{F}_q)=\{(A_i,B_i)(X_j)\in G(q)^{2g}\times \prod^{2k}_{j=1} C_j^{\phi}(\mathbb{F}_q)\mid\prod_{i=1}^g[A_i,B_i]\prod_{j=1}^{2k}X_j=1\},
\end{equation}
where $C_j^{\phi}(\mathbb{F}_q)=C_j^{\bar\phi}(\bar{\mathbb{F}}_q)^F$ is considered as contained in $\GL_n\sigma$.

\begin{Nota}\label{A(C)}
For each $j$, choose $s_j\in C_j$, and define $$\mathbf{A}(\mathcal{C}):=\prod_jC_{G}(s_j)/C_{G}(s_j)^{\circ}.$$ Different choices of the $s_j$'s give isomorphic $\mathbf{A}(\mathcal{C})$. Its direct factors are either the trivial group or the 2-element group $\bs\mu_2$.
\end{Nota}

\begin{Nota}\label{Cj+-}
Recall \S \ref{GLnSigma-F-Class} that $\smash{C_j^{\bar\phi}(\bar{\mathbb{F}}_q)^F}$ is in general not a single $G(q)$-conjugacy classes. Denote by $C_{j,+}$ the $G(q)$-conjugacy class contained in $\smash{C_j^{\bar\phi}(\bar{\mathbb{F}}_q)}$ which has a representative in $(T^{\sigma})^{\circ F}$ and by $C_{j,-}$ the other class if $\smash{C_j^{\bar\phi}(\bar{\mathbb{F}}_q)^F}$ is not a single $G(q)$-conjugacy class. For a given $2k$-tuple $\mathcal{C}^{\bar\phi}=(C_j^{\bar\phi})$ of semi-simple conjugacy classes in $G\sigma$ and any $\mathbf{e}=(e_j)\in\mathbf{A}(\mathcal{C})$, we will denote by $\mathcal{C}_{\mathbf{e}}=(C_{j,e_j})$ the $2k$-tuple of $G(q)$-conjugacy classes contained in $\mathcal{C}^{\bar\phi}$. If $\mathscr{C}$ is a tuple of conjugacy classes of the form $\mathcal{C}_{\mathbf{e}}$, we define $\sgn\mathscr{C}=\prod_je_j$.
\end{Nota}

In the following Proposition, we fix the generic tuple of conjugacy classes and omit the subscript $\mathcal{C}$. By the definition of the base ring $R$, the tuple $\mathcal{C}^{\bar{\phi}}$ is also generic.
\begin{Prop}\label{M=U/G}
We have the following formula.
\begin{equation}
|\Ch^{\phi}(\mathbb{F}_q)|=\frac{1}{|G(q)|}|\Rep^{\phi}(\mathbb{F}_q)|.
\end{equation}
\end{Prop}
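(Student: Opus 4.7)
The plan is to identify the quotient map $\pi\colon \Rep^{\phi}_{\mathcal{C}}\to \Ch^{\phi}_{\mathcal{C}}$ as a torsor under $G/\bs\mu_2$, where $\bs\mu_2=\{\pm\Id\}\subset Z(G)$, and then count $\mathbb{F}_q$-points by Galois cohomology. Observe first that the centre $Z(G)=\mathbb{G}_m$ does \emph{not} act trivially on $\Rep^{\phi}_{\mathcal{C}}$: because $\sigma(z\Id)=z^{-1}\Id$, the twisted conjugation rule $X_j\mapsto gX_j\sigma(g)^{-1}$ on the last $2k$ factors becomes multiplication by $z^2$ when $g=z\Id$, so the kernel of the $Z(G)$-action is precisely $\bs\mu_2$. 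The geometric claim to prove is that this $\bs\mu_2$ is also the full stabiliser of every geometric point of $\Rep^{\phi}_{\mathcal{C}}$.

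For the stabiliser computation, writing $X_j=Y_j\sigma$, an element $g\in G(\bar{\mathbb{F}}_q)$ fixes a point $(A_i,B_i,X_j)$ if and only if it centralises every $A_i,B_i$ and satisfies the twisted relation $gY_j=Y_j\sigma(g)$. Adapting the Schur-type irreducibility argument of \cite[\S 2.1]{HLR} to the non-connected ambient group $\bar{G}=\GL_n\lb\sigma\rb$, the genericity condition \ref{GenConGLn1} on the $(a^j_r)^{\pm 2}$ (precisely the eigenvalues of $X_j\sigma(X_j)\in\GL_n$, and the reason the corresponding polynomials are inverted in the definition of $R$ in \S\ref{R}) forces $g$ to be scalar, say $g=z\Id$; the twisted relation then degenerates to $z^2=1$. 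Hence the induced action of $G/\bs\mu_2$ on $\Rep^{\phi}_{\mathcal{C}}$ is free, $\pi$ is a $(G/\bs\mu_2)$-torsor, and the categorical quotient coincides with the geometric quotient.

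It now suffices to take Galois cohomology of the central extension $1\to\bs\mu_2\to G\to G/\bs\mu_2\to 1$ over $\mathbb{F}_q$. Using $H^1(\mathbb{F}_q,G)=1$ (Lang--Steinberg), $|\bs\mu_2(\mathbb{F}_q)|=2$, $H^1(\mathbb{F}_q,\bs\mu_2)\cong \mathbb{F}_q^{\ast}/(\mathbb{F}_q^{\ast})^2$ of order $2$ (Kummer, since $q$ is odd) and $H^2(\mathbb{F}_q,\bs\mu_2)=0$ (cohomological dimension one), the long exact sequence delivers
\begin{equation*}
|(G/\bs\mu_2)(\mathbb{F}_q)|\;=\;\frac{|G(\mathbb{F}_q)|\cdot|H^1(\mathbb{F}_q,\bs\mu_2)|}{|\bs\mu_2(\mathbb{F}_q)|}\;=\;|G(\mathbb{F}_q)|,\qquad H^1(\mathbb{F}_q,G/\bs\mu_2)=1.
\end{equation*}
The vanishing of $H^1(\mathbb{F}_q,G/\bs\mu_2)$ guarantees that every $\mathbb{F}_q$-point of $\Ch^{\phi}_{\mathcal{C}}$ lifts to $\Rep^{\phi}_{\mathcal{C}}(\mathbb{F}_q)$, and the fibres of $\pi$ over $\mathbb{F}_q$-points all have the common cardinality $|(G/\bs\mu_2)(\mathbb{F}_q)|=|G(\mathbb{F}_q)|$, yielding the stated formula.

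The hard part is the stabiliser computation in the second step: the twisted condition $gY_j=Y_j\sigma(g)$ couples $g$ with $\sigma(g)$ in a way that the usual $\GL_n$-Schur argument does not handle directly, and one must exploit the genericity of the eigenvalues of $Y_j\sigma(Y_j)\in\GL_n$ rather than those of $Y_j$ alone. Everything beyond that step is a formal combination of GIT theory and the arithmetic coincidence $|(G/\bs\mu_2)(\mathbb{F}_q)|=|G(\mathbb{F}_q)|$.
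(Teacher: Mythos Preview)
Your approach is correct in outline but takes a genuinely different route from the paper's.  The paper never identifies the stabiliser precisely: it invokes \cite[Proposition~3.8, Proposition~4.4]{Shu1} only to conclude that every geometric point is irreducible and hence has some \emph{finite abelian} stabiliser $H$, and then argues directly on each $F$-stable $G(\bar{\mathbb{F}}_q)$-orbit $O$.  The $G^F$-orbits in $O^F$ are indexed by $F$-conjugacy classes of $H$; since $H$ is abelian these are the cosets of the image of $h\mapsto hF(h)^{-1}$, so there are exactly $|H^F|$ of them, each of size $|G^F|/|H^F|$.  Thus $|O^F|=|G^F|$ independently of what $H$ is.  That robustness is the whole point of the paper's argument.

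Your route instead pins the stabiliser down to exactly $\bs\mu_2$ and repackages the count as Galois cohomology of a $(G/\bs\mu_2)$-torsor.  The cohomological step is clean---and note that $H^1(\mathbb{F}_q,G/\bs\mu_2)=1$ already follows from Lang--Steinberg applied to the connected group $G/\bs\mu_2$, so the $H^2$ argument is superfluous.  The soft spot is your stabiliser computation: ``adapting the Schur-type irreducibility argument of \cite{HLR}'' hides real work.  Genericity in \S\ref{GCC} gives irreducibility of $\rho$ as a $\bar{G}$-representation (no proper parabolic of $\bar G$ contains the image), but you need the sharper statement that the centraliser of the image in $G$ is scalar.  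The twisted relation $gX_j=X_j\sigma(g)$ does not produce a $g$-invariant subspace in the way the untwisted relation does, so the HLR argument does not transplant in one line; one has to go through something like $C_{\bar G}(g)$ and argue it sits inside a proper parabolic of $\bar G$ whenever $g\notin\bs\mu_2$.  This is essentially what \cite[Proposition~4.4]{Shu1} yields for $\GL_n$, but you should cite it rather than wave at HLR.  The paper sidesteps the issue entirely by never asking for more than ``finite abelian''.
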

\begin{proof}
By \cite[\S II.4 Theorem 3]{Ses77}, there is a natural bijection of sets
\begin{equation}
\Ch^{\bar\phi}(\bar{\mathbb{F}}_q)\lisomLR(\Rep^{\bar{\phi}}\ds G)(\bar{\mathbb{F}}_q).
\end{equation}
Since $\mathcal{C}^{\bar\phi}$ is generic, each element of $\Rep^{\bar\phi}(\bar{\mathbb{F}}_q)$ is an irreducible $G\lb\sigma\rb~$-representation according to \cite[Proposition 3.8]{Shu1}. By \cite[Proposition 4.4]{Shu1}, irreducible $G\lb\sigma\rb~$-representations have finite abelian stabilisers, thus every $G(\bar{\mathbb{F}}_q)$-orbit is closed. We then have a natural bijection of sets $(\Rep^{\bar{\phi}}\ds G)(\bar{\mathbb{F}}_q)\isom \Rep^{\bar{\phi}}(\bar{\mathbb{F}}_q)/G(\bar{\mathbb{F}}_q)$. Therefore,
\begin{equation}
\Ch^{\phi}(\mathbb{F}_q)\cong\Ch^{\bar\phi}(\bar{\mathbb{F}}_q)^F\cong(\Rep^{\bar{\phi}}(\bar{\mathbb{F}}_q)/G(\bar{\mathbb{F}}_q))^F,
\end{equation}
i.e. the set of $F$-stable $G$-orbits. Since $G$ is connected, each $F$-stable $G(\bar{\mathbb{F}}_q)$-orbit in $\Rep^{\bar{\phi}}(\bar{\mathbb{F}}_q)$ must contain some $F$-stable point by the Lang-Steinberg theorem. We will prove that the number of $F$-stable points in each such orbit is exactly $|G(\mathbb{F}_q)|$.

Let $O$ be an $F$-stable $G(\bar{\mathbb{F}}_q)$-orbit in $\Rep^{\bar{\phi}}(\bar{\mathbb{F}}_q)$, then $O^F$ splits into some $G(\mathbb{F}_q)$-orbits according to the stabliliser in $G$ of some $F$-stable point, say $x\in O^F$. By \cite[Proposition 4.4]{Shu1} again, the stabiliser is a finite abelian group $H$. The number of $G(\mathbb{F}_q)$-orbits in $O^F$ is equal to the number of $F$-conjugacy classes in $H$. 

Since $H$ is abelian, each $F$-conjugacy class of it is of the form $\{h_0hF(h)^{-1}\mid h\in H\}$ for some $h_0\in H$. Again because $H$ is abelian, the map $h\mapsto hF(h)^{-1}$ is a group homomorphism, with kernel $K=\{h\in H\mid F(h)=h\}$. Denote by $I$ the image of this homomorphism. Then the $F$-conjugacy classes in $H$ are the cosets $h_0I$, therefore there are $|H|/|I|=|K|$ of them.

That is, the number of $G(\mathbb{F}_q)$-orbits in $O$ is $|K|$. On the other hand, $x$ has $K$ as its stabiliser in $G(\mathbb{F}_q)$, so the cardinality of the $G(\mathbb{F}_q)$-orbit containing $x$ is $|G(\mathbb{F}_q)|/|K|$. If for some $g\in G(\bar{\mathbb{F}}_q)$, $g.x$ is an $F$-stable point contained in another $G(\mathbb{F}_q)$-orbit, then its stabiliser in $G(\mathbb{F}_q)$ is just $gKg^{-1}$. Indeed, we have $g^{-1}F(g)\in\Stab_G(x)=H$, thus for any $k\in K$, 
\begin{equation}
F(gkg^{-1})=F(g)kF(g)^{-1}=g(g^{-1}F(g))k(F(g)^{-1}g)g^{-1}=gkg^{-1}.
\end{equation}
Thus all $G(\mathbb{F}_q)$-orbits in $O$ have cardinality $|G(\mathbb{F}_q)|/|K|$. We conclude that $|O(\mathbb{F}_q)|=|G(\mathbb{F}_q)|$, which is independent of $O$. Therefore, 
\begin{equation}
|\Ch^{\phi}(\mathbb{F}_q)|=\frac{1}{|G(\mathbb{F}_q)|}|\Rep^{\phi}(\mathbb{F}_q)|.
\end{equation}
\end{proof}

\section{Computation of Linear Characters}\label{Section-Lem}
In this section, we fix an odd prime power $q$. The base field is $\kk=\bar{\mathbb{F}}_q$.
\subsection{A constraint on types}
\subsubsection{}
Theorem \ref{thm-decIrr} concerns only those irreducible characters of $G(q)$ that are induced from $\sigma$-stable standard Levi subgroups. We want to show that these characters suffice. 
\begin{Lem}\label{restrictTypes}
Let $(C_1,\ldots,C_{2k})$ be a tuple of semi-simple conjugacy classes in $G(q)\sigma$ such that for each $1\le j\le 2k$, the $G$-conjugacy class containing $C_j$ has a representative $s_j\sigma\in(T^{\sigma})^{\circ F}\sigma$. Let $M$ be a $\sigma$-stable and $F$-stable Levi factor of a $\sigma$-stable parabolic subgroup of $G$. Suppose that $(C_j)_j$ is strongly generic and that for all $j$, $C_j\cap M\sigma\ne\emptyset$. Then $M$ is $G(q)$-conjugate to a $\sigma$-stable standard Levi subgroup.
\end{Lem}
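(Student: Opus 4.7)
By the description following Proposition \ref{1.40}, I may assume after replacing $M$ by a $G^F$-conjugate that $M=L_{I,w}$ for some $\sigma$-stable standard Levi $L_I$ of the form (\ref{LI}) and some $w\in W_G(L_I)^{\sigma}$. Since $F$ acts trivially on $W$ for $\GL_n$, the $G^F$-conjugacy classes of $F$-stable $\sigma$-stable Levis in the $G$-conjugacy class of $L_I$ are parametrized by conjugacy classes of $W_G(L_I)^{\sigma}$, with the identity class corresponding to the standard Levi $L_I$ itself. It therefore suffices to show that $w$ is conjugate to $1$ in $W_G(L_I)^{\sigma}$. Arguing by contradiction, I suppose that $w=(w_r)_r$ has a non-trivial signed cycle of size $d$ in some $w_r$.

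For each $j$, fix $x_j=l_j\sigma\in C_j\cap M\sigma$; using the identification of $M$ with $L_{I,w}$ via the element $g_I\in(G^{\sigma})^{\circ}$ of \S \ref{Prop1.40}, transfer $l_j$ to $L_I^{F_w}$. Then $l_j\sigma$ lies in $L_I^{F_w}\sigma$, is $G$-conjugate to $s_j\sigma$, and in particular its eigenvalue multiset in $\hat{\kk}$ is $\{\hat a_{j,1},\ldots,\hat a_{j,N}\}$ with all $a_{j,i}\in\mathbb{F}_q^*$ by hypothesis on the representatives. My strategy is to extract from the non-trivial cycle, for every $j$, a relation $[\mathbf{A}_j,\mathbf{e}^j]_j=\pm 1$ with $|\mathbf{A}_j|$ a common positive integer; the product $\prod_{j=1}^{2k}[\mathbf{A}_j,\mathbf{e}^j]_j$ then lies in $\{\pm 1\}$, contradicting strong genericity (\ref{GenConGLn2}).

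For a positive cycle of size $d\geq 2$, or a negative cycle of size $d\geq 2$, the $F_w$-fixed points on the corresponding $2d$ copies of $\GL_r$ are $\GL_r(q^d)\times\GL_r(q^d)$ (respectively $\GL_r(q^{2d})$). An explicit computation of $(l_j\sigma)^2$ on the block exhibits its eigenvalues as the Frobenius orbit $\{F^i(\mu_k^{\pm 1}):0\leq i<d,\ 1\leq k\leq r\}$, where $\mu_1,\ldots,\mu_r$ are the eigenvalues of an element $X_j\in\GL_r$ built from the block data (for instance $X_j=A_j\sigma(B_j)$ in the positive case). Since these eigenvalues must lie in $\mathbb{F}_q^*$, each $\mu_k\in\mathbb{F}_q^*$, and every value $\hat{\sqrt{\mu_k}}$ appears with multiplicity $d\geq 2$ in the $\hat{\kk}$-eigenvalue multiset of $x_j$. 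Hence there exist distinct indices $i_j,i_j'\in\Lambda$ with $\hat a_{j,i_j}=\hat a_{j,i_j'}$, which yields $a_{j,i_j}^{2}\cdot a_{j,i_j'}^{\mp 2}=1$; the subsets $\mathbf{A}_j=\{i_j,i_j'\}$ have common size $2$.

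The delicate case is a negative cycle of size $d=1$: the block is a single $\GL_r(q^2)$ parametrized by $B_j\in\GL_r(q^2)$, and each $\hat{\sqrt{\mu_k}}$ appears with multiplicity only $1$, so no pairwise coincidence is forced. Here I use a determinant: writing $X_j=F(B_j)\sigma(B_j)$, one has
$$
\prod_{k=1}^{r}\mu_k=\det(X_j)=\det(B_j)^{q-1},
$$
an element of $\mathbb{F}_{q^2}^*$ of order dividing $q+1$. Its required belonging to $\mathbb{F}_q^*$ (since the $\mu_k$ lie in $\mathbb{F}_q^*$) forces it into $\mathbb{F}_q^*\cap\{\zeta:\zeta^{q+1}=1\}=\{\pm 1\}$, using $\gcd(q-1,q+1)=2$ for odd $q$. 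Writing each $\mu_k=a_{j,i_{j,k}}^{\pm 2}$ with distinct indices $i_{j,1},\ldots,i_{j,r}\in\Lambda$, I obtain $[\{i_{j,1},\ldots,i_{j,r}\},\mathbf{e}^j]_j=\pm 1$ with subsets of common size $r$. The main obstacle is this last case, where the relation comes from a determinant rather than from a multiplicity and relies on the number-theoretic fact about $\gcd(q\pm 1)$; the other cases reduce to a direct block-wise eigenvalue count under the twisted Frobenius $F_w$.
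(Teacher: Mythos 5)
Your strategy is essentially the same as the paper's, but executed at the level of eigenvalues rather than at the level of the abelianization of the Levi. The paper passes to $\mathbf{D}_{L_I}\colon L_I\to Z^{\circ}_{L_I}/(Z^{\circ}_{L_I}\cap[L_I,L_I])$, computes $l:=\mathbf{D}_{L_I}\bigl(\prod_j(g^{-1}t_j\sigma g)^2\bigr)$, shows from the right side of (\ref{l}) (using Lemma \ref{disct-e.v.-genconj}) that the components $l_i$ are pairwise distinct even after inversion and all lie in $\mathbb{F}_q^{\ast}$, and then plays this off against $F_w$-stability: any cycle of size $>1$ forces $l_{i}=l_{i'}^{\pm1}$, while a negative $1$-cycle forces $l_i=l_i^{-1}=\pm1$, each a contradiction. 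Your $\det(X_j)$ in the negative $1$-cycle case is exactly this $l_i$, and your $\gcd(q-1,q+1)=2$ observation is exactly the paper's $l_i=l_i^q=l_i^{-1}$; your eigenvalue-multiplicity argument for cycles of size $d\geq 2$ replaces the paper's determinant equality $l_{i}=l_{i'}^{\pm1}$ (and, as a bonus, lets you dispense with Lemma \ref{disct-e.v.-genconj} for that case, since you produce a size-$2$ relation directly). So this is a genuine, if close, variant: you trade the uniform determinant argument for a sharper case analysis.

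Two points need shoring up. First, the assertion in the negative $1$-cycle case that the indices $i_{j,1},\dots,i_{j,r}$ are pairwise distinct is stated without proof. It is true, but for a nontrivial reason: by Proposition \ref{DM1.16} applied inside $L_I\lb\sigma\rb$, one can conjugate $g^{-1}t_j\sigma g$ by an element of $L_I$ to an element $w_js_jw_j^{-1}z_j\sigma$ with $w_j\in W^{\sigma}$; the diagonal entries of the square $w_js_j^2w_j^{-1}$ at the $r$ positions occupied by the upper $\GL_r$ of the block carry a signed permutation of $\{1,\dots,N\}$ restricted to those positions, which is why the $r$ indices are distinct. Without that, your relation $[\{i_{j,1},\dots,i_{j,r}\},\mathbf{e}^j]_j=\pm1$ could in principle have a repeated index and not be a legitimate instance of \eqref{GenConGLn2}. (The paper hides this same subtlety inside its derivation that each $l_i$ is of the form $[\mathbf{A}_1,\mathbf{e}^1]_1\cdots[\mathbf{A}_{2k},\mathbf{e}^{2k}]_{2k}$ with the $\mathbf{A}_j$ honest subsets.) Second, your multiplicity count for $d\geq2$ should be made explicit for the corner case $\mu_k\in\{\pm1\}$: there a single index contributes two copies of $\mu_k$ to the block, so you must count the $2d$ block occurrences and divide by two to recover $d$ distinct indices, rather than pairing $\mu_k$ against $\mu_k^{-1}$; the conclusion is unaffected, but the sentence as written glides over this. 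With these two clarifications the argument is complete.
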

\begin{proof}
We may assume that $M=L_{I,w}$ for some $\sigma$-stable $I$ and some $w\in W_G(L_I)^{\sigma}$ (see \S \ref{Prop1.40} and \S \ref{GLnSigma-F-Levi}). There exists $\dot{w}\in(G^{\sigma})^{\circ}$ representing $w$ and $g\in(G^{\sigma})^{\circ}$ such that $g^{-1}F(g)=\dot{w}$. Then $L_{I,w}=gL_Ig^{-1}$.

For each $j$, let $t_j\sigma\in C_j\cap M\sigma$. For any $j$, $g^{-1}t_j\sigma g$ lies in $L_I\sigma$. By Proposition \ref{DM1.16}, for each $j$, there exists $l_j\in L_I$ such that $l_jg^{-1}t_j\sigma gl_j^{-1}$ lies in $(T^{\sigma})^{\circ}\sigma$, and moreover, for each $j$, there exists $w_j\in W^{\sigma}$ and $z_j\in (T^{\sigma})^{\circ}\cap [T,\sigma]$ such that $$l_jg^{-1}t_j\sigma gl_j^{-1}=w_js_jw_j^{-1}z_j\sigma.$$ For any connected reductive algebraic group $H$, denote by 
$$\mathbf{D}_H:H\longrightarrow Z_H^{\circ}/(Z_H^{\circ}\cap[H,H])$$
the natural surjection, identifying $H/[H,H]\cong Z_H^{\circ}/(Z_H^{\circ}\cap[H,H])$. For $H=\GL_n$, this is the determinant. We have $$\mathbf{D}_{L_I}((l_jg^{-1}t_j\sigma gl_j^{-1})^2)=\mathbf{D}_{L_I}((g^{-1}t_j\sigma g)^2).$$ Since $s_j$ lies in $(T^{\sigma})^{\circ}$, $z_j\sigma$ commutes with $w_js_jw_j^{-1}$ for all $j$. Since $z_j$ lies in $[T,\sigma]$, $(z_j\sigma)^2=\sigma^2$. Note also that $\sigma^2=1$ by definition. We deduce that 
\begin{equation}\label{l}
\mathbf{D}_{L_I}\big(\prod_{j=1}^{2k}(g^{-1}t_j\sigma g)^2\big)=\mathbf{D}_{L_I}\big(\prod_{j=1}^{2k}(w_js_j^2w_j^{-1})\big),
\end{equation}
and we denote by $l$ this element of $Z_{L_I}^{\circ}/(Z_{L_I}^{\circ}\cap[L_I,L_I])$. 

Let us choose an isomorphism $L_I\cong \GL_{n_0}\times\prod_i(\GL_{n_i}\times\GL_{n_i})$, which induces $$Z_{L_I}^{\circ}/(Z_{L_I}^{\circ}\cap[L_I,L_I])\cong \kk^{\ast}\times \prod_{i}(\kk^{\ast}\times \kk^{\ast}).$$ The action of $\sigma$ on the factor $\kk^{\ast}\times\kk^{\ast}$ corresponding to each $i$ sends $(x,y)$ to $(y^{-1},x^{-1})$, and on the first factor sends  $x$ to $x^{-1}$. From the right hand side of (\ref{l}), we see that $l$ is in fact a $\sigma$-stable element for this action. Thus we can write $l=(l_0,(l_i,l_i^{-1}))$ under the above isomorphism, where $l_0$ in fact equals to $1$.

In view of the right hand side of (\ref{l}), each $l_i$ is of the form $[\mathbf{A}_1,\mathbf{e}^1]_1\cdots[\mathbf{A}_{2k},\mathbf{e}^{2k}]_{2k}$ (See \S \ref{explicit-genconj-Fq} for the notations). We can then apply Lemma \ref{disct-e.v.-genconj} and conclude that $l_i\ne l_j^{\pm 1}$ whenever $i\ne j$. Moreover, every $l_i$ lies in $\mathbb{F}_q$. However, the left hand side of (\ref{l}) shows that $l$ is an $F_w$-stable element (see (\ref{F_w})). The action of a (signed) cycle in $w$ on $L_I$ is given by (\ref{ActionSymetrique}). It is easy to see that if there is some cycle of size larger than $1$, then $l_i=l_j$ for some $i\ne j$, which is a contradiction. Finally, if there is some negative cycle of size $1$, then the corresponding factor $l_i$ satisfies $l_i=l_i^q=l_i^{-1}$ and so must be equal to $\pm 1$. This possibility is ruled out by the assumption (\ref{GenConGLn2}). We conclude that $w=1$.
\end{proof}
By Theorem \ref{sig-st-chi}, each $\sigma$-stable irreducible character of $G(q)$ is of the form $\chi=\epsilon_G\epsilon_M R^G_M(\chi_1\boxtimes\chi_0)$ for some $F$-stable and $\sigma$-stable Levi factor $M$ of some $\sigma$-stable parabolic subgroup. By Theorem \ref{thm-decIrr}, the extension $\tilde{\chi}$ of $\chi$ to $G(q)\sigma$ is a linear combination of inductions from the twisted maximal tori in $M\sigma$, assuming that such an extension is a uniform function. By Proposition \ref{char-formula}, if $C_j\cap M\sigma=\emptyset$, then $\tilde{\chi}(C_j)=0$. The above lemma then implies that only those $\chi$ with $M$ being a standard Levi subgroup can have an extension $\tilde{\chi}$ that is non-vanishing on all $C_j$, as long as $\mathcal{C}$ is a strongly generic tuple of semi-simple conjugacy classes.

\subsection{M\"obius inversion functions}\hfill

The results in this subsection will only be used in \S \ref{subsec-Sum-LChar}.
\subsubsection{}\label{MoInv}
Let $m$ be a positive integer and let $I$ be a set of cardinality $x>m$. Denote by $\Pi$ the set of the partitions of $\mathbb{I}(m)=\{1,\ldots,m\}$. It is a partially ordered set. Two partitions satisfy $P_1\prec P_2$ if $P_2$ refines $P_1$. Denote by $P_0$ the partition into $m$ parts, each consisting of a single element. Then $P_0$ is the maximal element. Each $P\in\Pi$ can be written as a collection of disjoint subsets $p_1\cdots p_l$ of $\mathbb{I}(m)$. Each $p_i$ is called a part of $P$ and $l$ is called the length of $P$, denoted by $l(P)$. For any $P\in\Pi$, denote by $(I^m)_{P}$ the subset of $I^m$ consisting of the elements $(i_r)_{1\le r\le m}$ such that $i_r=i_s$ whenever $r$ and $s$ are in the same part of $P$. Denote by $(I^m)_{P,\reg}$ the set of the elements $(i_r)$ of $(I^m)_{P}$ such that $i_r\ne i_s$ whenever $r$ and $s$ are not in the same part of $P$. Obviously, $$(I^m)_{P_2}=\bigsqcup_{P_1\prec P_2}(I^m)_{P_1,\reg},$$ for any $P_2\in\Pi$. A $\ladic$-valued function $f$ defined on the set of the subsets of $I^m$ is called additive if $f(U\cup V)=f(U)+f(V)$ for any two disjoint subsets $U$ and $V$. Given such a function $f$, we can define two functions $F$ and $F'$ on $\Pi$ by $$F(P):=f((I^m)_P),\quad F'(P):=f((I^m)_{P,\reg}).$$ Then $$F(P_2)=\sum_{P_1\prec P_2}F'(P_1).$$ By the M\"obius inversion formula, we have 
\begin{equation}\label{MoInvFor}
F'(P_2)=\sum_{P_1\prec P_2}\mu(P_1,P_2)F(P_1),
\end{equation}
where $\mu(P_1,P_2)$ is the M\"obius inversion function for the partially ordered set $\Pi$. 

Define $c_{P}(x):=|(I^m)_{P}|$, $c'_{P}(x):=|(I^m)_{P,reg}|$. We have $c_{P}(x)=(x)^{l(P)}$ and $$c'_{P}(x)=x(x-1)\cdots(x-l(P)+1).$$ These are the functions defined by counting elements, and so are additive. Inserting these expressions into the M\"obius inversion formula, we have a polynomial identity in $x$ that is valid for all large enough $x$. Specialising this equality at $x=-1$, we get 
\begin{equation}\label{x=-1}
(-1)^mm!=\sum_{P\prec P_0}\mu(P,P_0)(-1)^{l(P)}.
\end{equation}

\subsubsection{}\label{MoInvDouble}
Let $I$, $m$ and $\Pi$ be as above. Let $I^{\ast}$ be a set in bijection with $I$ and we fix a bijection between them. For any $i\in I$, let $i^{\ast}$ denote the element of $I^{\ast}$ corresponding to $i$ under the given bijection. Write $\bar{I}=I\sqcup I^{\ast}$. For any $i$, $i'\in\bar{I}$, we write $[i]=[i']$ if $i'\in\{i,i^{\ast}\}$. For any $P\in\Pi$, denote by $(\bar{I}^m)_P$ the subsets of elements $(i_r)_{1\le r\le m}$, $i_r\in\bar{I}$, such that $[i_r]=[i_s]$ whenever $r$ and $s$ are in the same part of $P$. Denote by $(\bar{I}^m)_{P,\reg}$ the set of the elements $(i_r)$ of $(\bar{I}^m)_P$ such that $[i_r]\ne[i_s]$ whenever $r$ and $s$ are not in the same part of $P$. We say that an element of $\bar{I}^m$ is regular if it is regular with respect to the maximal partition $P_0$. Obviously, $$(\bar{I}^m)_{P_2}=\bigsqcup_{P_1\prec P_2}(\bar{I}^m)_{P_1,\reg},$$ for any $P_2\in\Pi$. 

Now, put $\bar{I}=\Irr(\mathbb{F}^{\ast}_q)\setminus\{1,\eta\}$ and $\tilde{I}=(\Irr(\mathbb{F}^{\ast}_q)\setminus\{1,\eta\})/\!\sim$ for the equivalence relation $\sim$ that identifies $\alpha$ and $\alpha^{-1}$. For each equivalence class we choose a representative and denote by $I$ the set of these representatives, and so for any $\alpha\in I$, $\alpha^{\ast}=\alpha^{-1}$. Let $(a_i)_{1\le i\le m}$ be an $m$-tuple of elements of $\mathbb{F}_q^{\ast}$. Define an additive function on the set of the subsets of $\bar{I}^m$ by $$f(J):=\sum_{(\alpha_i)_{1\le i\le m}\in J}\prod_{1\le i\le m}\alpha_i(a^2_i),$$ for $J\subset \bar{I}^m$. We define the functions on $\Pi$: $$F(P):=f((\bar{I}^m)_P),\quad F'(P):=f((\bar{I}^m)_{P,\reg}).$$ Then,
\begin{equation}\label{MoInvFF'}
F'(P_0)=\sum_{P_1\prec P_0}\mu(P_1,P_0)F(P_1).
\end{equation}

\subsubsection{}\label{MoInvPart}
Let $s$ be a positive integer. Let $(a_i)_{1\le i\le s}$ be an $s$-tuple of elements of $\mathbb{F}_q^{\ast}$ such that $\prod_{1\le i\le s}a_i^{2e_i}\ne 1$ for any $(e_i)_{1\le i\le s}\in\bs\mu_2^s$. 
\begin{Lem}\label{MoInvLem}
With the notations of \S \ref{MoInvDouble}, we have the identity:
\begin{equation*}
\sum_{\alpha\in I}~\sum_{(e_i)_{1\le i\le s}\in\bs\mu_2^s}~\prod_{1\le i\le s}\alpha(a^{2e_i}_i)=-2^s.
\end{equation*}
\end{Lem}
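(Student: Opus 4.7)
The plan is to reduce the double sum to a single sum over characters by factoring the inner product, then exploit character orthogonality on the full dual group $\Irr(\mathbb{F}_q^*)$ and subtract the contributions of the two excluded characters $1$ and $\eta$.

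First I would swap the order of summation and factor. For fixed $\alpha$, $\sum_{(e_i)\in\bs\mu_2^s}\prod_{i=1}^s \alpha(a_i^{2e_i}) = \prod_{i=1}^s\bigl(\alpha(a_i^2)+\alpha(a_i^{-2})\bigr)$, so the quantity to compute is
\begin{equation*}
S := \sum_{\alpha\in I}\prod_{i=1}^s\bigl(\alpha(a_i^2)+\alpha(a_i^{-2})\bigr).
\end{equation*}
The summand is invariant under $\alpha\mapsto\alpha^{-1}$, and for every $\alpha\in\Irr(\mathbb{F}_q^*)\setminus\{1,\eta\}$ one has $\alpha\ne\alpha^{-1}$ (since $\alpha^2=1$ forces $\alpha\in\{1,\eta\}$). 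Hence summing over $I$ is the same as taking half the sum over $\Irr(\mathbb{F}_q^*)\setminus\{1,\eta\}$:
\begin{equation*}
S = \tfrac{1}{2}\sum_{\alpha\in\Irr(\mathbb{F}_q^*)\setminus\{1,\eta\}}\prod_{i=1}^s\bigl(\alpha(a_i^2)+\alpha(a_i^{-2})\bigr).
\end{equation*}

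Next I would evaluate the corresponding sum over \emph{all} of $\Irr(\mathbb{F}_q^*)$. Expanding the product gives $\sum_{(e_i)}\alpha(b_{(e_i)})$ with $b_{(e_i)}:=\prod_i a_i^{2e_i}$. By hypothesis $b_{(e_i)}\ne 1$ for every $(e_i)\in\bs\mu_2^s$, so character orthogonality yields $\sum_{\alpha\in\Irr(\mathbb{F}_q^*)}\alpha(b_{(e_i)})=0$ term by term, whence
\begin{equation*}
\sum_{\alpha\in\Irr(\mathbb{F}_q^*)}\prod_{i=1}^s\bigl(\alpha(a_i^2)+\alpha(a_i^{-2})\bigr)=0.
\end{equation*}

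Finally I would compute the two excluded contributions. At $\alpha=1$, each factor equals $2$, giving $2^s$. At $\alpha=\eta$, note that $\eta(a_i^{\pm 2}) = \eta(a_i)^{\pm 2} = 1$ since $\eta$ has order $2$, so this contribution is also $2^s$. Subtracting, $\sum_{\alpha\in\Irr(\mathbb{F}_q^*)\setminus\{1,\eta\}}\prod_i(\alpha(a_i^2)+\alpha(a_i^{-2})) = -2^{s+1}$, and dividing by $2$ gives $S=-2^s$ as claimed. There is no real obstacle here; the only thing to be a bit careful about is verifying that $\alpha\ne\alpha^{-1}$ for every $\alpha$ outside $\{1,\eta\}$, which is precisely why these two characters are removed in the definition of $I$.
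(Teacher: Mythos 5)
Your proof is correct and follows essentially the same route as the paper: the key steps — exploiting the $\alpha\mapsto\alpha^{-1}$ symmetry to reduce the sum over $I$ to half the sum over $\bar{I}$, invoking orthogonality of characters together with the hypothesis $\prod_i a_i^{2e_i}\ne1$, and evaluating the contributions of $\mathbf{1}$ and $\eta$ — are identical. The only cosmetic difference is that you factor the inner sum over $(e_i)$ into the product $\prod_i(\alpha(a_i^2)+\alpha(a_i^{-2}))$ before applying orthogonality and then re-expand, whereas the paper keeps the double sum explicit throughout.
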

\begin{proof}
Observe that 
\begin{equation*}
\sum_{(e_i)_{1\le i\le s}\in\bs\mu_2^s}\alpha^{-1}(\prod_{1\le i\le s}a^{2e_i}_i)=\sum_{(e_i)_{1\le i\le s}\in\bs\mu_2^s}\alpha(\prod_{1\le i\le s}a^{-2e_i}_i)=\sum_{(e_i)_{1\le i\le s}\in\bs\mu_2^s}\alpha(\prod_{1\le i\le s}a^{2e_i}_i),
\end{equation*}
so the desired quantity is equal to $$\frac{1}{2}\sum_{\alpha\in \bar{I}}~\sum_{(e_i)_{1\le i\le s}\in\bs\mu_2^s}\alpha(\prod_{1\le i\le s}a^{2e_i}_i).$$ Since $\prod_{1\le i\le s}a_i^{2e_i}\ne 1$, we have $$0=\sum_{\alpha\in\Irr(\mathbb{F}_q^{\ast})}\alpha(\prod_{1\le i\le s}a_i^{2e_i})=\sum_{\alpha\in\bar{I}}\alpha(\prod_{1\le i\le s}a_i^{2e_i})+2,$$that is, $$\sum_{\alpha\in \bar{I}}\alpha(\prod_{1\le i\le s}a^{2e_i}_i)=-2.$$ We get $\frac{1}{2}2^s\cdot (-2)=-2^s$.
\end{proof}

We use the above lemma to compute $F(P_1)$ in (\ref{MoInvFF'}).
\begin{Lem}\label{MoInvLem2}
We have $F(P_1)=(-1)^{l(P_1)}2^m$.
\end{Lem}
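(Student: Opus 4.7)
The plan is to factor $F(P_1)$ as a product over the parts of $P_1$ and then invoke Lemma~\ref{MoInvLem} once per factor.

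First, write $P_1 = p_1 \sqcup \cdots \sqcup p_l$ with $l = l(P_1)$. An element $(\alpha_i)_{1 \le i \le m}$ lies in $(\bar{I}^m)_{P_1}$ precisely when, for each part $p_k$, there is a common class $[\alpha^{(k)}] \in \tilde{I}$ such that $[\alpha_i] = [\alpha^{(k)}]$ for every $i \in p_k$. Picking the representative $\alpha^{(k)} \in I$, each $\alpha_i$ with $i \in p_k$ is then either $\alpha^{(k)}$ or $(\alpha^{(k)})^{-1}$, encoded by a unique sign $e_i \in \bs\mu_2$ via $\alpha_i = (\alpha^{(k)})^{e_i}$. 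Under this parametrisation the summand defining $f$ factorises along the parts:
\begin{equation*}
\prod_{i=1}^m \alpha_i(a_i^2) \;=\; \prod_{k=1}^l \prod_{i \in p_k} \alpha^{(k)}(a_i^{2 e_i}),
\end{equation*}
and the sums over the independent choices of $\alpha^{(k)}$ and of the $e_i$ decouple, giving
\begin{equation*}
F(P_1) \;=\; \prod_{k=1}^{l} \left( \sum_{\alpha^{(k)} \in I} \sum_{(e_i)_{i \in p_k} \in \bs\mu_2^{|p_k|}} \prod_{i \in p_k} \alpha^{(k)}(a_i^{2 e_i}) \right).
\end{equation*}

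Each inner factor is exactly the expression appearing in Lemma~\ref{MoInvLem} applied to the subtuple $(a_i)_{i \in p_k}$; the hypothesis $\prod_{i \in p_k} a_i^{2 e_i} \ne 1$ needed to invoke that lemma is inherited from the standing genericity assumption on $(a_i)_{1 \le i \le m}$ in \S\ref{MoInvPart}. Hence each factor equals $-2^{|p_k|}$. Multiplying across the $l$ parts and using $\sum_{k=1}^l |p_k| = m$ gives
\begin{equation*}
F(P_1) \;=\; \prod_{k=1}^l (-2^{|p_k|}) \;=\; (-1)^l \cdot 2^{m} \;=\; (-1)^{l(P_1)} 2^m,
\end{equation*}
as claimed. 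The only conceptual point is the factorisation over parts; the inheritance of genericity to each sub-tuple is the routine bookkeeping step and is not a real obstacle.
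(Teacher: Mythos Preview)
Your proof is correct and follows essentially the same approach as the paper: factor $F(P_1)$ as a product over the parts of $P_1$ via $(\bar I^m)_{P_1}\cong\prod_k(\bar I^{p_k})_0$, identify each factor with the sum in Lemma~\ref{MoInvLem}, and multiply the resulting values $-2^{|p_k|}$. Your explicit parametrisation by $(\alpha^{(k)},(e_i)_{i\in p_k})\in I\times\bs\mu_2^{|p_k|}$ is just a concrete unpacking of the paper's set $(\bar I^{p_k})_0$, and your flagging of the sub-tuple genericity hypothesis (which the paper leaves implicit) is a welcome clarification rather than a deviation.
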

\begin{proof}
Write $P_1=p_1\cdots p_l$ with $l=l(P_1)$. For each $k\in\{1,\ldots,l\}$, denote by $\bar{I}^{p_k}$ the components of $\bar{I}^m$ indexed by the elements of $p_k$, and denote by $(\bar{I}^{p_k})_0$ the subset consisting of $(i_r)_{r\in p_k}$ satisfying $[i_r]=[i_{r'}]$ for any $r$, $r'\in p_k$. Since $(\bar{I}^m)_{P_1}=\prod_k(\bar{I}^{p_k})_0$, we have $$F(P_1)=\sum_{(\alpha_i)_{1\le i\le m}\in (\bar{I}^m)_{P_1}}\prod_{1\le i\le m}\alpha_i(a^2_i)=\prod_{k=1}^l\sum_{(\alpha_i)_{i\in p_k}\in (\bar{I}^{p_k})_0}\prod_{i\in p_k}\alpha_i(a^2_i).$$ Note that $$\sum_{(\alpha_i)_{i\in p_k}\in (\bar{I}^{p_k})_0}\prod_{i\in p_k}\alpha_i(a^2_i)=\sum_{\alpha\in I}~\sum_{(e_i)_{i\in p_k}\in\bs\mu_2^{p_k}}~\prod_{i\in p_k}\alpha^{e_i}(a^{2}_i),$$which is equal to $-2^{|p_k|}$ by Lemma \ref{MoInvLem}. It then only remains to take the product over $k$.
\end{proof}

\subsection{A formula for linear characters}
\subsubsection{}
Let $T:=(\kk^{\ast})^{\bar{\mathbb{I}}(m)}$ be the torus with components indexed by $$\bar{\mathbb{I}}(m)=\{1,\ldots,m,-m,\ldots,-1\}.$$ Let $F$ be the Frobenius of $T$ sending $t$ to $t^q$. Let $w_+$ (resp. $w_-$) be the automorphism of $T$ that permutes its factors according to (\ref{w_+}) (resp. (\ref{w_-})). Write $F_+=\ad w_+\circ F$ and $F_-=\ad w_-\circ F$. These are Frobenius endomorphisms of $T$ such that $T^{F_+}\cong \mathbb{F}^{\ast}_{q^m}\times\mathbb{F}^{\ast}_{q^m}$ and $T^{F_-}\cong \mathbb{F}^{\ast}_{q^{2m}}$. 
\begin{Lem}\label{evaluate-st}
Let $$u=(u_1,\ldots,u_m,u_m^{-1},\ldots,u_1^{-1})\in T^F,$$and let $$t=(t_1,\ldots,t_m,t_m,\ldots,t_1)\in T.$$Suppose that $ut$ lies in $T^{F_+}$ or in  $T^{F_-}$. Then,
\begin{itemize}
\item[(i)] $u_{i+1}^2=u_i^2$ for all $i$;
\item[(ii)] $u_i^4=1$ for all $i$ if $ut\in T^{F_-}$;
\item[(iii)] $\prod_i^mt_i^2=\prod_{i=1}^mt_1^{2q^{i-1}}$ and, 
$$
\begin{cases}
t_1^{1+q+\cdots+q^{m-1}}\in\mathbb{F}_q^{\ast} & \text{ if } ut\in T^{F_+};\\
t_1^{1+q+\cdots+q^{m-1}}\in\mathbb{F}_q^{\ast} & \text{ if } ut\in T^{F_-} \text{ and } u_i^2=1;\\
t_1^{2(1+q+\cdots+q^{m-1})}\in\mathbb{F}_q^{\ast}\setminus(\mathbb{F}_q^{\ast})^2 & \text{ if } ut\in T^{F_-} \text{ and } u_i^2=-1.
\end{cases}
$$
\end{itemize}
\end{Lem}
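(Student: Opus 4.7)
The plan is to spell out the condition $F_{\pm}(ut)=ut$ coordinate-by-coordinate on $\bar{\mathbb{I}}(m)$ and read off all three assertions from the resulting recursions. First I would note that for $i>0$ we have $(ut)_i=u_it_i$ and $(ut)_{-i}=u_i^{-1}t_i$, and that since $u_j\in\mathbb{F}_q^{\ast}$ the Frobenius $F$ fixes the $u_j$'s; this reduces $F_{\pm}(ut)=ut$ to a purely multiplicative system in the $t_i$'s.

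Next, I would handle the equations common to both $F_+$ and $F_-$, namely those coming from the ``internal'' part of the cycle. For $1\le i\le m-1$, both Frobenii force
\[
u_{i+1}t_{i+1}=(u_it_i)^q=u_it_i^q,\qquad u_{i+1}^{-1}t_{i+1}=(u_i^{-1}t_i)^q=u_i^{-1}t_i^q.
\]
Dividing the two equations yields $u_{i+1}^2=u_i^2$, which is (i), and then $t_{i+1}=(u_i/u_{i+1})t_i^q$. Iterating gives $t_i=(u_1/u_i)t_1^{q^{i-1}}$, hence $t_i^2=t_1^{2q^{i-1}}$, and so $\prod_{i=1}^m t_i^2=t_1^{2(1+q+\cdots+q^{m-1})}$, which is the first half of (iii).

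Third, I would analyse the ``wrap-around'' equations, which are where $F_+$ and $F_-$ differ. In the $F_+$ case the conditions $u_1t_1=(u_mt_m)^q$ and $u_1^{-1}t_1=(u_m^{-1}t_m)^q$ both collapse, using the recursion above, to $t_1^{q^m}=t_1$; so $t_1\in\mathbb{F}_{q^m}^{\ast}$ and $t_1^{1+q+\cdots+q^{m-1}}=N_{\mathbb{F}_{q^m}/\mathbb{F}_q}(t_1)\in\mathbb{F}_q^{\ast}$. In the $F_-$ case the pair $(ut)_{-1}=(ut)_m^q$ and $(ut)_1=(ut)_{-m}^q$ give, respectively, $t_1^{q^m}=u_1^{-2}t_1$ and $t_1^{q^m}=u_m^2t_1$; combined with (i) these force $u_1^4=1$, proving (ii), and identify $t_1^{q^m}=u_1^2t_1=\pm t_1$.

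Finally, I would extract (iii) in the $F_-$ case. Setting $N=1+q+\cdots+q^{m-1}$ and using $Nq=N+q^m-1$, one computes
\[
(t_1^N)^q=t_1^N\cdot t_1^{q^m-1}=t_1^N\cdot(t_1^{q^m}/t_1)=u_1^2\, t_1^N.
\]
If $u_1^2=1$ this says $t_1^N\in\mathbb{F}_q^{\ast}$ directly. If $u_1^2=-1$ then $\alpha:=t_1^N$ satisfies $\alpha^q=-\alpha$, so $\alpha\in\mathbb{F}_{q^2}^{\ast}\setminus\mathbb{F}_q^{\ast}$ while $\alpha^2\in\mathbb{F}_q^{\ast}$; the hard part, which is really the only delicate moment in the argument, is to conclude that $t_1^{2N}=\alpha^2$ is a non-square in $\mathbb{F}_q^{\ast}$. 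This follows because both candidate square roots $\pm\alpha$ are exchanged by Frobenius and hence neither lies in $\mathbb{F}_q^{\ast}$, which uses in an essential way that $q$ is odd.
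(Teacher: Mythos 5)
Your proof is correct and follows essentially the same route as the paper: derive $u_{i+1}^2=u_i^2$ from the ``internal'' equations, iterate to get $t_i^2=t_1^{2q^{i-1}}$, then read off $t_1^{q^m-1}=1$ or $t_1^{q^m-1}=u_1^2$ from the wrap-around equations, and finish by considering $(t_1^N)^{q-1}$. The paper's argument is slightly terser (it writes $x=t_1^N$, notes $x^{q-1}=u_1^2$ and $x^2\in\mathbb{F}_q^{\ast}$, and leaves the final non-square deduction to the reader); yours spells out the coordinate bookkeeping and the final ``$\pm\alpha$ exchanged by Frobenius'' step more explicitly, but the content is the same.
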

\begin{proof}
By the assumption on $u$, we have $u_i^q=u_i$ for all $i$. Therefore, $$t_i^qu_i=t_{i+1}u_{i+1},\quad
t_i^qu_i^{-1}=t_{i+1}u_{i+1}^{-1},$$
and so $u_{i+1}^2=u_i^2$.

Suppose $ut\in T^{F_+}$. Then $t_1^{q^m}u_1=t_1u_1$, and so $t_1^{q^m-1}=1$. This proves the first part of (iii).

Now suppose $ut\in T^{F_-}$. Since $$t_1^{q^m}u_1=t_1u_1^{-1},\quad t_1^{q^m}u_1^{-1}=t_1u_1,$$ we deduce that $u_1^4=1$ and
\begin{equation}\label{eq-a_1^2}
t_1^{q^m-1}=u_1^2.
\end{equation}

The proof of part (i) shows that $t_{i+1}^2=t_i^{2q}$. Applying the norm map to $ut$ we deduce that $$\prod_{i=1}^mt_1^{2q^{i-1}}=\prod_{i=1}^mt_i^2\in\mathbb{F}_q^{\ast}.$$Write $x=\prod_{i=1}^mt_1^{q^{i-1}}$. Then $x^2\in\mathbb{F}_q^{\ast}$, while (\ref{eq-a_1^2}) means that $x^{q-1}=u_1^2$, whence the assertions.
\end{proof}

Let $\alpha$, $\beta\in\Irr(\mathbb{F}_q^{\ast})$. We regard $\alpha$ as an element of $\Irr(\mathbb{F}_{q^m}^{\ast})$ via the norm map. Then $\alpha$ defines an irreducible character $(\alpha,\alpha^{-1})$ of $T^{F_+}\cong \mathbb{F}^{\ast}_{q^m}\times\mathbb{F}^{\ast}_{q^m}$, denoted by $\theta_{\alpha}$. We regard $\beta$ as an element of $\Irr(\mathbb{F}_{q^{2m}}^{\ast})$ via the norm map. Then $\beta$ defines an irreducible character of $T^{F_-}\cong \mathbb{F}^{\ast}_{q^{2m}}$, denoted by $\theta'_{\beta}$.
\begin{Lem}\label{2-evaluate-st}
With the same notations as in Lemma \ref{evaluate-st}, we have
\begin{itemize}
\item[(i)] $\theta_{\alpha}(ut)=\alpha(\prod_i^mu_i^2)$ if $ut\in T^{F_+}$;
\item[(ii)] $\theta'_{\beta}(ut)=\beta(\prod_i^mt_i^2)=\beta(t_1^{2(1+q+\cdots+q^{m-1})})$ if $ut\in T^{F_-}$.
\end{itemize}
\end{Lem}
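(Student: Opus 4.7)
The argument is a direct computation that unwinds the two isomorphisms $T^{F_+}\cong(\mathbb{F}_{q^m}^{\ast})^2$ and $T^{F_-}\cong\mathbb{F}_{q^{2m}}^{\ast}$ explicitly and then invokes Lemma~\ref{evaluate-st} to control the Frobenius twists. There is no conceptual difficulty; the whole proof is bookkeeping of $q$-power exponents.

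First I would make the two isomorphisms concrete. The equation $F_+(x)=x$ forces $x_{i+1}=x_i^q$ on the positive side and $x_{-(i+1)}=x_{-i}^q$ on the negative side, with $x_1,x_{-1}\in\mathbb{F}_{q^m}^{\ast}$, so $T^{F_+}\cong(\mathbb{F}_{q^m}^{\ast})^2$ via $x\mapsto(x_1,x_{-1})$ and $\theta_{\alpha}(x)=\alpha(N_{q^m/q}(x_1))\,\alpha^{-1}(N_{q^m/q}(x_{-1}))$. Similarly, $F_-$ couples all $2m$ coordinates into a single cycle, giving $T^{F_-}\cong\mathbb{F}_{q^{2m}}^{\ast}$ via $x\mapsto x_1$ and $\theta_{\beta}'(x)=\beta(N_{q^{2m}/q}(x_1))$.

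For part (i), at the point $ut$ we have $x_1=u_1t_1$ and $x_{-1}=u_1^{-1}t_1$. Since $u_1\in\mathbb{F}_q^{\ast}$, the norm factors as $N_{q^m/q}(u_1t_1)=u_1^m\,T_1$, where $T_1:=t_1^{1+q+\cdots+q^{m-1}}\in\mathbb{F}_q^{\ast}$ by Lemma~\ref{evaluate-st}(iii); likewise $N_{q^m/q}(u_1^{-1}t_1)=u_1^{-m}T_1$. The $T_1$-contributions therefore cancel between $\alpha$ and $\alpha^{-1}$, leaving $\theta_{\alpha}(ut)=\alpha(u_1^{2m})$, which equals $\alpha(\prod_{i=1}^m u_i^2)$ by Lemma~\ref{evaluate-st}(i) (that gives $u_i^2=u_1^2$ for all $i$).

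For part (ii), write the exponent as $(q^{2m}-1)/(q-1)=(1+q+\cdots+q^{m-1})(1+q^m)$ so that $N_{q^{2m}/q}(u_1t_1)=u_1^{2m}\,T_1\,(t_1^{q^m})^{1+q+\cdots+q^{m-1}}$. The key input is the relation $t_1^{q^m}=u_1^2 t_1$, which is equation~(\ref{eq-a_1^2}) inside the proof of Lemma~\ref{evaluate-st}; it turns the last factor into $u_1^{2m}T_1$, and then $u_1^{4m}=1$ (from $u_1^4=1$) collapses the whole expression to $T_1^2$. Finally, Lemma~\ref{evaluate-st}(iii) identifies $T_1^2=t_1^{2(1+q+\cdots+q^{m-1})}$ with $\prod_{i=1}^m t_i^2$, using $t_i^2=t_1^{2q^{i-1}}$ (itself a consequence of the Frobenius compatibility $t_{i+1}=(u_i/u_{i+1})t_i^q$ combined with $u_i^2=u_1^2$). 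The only ``hard'' part is keeping the $q$-exponents straight and correctly pairing up each factor between Lemma~\ref{evaluate-st}(i) and~(iii); no additional ingredient is required.
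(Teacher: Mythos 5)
Your computation is correct, and it fills in precisely the bookkeeping that the paper dismisses as ``Obvious'': the paper offers no argument at all for Lemma~\ref{2-evaluate-st}. Your unwinding of $T^{F_+}\cong\mathbb{F}_{q^m}^\ast\times\mathbb{F}_{q^m}^\ast$ and $T^{F_-}\cong\mathbb{F}_{q^{2m}}^\ast$ via the first coordinate of each cycle, and the reduction of the two norms, is the natural route. In part~(i) the cancellation of the $T_1$-factor between $\alpha$ and $\alpha^{-1}$ together with $u_i^2=u_1^2$ is exactly what makes the claimed formula $\alpha(\prod_i u_i^2)$ fall out. In part~(ii), your use of the relation $t_1^{q^m}=u_1^2 t_1$, labelled~(\ref{eq-a_1^2}) inside the proof of Lemma~\ref{evaluate-st}, is legitimate (it is established there before it is invoked) and, combined with $u_1^4=1$ from Lemma~\ref{evaluate-st}(ii), cleanly collapses $N_{q^{2m}/q}(u_1t_1)$ to $T_1^2$. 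The only small remark worth making is that $u_1^2=u_1^{-2}$ under $u_1^4=1$, so one need not worry about which sign of the exponent appears in~(\ref{eq-a_1^2}); you implicitly used the correct one. No gaps.
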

\begin{proof}
Obvious.
\end{proof}

\subsubsection{}\label{value-tildetheta}
Let $w\in W^{\sigma}$, then there exists $\dot{w}\in(G^{\sigma})^{\circ}$ representing $w$. Let $g\in(G^{\sigma})^{\circ}$ be such that $g^{-1}F(g)=\dot{w}$ and put $T_w=gTg^{-1}$. Suppose $n=2N+1$. We can write $$w=((e_1,\ldots,e_N),\tau)\in(\mathbb{Z}/2\mathbb{Z})^N\rtimes\mathfrak{S}_N$$ as in \S \ref{signed-partitions}. Then $\tau$ is a product of cycles: $\tau=\prod_{i\in\Lambda}c_{I_i}$, where $\Lambda$ is just a set indexing the cycles. The set $\Lambda$ is divided into two parts: $\Lambda_+$ and $\Lambda_-$ so that $\bar{e}_i=-1$ if and only if $i\in\Lambda_-$ (see \S \ref{signed-partitions}). The size of $I_i$ is denoted by $\tau_i$ so that $((\tau_i)_{i\in\Lambda_+},(\tau_i)_{i\in\Lambda_-})$ is the 2-partition corresponding to the conjugacy class of $w$. Now $$T_w^F\cong T^{F_w}\cong\mathbb{F}_q^{\ast}\times\prod_{i\in\Lambda_+}(\mathbb{F}_{q^{\tau_i}}^{\ast}\times\mathbb{F}_{q^{\tau_i}}^{\ast})\times\prod_{i\in\Lambda_-}\mathbb{F}_{q^{2\tau_i}}^{\ast}.$$
For each $i\in\Lambda_+$, let $\alpha_i\in\Irr(\mathbb{F}_q^{\ast})$ and for each $i\in\Lambda_-$, let $\beta_i=\mathbf{1}$ or $\eta$. Let $\theta_0=\mathbf{1}$ or $\eta$ considered as associated to the first component under the above isomorphism. Put 
\begin{equation}\label{epsilon-theta}
\epsilon(\theta_0)=
\begin{cases}
1 & \text{ if } \theta_0=\mathbf{1},\\
-1 & \text{ if } \theta_0=\eta.
\end{cases}
\end{equation}
The characters $\alpha_i$, $\beta_i$ and $\theta_0$ are not necessarily distinct. Under the above isomorphism, we regard $$\theta:=\left(\theta_0,(\alpha_i,\alpha_i^{-1})_{i\in\Lambda_+},(\beta_i)_{i\in\Lambda_-}\right)$$ as a character of $T^{F_w}$ via the norm maps. Then $\theta$ is obviously a $\sigma$-stable character, and so can be extended to a character of $T^{F_w}\lb\sigma\rb$, denoted by $\tilde{\theta}$. We require that $\tilde{\theta}$ takes value $1$ at $\sigma$. Define $\epsilon(\theta):=\epsilon(\theta_0)$ if $n$ is odd.

\begin{Lem}\label{theta-st}
Let $\theta=(\theta_0,(\alpha_i,\alpha_i^{-1})_{i\in\Lambda_+},(\beta_i)_{i\in\Lambda_-})$ be the character of $T^{F_w}$ as above and let 
\begin{equation}\label{s'sigma}s'=(a'_1,\ldots,a'_N,c,a_N^{\prime-1},\ldots,a_1^{\prime-1})\in T^F.
\end{equation} Suppose that there exists $h\in G(q)$ such that $hs'\sigma h^{-1}\in T_w\sigma$. Then the following assertions hold.

(i). The element $g^{-1}hs'\sigma h^{-1}g$ can be written as $st\sigma$, where $$t=\diag(t_1,\ldots,t_N,t_{N+1},t_N,\ldots,t_1)\in T,$$ and $$s=\diag(a_1,\ldots,a_N,c,a_N^{-1},\ldots,a_1^{-1})\in T^F$$ satisfies $a_{\gamma}=a^{\prime e_{\gamma}}_{\rho^{-1}(\gamma)}$, $1\le\gamma\le N$, for some permutation $\rho$ of $\{1,\ldots,N\}$ and some $N$-tuple of signs $(e_1,\ldots,e_N)$. 

(ii). Define $$\Lambda_-^{\circ}=\{i\in\Lambda_-\mid a_{\gamma}^2=-1\text{ for any $\gamma\in I_i$}\},$$ and $$\Lambda_-^{\circ\circ}=\{i\in\Lambda_-^{\circ}\mid \beta_i=\eta\}.$$ The set $\Lambda_-^{\circ}$ is well-defined according to Lemma \ref{evaluate-st}. Then we have $$\tilde{\theta}(st\sigma)=\theta_0(c)\prod_{i\in\Lambda_+}\alpha_i(\prod_{\gamma\in I_i}a^2_{\gamma})\prod_{i\in\Lambda_-^{\circ\circ}}(-1)\prod_{i\in\Lambda_-^{\circ}}\epsilon(\theta_0),$$
\end{Lem}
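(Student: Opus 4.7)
The plan is to derive (i) by combining the commutativity of $g\in(G^\sigma)^\circ$ with $\sigma$ in $\bar{G}$ with the almost-direct decomposition $T=(T^\sigma)^\circ\cdot[T,\sigma]$, and then to obtain (ii) by evaluating $\theta$ on each factor of $T^{F_w}$ via Lemma \ref{evaluate-st} and Lemma \ref{2-evaluate-st}.

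For (i), observe that $g\in(G^\sigma)^\circ$ forces $g\sigma=\sigma g$ in $\bar{G}$, so $g^{-1}hs'\sigma h^{-1}g=(g^{-1}hs'h^{-1}g)\sigma=:t_0\sigma$ with $t_0\in T$ (since $hs'h^{-1}\in T_w=gTg^{-1}$). A short Frobenius computation using $F(g)=g\dot{w}$ gives $F(t_0)=\dot{w}^{-1}t_0\dot{w}$, so $t_0\in T^{F_w}$. Because $t_0\sigma$ is $G$-conjugate to $s'\sigma$, the squares $t_0\sigma(t_0)$ and $s'\sigma(s')=\diag(a_1'^{2},\ldots,a_N'^{2},1,a_N'^{-2},\ldots,a_1'^{-2})$ are $G$-conjugate. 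Matching entries multiset-wise (up to inversion, using that $W$ acts on the symmetric diagonal tori by signed permutations) produces the permutation $\rho$ and the signs $(e_\gamma)$. One then writes $t_0=s\cdot t$ in the prescribed forms using $T=(T^\sigma)^\circ\cdot[T,\sigma]$; the sign-matrix ambiguity $(T^\sigma)^\circ\cap[T,\sigma]$ is absorbed so that $s\in T^F$ with middle entry equal to $c$, establishing (i).

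For (ii), by the normalisation $\tilde{\theta}(\sigma)=1$, we have $\tilde{\theta}(st\sigma)=\theta(st)=\theta(t_0)$. Under the decomposition $T^{F_w}\cong\mathbb{F}_q^{\ast}\times\prod_{i\in\Lambda_+}(\mathbb{F}_{q^{\tau_i}}^{\ast})^{2}\times\prod_{i\in\Lambda_-}\mathbb{F}_{q^{2\tau_i}}^{\ast}$, both $\theta$ and the restriction of $t_0$ split as products indexed by the cycles of $w$, and on each cycle the pair $(s|_{I_i},t|_{I_i})$ is precisely the $(u,t)$ configuration of Lemma \ref{evaluate-st}. For each positive cycle $i\in\Lambda_+$, Lemma \ref{2-evaluate-st}(i) yields the factor $\alpha_i\bigl(\prod_{\gamma\in I_i}a_\gamma^{2}\bigr)$. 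For each negative cycle $i\in\Lambda_-$, Lemma \ref{evaluate-st}(ii) forces $a_\gamma^{4}=1$, so $a_\gamma^{2}\in\{\pm1\}$; combined with Lemma \ref{evaluate-st}(iii) and Lemma \ref{2-evaluate-st}(ii), the $\beta_i$-value evaluates to $1$ when $a_\gamma^{2}=1$ (the argument is an $\mathbb{F}_q^{\ast}$-square) and to $-1$ precisely when $i\in\Lambda_-^{\circ\circ}$ (argument a non-square and $\beta_i=\eta$).

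The fixed middle factor contributes $\theta_0(ct_{N+1})$, whose $c$-part yields the advertised $\theta_0(c)$; the residual $\theta_0(t_{N+1})$ is the chief technical point, and the main obstacle of the proof. One must show that, under the prescribed decomposition $t_0=st$, the middle twist $t_{N+1}$ lands in a square class of $\mathbb{F}_q^{\ast}$ whose $\eta$-value is exactly $(-1)^{|\Lambda_-^\circ|}$, equivalently $\theta_0(t_{N+1})=\prod_{i\in\Lambda_-^\circ}\epsilon(\theta_0)$. This is established by propagating the non-square constraint $t_1^{2(1+q+\cdots+q^{\tau_i-1})}\in\mathbb{F}_q^{\ast}\setminus(\mathbb{F}_q^{\ast})^{2}$ of Lemma \ref{evaluate-st}(iii) through the global $\sigma$-twisted compatibility forced simultaneously by $t\in[T,\sigma]$ (which writes $t=r\sigma(r)^{-1}$ and imposes the symmetric shape on the entries of $t$) and by $t_0\in T^{F_w}$: these relations tie the middle entry $t_{N+1}$ to a product of cycle contributions, accumulating one factor of $-1$ (in the non-trivial square class of $\mathbb{F}_q^{\ast}$) per cycle in $\Lambda_-^\circ$. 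Once this sign is pinned down, assembling the per-cycle values gives the stated formula.
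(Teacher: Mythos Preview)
For part (i), your approach differs from the paper's. The paper sets $x=g^{-1}h$ and invokes Lemma~\ref{Shu210.2.2a} to write $x=nl$ with $n\in N_G(T\sigma)$ and $l\in C_G(s'\sigma)^\circ$; then $n=t'\dot v$ with $\dot v\in(G^\sigma)^\circ$ a representative of some $v\in W^\sigma$, so that $s:=\dot v s'\dot v^{-1}$ is literally a $W^\sigma$-conjugate of $s'$. This yields the entries $a_\gamma=a'^{e_\gamma}_{\rho^{-1}(\gamma)}$ and the middle entry $c$ immediately (since $W^\sigma$ fixes the middle coordinate), and one reads off $t_i=t'_it'_{n+1-i}$. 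Your route via $T=(T^\sigma)^\circ\cdot[T,\sigma]$ can be made to work but has two slips as written: first, $t_0=g^{-1}hs'\sigma(h^{-1})g$, not $g^{-1}hs'h^{-1}g$; second, the almost-direct decomposition places $s$ in $(T^\sigma)^\circ$ with middle entry $1$, and the sign ambiguity $(T^\sigma)^\circ\cap[T,\sigma]$ cannot move that entry to $c$ --- you must build $s$ by hand with middle entry $c$ and then check directly that $t:=s^{-1}t_0$ is symmetric.

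For part (ii), the per-cycle contributions agree with the paper. The genuine gap is the square class of $t_{N+1}$, which you flag as the main obstacle but do not resolve. Your ``propagating compatibility'' cannot work as stated: the middle coordinate is its own orbit both under $w\in W^\sigma$ and under $\sigma$, so the $F_w$-condition on $t_0$ and the symmetric shape of $t$ impose no relation whatsoever between $t_{N+1}$ and the other entries. The paper's device is global: compute $\det(st)$ two ways. On one hand $\det(st)=t_1^2\cdots t_N^2\,t_{N+1}c$, which Lemma~\ref{evaluate-st}(iii) reduces modulo $(\mathbb{F}_q^\ast)^2$ to $t_{N+1}c\prod_{i\in\Lambda_-^\circ}\bigl(\prod_{\gamma\in I_i}t_\gamma^2\bigr)$. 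On the other hand, since $h\in G^F$ and $\det\sigma(h^{-1})=\det h$, one gets $\det(st)=\det(h)^2\det(s')\equiv c\pmod{(\mathbb{F}_q^\ast)^2}$. Comparing, and using that $\prod_{\gamma\in I_i}t_\gamma^2$ is a non-square exactly for $i\in\Lambda_-^\circ$, yields $\eta(t_{N+1})=\prod_{i\in\Lambda_-^\circ}(-1)$, hence $\theta_0(t_{N+1})=\prod_{i\in\Lambda_-^\circ}\epsilon(\theta_0)$. This determinant argument is the missing idea.
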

\begin{proof}
Put $x=g^{-1}h$. That $xs'\sigma x^{-1}\in T\sigma$ implies that $x=\xi l$ with $\xi\in N_G(T\sigma)$ and $l\in C_G(s'\sigma)^{\circ}$ by Lemma \ref{Shu210.2.2a}. Write $\xi$ as $t'\dot{v}$, with $t'\in T$ and $\dot{v}$ fixed by $\sigma$. Such $\dot{v}$ exists since $\sigma$ is quasi-central. Write $$\dot{v}s'\dot{v}^{-1}=:s=\diag(a_1,\ldots,a_N,c,a_N^{-1},\ldots,a_1^{-1}),$$for some $a_i\in\mathbb{F}_q^{\ast}$, $1\le i\le N$, and write $$t'=\diag(t'_1,\ldots,t'_{N+1},\ldots,t'_{2N+1}),$$for some $t'_i\in\kk^{\ast}$, $1\le i\le 2N+1$, then $$t'\dot{v}s'\sigma \dot{v}^{-1}t^{\prime-1}=\diag(t'_1t'_{2N+1}a_1,\ldots,t'_Nt'_{N+2}a_N,t_{N+1}^{\prime 2}c,t'_Nt'_{N+2}a_N^{-1},\ldots,t'_1t'_{2N+1}a_1^{-1})\sigma.$$ Put $t=\diag(t_1,\ldots,t_N,t_{N+1},t_N,\ldots,t_1)$, with $t_i=t'_it'_{2N+2-i}$, $1\le i\le N+1$. Then, $$g^{-1}hs'\sigma h^{-1}g=st\sigma.$$Let $v$ be the image of $\dot{v}$ under the natural map $N_G(T\sigma)\rightarrow W^{\sigma}$ (see Lemma \ref{Shu210.2.2b}). Then $v$ can be written as $((e_1,\ldots,e_N),\rho)$ under the isomorphism $W^{\sigma}\cong(\mathbb{Z}/2\mathbb{Z})^N\rtimes\mathfrak{S}_N$. In fact, $\rho$ and $(e_1,\ldots,e_N)$ are the desired permutation and $N$-tuple of signs. 

To compute $\theta(st)=\tilde{\theta}(st\sigma)$, we apply Lemma \ref{evaluate-st} and Lemma \ref{2-evaluate-st} to each cycle in $\tau$ and get $$\prod_{i\in\Lambda_+}\alpha_i(\prod_{k\in I_i}a^{2}_{\gamma})\prod_{i\in\Lambda_-^{\circ\circ}}(-1).$$ It remains to tackle the entry $t_{N+1}c$ of $st$. From the equality $g^{-1}hs'\sigma h^{-1}g=st\sigma$, we see that $st$ is $F_w$-stable, and so $t_{N+1}c\in\mathbb{F}_q$. Since we have required that $\theta_0$ is either $\mathbf{1}$ or $\eta$, it suffices to determine whether $t_{N+1}c$ lies in $(\mathbb{F}_q^{\ast})^2$. We are going to compute $\det(ts)$ in two different ways. On the one hand,
\begingroup
\allowdisplaybreaks
\begin{align*}
\det(ts)&=t_1^2\cdots t_N^2t_{N+1}c\\
&\stackrel{\circled{1}}{\equiv} t_{N+1}c\prod_{i\in\Lambda_-}(\prod_{\gamma\in I_i}t_{\gamma}^2)\mod(\mathbb{F}_q^{\ast})^2\\
&\stackrel{\circled{2}}{\equiv} t_{N+1}c\prod_{i\in\Lambda_-^{\circ}}(\prod_{\gamma\in I_i}t_{\gamma}^2)\mod(\mathbb{F}_q^{\ast})^2,
\end{align*}
\endgroup
where the equivalences \circled{1} and \circled{2} follow from the first and the second part of Lemma \ref{evaluate-st} (iii). On the other hand, 
\begingroup
\allowdisplaybreaks
\begin{align*}
\det(st)&=\det(g^{-1}hs'\sigma(h)^{-1}g)\\
&=\det(hs'\sigma(h^{-1}))\\
&=\det(h^2s')\\
&\equiv \det(s')=c\mod(\mathbb{F}_q^{\ast})^2.
\end{align*}
\endgroup
Comparing the two expressions for $\det(st)$, and using Lemma \ref{evaluate-st} (iii) and Lemma \ref{2-evaluate-st} (ii), we deduce that $$\eta(t_{N+1})=\prod_{i\in\Lambda_-^{\circ}}(-1),$$and so $$\theta_0(t_{N+1}c)=\theta_0(c)\prod_{i\in\Lambda_-^{\circ}}\epsilon(\theta_0).$$
\end{proof}
\begin{Cor}\label{Cor-theta-st}
With the same assumptions as in Lemma \ref{theta-st}, we assume further that $a_{\gamma}^2\ne-1$ for all $1\le\gamma\le N$. Then $$\tilde{\theta}(g^{-1}hs'\sigma h^{-1}g)=\theta_0(c)\prod_{i\in\Lambda_+}\alpha_i(\prod_{\gamma\in I_i}a^{2}_{\gamma}).$$
\end{Cor}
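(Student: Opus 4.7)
The plan is to deduce the corollary as a direct specialisation of Lemma \ref{theta-st}. First, by part (i) of Lemma \ref{theta-st}, the element $g^{-1}hs'\sigma h^{-1}g$ equals $st\sigma$ with $s = \diag(a_1, \ldots, a_N, c, a_N^{-1}, \ldots, a_1^{-1})$ and $t$ of the prescribed form, so evaluating $\tilde{\theta}$ at $g^{-1}hs'\sigma h^{-1}g$ amounts to applying part (ii) of the lemma.

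Next, I would check that the added hypothesis $a_\gamma^2 \neq -1$ for all $1 \le \gamma \le N$ forces the index sets $\Lambda_-^\circ$ and $\Lambda_-^{\circ\circ}$ to be empty. By definition, $\Lambda_-^\circ$ consists of those $i \in \Lambda_-$ for which every $\gamma \in I_i$ satisfies $a_\gamma^2 = -1$. Since each cycle $I_i$ is non-empty, a single $\gamma \in I_i$ with $a_\gamma^2 \neq -1$ suffices to exclude $i$; under the corollary's hypothesis, this is automatic. Hence $\Lambda_-^\circ = \emptyset$, and since $\Lambda_-^{\circ\circ} \subseteq \Lambda_-^\circ$, also $\Lambda_-^{\circ\circ} = \emptyset$.

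Substituting into the formula of Lemma \ref{theta-st} (ii), the two trailing products over $\Lambda_-^{\circ\circ}$ and $\Lambda_-^\circ$ become empty products, equal to $1$. What remains is exactly
\[
\tilde{\theta}(g^{-1}hs'\sigma h^{-1}g) = \theta_0(c)\prod_{i\in\Lambda_+}\alpha_i\Bigl(\prod_{\gamma\in I_i}a_{\gamma}^{2}\Bigr),
\]
which is the asserted identity. No obstacle is anticipated, since the corollary is a pure specialisation; the only subtle point is verifying that the hypothesis on the $a_\gamma$ (which are obtained from the $a'_\gamma$ through the signed permutation of part (i)) indeed kills $\Lambda_-^\circ$, and this is immediate from the definition of $\Lambda_-^\circ$.
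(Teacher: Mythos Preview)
Your proposal is correct and follows exactly the intended approach: the paper's own proof is simply ``Clear,'' and you have spelled out precisely why---the hypothesis $a_\gamma^2\ne -1$ forces $\Lambda_-^\circ=\Lambda_-^{\circ\circ}=\emptyset$, so the last two products in Lemma~\ref{theta-st}(ii) are empty and the formula collapses to the stated one.
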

\begin{proof}
Clear.
\end{proof}
\begin{Rem}
If $n$ is even, then the same arguments show that $$\tilde{\theta}(g^{-1}hs'\sigma h^{-1}g)=\prod_{i\in\Lambda_+}\alpha_i(\prod_{\gamma\in I_i}a^{2}_{\gamma})\prod_{i\in\Lambda_-^{\circ\circ}}(-1),$$where each term in this expression is defined in a way similar to the odd dimensional case. And if we assume $a_{\gamma}^2\ne-1$ for all $\gamma$, then $$\tilde{\theta}(g^{-1}hs'\sigma h^{-1}g)=\prod_{i\in\Lambda_+}\alpha_i(\prod_{\gamma\in I_i}a^{2}_{\gamma}).$$
\end{Rem}

\subsection{Sum of linear characters}\label{subsec-Sum-LChar}
\subsubsection{}\label{Irr-reg-sig}
Let $M=L_I$ be a $\sigma$-stable standard Levi subgroup. As before, we write $M=M_0\times M_1$, where $M_0=\GL_{n_0}$ and $M_1=\prod_{i=1}^l(\GL_{n_i}\times\GL_{n_i})$ for some integers $l$, $n_0$, and $n_i$. Let $N_+$ and $N_-$ be some non negative integers such that $N_++N_-=[n_0/2]$, and write $\mathfrak{W}_+:=\mathfrak{W}_{N_+}$, $\mathfrak{W}_-:=\mathfrak{W}_{N_-}$. For each $j\in\{1,\ldots 2k\}$, let $$w_j\in \mathfrak{W}_+\times\mathfrak{W}_-\times W_1^{\sigma}\subset W_M^{\sigma}(T).$$To each $w_j$ is associated an $F$-stable and $\sigma$-stable maximal torus $T_{w_j}\subset M$ as in \S \ref{decIrr}. Recall that $\Irr_{\reg}^{\sigma}(M_1^F)$ is the set of $\sigma$-stable regular linear characters of $M_1^F$ in the sense of Definition \ref{reg-M_1}. Write $\bar{I}:=\Irr(\mathbb{F}^{\ast}_q)\setminus\{\mathbf{1},\eta\}$. Then the elements of $\Irr_{\reg}^{\sigma}(M_1^F)$ are exactly the regular elements of $\bar{I}^l$ in the sense of \S \ref{MoInvDouble}. Given $\theta\in\Irr_{\reg}^{\sigma}(M_1^F)$ and $\epsilon\in\{-1,0,+1\}$, the procedure of \S \ref{decIrr} gives a $\sigma$-stable linear character of $T_{w_j}^F$ for each $j$, which is denoted by $\theta_j$. And we denote by $\tilde{\theta}_j$ its extension to $T_{w_j}^F\lb\sigma\rb$ which equals to $1$ at $\sigma$. If $n$ is odd, then $\epsilon(\theta_j)=\epsilon$ by the definition of $\theta_j$.

Let $\mathcal{C}=(C_1,\ldots,C_{2k})$ be a tuple of semi-simple conjugacy classes in $G(q)\sigma$ with representatives $s_j\sigma$ of the form (\ref{s'sigma}). For each $j$, regard $s_j\sigma$ as the $s'\sigma$ in Lemma \ref{theta-st}, then let $c_j$ and $h_j$ be associated to $s_j\sigma$ and $w_j$ in the way $c$ and $h$ were associated to $s'\sigma$ and $w$. Similarly, let $\Lambda_j$, $\Lambda_{j,+}$, $\Lambda_{j,-}$, and $I_{j,i}$ with $i$ running over $\Lambda_j$, be defined like $\Lambda$, $\Lambda_+$, $\Lambda_-$, and $I_i$ respectively. For each $j$, we can write $$\theta_j:=\left(\theta_{j,0},(\alpha_{j,i},\alpha_{j,i}^{-1})_{i\in\Lambda_{j,+}},(\beta_{j,i})_{i\in\Lambda_{j,i}}\right),$$ for some characters $\theta_{j,0}$, $\alpha_{j,i}$ and $\beta_{j,i}$ of $\mathbb{F}_q^{\ast}$. The characters $\theta_{j,0}$'s are the same for all $j$, and only depend on $\epsilon$ by definition. Thus we will write $\theta_0$ instead of $\theta_{j,0}$.

\begin{Lem}\label{sumofreg}
With the above notations, put (\textit{cf.} Notation \ref{Cj+-})
\begin{equation}
\Delta_{\epsilon,\sgn\mathcal{C}}=
\begin{cases}
-1 & \text{if $\epsilon=-$ and $\sgn\mathcal{C}=-1$};\\
1 & \text{otherwise}.
\end{cases}
\end{equation}
Suppose that $\mathcal{C}$ is strongly generic and satisfies \ref{CCL}. We have:
\begin{equation}
\sum_{\theta\in\Irr_{\reg}^{\sigma}(M_1^F)}\prod^{2k}_{j=1}\tilde{\theta}_j(h_js_j\sigma h_j^{-1})=(-2)^ll!\Delta_{\epsilon,\sgn\mathcal{C}}.
\end{equation}
\end{Lem}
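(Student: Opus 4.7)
Under (CCL), no eigenvalue of any $s_j$ equals $\pm\mathfrak i$, hence $a_{j,\gamma}^2\neq -1$ and $\Lambda_{j,-}^{\circ}=\emptyset$ for every $j$. Lemma \ref{theta-st}(ii) then collapses to
$$
\tilde\theta_j(h_js_j\sigma h_j^{-1})=\theta_{j,0}(c_j)\prod_{i\in\Lambda_{j,+}}\alpha_{j,i}\Bigl(\prod_{\gamma\in I_{j,i}}a_{j,\gamma}^2\Bigr),
$$
with $\theta_{j,0}$ absent when $n$ is even. Positive cycles of $w_j$ coming from $w_{+,j}\in\mathfrak W_+$ carry the character $\mathbf 1$ by the construction of $\theta_{w_1,\mathbf w}$ in \S\ref{sssec-decIrr}, and those from $w_{-,j}\in\mathfrak W_-$ carry $\eta$; since $\prod a^2$ is always a square, both contribute $1$. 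Only cycles from $w_{1,j}\in W_1^\sigma\cong\prod_{r=1}^l\mathfrak S_{n_r}$ — which embed as positive cycles in $W^\sigma$ via the diagram (\ref{ActionSymetrique}) — contribute non-trivially: a cycle lying in the $r$-th factor gives $\lambda_r(\prod a^2)$, where $\theta=(\lambda_r)_{r=1}^l\in\bar I^l$ parametrises $\Irr_{\reg}^\sigma(M_1^F)$.

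The factor $\prod_j\theta_{j,0}(c_j)$ is $\theta$-independent; using the explicit forms (\ref{C_+})-(\ref{C_-}) of $c_j$, one checks $\eta(c_j)=e_j$, and a case analysis on $\epsilon\in\{0,\pm 1\}$ gives $\prod_j\theta_{j,0}(c_j)=\Delta_{\epsilon,\sgn\mathcal C}$. Pulling this out, what remains is
$$
S:=\sum_{(\lambda_r)\in(\bar I^l)_{P_0,\reg}}\prod_{r=1}^l\lambda_r(A_r),\qquad A_r:=\prod_{j=1}^{2k}\prod_{c\subset w_{1,j}|_{\mathfrak S_{n_r}}}\prod_{\gamma\in c}a_{j,\gamma}^2,
$$
in the notation of \S\ref{MoInvDouble}.

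The plan is to evaluate $S$ by Möbius inversion (\ref{MoInvFor}) applied to the additive function $f(J):=\sum_{(\lambda_r)\in J}\prod_r\lambda_r(A_r)$. For each partition $P$ with parts $p$,
$$
F(P)=\prod_{p\in P}\Bigl(\sum_{\lambda\in I}\sum_{(e_r)\in\bs\mu_2^{p}}\lambda\bigl(\prod_{r\in p}A_r^{e_r}\bigr)\Bigr),
$$
and each bracket equals $-2^{|p|}$ by Lemma \ref{MoInvLem}, provided $\prod_{r\in p}A_r^{e_r}\neq 1$ for every $(e_r)\in\bs\mu_2^{p}$. This is the hard step: write $\prod_{r\in p}A_r^{e_r}=\prod_j[\tilde{\mathbf A}_j,\tilde{\mathbf e}^j]_j$ with $\tilde{\mathbf A}_j=\bigsqcup_{r\in p}\mathbf A_j^r$ (where $\mathbf A_j^r\subset\{1,\ldots,N\}$ indexes the eigenvalues of $s_j$ sitting in the $r$-th factor of $M_1$) and $\tilde{\mathbf e}^j_\gamma=e_r$ on the $r$-block. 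The crucial observation is that $|\tilde{\mathbf A}_j|=\sum_{r\in p}n_r$ is independent of $j$, so the strong genericity condition (\ref{GenConGLn2}) applies and yields $\prod_j[\tilde{\mathbf A}_j,\tilde{\mathbf e}^j]_j\neq\pm 1$, in particular $\neq 1$.

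As in Lemma \ref{MoInvLem2}, this gives $F(P)=(-1)^{l(P)}2^l$, and the specialisation (\ref{x=-1}) of the Möbius polynomial identity at $x=-1$ yields
$$
S=F'(P_0)=2^l\sum_{P\prec P_0}\mu(P,P_0)(-1)^{l(P)}=(-2)^l\,l!.
$$
Multiplying by $\Delta_{\epsilon,\sgn\mathcal C}$ finishes the proof. The main obstacle is the bookkeeping in the first step — tracking which positive cycles of $w_j$ come from $w_{1,j}$ versus from $\mathbf w_j$, and matching the character factors $\alpha_{j,i}$ with the components $\lambda_r$ of $\theta$; once this is in place, the verification of strong genericity for the $A_r$'s and the Möbius computation proceed in parallel with \S\ref{MoInvPart}.
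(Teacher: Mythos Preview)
Your proposal is correct and follows essentially the same route as the paper's proof: reduce via Corollary~\ref{Cor-theta-st} under (CCL), observe that cycles carrying $\mathbf{1}$ or $\eta$ contribute trivially on squares, factor out $\Delta_{\epsilon,\sgn\mathcal{C}}$, and then apply the M\"obius inversion machinery of \S\ref{MoInvDouble}--\S\ref{MoInvPart} together with the identity~(\ref{x=-1}). You are in fact slightly more explicit than the paper in checking that the hypothesis of Lemma~\ref{MoInvLem} holds for the quantities $\prod_{r\in p}A_r^{e_r}$ via strong genericity; the paper leaves this implicit. One small imprecision: when you pass from the $a_{j,\gamma}$ of Lemma~\ref{theta-st} back to the original eigenvalues $a'_{j,\gamma'}$ of $s_j$ to write $\prod_{r\in p}A_r^{e_r}=\prod_j[\tilde{\mathbf A}_j,\tilde{\mathbf e}^j]_j$, the signs $\tilde{\mathbf e}^j_\gamma$ pick up an extra twist from the permutation $\rho_j$ and signs $(e_\gamma)$ of Lemma~\ref{theta-st}(i), so they are not literally equal to $e_r$ on the $r$-block; but since the strong genericity condition~(\ref{GenConGLn2}) is for \emph{arbitrary} sign tuples, this does not affect the conclusion.
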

\begin{proof}
We will assume $\epsilon\ne0$. The case $\epsilon=0$ follows from similar arguments. Under \ref{CCL}, we are in the situation of Corollary \ref{Cor-theta-st}, and so for each $j$ we have,
$$\tilde{\theta}_j(h_js_j\sigma h^{-1}_j)=\theta_{0}(c_j)\prod_{i\in\Lambda_{j,+}}\alpha_{j,i}(\prod_{\gamma\in I_{j,i}}a^{2}_{j,\gamma}),$$where $a_{j,\gamma}$ is defined like $a_{\gamma}$ in Lemma \ref{theta-st}. Obviously, $$\prod_{j}^{2k}\theta_0(c_j)=\Delta_{\epsilon,\sgn\mathcal{C}},$$ and for any $j$ and $i\in\Lambda_{j,+}$, we have $\alpha_{j,i}(\prod_{\gamma\in I_{j,i}}a^{2}_{j,\gamma})=1$ if $\alpha_{j,i}=\mathbf{1}$ or $\eta$. We will only need to consider the remaining terms.

Recall that $\theta$ is defined by a set $\{\theta_1,\ldots,\theta_l\}$ of distinct characters of $\mathbb{F}_q^{\ast}$. For each $r\in\{1,\ldots,l\}$ and each $j$, put $$I_{j,r}:=\bigsqcup_{\{i\in\Lambda_{j,+}\mid \alpha_{j,i}=\theta_r\}}I_{j,i},$$and for each $r$, $$a_r:=\prod_{j=1}^{2k}\prod_{\gamma\in I_{j,r}}a_{j,\gamma}.$$ Then, $$\prod_{j=1}^{2k}\prod_{i\in\Lambda_{j,+}}\alpha_{j,i}(\prod_{\gamma\in I_{j,i}}a^{2}_{j,\gamma})=\prod_{r=1}^{l}\theta_r(a_r^2).$$ 
Therefore, 
$$
\sum_{\theta\in\Irr^{\sigma}_{\reg}(M_1^F)}\prod^{2k}_{j=1}\tilde{\theta}_j(h_js_j\sigma h_j^{-1})=\Delta_{\epsilon,\sgn\mathcal{C}}\sum_{\substack{(\theta_1,\ldots,\theta_l)\\\text{regular}}}\prod^l_{r=1}\theta_r(a_r^2).
$$ 

Using (\ref{MoInvFF'}) and Lemma \ref{MoInvLem2}, we get $$
\sum_{\substack{(\theta_1,\ldots,\theta_l)\\\text{regular}}}\prod^l_{r=1}\theta_r(a_r^2)=\sum_{P_1\prec P_0}\mu(P_1,P_0)(-1)^{l(P_1)}2^l,$$
which is equal to $(-2)^ll!$ by (\ref{x=-1}).
\end{proof}

\section{Symmetric Functions Associated to Wreath Products}\label{Section-SymF}
\subsection{Ring of Symmetric Functions}
\subsubsection{}
For $k\in\{0,1\}$, let $\mathbf{x}^{(k)}=(x^{(k)}_1,x^{(k)}_2,\cdots)$ be an infinite series of variables. Denote by $\mathbf{x}=(\mathbf{x}^{(0)},\mathbf{x}^{(1)})$ all of these variables. For any commutative ring with unity $R$, denote by $\SymF_R[\Xo,\Xl]$ the ring of symmetric functions in two variables (symmetric in each $\Xk$) with coefficents in $R$. We will always assume that $\mathbb{Q}$ is contained in $R$. The usual choice of $R$ will be the fields $\mathbb{Q}$, $\mathbb{Q}(z)$ and $\mathbb{Q}(z,w)$, i.e. the function fields in variables $z$ and $w$. If the base field is $\mathbb{Q}(z,w)$, then we write $\SymF_{z,w}[\Xo,\Xl]=\SymF_{\mathbb{Q}(z,w)}[\Xo,\Xl]$, and similarly for $\mathbb{Q}(z)$. We will omit the subscript $R$ if there is no confusion with the base ring. For any $r\in\mathbb{Z}_{>0}$, put
\begin{equation}
\begin{split}
p^{(0)}_r(\mathbf{x}):=p_r(\mathbf{x}^{(0)})+p_r(\mathbf{x}^{(1)}),\\
p^{(1)}_r(\mathbf{x}):=p_r(\mathbf{x}^{(0)})-p_r(\mathbf{x}^{(1)}),
\end{split}
\end{equation}
where $p_r(\mathbf{x}^{(k)})$ is the usual power sum in $\Xk$. Put $p_0^{(k)}(\mathbf{x})=1$ for both values of $k$. For a 2-partition $\boldsymbol{\lambda}=(\lambda^{(0)},\lambda^{(1)})$, with each partition $\lambda^{(k)}$ written as $\lambda^{(k)}_1\ge\lambda^{(k)}_2\ge\cdots$, the \textit{power sum} in $\mathbf{x}$ is defined by
\begin{equation}
p_{\boldsymbol{\lambda}}(\mathbf{x}):=\prod_ip^{(0)}_{\lambda^{(0)}_i}(\mathbf{x})\prod_jp^{(1)}_{\lambda^{(1)}_j}(\mathbf{x}).
\end{equation}
The \textit{Schur function}, \textit{monomial symmetric function} and the \textit{complete symmetric function} are respectively defined by
\begingroup
\allowdisplaybreaks
\begin{align*}
s_{\boldsymbol{\lambda}}(\mathbf{x}):=&s_{\lambda^{(0)}}(\Xo)s_{\lambda^{(1)}}(\Xl),\\
m_{\boldsymbol{\lambda}}(\mathbf{x}):=&m_{\lambda^{(0)}}(\Xo)m_{\lambda^{(1)}}(\Xl),\\
h_{\boldsymbol{\lambda}}(\mathbf{x}):=&h_{\lambda^{(0)}}(\Xo)h_{\lambda^{(1)}}(\Xl),
\end{align*}
\endgroup
where $s_{\lambda^{(k)}}(\Xk)$, $m_{\lambda^{(k)}}(\Xk)$, $h_{\lambda^{(k)}}(\Xk)$ are the usual Schur functions, monomial symmetric functions and complete symmetric functions associated to partitions. The usual results on $\SymF[\Xo]$ and $\SymF[\Xl]$ imply that each family of the symmetric functions above, with $\boldsymbol{\lambda}$ running through all 2-partitions, forms a basis of $\SymF[\Xo,\Xl]$.

In \cite{S01}, Shoji defined two families of symmetric functions $\{P_{\bs{\Lambda}}(\X;z)\}_{\bs\Lambda}$ and $\{Q_{\bs{\Lambda}}(\X;z)\}_{\bs\Lambda}$  in $\SymF_z[\Xo,\Xl]$. Each of these two families of functions forms a basis of $\SymF_z[\Xo,\Xl]$, and they are both called \textit{Hall-Littlewood functions}.
\begin{Rem}
In \cite{S01}, Shoji works with reflection groups $(\mathbb{Z}/r\mathbb{Z})^n\rtimes\mathfrak{S}_n$ with $r$ not necessarily equal to $2$. In this general setting, he defines the symmetric functions $P^{\pm}_{\bs{\Lambda}}(\X;z)$ and $Q^{\pm}_{\bs{\Lambda}}(\X;z)$ depending on a sign. When $n=2$, they degenerate into $P_{\bs{\Lambda}}(\X;z)$ and $Q_{\bs{\Lambda}}(\X;z)$.
\end{Rem}

\subsubsection{}\label{tildeH}
For any partition $\lambda=(1^{m_1},2^{m_2},\ldots)$, define $\zz_{\lambda}=\prod_ii^{m_i}m_i!$. For any 2-partition $\bs{\lambda}=(\lambda^{(0)},\lambda^{(1)})$, define $\zz_{\bs{\lambda}}=2^{l(\bs{\lambda})}\zz_{\lambda^{(0)}}\zz_{\lambda^{(1)}}$. 

According to \cite[Chapter I, Appendix B]{Mac}, we have
\begingroup
\allowdisplaybreaks
\begin{align}
p_{\boldsymbol{\beta}}(\X)&=\sum_{\bs{\alpha}\in\mathcal{P}_{n,2}}\chi^{\bs{\alpha}}_{\bs{\beta}}~s_{\bs{\alpha}}(\X);\\
s_{\bs{\lambda}}(\X)&=\sum_{\bs{\tau}}\frac{1}{\zz_{\bs{\tau}}}\chi^{\bs{\lambda}}_{\bs{\tau}}~p_{\bs{\tau}}(\X),
\end{align}
\endgroup
where $\chi^{\bs{\alpha}}_{\bs{\beta}}$ is the value of the character of $(\mathbb{Z}/2\mathbb{Z})^m\rtimes\mathfrak{S}_m$ ($m=|\bs{\alpha}|=|\bs{\beta}|$) of class $\bs{\alpha}$ at an element of class $\bs{\beta}$.

The \textit{Kostka-Foulkes polynomials} are the entries of the transition matrix
\begin{equation}
s_{\bs{\beta}}(\X)=\sum_{\bs{\alpha}}K_{\bs{\beta},\bs{\alpha}}(z)P_{\bs{\Lambda(\alpha)}}(\X;z),
\end{equation}
and the \textit{modified Kostka-Foulkes polynomials} are defined by 
\begin{equation}\label{modified-Kostka}
\tilde{K}_{\bs{\beta},\bs{\alpha}}(z)=z^{a(\bs{\Lambda})}K_{\bs{\beta},\bs{\alpha}}(z^{-1})
\end{equation}
with $\bs{\Lambda}=\bs{\Lambda}(\bs{\alpha})$.

The \textit{transformed Hall-Littlewood function} is defined by
\begin{equation}
\bar{H}_{\bs{\Lambda(\alpha)}}(\X;z)=\sum_{\bs{\beta}}K_{\bs{\beta},\bs{\alpha}}(z)s_{\bs{\beta}}(\X),
\end{equation}
and the \textit{modified Hall-Littlewood function} is defined by
\begin{equation}\label{mH-L}
\tilde{H}_{\bs{\Lambda(\alpha)}}(\X;z)=\sum_{\bs{\beta}}\tilde{K}_{\bs{\beta},\bs{\alpha}}(z)s_{\bs{\beta}}(\X).
\end{equation}

\subsection{Orthogonality}
\subsubsection{Plethysm}
Let $\mathbb{K}$ denote the base field, which will be $\mathbb{Q}$, $\mathbb{Q}(z)$ or $\mathbb{Q}(z,w)$ depending on the circumstance. Let $R$ be a $\lambda$-ring containing $\mathbb{K}$, then the ring $\SymF_R[\Xo]$ of symmetric functions over $R$ has a natural $\lambda$-ring structure $\{p_n\}_{n\in\mathbb{Z}_{>0}}$ with $p_n$ sending $p_1(\Xo)$ to $p_n(\Xo)$ and acting on the coefficients according to the $\lambda$-ring structure of $R$. If we take $R=\SymF_{\mathbb{K}}[\Xo]$, then $\SymF_R[\Xl]\cong\SymF_{\mathbb{K}}[\Xo,\Xl]$ acquires a natural $\lambda$-ring structure, and $p_n$ sends $p_1(\Xl)$ to $p_n(\Xl)$. If $\mathbb{K}$ contains the indeterminates $z$ or $w$, then $p_n$ sends them to the $n$'th power. By abuse of notation, we will write $\Xk=p_1(\Xk)$.

Let $A$ be a $\lambda$-ring containg $\mathbb{K}$ as a $\lambda$-subring. Given any elements $f_0$ and $f_1$ of $A$, there is a unique $\lambda$-ring homomorphism $\varphi_{f_0,f_1}:\SymF_{\mathbb{K}}[\Xo,\Xl]\rightarrow A$ sending $\Xo$ to $f_0$ and $\Xl$ to $f_1$. For any $F\in \SymF_{\mathbb{K}}[\Xo,\Xl]$, its image under this map is denoted by $F[f_0,f_1]$. Concretely, by expressing $F$ as a polynomial $f(p_n(\Xo),p_m(\Xl)\mid n,m\in\mathbb{Z}_{>0})$, one defines $F[f_0,f_1]:=f(p_n(f_0),p_m(f_1)\mid n,m\in\mathbb{Z}_{>0})$. This is the plethystic substitution for $F$. For example, if $F=s_{\alpha}(\Xo)$ is the usual Schur symmetric function in $\Xo$ for some partition $\alpha$, and $A=\SymF_{\mathbb{K}}[\Xo,\Xl]$, then $F[\Xl,\Xo]=s_{\alpha}(\Xl)$.

In what follows, we will often write the variables of a symmetric function in a column vector:
\begin{equation}
u(
\begin{bmatrix}
\Xo\\
\Xl
\end{bmatrix}
)=u(\X)\in\SymF[\Xo,\Xl].
\end{equation}
Then $\X^t$ is a row vector. The expression
\begin{equation}
u(
\left(
\begin{array}{cc}
a & b\\
c & d
\end{array}
\right)
\begin{bmatrix}
\Xo\\
\Xl
\end{bmatrix}
)=
u[\begin{bmatrix}
a\Xo+b\Xl\\
c\Xo+d\Xl
\end{bmatrix}],
\end{equation}
is the plethystic substitution by $f_0=a\Xo+b\Xl$ and $f_1=c\Xo+d\Xl$.

The following identity will be useful: for any partition $\alpha$,
\begin{equation}\label{s(-z)}
s_{\alpha}[-\Xo]=(-1)^{|\alpha|}s_{\alpha^{\ast}}(\Xo),
\end{equation}
where $\alpha^{\ast}$ is the dual partition of $\alpha$.

\subsubsection{}
The Hall inner product on $\SymF[\Xo]$ is defined by
$$
\langle s_{\lambda}(\Xo),s_{\mu}(\Xo)\rangle=\delta_{\lambda,\mu},
$$
for any partitions $\lambda$ and $\mu$. It is known that
$$
\langle p_{\lambda}(\Xo),p_{\mu}(\Xo)\rangle=\zz_{\lambda}\delta_{\lambda,\mu}.
$$
The \textit{plethystic exponential} is defined by 
\begin{equation}
\Exp[\Xo]=\sum_{n\in\mathbb{Z}_{\ge0}}h_n(\Xo)=\prod_i\frac{1}{1-x^{(0)}_i}.
\end{equation}
It satisfies $\Exp[\Xo+\Xl]=\Exp[\Xo]\cdot\Exp[\Xl]$. Let $\{u_{\lambda}\}_{\lambda\in\mathcal{P}}$ and $\{v_{\lambda}\}_{\lambda\in\mathcal{P}}$ be two families of symmetric functions  such that for any $n\in\mathbb{Z}_{>0}$, each of $\{u_{\lambda}\}_{\lambda\in\mathcal{P}_n}$ and $\{v_{\lambda}\}_{\lambda\in\mathcal{P}_n}$ is a basis of the homogenous degree $n$ subspace of $\SymF[\Xo]$. Then the following conditions are equivalent:
\begin{itemize}
\item $\langle u_{\lambda},v_{\mu}\rangle=\delta_{\lambda,\mu}$ for any $\lambda$, $\mu$;
\item $\sum_{\lambda}u_{\lambda}(\Xo)v_{\lambda}(\Xl)=\Exp[\Xo\Xl]$,
\end{itemize}
where the bracket means the plethystic substitution, and $\Xo\Xl$ is the family of variables $(x^{(0)}_ix^{(1)}_j)_{i,j}$.

\subsubsection{}
The Hall inner product on $\SymF_z[\Xo,\Xl]$ is defined by
\begin{equation}
\langle s_{\bs{\alpha}}(\X),s_{\bs{\beta}}(\X)\rangle=\delta_{\bs{\alpha},\bs{\beta}},
\end{equation}
for all 2-partitions $\bs{\alpha}$ and $\bs{\beta}$. By \cite[Proposition 2.5]{S01}, we have
$$
\langle p_{\bs{\alpha}}(\X),p_{\bs{\beta}}(\X)\rangle=\zz_{\bs{\alpha}}\delta_{\bs{\alpha},\bs{\beta}}.
$$
The $z$-inner product on $\SymF_z[\Xo,\Xl]$ is defined by
$$
\left\langle F[\X],G[\X]\right\rangle_{z}:=\left\langle F[\X],G[
\left(
\begin{array}{cc}
1 & -z\\
-z & 1
\end{array}
\right)^{-1}\X]\right\rangle
$$
for any $F$, $G\in\SymF_{z}[\X]$. Thus, if $u_{\bs\alpha}(\X,z)$ and $v_{\bs\alpha}(\X,z)$ are dual basis under the $z$-inner product, then 
$$
\Exp[\X^t\left(
\begin{array}{cc}
1 & -z\\
-z & 1
\end{array}
\right)\Y]=\sum_{\bs{\alpha}}u_{\bs\alpha}(\X;z)v_{\bs{\alpha}}(\Y;z).
$$
We have an explicit expression
\begingroup
\allowdisplaybreaks
\begin{align*}
\Exp[\X^t\left(
\begin{array}{cc}
1 & -z\\
-z & 1
\end{array}
\right)\Y]=&\prod_{i,j}\frac{1-z\mathbf{x}^{(1)}_i\mathbf{y}^{(0)}_j}{1-\mathbf{x}^{(0)}_i\mathbf{y}^{(0)}_j}\prod_{i,j}\frac{1-z\mathbf{x}^{(0)}_i\mathbf{y}^{(1)}_j}{1-\mathbf{x}^{(1)}_i\mathbf{y}^{(1)}_j}.
\end{align*}
\endgroup
This is exactly the generating function $\Omega(\X,\Y,t)$ in \cite[Proposition 2.5]{S01}.

\subsubsection{}
The Hall-Littlewood symmetric functions $P_{\bs{\Lambda}}(\X;z)$ and $Q_{\bs{\Lambda}'}(\X;z)$ form dual basis with respect to the $z$-inner product.
\begin{Thm}(\cite{S01} Corollary 4.6)\label{ShCor4.6}
There exists a block diagonal matrix $b_{\bs{\Lambda},\bs{\Lambda}'}(z)$, with each block corresponding to a similarity class, such that
\begingroup
\allowdisplaybreaks
\begin{align}
\Exp[\X^t\left(
\begin{array}{cc}
1 & -z\\
-z & 1
\end{array}
\right)\Y]&=\sum_{\bs{\Lambda},\bs{\Lambda}'}b_{\bs{\Lambda},\bs{\Lambda}'}(z)P_{\bs{\Lambda}}(\X;z)P_{\bs{\Lambda}'}(\Y;z),\\
\Exp[\X^t\left(
\begin{array}{cc}
1 & -z\\
-z & 1
\end{array}
\right)\Y]&=\sum_{\bs{\Lambda}}P_{\bs{\Lambda}}(\X;z)Q_{\bs{\Lambda}}(\Y;z).
\end{align}
\endgroup
\end{Thm}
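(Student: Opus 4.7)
The plan is to deduce both identities from two structural facts about the Hall-Littlewood bases $\{P_{\bs\Lambda}(\X;q)\}$ and $\{Q_{\bs\Lambda}(\X;q)\}$: (a) they are dual bases of $\SymF_q[\Xo,\Xl]$ with respect to the $q$-inner product $\langle-,-\rangle_q$; and (b) the transition matrix expressing $Q$ in terms of $P$ is block diagonal with blocks indexed by similarity classes, i.e.\ $Q_{\bs\Lambda}$ lies in the span of $\{P_{\bs\Lambda'} \mid \bs\Lambda' \sim \bs\Lambda\}$.

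For (a), I would invoke the general principle recalled just above the statement: $\Exp[\X^t M \Y] = \sum_\alpha u_\alpha(\X) v_\alpha(\Y)$ holds precisely when $\{u_\alpha\}$ and $\{v_\alpha\}$ are dual bases under the pairing with Gram matrix $M$. With $M = \left(\begin{smallmatrix}1 & -q\\-q & 1\end{smallmatrix}\right)$, the second identity is equivalent to $\langle P_{\bs\Lambda}, Q_{\bs\Lambda'}\rangle_q = \delta_{\bs\Lambda,\bs\Lambda'}$, which is the defining property of $Q$ in Shoji's construction: once $P_{\bs\Lambda}$ is pinned down by its triangular expansion into Schur functions through the Kostka-Foulkes polynomials $K_{\bs\beta,\bs\Lambda(\bs\alpha)}(q)$, the basis $Q$ is introduced as its $\langle-,-\rangle_q$-dual.

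For (b), set $b_{\bs\Lambda,\bs\Lambda'}(q)$ to be the coefficient of $P_{\bs\Lambda'}$ in the expansion $Q_{\bs\Lambda} = \sum_{\bs\Lambda'} b_{\bs\Lambda,\bs\Lambda'}(q) P_{\bs\Lambda'}$. Substituting into the second identity yields the first on the nose. The block-diagonal structure reduces to the claim that $b_{\bs\Lambda,\bs\Lambda'}(q) = 0$ whenever $\bs\Lambda$ and $\bs\Lambda'$ are not similar. This in turn follows by combining two triangularity statements: first, that $K_{\bs\beta,\bs\alpha}(q) = 0$ whenever the symbols $\bs\Lambda(\bs\beta)$ and $\bs\Lambda(\bs\alpha)$ are unrelated for the order $\prec$ fixed in the excerpt, and second, that Shoji's orthogonalization inside each similarity class (which turns the transformed Hall-Littlewood functions $\bar{H}$ into the $P$'s) does not mix distinct similarity classes. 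Together these imply that $P$'s, $Q$'s, and $\bar{H}$'s indexed by a fixed similarity class span a common subspace, forcing $b$ to be block diagonal.

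The main obstacle is to verify the similarity-class preservation underlying (b). The triangularity of the Kostka-Foulkes polynomials across the order $\prec$ is not visible from the bare combinatorial definitions; it is governed by the Springer correspondence for $\Sp_{2m}$ and $\SO_{2m+1}$ and by Lusztig's symbol combinatorics, and is the technical core of Shoji's Theorem 3.2 and Corollary 4.6 in \cite{S01}. A complete proof would have to carry out \emph{loc.\ cit.}'s inductive argument identifying the off-similarity-class entries of $b$ with values of generalized Green functions for $\mathfrak{W}_m$ that vanish by the explicit unipotent support of the attached character sheaves.
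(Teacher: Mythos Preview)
The paper does not supply its own proof of this theorem: it is stated as a citation of \cite[Corollary 4.6]{S01}, preceded only by the remark that $P_{\bs\Lambda}$ and $Q_{\bs\Lambda}$ are dual for the $q$-inner product, and followed by Remark~\ref{Q=bP} recording that $Q_{\bs\Lambda}=\sum_{\bs\Lambda'}b_{\bs\Lambda,\bs\Lambda'}(q)P_{\bs\Lambda'}$. So there is no ``paper's own proof'' to compare against beyond this citation.

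Your structural outline is correct and matches exactly how the paper packages the result: the second identity is the reproducing-kernel reformulation of $\langle P_{\bs\Lambda},Q_{\bs\Lambda'}\rangle_q=\delta_{\bs\Lambda,\bs\Lambda'}$, and the first follows by substituting the $P$-expansion of $Q$ (this is precisely the content of Remark~\ref{Q=bP}). You are also right that the only nontrivial assertion is the block-diagonality of $(b_{\bs\Lambda,\bs\Lambda'})$ along similarity classes, and that this is the technical heart of Shoji's argument in \cite{S01}. Your final paragraph honestly flags that establishing this vanishing requires the machinery of \emph{loc.\ cit.}; since the present paper simply imports the result, your proposal is at the same level of detail as what the paper provides.
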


\begin{Rem}\label{Q=bP}
In general, $Q_{\bs{\Lambda}}(\Y;z)$ is a linear combination of $P_{\bs{\Lambda}'}(\Y;z)$ with coefficients $b_{\bs{\Lambda},\bs{\Lambda}'}(z)$. The minimal element $\bs{\Lambda}_0=\bs{\Lambda}(\bs{\alpha}_0)$ with $\bs{\alpha}_0=(\varnothing,(1^m))$, is alone in its similarity class. If we write $b_{\bs{\Lambda}_0}(z):=b_{\bs{\Lambda}_0,\bs{\Lambda}_0}(z)$, then $$Q_{\bs{\Lambda}_0}(\X;z)=b_{\bs{\Lambda}_0}(z)P_{\bs{\Lambda}_0}(\X;z).$$ We may also write $b_{\bs{\alpha}_0}(z)=b_{\bs{\Lambda}_0}(z)$.
\end{Rem}

\begin{Lem}\label{XoXlbasechange}
We have:
\begin{itemize}
\item[(i)]
$Q_{\bs{\Lambda}}(
\begin{bmatrix}
\Xo\\
\Xl
\end{bmatrix}
,z)=
\bar{H}_{\bs{\Lambda}}(
\left(
\begin{array}{cc}
1 & -z\\
-z & 1
\end{array}
\right)
\begin{bmatrix}
\Xo\\
\Xl
\end{bmatrix}
,z)$\\for any $\bs{\Lambda}$;
\item[(ii)]
$
h_{\bs{\alpha}_0^{\ast}}(
\left(
\begin{array}{cc}
1 & -z\\
-z & 1
\end{array}
\right)^{-1}
\begin{bmatrix}
\Xo\\
\Xl
\end{bmatrix}
)=
(-1)^mz^{-a(\bs{\alpha}_0)-m}b_{\bs{\alpha}_0}(z^{-1})^{-1}\tilde{H}_{\bs{\Lambda}(\bs{\alpha}_0)}(\X;z)
$\\for $\bs\alpha_0=(\varnothing,(1^m))$.
\end{itemize}
\end{Lem}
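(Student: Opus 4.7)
The plan has two parts. For (i), I would use the Cauchy identity. Starting from the second identity of Theorem~\ref{ShCor4.6}, substituting $\Y \mapsto M^{-1}\Y$ with $M = \left(\begin{smallmatrix}1&-q\\-q&1\end{smallmatrix}\right)$ gives
\begin{equation*}
\sum_{\bs\Lambda} P_{\bs\Lambda}(\X;q)\, Q_{\bs\Lambda}(M^{-1}\Y;q) \;=\; \Exp[\X^t\Y] \;=\; \sum_{\bs\alpha} s_{\bs\alpha}(\X)\, s_{\bs\alpha}(\Y),
\end{equation*}
the last equality being the ordinary Cauchy identity for the self-dual Schur basis of $\SymF[\Xo,\Xl]$. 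Hence $\{P_{\bs\Lambda}(\X;q)\}$ and $\{Q_{\bs\Lambda}(M^{-1}\Y;q)\}$ are Hall-dual bases. The expansion $s_{\bs\beta} = \sum_{\bs\alpha} K_{\bs\beta,\bs\alpha}(q)\,P_{\bs\Lambda(\bs\alpha)}$ writes Schurs in terms of $P$'s via the matrix $(K_{\bs\beta,\bs\alpha}(q))$, so the Hall-dual of $\{P_{\bs\Lambda(\bs\alpha)}\}$ is obtained by applying the transpose of this matrix to $\{s_{\bs\beta}\}$, which by definition of $\bar H$ gives $\{\bar H_{\bs\Lambda(\bs\alpha)}\}$. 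Uniqueness of dual bases yields $Q_{\bs\Lambda}(M^{-1}\Y;q) = \bar H_{\bs\Lambda}(\Y;q)$, and substituting $\Y \mapsto M\Y$ proves (i).

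For (ii), I would apply (i) with $q$ replaced by $q^{-1}$, combine with Remark~\ref{Q=bP} and the relation $\tilde H_{\bs\Lambda_0}(\X;q) = q^{a(\bs\Lambda_0)}\bar H_{\bs\Lambda_0}(\X;q^{-1})$, and (since $a(\bs\alpha_0)=a(\bs\Lambda_0)$) rewrite the right-hand side of (ii) as $(-1)^m q^{-m}\, P_{\bs\Lambda_0}\!\bigl(M(q^{-1})^{-1}\X;q^{-1}\bigr)$. The claim then reduces to the plethystic identity
\begin{equation*}
h_m\!\left[\tfrac{\Xo+q\Xl}{1-q^2}\right] \;=\; (-1)^m q^{-m}\, P_{\bs\Lambda_0}\!\bigl(M(q^{-1})^{-1}\X;q^{-1}\bigr).
\end{equation*}
Invoking the explicit formula $P_{\bs\Lambda_0}(\X;q) = e_m(\Xl)$---the wreath analogue (valid because $\bs\Lambda_0$ is alone in its similarity class) of the classical $P_{(1^m)}(\mathbf{z};q) = e_m(\mathbf{z})$---the right-hand side equals $(-1)^m q^{-m}\, e_m\!\left[\tfrac{q^{-1}\Xo+\Xl}{1-q^{-2}}\right]$. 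The identity then follows from the algebraic equality $\tfrac{q^{-1}\Xo+\Xl}{1-q^{-2}} = -q\cdot\tfrac{\Xo+q\Xl}{1-q^2}$ combined with the plethystic formula $e_m[-qZ] = (-q)^m h_m[Z]$, itself a direct consequence of $p_n[-qZ] = -q^n p_n[Z]$ and the power-sum expansions of $e_m$ and $h_m$.

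The main obstacle is justifying the explicit formula $P_{\bs\Lambda_0}(\X;q) = e_m(\Xl)$ for Shoji's Hall-Littlewood function at the minimal symbol: this identity is not recorded in the excerpt, so rigorously establishing it requires either invoking results from \cite{S01} or carrying out a direct calculation using Shoji's defining triangularity and orthogonality conditions, which pin down $P_{\bs\Lambda_0}$ uniquely since its similarity class is a singleton.
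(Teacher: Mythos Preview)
Your proof is correct and follows essentially the same route as the paper. For (i), both you and the paper argue via the Cauchy kernel that $\{P_{\bs\Lambda}\}$ and $\{\bar H_{\bs\Lambda}\}$ are Hall-dual, then compare with Theorem~\ref{ShCor4.6}. For (ii), both reduce to the identity $P_{\bs\Lambda_0}(\X;q)=s_{\bs\alpha_0}(\X)=e_m(\Xl)$; the paper justifies this by citing the second statement of \cite[Theorem 4.4]{S01}, which is precisely the input you flagged as the main obstacle, and the remaining plethystic manipulation (your $e_m[-qZ]=(-q)^m h_m[Z]$, the paper's use of (\ref{s(-z)})) is the same computation.
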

\begin{proof}
From the definition of the transformed Hall-Littlewood functions and the orthogonality of the Schur functions, we see that 
$$
\Exp[\X^t\Y]=\sum_{\bs{\Lambda}}P_{\bs{\Lambda}}(\X;z)\bar{H}_{\bs{\Lambda}}(\Y;z),
$$
and so
$$
\Exp[\X^t\left(
\begin{array}{cc}
1 & -z\\
-z & 1
\end{array}
\right)\Y]=\sum_{\bs{\Lambda}}P_{\bs{\Lambda}}(\X;z)\bar{H}_{\bs{\Lambda}}(\left(
\begin{array}{cc}
1 & -z\\
-z & 1
\end{array}
\right)\Y;z).
$$
This shows that the right hand side of (i) gives a dual basis of $\{P_{\bs{\Lambda}}(\X;z)\}$ for the $z$-inner product, and so must coincide with $Q_{\bs{\Lambda}}(\X;z)$.

To prove (b), we calculate
\begingroup
\allowdisplaybreaks
\begin{align*}
&(-1)^mz^{-a(\bs{\alpha}_0)-m}b_{\bs{\alpha}_0}(z^{-1})^{-1}\tilde{H}_{\bs{\Lambda}_0}(\X;z)\\
\stackrel{\circled{1}}{=}&(-z)^{-m}b_{\bs{\alpha}_0}(z^{-1})^{-1}Q_{\bs{\Lambda}_0}(
\left(
\begin{array}{cc}
1 & -z^{-1}\\
-z^{-1} & 1
\end{array}
\right)^{-1}
\begin{bmatrix}
\Xo\\
\Xl
\end{bmatrix}
,z^{-1})\\
\stackrel{\circled{2}}{=}&(-z)^{-m}P_{\bs{\Lambda}_0}(
\left(
\begin{array}{cc}
1 & -z^{-1}\\
-z^{-1} & 1
\end{array}
\right)^{-1}
\begin{bmatrix}
\Xo\\
\Xl
\end{bmatrix}
,z^{-1})\\
\stackrel{\circled{3}}{=}&(-z)^{-m}s_{\bs{\alpha}_0}(
\left(
\begin{array}{cc}
1 & -z^{-1}\\
-z^{-1} & 1
\end{array}
\right)^{-1}
\begin{bmatrix}
\Xo\\
\Xl
\end{bmatrix}
)\\
=&(-z)^{-m}s_{\bs{\alpha}_0}(
\frac{-z}{1-z^2}
\left(
\begin{array}{cc}
z & 1\\
1 & z
\end{array}
\right)
\begin{bmatrix}
\Xo\\
\Xl
\end{bmatrix}
)\\
\stackrel{\circled{4}}{=}&(-z)^{-m}s_{(1^m)}(
\frac{-z}{1-z^2}
(\Xo+z\Xl)
)\\
\stackrel{\circled{5}}{=}&s_{(m)}(\frac{\Xo+z\Xl}{1-z^2})\\
=&h_{m}(\frac{\Xo+z\Xl}{1-z^2})\\
=&h_{((m),\varnothing)}(
\frac{1}{1-z^2}
\left(
\begin{array}{cc}
1 & z\\
z & 1
\end{array}
\right)
\begin{bmatrix}
\Xo\\
\Xl
\end{bmatrix}
)\\
=&h_{((m),\varnothing)}(
\left(
\begin{array}{cc}
1 & -z\\
-z & 1
\end{array}
\right)^{-1}
\begin{bmatrix}
\Xo\\
\Xl
\end{bmatrix}
).
\end{align*}
\endgroup
Equality \circled{1} follows from the equality $\tilde{H}_{\bs{\Lambda}_0}(\X;z)=z^{a(\bs{\alpha}_0)}\bar{H}_{\bs{\Lambda}_0}(\X;z^{-1})$ and part (i) of the lemma. Equality \circled{2} uses Remark \ref{Q=bP}. In equality \circled{3} we have used the second statement about  $P_{\bs{\Lambda}}$ in \cite[Theorem 4.4]{S01}. In equality \circled{4}, we denote by $s_{(1^m)}$ the usual Schur function associated to partitions. In equality \circled{5}, we use (\ref{s(-z)}).
\end{proof}

\subsection{Digression - Green Functions}\hfill

For $\bs{\Lambda}_0=\bs{\Lambda}(\bs{\alpha}_0)$, define the \textit{Green function} by
\begin{equation}\label{Green-Function-Comb}
\mathcal{Q}^{\bs{\Lambda}_0}_{\bs{\beta}}(z)=\sum_{\bs{\gamma}}\chi^{\bs{\gamma}}_{\bs{\beta}}\tilde{K}_{\bs{\gamma},\bs{\alpha}_0}(z),
\end{equation}
where $\chi^{\bs{\gamma}}_{\bs{\beta}}$ is the value of the character $\chi^{\bs{\gamma}}\in\Irr(\mathfrak{W}_m)$ on the conjugacy class corresponding to $\bs\beta$. Let $H$ be $\Sp_{2m}$ or $\SO_{2m+1}$ over $\bar{\mathbb{F}}_q$, equipped with a split Frobenius $F$. As in \S \ref{DL-Induction}, we denote by $W_H$ the Weyl group of $H$ defined by a fixed maximal torus $T_H$, and we have an $F$-stable maximal torus $T_w$ for each $w\in W_H$. The purpose of this subsection is to explain the equality
\begin{equation}\label{eq-Q=Q}
Q_{T_w}^H(1)=\mathcal{Q}^{\bs{\Lambda}_0}_{\bs{\beta}}(q),
\end{equation} 
where the left hand side is the Green functions defined by (\ref{Green-Function-DL}) evaluated at the identity, $\bs\beta$ is the 2-partition corresponding to the conjugacy class of $w$, and $z$ is specialised to the prime power $q$. 

This equality is known, but not explicitly mentioned in \cite{S01}.

\subsubsection{}
Let $b(z)$ be the matrix $(b_{\bs{\Lambda},\bs{\Lambda}'}(z))$ as in Theorem \ref{ShCor4.6}. Denote by $H(q)$ the finite group $H^F$. Define $$\tilde{\Pi}=\frac{|H(q)|}{q^{m}}T^{-1}b(q^{-1})^{-1}T^{-1},$$ where $T$ is the diagonal matrix with entries $q^{a(\bs{\alpha})}$. Let $\tilde{K}(z)$ be the matrix of modified Kostka-Foulkes polynomials (see (\ref{modified-Kostka})). By \cite[Theorem 5.4]{S01}, the matrices $P:=\tilde{K}(q)$ satisfies the following matrix equation: 
\begin{equation}\label{POmega2}
P\tilde{\Pi} P^t=\Xi.
\end{equation}
The matrix $\Xi$ is determined by the inner products of Green functions of classical groups, which are known. The matrix $\tilde{\Pi}$ is a block diagonal matrix with nonsingular blocks. The matrix $P$ is a lower triangular block matrix with each diagonal block being the scalar $q^{a(\bs{\alpha})}$, with $\bs\alpha\in\mathcal{P}^2(m)$. The polynomials $\tilde{K}_{\bs{\beta},\bs{\alpha}}(q)$ are uniquely determined as the entries of $P$ satisfying the above equation.
\begin{Rem}
\cite[Theorem 5.4]{S01} states that these matrices satisfy \cite[(1.5.2)]{S01}. For Coxeter groups, this is equivalent to \cite[(1.4.2)]{S01}, since the complex conjugate of an irreducible character coincides with itself.
\end{Rem}

\subsubsection{}\label{Q=Q}
Let $\chi\in\Irr(W_H)$ and let $(C,\phi)$ be the pair corresponding to $\chi$ under the Springer correspondence (\ref{Spr-Corr}). Put $$Q_{\chi}:=\frac{1}{|W_H|}\sum_{w\in W_H}\chi(w)Q_{T_w}^H.$$ Then $Q_{\chi}$ coincides with the characteristic function of the intersection cohomology complex $\IC(\bar{C},\mathcal{L}_{\phi})$, where $\mathcal{L}_{\phi}$ is the simple $H$-equivariant local system on $C$ corresponding to $\phi$ (see \cite[(5.2)]{S87}). Let $I_0$ be a finite set that is in bijection with the image of (\ref{Spr-Corr}), equipped with the total order induced from symbols. If $\chi$ corresponds to $i\in I_0$, then we may write $Q_i=Q_{\chi}$. For any $(C,\phi)=i\in I_0$, define the $H^F$-invariant function $\psi_i$ on $H^F_u$ as the characteristic function of $\mathcal{L}_{\phi}$ (for a given isomorphism $F^{\ast}\mathcal{L}_{\phi}\isom\mathcal{L}_{\phi}$). By \cite[Theorem 5.3]{S87}, there exists a square matrix $P$ with entries in $\bar{\mathbb{Q}}_{\ell}$ satisfying
\begin{equation}\label{Q=Ppsi}
Q_i=\sum_{j\in I_0}P_{ij}\psi_j.
\end{equation}
Beware that this matrix $P$ is the transpose of the $P$ in \cite{S87}. Since $Q_i$ is supported on the closure of $C$, the matrix $P$ is block lower triangular. By taking the inner products of functions on the unipotent part $H^F_u$, we get the following matrix equation for $P$: $$P\bs\Lambda P^t=\bs\Pi$$ as \cite[(5.6)]{S87}. This equation (i.e. entries of $\bs\Lambda$ and $\bs\Pi$) only differs by a scalar from the equation (\ref{POmega2}). The solution is unique, and so the entries of this $P$ are also equal to the modified Kostka-Foulkes polynomials $\tilde{K}_{\bs{\beta},\bs{\alpha}}(q)$. Finally, observe that if $C=\{1\}$, then $\phi$ must be the trivial character, thus \begin{equation}\label{Q(1)=P}
Q_i(1)=P_{ii_0},
\end{equation} where $i_0$ is the minimal element corresponding to $(C=\{1\},\phi=\mathbf{1})$. Combined with the definition of $Q_{\chi}$, this implies (\ref{eq-Q=Q}).
\begin{Rem}
If we want to compute $Q_{T_w}^H(u)$ for some unipotent element $u\ne 1$, then (\ref{Q(1)=P}) should be replaced by a sum as in (\ref{Q=Ppsi}). There is no obvious way to incorporate the functions $\psi_j$ into symmetric functions.
\end{Rem}

\subsubsection{}\label{b_lambda}
From \cite[(1.1.1),(1.4.1)]{S01} one sees that $\tilde{\Pi}_{\bs{\Lambda}_0,\bs{\Lambda}_0}=1$, i.e.
\begin{equation}\label{mathbbG}
\frac{1}{|H(q)|}=q^{-2a(\bs{\alpha}_0)-n}b_{\bs{\alpha}_0}(q^{-1})^{-1}.
\end{equation}
For $\GL_n(q)$, it is known that (see \cite[(2.6), (2.7)]{Mac})
\begin{equation}\label{1/|GLn|}
\frac{1}{|\GL_n(q)|}=q^{-2n((1^n))-n}b_{(1^n)}(q^{-1})^{-1},
\end{equation}
where for any partition $\lambda$, the function $b_{\lambda}(z)$ is defined by $b_{\lambda}(z)P_{\lambda}(\X,z)=Q_{\lambda}(\X,z)$, i.e. the difference between the two Hall-Littlewood symmetric functions associated to the partition $\lambda$.

\subsection{Symmetric Functions Associated to Types}\label{SymF-Types}
\subsubsection{}
For any symmetric function $u(\Xo)\in\SymF[\Xo]$, we define
$$
u(
\begin{bmatrix}
\Xo\\
\Xl
\end{bmatrix}
):=u[\Xo+\Xl]\in\SymF[\Xo,\Xl].
$$
We have $$u[
\left(
\begin{array}{cc}
a & b\\
c & d
\end{array}
\right)
\begin{bmatrix}
\Xo\\
\Xl
\end{bmatrix}
]=u[(a+c)\Xo+(b+d)\Xl].$$ In particular 
\begingroup
\allowdisplaybreaks
\begin{align}
\nonumber
u(
\left(
\begin{array}{cc}
1 & -z\\
-z & 1
\end{array}
\right)
\begin{bmatrix}
\Xo\\
\Xl
\end{bmatrix}
)&=u[(1-z)(\Xo+\Xl)],\\
\nonumber
u(
\left(
\begin{array}{cc}
1 & -z\\
-z & 1
\end{array}
\right)^{-1}
\begin{bmatrix}
\Xo\\
\Xl
\end{bmatrix}
)&=u[\frac{\Xo+\Xl}{1-z}].
\end{align}
\endgroup 
We will write 
\begin{equation}\label{P}
\mathbf{P}=\left(
\begin{array}{cc}
1 & -z\\
-z & 1
\end{array}
\right)^{-1}
\end{equation}
in what follows.

\subsubsection{}
For any $\bs\omega=\bs\omega_+\bs\omega_-(\omega_i)\in\mathfrak{T}$, we define the Schur symmetric function associated to $\bs\omega$ as 
$$
s_{\bs\omega}(\X):=s_{\bs\omega_+}(\X)s_{\bs\omega_-}(\X)\prod_is_{\omega_i}(\X).
$$
Regarding $\mathfrak{T}_{\ast}$ as a subset of $\mathfrak{T}$, this definition also makes sense for $\bs\omega\in\mathfrak{T}_{\ast}$. Monomial symmetric functions, complete symmetric functions and power sum symmetric functions can be similarly defined. Define 
\begin{equation}
P_{\bs{\Lambda}(\bs\omega)}(\X,z):=P_{\bs{\Lambda}(\bs\omega_+)}(\X,z)P_{\bs{\Lambda}(\bs\omega_-)}(\X,z)\prod_iP_{\omega_i}(\X,z),
\end{equation}
where $P_{\omega_i}(\X,z)$ is the Hall-littlewood functions defined in \cite[Chapter III]{Mac}, and similarly for $Q_{\bs{\Lambda}(\omega)}(\X,z)$. Note that for any partition $\lambda$, we have $$p_{\lambda}[\Xo+\Xl]=p_{(\lambda,\varnothing)}(\X).$$ This implies in particular that $p_{\bs\omega}(\X)=p_{[\bs\omega]}(\X)$.

\subsubsection{}
For any $\bs\omega\in\mathfrak{T}$, define $\zz_{\bs\omega}:=\zz_{\bs\omega_+}\zz_{\bs\omega_-}\prod_i\zz_{\omega_i}$. Define $$a(\bs{\Lambda}(\bs\omega))=a(\bs{\Lambda}(\bs\omega_+))+a(\bs{\Lambda}(\bs\omega_-))+\sum_in(\omega_i).$$We may write $a(\bs\omega):=a(\bs{\Lambda}(\bs\omega))$. For any $\bs\lambda=\lambda_+\lambda_-(\lambda_i)\in\mathfrak{T}^{\circ}$, define $$n(\bs\lambda)=n(\lambda_+)+n(\lambda_-)+\sum_in(\lambda_i).$$ In particular, for $\bs\alpha\in\mathfrak{T}_{\ast}$, we have $a(\bs\alpha)=n(\bs\alpha)$. For $\bs\omega=(\varnothing,(1)^{m_+})(\varnothing,(1^{m_-}))((1^{m_i}))_i\in\mathfrak{T}_s$, define (see Remark \ref{Q=bP} and \S \ref{b_lambda}) $$b_{\bs\omega}(z)=b_{\bs\omega_+}(z)b_{\bs\omega_-}(z)\prod_ib_{\omega_i}(z).$$

\subsubsection{}
For any $\bs\alpha$, $\bs\beta\in\tilde{\mathfrak{T}}$, if $\bs\alpha\thickapprox\bs\beta$, define $\chi^{\bs\alpha}_{\bs\beta}:=\chi^{\bs\alpha_+}_{\bs\beta_+}\chi^{\bs\alpha_-}_{\bs\beta_-}\prod_i\chi^{\alpha_i}_{\beta_i}$; otherwise we put $\chi^{\bs\alpha}_{\bs\beta}=0$. With these definitions we have
\begin{equation}
\begin{split}
p_{\bs\beta}(\X)&=\sum_{\bs\alpha}\chi^{\bs\alpha}_{\bs\beta}~s_{\bs\alpha}(\X);\\
s_{\bs\lambda}(\X)&=\sum_{\bs\tau}\frac{1}{\zz_{\bs\tau}}\chi^{\bs\lambda}_{\bs\tau}~p_{\bs\tau}(\X).
\end{split}
\end{equation}
\begin{Rem}
One needs to be careful with the summations in these expressions. For example in the first equation, $\bs\alpha$ runs over the ordered types $\tilde{\mathfrak{T}}$. Although the symmetric functions are independent of the ordering in the types. 
\end{Rem}
 
Define $$K_{\bs\beta,\bs\alpha}(z):=K_{\bs\beta_+,\bs\alpha_+}(z)K_{\bs\beta_-,\bs\alpha_-}(z)\prod_iK_{\beta_i,\alpha_i}(z),\text{ if $\bs\alpha\thickapprox\bs\beta$},$$ and put $K_{\bs\beta,\bs\alpha}(z)=0$ otherwise. We then have
\begin{equation}
s_{\bs\beta}(\X)=\sum_{\bs\alpha}K_{\bs\beta,\bs\alpha}(z)P_{\bs{\Lambda}(\bs\alpha)}(\X;z).
\end{equation}
Define $\tilde{K}_{\bs\beta,\bs\alpha}(z)=z^{a(\bs\alpha)}K_{\bs\beta,\bs\alpha}(z^{-1})$.

With the obvious definitions, we have the following identities for types:
\begin{eqnarray}
\mathcal{Q}^{\bs{\Lambda}(\bs\alpha)}_{\bs\beta}(z)&=&\sum_{\bs\gamma}\chi^{\bs\gamma}_{\bs\beta}\tilde{K}_{\bs\gamma,\bs\alpha}(z),\\
\bar{H}_{\bs{\Lambda}(\bs\alpha)}(\X;z)&=&\sum_{\bs\beta}K_{\bs\beta,\bs\alpha}(z)s_{\bs\beta}(\X),\\
\tilde{H}_{\bs{\Lambda}(\bs\alpha)}(\X;z)&=&\sum_{\bs\beta}\tilde{K}_{\bs\beta,\bs\alpha}(z)s_{\bs\beta}(\X).
\end{eqnarray}
In the first equality, $\bs\alpha$ lies in $\tilde{\mathfrak{T}}_s$.

\subsubsection{}
Lemma \ref{XoXlbasechange} (ii) can also be stated for types.
\begin{Cor}\label{XoXlBCType}
Let $\bs\beta\in\mathfrak{T}_s$. Then the following identity holds:
$$
h_{\bs\beta^{\ast}}(\mathbf{P}\X)=
(-1)^{|\bs\beta|}z^{-a(\bs\beta)-|\bs\beta|}b_{\bs\beta}(z^{-1})^{-1}\tilde{H}_{\bs{\Lambda}(\bs\beta)}(\X;z).
$$
\end{Cor}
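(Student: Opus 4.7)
The plan is to reduce the identity to the component level, exploiting the multiplicativity built into every object on both sides. A type $\bs\beta\in\mathfrak{T}_s$ decomposes canonically as $\bs\beta_+\bs\beta_-(\omega_i)_i$ with $\bs\beta_\pm=(\varnothing,(1^{m_\pm}))$ and $\omega_i=(1^{m_i})$, and by the definitions in \S \ref{SymF-Types} every quantity appearing in the stated identity factors accordingly:
\begin{align*}
h_{\bs\beta^{\ast}}(\mathbf{P}\X) &= h_{\bs\beta_+^{\ast}}(\mathbf{P}\X)\,h_{\bs\beta_-^{\ast}}(\mathbf{P}\X)\prod_i h_{\omega_i^{\ast}}(\mathbf{P}\X),\\
\tilde{H}_{\bs{\Lambda}(\bs\beta)}(\X;q) &= \tilde{H}_{\bs{\Lambda}(\bs\beta_+)}(\X;q)\,\tilde{H}_{\bs{\Lambda}(\bs\beta_-)}(\X;q)\prod_i \tilde{H}_{\omega_i}(\X;q),
\end{align*}
together with $|\bs\beta|=m_++m_-+\sum_i m_i$, $a(\bs\beta)=a(\bs\beta_+)+a(\bs\beta_-)+\sum_i n(\omega_i)$, and $b_{\bs\beta}(q^{-1})=b_{\bs\beta_+}(q^{-1})b_{\bs\beta_-}(q^{-1})\prod_i b_{\omega_i}(q^{-1})$. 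So it suffices to establish the asserted identity one factor at a time.

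For the two 2-partition factors $\bs\beta_\pm=(\varnothing,(1^{m_\pm}))$, Lemma \ref{XoXlbasechange}(ii) applies verbatim and gives exactly the desired equality with prefactor $(-1)^{m_\pm}q^{-a(\bs\beta_\pm)-m_\pm}b_{\bs\beta_\pm}(q^{-1})^{-1}$. For the single-partition factors $\omega_i=(1^{m_i})$, the required one-family identity
\begin{equation*}
h_{m_i}\!\left[\tfrac{\Xo+\Xl}{1-q}\right] = (-1)^{m_i}\,q^{-n((1^{m_i}))-m_i}\,b_{(1^{m_i})}(q^{-1})^{-1}\,\tilde{H}_{(1^{m_i})}[\Xo+\Xl;\,q]
\end{equation*}
is the classical analogue of Lemma \ref{XoXlbasechange}(ii) and is proved by exactly the same chain of manipulations, using: the plethystic computation $(\mathbf{P}\X)_0+(\mathbf{P}\X)_1=(\Xo+\Xl)/(1-q)$ obtained by summing the rows of $\mathbf{P}$; the identity $\tilde{H}_{(1^{m_i})}=q^{n((1^{m_i}))}\bar{H}_{(1^{m_i})}|_{q\mapsto q^{-1}}$ combined with the classical counterpart of Lemma \ref{XoXlbasechange}(i), namely $Q_\lambda[\Z;q]=\bar{H}_\lambda[(1-q)\Z;q]$ from Macdonald's book; the fact that $P_{(1^{m_i})}(\Z;q)=e_{m_i}(\Z)=s_{(1^{m_i})}(\Z)$ is independent of $q$; and finally the duality $s_{(1^{m_i})}[-\Z]=(-1)^{m_i}s_{(m_i)}(\Z)$ from (\ref{s(-z)}) to convert $s_{(1^{m_i})}$ into $h_{m_i}$. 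The normalisation (\ref{1/|GLn|}) plays the role that (\ref{mathbbG}) played in the proof of Lemma \ref{XoXlbasechange}.

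Multiplying the two-partition and single-partition identities together, and using the multiplicativity listed above to recombine prefactors, yields the corollary. The only non-mechanical step is the classical identity for $(1^{m_i})$, and since its proof merely transposes the argument of Lemma \ref{XoXlbasechange}(ii) from Shoji's two-variable Hall--Littlewood machinery to Macdonald's one-variable machinery, I expect no genuine obstacle; the bookkeeping of signs and powers of $q$ across the three types of factors is the only delicate part.
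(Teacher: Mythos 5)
Your proposal is correct and matches the paper's (one-line) proof, which simply invokes Lemma \ref{XoXlbasechange} for the two 2-partition factors $\bs\beta_\pm$ and cites [HLR, Lemma 2.3.6] for the single-partition factors $\omega_i$ --- precisely the classical identity you re-derive by transposing the argument, together with the multiplicativity of $h$, $\tilde{H}$, $a$ and $b$ over the factors of a type. One small slip worth flagging: equation (\ref{mathbbG}) is not in fact used in the proof of Lemma \ref{XoXlbasechange}(ii) (that proof only needs part (i), Remark \ref{Q=bP}, $P_{\bs\Lambda_0}=s_{\bs\alpha_0}$, and (\ref{s(-z)})), so your closing remark about (\ref{1/|GLn|}) playing an analogous role is misplaced --- though harmless, since in both settings the $b$-factor simply cancels when passing from $Q$ to $P$.
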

\begin{proof}
This follows from Lemma \ref{XoXlbasechange} and \cite[Lemma 2.3.6]{HLR}.
\end{proof}

\begin{Lem}\label{<s,H>}
Let $\bs\alpha\in\tilde{\mathfrak{T}}$ and $\bs\beta\in\tilde{\mathfrak{T}}_s$. Then 
$$
\langle s_{\bs\alpha}(\X),\tilde{H}_{\bs{\Lambda}(\bs\beta)}(\X,z)\rangle=\sum_{\bs\tau\in\tilde{\mathfrak{T}}}\frac{\zz_{[\bs\tau]}\chi^{\bs\alpha}_{\bs\tau}}{\zz_{\bs\tau}}\sum_{\substack{\bs\nu\in\tilde{\mathfrak{T}}\\ [\bs\nu]=[\bs\tau] }}\frac{Q^{\bs{\Lambda}(\bs\beta)}_{\bs\nu}(z)}{\zz_{\bs\nu}}.$$
\end{Lem}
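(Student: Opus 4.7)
The plan is to expand both sides of the inner product into the power-sum basis and then apply the orthogonality relation $\langle p_{\bs\mu}(\X), p_{\bs\nu}(\X)\rangle = z_{\bs\mu}\delta_{\bs\mu,\bs\nu}$ for $\bs\mu,\bs\nu\in\mathcal{P}^2$ (which is the only pairing we truly have access to — the ordered-type power sums $p_{\bs\tau}$ are really just $p_{[\bs\tau]}$, as the excerpt emphasises after the definition of $P_{\bs\Lambda(\bs\omega)}$). The crucial identity to keep in mind throughout is
\[
p_{\bs\tau}(\X) \;=\; p_{[\bs\tau]}(\X), \qquad \bs\tau\in\tilde{\mathfrak T},
\]
so the pairing of two power sums indexed by types only depends on the underlying signed partitions, and collapses ordered-type indices onto $\mathcal P^2$.

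First I would expand $s_{\bs\alpha}(\X)$ using the character-theoretic formula $s_{\bs\lambda}(\X)=\sum_{\bs\tau}z_{\bs\tau}^{-1}\chi^{\bs\lambda}_{\bs\tau}p_{\bs\tau}(\X)$ recalled just above the statement. Then I would rewrite $\tilde H_{\bs\Lambda(\bs\beta)}(\X;q)$ by first using its defining Schur expansion $\tilde H_{\bs\Lambda(\bs\beta)}=\sum_{\bs\gamma}\tilde K_{\bs\gamma,\bs\beta}(q)\,s_{\bs\gamma}$ and then converting each Schur function into power sums by the same formula. Collecting the sum over $\bs\gamma$ produces exactly the combinatorial Green function (\ref{Green-Function-Comb}): concretely,
\[
\tilde H_{\bs\Lambda(\bs\beta)}(\X;q) \;=\; \sum_{\bs\nu\in\tilde{\mathfrak T}} \frac{1}{z_{\bs\nu}}\Bigl(\sum_{\bs\gamma}\chi^{\bs\gamma}_{\bs\nu}\tilde K_{\bs\gamma,\bs\beta}(q)\Bigr)\,p_{\bs\nu}(\X) \;=\; \sum_{\bs\nu\in\tilde{\mathfrak T}} \frac{\mathcal{Q}^{\bs\Lambda(\bs\beta)}_{\bs\nu}(q)}{z_{\bs\nu}}\,p_{\bs\nu}(\X).
\]

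Next I would take the inner product. Because of $p_{\bs\tau}=p_{[\bs\tau]}$, the bracket $\langle p_{\bs\tau},p_{\bs\nu}\rangle$ vanishes unless $[\bs\tau]=[\bs\nu]$, in which case it equals $z_{[\bs\tau]}$. This gives
\[
\langle s_{\bs\alpha},\tilde H_{\bs\Lambda(\bs\beta)}\rangle \;=\; \sum_{\bs\tau}\sum_{\substack{\bs\nu\\ [\bs\nu]=[\bs\tau]}} \frac{\chi^{\bs\alpha}_{\bs\tau}}{z_{\bs\tau}} \cdot \frac{\mathcal{Q}^{\bs\Lambda(\bs\beta)}_{\bs\nu}(q)}{z_{\bs\nu}} \cdot z_{[\bs\tau]},
\]
which on regrouping yields the claimed formula.

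The only real subtlety is bookkeeping between ordered types $\tilde{\mathfrak T}$ and the underlying signed partitions $[\bs\tau]\in\mathcal P^2$: one has to ensure that the character values $\chi^{\bs\alpha}_{\bs\tau}$ as defined in the excerpt (which vanish unless $\bs\alpha\thickapprox\bs\tau$) and the Green functions $\mathcal Q^{\bs\Lambda(\bs\beta)}_{\bs\nu}(q)$ are being summed over the correct index sets so that the double sum matches exactly the right-hand side — no extra multiplicities slip in. I expect this bookkeeping to be the main (mild) obstacle; once the ordered-vs-unordered conventions and the identification $p_{\bs\tau}=p_{[\bs\tau]}$ are firmly in place, the computation is a direct three-line manipulation.
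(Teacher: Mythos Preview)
Your proposal is correct and follows essentially the same approach as the paper: expand $s_{\bs\alpha}$ and $\tilde H_{\bs\Lambda(\bs\beta)}$ in the power-sum basis (the latter via Schur functions, collecting the $\tilde K$-coefficients into the Green function), then pair using $\langle p_{\bs\tau},p_{\bs\nu}\rangle=\delta_{[\bs\tau],[\bs\nu]}z_{[\bs\tau]}$. The paper's proof is exactly this three-step computation, with the same emphasis on the identity $p_{\bs\tau}(\X)=p_{[\bs\tau]}(\X)$.
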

\begin{Rem}
The inner product only depends on the corresponding unordered types.
\end{Rem}
\begin{proof}
We use the transition matrices $$s_{\bs\alpha}(\X)=\sum_{\bs\tau}\chi^{\bs\alpha}_{\bs\tau}\frac{p_{\bs\tau}(\X)}{\zz_{\bs\tau}},$$ and, 
\begin{equation*}
\begin{split}
\tilde{H}_{\bs{\Lambda}(\bs\beta)}(\X,z)
=&\sum_{\bs\tau}\tilde{K}_{\bs\tau,\bs\beta}(z)s_{\bs\tau}(\X)\\
=&\sum_{\bs\tau}\sum_{\bs\nu}\chi^{\bs\tau}_{\bs\nu}\tilde{K}_{\bs\tau,\bs\beta}(z)\frac{p_{\bs\nu}(\X)}{\zz_{\bs\nu}}\\
=&\sum_{\bs\nu}Q^{\bs{\Lambda}(\bs\beta)}_{\bs\nu}(z)\frac{p_{\bs\nu}(\X)}{\zz_{\bs\nu}}.
\end{split}
\end{equation*}
The result follows by taking inner product, noting that $$\langle p_{\bs\tau}(\X),p_{\bs\nu}(\X)\rangle=\langle p_{[\bs\tau]}(\X),p_{[\bs\nu]}(\X)\rangle=\delta_{[\bs\tau],[\bs\nu]}\zz_{[\bs\tau]}.$$
\end{proof}

\section{Computation of E-polynomials}\label{Section-E-poly}  

\subsection{A combinatorial formula for irreducible characters}
\subsubsection{}
Let us first give a combinatorial expression for the value of an irreducible character at a semi-simple element. Let $\chi$ be a $\sigma$-stable irreducible character of $G(q)$, which is induced from some $\sigma$-stable standard Levi subgroup $M=M_0\times M_1$ as in Theorem \ref{sig-st-chi}. Denote its extension to $G(q)\lb\sigma\rb$ by $\tilde{\chi}$. If $\tilde{\chi}$ is uniform, then it can be written as a linear combination of generalised Deligne-Lusztig characters:
\begin{equation}\label{cptechi1}
\tilde{\chi}|_{G^F\sigma}=|\mathfrak{W}_+\times\mathfrak{W}_-\times W_1^{\sigma}|^{-1}\!\!\!\!\!\!\!\!\sum_{\substack{\tau=(w_+,w_-,w_1)\\\in\mathfrak{W}_+\times\mathfrak{W}_-\times W_1^{\sigma}}}\!\!\!\!\!\!\!\!\varphi_+(w_+)\varphi_-(w_-)\tilde{\varphi}(w_1F)R^{G\sigma}_{T_{w_1,\mathbf{w}}\sigma}\tilde{\theta}_{w_1,\mathbf{w}}
\end{equation}
by Theorem \ref{thm-decIrr}. We will write $T_{\tau}=T_{w_1,\mathbf{w}}$ and $\theta_{\tau}=\theta_{w_1,\mathbf{w}}$. Suppose that $\chi$ is of type $\epsilon\bs\alpha$. As is explained in \S \ref{relevance-types}, the irreducible character $(\varphi_+,\varphi_-,\varphi)$ of $\mathfrak{W}_+\times\mathfrak{W}_-\times W_1^{\sigma}$ corresponds to a unique ordered type, the unordered version of which is exactly $\bs\alpha$, with the ordering provided by a specific choice of isomorphism $M_1\cong\prod_i(\GL_{n_i}\times\GL_{n_i})$. In this subsection, we fix such an isomorphism and so $\bs\alpha$ will be regarded as an element of $\tilde{\mathfrak{T}}$. The sum over $\tau$ only depends on its conjugacy class, which we denote by $\mathcal{O}(\tau)$. Note that the conjugacy classes of $\mathfrak{W}_+\times\mathfrak{W}_-\times W_1^{\sigma}$ are parametrised by a subset of $\tilde{\mathfrak{T}}$. The type of $\mathcal{O}(\tau)$ will be denoted by $\bs\tau$. We have 
\begin{equation}
\frac{|\mathcal{O}(\tau)|}{|\mathfrak{W}_+\times\mathfrak{W}_-\times W_1^{\sigma}|}=\frac{1}{\zz_{\bs\tau}},\quad\varphi_+(w_+)\varphi_-(w_-)\tilde{\varphi}(w_1F)=\chi^{\bs\alpha}_{\bs\tau}.
\end{equation}
We will therefore replace $\sum_{\tau}(-)\text{ by }\sum_{\mathcal{O}(\tau)}|\mathcal{O}(\tau)|\cdot(-)$. However, for each $\bs\tau$, we will choose a $\tau$ representing it so that $T_{\tau}$ and $\theta_{\tau}$ have definite meanings.

If $s\sigma\in T^F\sigma$ is a semi-simple element, then the character formula (\ref{eq-char-formula}) reads
\begin{equation}\label{cptechi2}
R^{G\sigma}_{T_{\tau}\sigma}\tilde{\theta}_{\tau}(s\sigma)=\frac{|(T_{\tau}^{\sigma})^{\circ F}|}{|T_{\tau}^F|\cdot|C_G(s\sigma)^{\circ F}|}\sum_{\{h\in G^{F}\mid hs\sigma h^{-1}\in T_{\tau}\sigma\}}Q^{C_G(s\sigma)^{\circ}}_{C_{h^{-1}T_{\tau}h}(s\sigma)^{\circ}}(1)\tilde{\theta}_{\tau}(hs\sigma h^{-1}).
\end{equation}
Denote by $\bs\beta\in\tilde{\mathfrak{T}}_s$ the type of the conjugacy class of $s\sigma$, which we assume to satisfy \ref{CCL}. Recall that the $C_{G}(s\sigma)^{\circ F}$-conjugacy classes of the $F$-stable maximal tori of $C_{G}(s\sigma)^{\circ}$ are also parametrised by a subset of $\tilde{\mathfrak{T}}$. If $C_{h^{-1}T_{\tau}h}(s\sigma)^{\circ}$ is of type $\bs\nu\in\tilde{\mathfrak{T}}$, we have $\mathcal{Q}^{\bs{\Lambda}(\bs\beta)}_{\bs\nu}(q)=Q^{C_G(s\sigma)^{\circ}}_{C_{h^{-1}T_{\tau}h}(s\sigma)^{\circ}}(1)$, according to (\ref{eq-Q=Q}) together with the analogous known identity for Green functions of $\GL_n(q)$. 

Let $A_{\tau}^F$ and $B_{\tau}$ be as in  \ref{A^F->B}. Combining (\ref{cptechi1}) and (\ref{cptechi2}) gives
\begingroup
\allowdisplaybreaks
\begin{align}
\nonumber
\tilde{\chi}(s\sigma)=&\sum_{\bs\tau}\sum_{h\in A^F_{\tau}}\frac{|(T^{\sigma}_{\tau})^{\circ F}|}{|T_{\tau}^F|\cdot|C_G(s\sigma)^{\circ F}|}\cdot\frac{\chi^{\bs\alpha}_{\bs\tau}Q^{C_G(s\sigma)^{\circ}}_{C_{h^{-1}T_{\tau}h}(s\sigma)^{\circ}}(1)}{\zz_{\bs\tau}}\tilde{\theta}_{\tau}(hs\sigma h^{-1})\\
\label{tildechi(ssigma)}
\stackrel{\circled{1}}{=}&\sum_{\bs\tau}\sum_{\{\bs\nu\in B_{\tau}\}}\frac{|(T^{\sigma}_{\tau})^{\circ F}|}{|T_{\tau}^F|\cdot|C_G(s\sigma)^{\circ F}|}\cdot\frac{\chi^{\bs\alpha}_{\bs\tau}\mathcal{Q}^{\bs{\Lambda}(\bs\beta)}_{\bs\nu}(q)}{\zz_{\bs\tau}}\sum_{h\in A^F_{\tau,\bs\nu}}\tilde{\theta}_{\tau}(hs\sigma h^{-1}).
\end{align}
\endgroup
Equality \circled{1} uses the surjective map $A^F_{\tau}\rightarrow B_{\tau}$. Note that $\theta_{\tau}$ depends on $\epsilon$.

\subsubsection{}
Now we work in the context of \S \ref{Irr-reg-sig}. We will write $\Irr^{\sigma}_{\reg}=\Irr_{\reg}^{\sigma}(M_1^F)$ (see Definition \ref{reg-M_1}). For each $j$, let $\bs\beta_j$ be the type of the semi-simple conjugacy class $C_j$. Recall that $\Irr^{\sigma}_{\epsilon\bs\omega}$ is the set of $\sigma$-stable irreducible characters of type $\epsilon\bs\omega$.
\begin{Lem}
We have
$$\sum_{\chi\in\Irr^{\sigma}_{\epsilon\bs\omega}}\prod^{2k}_{j=1}\tilde{\chi}(C_{j})=\Delta_{\epsilon,\sgn\mathcal{C}}\frac{K(\bs\omega_{\ast})}{N(\bs\omega_{\ast})}\prod^{2k}_{j=1}\langle s_{\bs\omega}(\X_j),\tilde{H}_{\bs{\Lambda}(\bs{\beta}_j)}(\X_j,q)\rangle,$$ where for each $j$, $\X_j=(\mathbf{x}^{(0)}_j,\mathbf{x}^{(1)}_j)$ is an independent family of variables.
\end{Lem}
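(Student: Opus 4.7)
The plan is to peel the identity apart from the outside in, using the surjection (\ref{reg->eomega}) to convert the sum over $\chi\in\Irr^{\sigma}_{\epsilon\bs\omega}$ into a sum over $\theta\in\Irr^{\sigma}_{\reg}(M_1^F)$, and then substituting the character formula (\ref{tildechi(ssigma)}) for each factor $\tilde{\chi}(C_j)$. Since every fibre of (\ref{reg->eomega}) has cardinality $2^{l(\bs\omega_{\ast})}N(\bs\omega_{\ast})$, the left-hand side becomes
\[
\frac{1}{2^{l(\bs\omega_{\ast})}N(\bs\omega_{\ast})}\sum_{\theta\in\Irr^{\sigma}_{\reg}(M_1^F)}\prod^{2k}_{j=1}\tilde{\chi}_{\theta}(C_{j}),
\]
where $\chi_{\theta}$ is the character determined by $\theta$ together with the common quadratic-unipotent data of type $\epsilon\bs\omega$. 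Because the $\sigma$-stable extension (\S \ref{SigSt-Irr}) is uniform whenever $m\le 1$ (Proposition \ref{Shu2Cor11.1.2}), and under the \ref{CCL} and strong genericity assumptions Lemma \ref{restrictTypes} guarantees that only $\sigma$-stable characters induced from standard Levi subgroups contribute, Theorem \ref{thm-decIrr} applies uniformly and each $\tilde{\chi}_{\theta}(C_j)$ expands as in (\ref{tildechi(ssigma)}).

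Next I would interchange the orders of summation so that $\sum_\theta$ becomes the innermost sum. The only $\theta$-dependent piece in the expansion (\ref{tildechi(ssigma)}) is the product $\prod_j\tilde{\theta}_{\tau_j}(h_js_j\sigma h_j^{-1})$, all other factors (character values $\chi^{\bs\omega}_{\bs\tau_j}$, Green functions $\mathcal{Q}^{\bs\Lambda(\bs\beta_j)}_{\bs\nu_j}(q)$, and the cardinalities $|T_{\tau_j}^F|$, $|(T_{\tau_j}^\sigma)^{\circ F}|$, $|C_G(s_j\sigma)^{\circ F}|$) being independent of $\theta$. By Lemma \ref{sumofreg}, this inner sum equals $(-2)^l l! \Delta_{\epsilon,\sgn\mathcal{C}}$ with $l=l(\bs\omega_{\ast})$, independently of the chosen $\tau_j$ and $h_j$. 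Combined with the prefactor $1/(2^{l}N(\bs\omega_{\ast}))$ and the definition (\ref{Komega}) of $K(\bs\omega_{\ast})$, this produces the scalar $\Delta_{\epsilon,\sgn\mathcal{C}}\cdot K(\bs\omega_{\ast})/N(\bs\omega_{\ast})$ that appears in the claim.

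It then remains to show that the surviving sum is $\prod_j\langle s_{\bs\omega}(\X_j),\tilde{H}_{\bs{\Lambda}(\bs\beta_j)}(\X_j,q)\rangle$. For each $j$ I would pass from the sum over $h_j\in A^F_{\tau_j,\bs\nu_j}$ to the cardinality $|A^F_{\tau_j,\bs\nu_j}|$ given by Proposition \ref{Prop-A^F->B}, namely $|T_{\tau_j}^F|\cdot|C_G(s_j\sigma)^{\circ F}|\cdot|(T_{\tau_j}^{\sigma})^{\circ F}|^{-1}\,z_{[\bs\tau_j]}/z_{\bs\nu_j}$. This cancels the analogous factors already present in (\ref{tildechi(ssigma)}) and leaves the clean expression
\[
\prod_{j=1}^{2k}\sum_{\bs\tau_j}\frac{z_{[\bs\tau_j]}\chi^{\bs\omega}_{\bs\tau_j}}{z_{\bs\tau_j}}\sum_{\bs\nu_j\in B_{\tau_j}}\frac{\mathcal{Q}^{\bs{\Lambda}(\bs\beta_j)}_{\bs\nu_j}(q)}{z_{\bs\nu_j}}.
\]
Lemma \ref{comb-Btau} identifies $B_{\tau_j}$ with $\{\bs\nu\in\tilde{\mathfrak{T}}(\bs\beta_j)\mid [\bs\nu]=[\bs\tau_j]\}$, and since $\chi^{\bs\omega}_{\bs\tau_j}$ vanishes off the relevant $\tilde{\mathfrak{T}}(\tilde{S}(\bs\omega))$ the inner sum matches the right-hand side of Lemma \ref{<s,H>} applied with $\bs\alpha=\bs\omega$ and $\bs\beta=\bs\beta_j$, finishing the proof.

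The main obstacle I anticipate is bookkeeping: cleanly separating the two distinct centraliser orders (namely $z_{\bs\tau}$, computed inside $\mathfrak{W}_+\times\mathfrak{W}_-\times W_1^{\sigma}$, versus $z_{[\bs\tau]}$, computed inside $W^{\sigma}\cong\mathfrak{W}_N$) and verifying that the factor $z_{[\bs\tau]}/z_{\bs\nu}$ arising from Proposition \ref{Prop-A^F->B} meshes exactly with the combinatorial identity of Lemma \ref{<s,H>}. A secondary subtlety is ensuring that the linear character $\tilde{\theta}_{\tau_j}$ really takes the form required by Corollary \ref{Cor-theta-st} at every element $h_js_j\sigma h_j^{-1}$ that occurs, which is where the hypothesis \ref{CCL} on $\mathcal{C}$ and the strong genericity hypothesis together do their work.
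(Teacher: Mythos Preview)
Your proposal is correct and follows essentially the same route as the paper's own proof: reduce to a sum over $\theta\in\Irr^{\sigma}_{\reg}(M_1^F)$ via the fibre count of (\ref{reg->eomega}), plug in (\ref{tildechi(ssigma)}), apply Lemma \ref{sumofreg} to the innermost $\theta$-sum, replace the sum over $h_j$ by the cardinality from Proposition \ref{Prop-A^F->B}, identify $B_{\tau_j}$ via Lemma \ref{comb-Btau}, and conclude with Lemma \ref{<s,H>}. The bookkeeping concern you flag about $z_{\bs\tau}$ versus $z_{[\bs\tau]}$ is exactly the point the paper handles in equality \circled{3} of its proof.
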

\begin{proof}
We calculate 
\begingroup
\allowdisplaybreaks
\begin{align}
\nonumber
&\sum_{\chi\in\Irr^{\sigma}_{\epsilon\bs\omega}}\prod^{2k}_{j=1}\tilde{\chi}(C_{j})\\
\nonumber
\stackrel{\circled{1}}{=}&\frac{1}{2^{l(\bs\omega_{\ast})}N(\bs\omega_{\ast})}\sum_{\bs\tau_1,\ldots,\bs\tau_{2k}}\sum_{\substack{(\bs\nu_1,\ldots,\bs\nu_{2k})\\\in\prod_jB_{\tau_j}}}\prod_{j=1}^{2k}\frac{|(T^{\sigma}_{\tau_j})^{\circ F}|}{|T_{\tau_j}^F|\cdot|C_G(s_j\sigma)^{\circ F}|}\\
\nonumber
&\cdot\frac{\chi^{\bs\omega}_{\bs\tau_j}\mathcal{Q}^{\bs{\Lambda}(\bs\beta_j)}_{\bs\nu_j}(q)}{\zz_{\bs\tau_j}}\sum_{\substack{(h_1,\ldots,h_{2k})\\\in\prod_jA^F_{\bs\tau_j,\bs\nu_j}}}\sum_{\theta\in\Irr^{\sigma}_{\reg}}\prod_{j=1}^{2k}\tilde{\theta}_j(h_js_j\sigma h_j^{-1})\\
\nonumber
\stackrel{\circled{2}}{=}&\frac{1}{2^{l(\bs\omega_{\ast})}N(\bs\omega_{\ast})}\sum_{\bs\tau_1,\ldots,\bs\tau_{2k}}\sum_{\substack{(\bs\nu_1,\ldots,\bs\nu_{2k})\\\in\prod_jB_{\tau_j}}}\prod_{j=1}^{2k}\frac{|(T^{\sigma}_{\tau_j})^{\circ F}|}{|T_{\tau_j}^F|\cdot|C_G(s_j\sigma)^{\circ F}|}\\
\nonumber
&\cdot\frac{\chi^{\bs\omega}_{\bs\tau_j}\mathcal{Q}^{\bs{\Lambda}(\bs\beta_j)}_{\bs\nu_j}(q)}{\zz_{\bs\tau_j}}\sum_{\substack{(h_1,\ldots,h_{2k})\\\in\prod_jA^F_{\bs\tau_j,\bs\nu_j}}}(-1)^{l(\bs\omega_{\ast})}2^{l(\bs\omega_{\ast})}l(\bs\omega_{\ast})!\Delta_{\epsilon,\sgn\mathcal{C}}\\
\nonumber
\stackrel{\circled{3}}{=}&\Delta_{\epsilon,\sgn\mathcal{C}}\frac{K(\bs\omega_{\ast})}{N(\bs\omega_{\ast})}\prod^{2k}_{j=1}\sum_{\bs\tau_j}\frac{\zz_{[\bs\tau_j]}\chi^{\bs\omega}_{\bs\tau_j}}{\zz_{\bs\tau_j}}\sum_{\{\bs\nu_j\mid[\bs\nu_j]=[\bs\tau_j]\}}\frac{\mathcal{Q}^{\bs{\Lambda}(\bs\beta_j)}_{\bs\nu_j}(q)}{\zz_{\bs\nu_j}}\\
\stackrel{\circled{4}}{=}&\Delta_{\epsilon,\sgn\mathcal{C}}\frac{K(\bs\omega_{\ast})}{N(\bs\omega_{\ast})}\prod^{2k}_{j=1}\langle s_{\bs\omega}(\X_j),\tilde{H}_{\bs{\Lambda}(\bs{\beta}_j)}(\X_j,q)\rangle,
\end{align}
\endgroup
In equality \circled{1}, we have applied (\ref{tildechi(ssigma)}) to each $\tilde{\chi}(C_j)$. The factor $2^{l(\bs\omega_{\ast})}N(\bs\omega_{\ast})$ comes from the surjective map (\ref{reg->eomega}). In equality \circled{2}, we have used Lemma \ref{sumofreg}. We have \circled{3} because the summation over $h_j$ is independent of $h_j$, which produces a factor $$\frac{|(T^{\sigma}_{\tau_j})^{\circ F}|}{|T_{\tau_j}^F|\cdot|C_G(s_j\sigma)^{\circ F}|}\zz_{[\bs\tau_j]}\zz_{\bs\nu_j}^{-1}$$by Proposition \ref{Prop-A^F->B}, if $A^F_{\bs\tau_j,\bs\nu_j}$ is non empty. If some $A^F_{\bs\tau_j,\bs\nu_j}$ is empty, then by Lemma \ref{comb-Btau}, we must have $[\bs\nu_j]\ne[\bs\tau_j]$. Therefore we impose $[\bs\nu_j]=[\bs\tau_j]$ in the summation. Equality \circled{4} uses Lemma \ref{<s,H>}.
\end{proof}

\subsection{Some notations}
\subsubsection{}
Recall the Hook polynomial $H_{\lambda}(z)$ defined for any partition $\lambda$:
\begin{equation}\label{eq-hook}
H_{\lambda}(z):=\prod_{x\in\lambda}(1-z^{h(x)}),
\end{equation}
where $\lambda$ is regarded as a Young diagram and $x$ runs over the boxes in the diagram, and $h(x)$ is the hook length. If we denote by $\lambda^{\ast}$ the dual partition of $\lambda$, then 
\begin{equation}\label{eq-hook-lgth}
\sum_{x\in\lambda}h(x)=|\lambda|+n(\lambda)+n(\lambda^{\ast}).
\end{equation}
For any $\bs\lambda=\lambda_+\lambda_-(\lambda_i)\in\mathfrak{T}^{\circ}$, put $H_{\bs\lambda}(z):=H_{\lambda_+}(z)H_{\lambda_-}(z)\prod_iH_{\lambda_i}(z)$. Regarding $\mathfrak{T}_{\ast}$ as a subset of $\mathfrak{T}^{\circ}$, $H_{\bs\lambda}(z)$ also makes sense for $\bs\lambda\in\mathfrak{T}_{\ast}$. Then for $\chi\in\Irr_{\bs\lambda}^{\sigma}$, we have (\cite[Chapter IV, \S 6 (6.7)]{Mac}):
\begin{equation}\label{eq-Mac6.7}
\frac{|\GL_n(q)|}{\chi_{\bs\lambda}(1)}=(-1)^nq^{\frac{1}{2}n(n-1)-n(\bs\lambda)}H_{\bs\lambda}(q).
\end{equation}
\begin{Rem}
The equation (\cite[Chapter IV, \S 6 (6.7)]{Mac}) literally reads $$\frac{|\GL_n(q)|}{\chi_{\bs\lambda}(1)}=(-1)^nq^{\frac{1}{2}n(n-1)-n(\bs\lambda^{\ast})}H_{\bs\lambda}(q),$$with $\bs\lambda^{\ast}=\lambda_+^{\ast}\lambda_-^{\ast}(\lambda_i^{\ast})$. The difference from (\ref{eq-Mac6.7}) is due to a different parametrisation of the unipotent characters of $\GL_n(q)$ by partitions.
\end{Rem}

\subsubsection{}\label{dim-Ch}
According to \cite[Theorem 4.6]{Shu1}, the dimension of the character variety is given by:
\begingroup
\allowdisplaybreaks
\begin{align*}
d:=&(2g-2)\dim G+\sum^{2k}_{j=1}\dim C_j\\
=&(2g-2)\dim G+\sum^{2k}_{j=1}(\dim G-\dim C_G(s_j\sigma))\\
\stackrel{\circled{1}}{=}&(2g-2)n^2+2kn^2-\sum^{2k}_{j=1}(2a(\bs\beta_j)+N)\\
=&n^2(2g+2k-2)-2kN-\sum^{2k}_{j=1}2a(\bs\beta_j).
\end{align*}
\endgroup
In \circled{1}, we have used (\ref{a=dimBu}) and the fact that if $H$ is a connected reductive group over $\mathbb{C}$ and $\mathcal{B}$ is the flag variety of $H$, then $\dim H=2\dim\mathcal{B}+\rk H$.

\subsubsection{}\label{Omega(q)}
Define the infinite series:
\begingroup
\allowdisplaybreaks
\begin{align*}
\Omega_{1}(z):=&\sum_{\bs{\alpha}\in\mathcal{P}^2}z^{k|\bs\alpha|}z^{(1-g-k)|\{\bs{\alpha}\}_1|}(H_{\{\bs{\alpha}\}_1}(z)z^{-n(\{\bs{\alpha}\}_1)})^{2g+2k-2}\prod^{2k}_{j=1}s_{\bs{\alpha}}[\left(
\begin{array}{cc}
1 & -z\\
-z & 1
\end{array}
\right)^{-1}
\begin{bmatrix}
\mathbf{x}^{(0)}_j\\
\mathbf{x}^{(1)}_j
\end{bmatrix}],\\
\Omega_{0}(z):=&\sum_{\bs{\alpha}\in\mathcal{P}^2}z^{k|\bs\alpha|}z^{(1-g-k)|\{\bs{\alpha}\}_0|}(H_{\{\bs{\alpha}\}_0}(z)z^{-n(\{\bs{\alpha}\}_0)})^{2g+2k-2}\prod^{2k}_{j=1}s_{\bs{\alpha}}[\left(
\begin{array}{cc}
1 & -z\\
-z & 1
\end{array}
\right)^{-1}
\begin{bmatrix}
\mathbf{x}^{(0)}_j\\
\mathbf{x}^{(1)}_j
\end{bmatrix}],\\
\Omega_{\ast}(z):
=&\sum_{\alpha\in\mathcal{P}}z^{(2-2g-k)|\alpha|}(H_{\alpha}(z)^2z^{-2n(\alpha)})^{2g+2k-2}\prod^{2k}_{j=1}s_{\alpha}[\frac{\mathbf{x}^{(0)}_j+\mathbf{x}^{(1)}_j}{1-z}],
\end{align*}
\endgroup
where the bracket means plethystic substitution.

\subsection{The Formulas for E-Polynomials}
\subsubsection{}
Recall that for each $1\le j\le 2k$, we let $\bs\beta_j$ denote the type of the semi-simple conjugacy class $C_j$.
\begin{Thm}\label{Main-Thm}
Suppose that $\mathcal{C}$ is strongly generic and satisfies \ref{CCL}. Then with the notations above, we have
\begingroup
\allowdisplaybreaks
\begin{align}
|\Ch_{\mathcal{C}}(\mathbb{F}_q)|=&~q^{\frac{1}{2}d}
\left\langle\frac{\Omega(q)_1\Omega(q)_{0}}{\Omega_{\ast}(q)},\prod^{2k}_{j=1}h_{\bs\beta^{\ast}_j}(\X_j)\right\rangle,\text{ if $n$ is odd;}\\
|\Ch_{\mathcal{C}}(\mathbb{F}_q)|=&~q^{\frac{1}{2}d}
\left\langle\frac{\Omega(q)_{0}^2}{\Omega_{\ast}(q)},\prod^{2k}_{j=1}h_{\bs\beta^{\ast}_j}(\X_j)\right\rangle,\text{ if $n$ is even.}
\end{align}
\endgroup
\end{Thm}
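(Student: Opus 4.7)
The plan is to combine all the ingredients of Sections \ref{Section-Pre}--\ref{Section-SymF} in sequence, reducing the point count $|\Ch_{\mathcal{C}}(\mathbb{F}_q)|$ to a combinatorial expression in symmetric functions that we recognize as the claimed inner product. First, Proposition \ref{M=U/G} gives $|\Ch_{\mathcal{C}}(\mathbb{F}_q)| = |G^F|^{-1}|\Rep^{\phi}_{\mathcal{C}}(\mathbb{F}_q)|$; since $\Rep^{\phi}_{\mathcal{C}}(\mathbb{F}_q)$ decomposes as $\bigsqcup_{\mathbf{e}\in\mathbf{A}(\mathcal{C})}\Rep^{\phi}_{\mathcal{C}_{\mathbf{e}}}(\mathbb{F}_q)$, applying the Frobenius formula (\ref{Frob-Form-eq}) to each piece yields
\[
|\Ch_{\mathcal{C}}(\mathbb{F}_q)| = \sum_{\mathbf{e}\in\mathbf{A}(\mathcal{C})}\sum_{\chi\in\Irr(G^F)^{\sigma}}\Bigl(\frac{|G^F|}{\chi(1)}\Bigr)^{2g-2}\prod_{j=1}^{2k}\frac{|C_{j,e_j}|\,\tilde{\chi}(C_{j,e_j})}{\chi(1)}.
\]
By Lemma \ref{restrictTypes} together with Proposition \ref{Shu2Cor11.1.2}, strong genericity implies that only $\chi\in\Irr^{\sigma}_{st}$ whose extension $\tilde{\chi}$ is uniform on $G^F\sigma$ contribute, i.e.\ characters of type $\epsilon\bs\omega$ with $\epsilon\in\{-1,0,+1\}$ (non-uniform extensions are linear combinations of inductions from non-torus Levi subgroups of the twisted component, which vanish on semi-simple classes).

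Next, I would group the characters by type. The unnumbered lemma concluding the preceding section evaluates
\[
\sum_{\chi\in\Irr^{\sigma}_{\epsilon\bs\omega}}\prod_{j=1}^{2k}\tilde{\chi}(C_{j,e_j}) = \Delta_{\epsilon,\sgn\mathcal{C}_{\mathbf{e}}}\,\frac{K(\bs\omega_{\ast})}{N(\bs\omega_{\ast})}\prod_{j=1}^{2k}\bigl\langle s_{\bs\omega}(\X_j),\tilde{H}_{\bs\Lambda(\bs\beta_j)}(\X_j,q)\bigr\rangle,
\]
which already encodes the character-theoretic input as a Hall pairing. Performing the sum $\sum_{\mathbf{e}}\Delta_{\epsilon,\sgn\mathbf{e}}$ separates the parity cases: under \ref{CCL}, when $n$ is odd every $C_j$ splits into two $G^F$-classes of equal cardinality, so the $\epsilon=-1$ contribution survives while the combination of $\epsilon=+1$ and $\epsilon=-1$ (which are related by swapping $\bs\omega_+\leftrightarrow\bs\omega_-$) produces the product structure $\Omega_1\Omega_0$; when $n$ is even only $\epsilon=0$ arises and the two $\lambda_{\pm}$ factors both carry $2$-core $(0)$, yielding $\Omega_0^2$.

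Then I would convert the prefactors via the degree formula (\ref{eq-Mac6.7}), using the decomposition $\{\epsilon\bs\omega\} = \lambda_+\lambda_-\bs\omega_{\ast}^2$ and the factorization
\[
H_{\{\epsilon\bs\omega\}}(q) = H_{\lambda_+}(q)\,H_{\lambda_-}(q)\,H_{\bs\omega_{\ast}}(q)^2
\]
together with (\ref{eq-hook-lgth}); the dimension formula of \S\ref{dim-Ch} then collects all the powers of $q$ into the prefactor $q^{d/2}$. Applying Corollary \ref{XoXlBCType} to each $h_{\bs\beta_j^{\ast}}(\X_j)$ converts the external $\prod_j h_{\bs\beta_j^{\ast}}(\X_j)$ into the $\tilde{H}_{\bs\Lambda(\bs\beta_j)}$ factors appearing in the summands of $\Omega_0,\Omega_1,\Omega_{\ast}$, via the plethystic substitution $\mathbf{P}$ of (\ref{P}). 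The sums over $\bs\omega_+$ and $\bs\omega_-$ then match the summands of $\Omega_0(q)$ and $\Omega_1(q)$ termwise.

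The main obstacle is the appearance of $1/\Omega_{\ast}(q)$. It emerges from the remaining sum over $\bs\omega_{\ast}\in\mathfrak{T}_{\ast}$ through the combinatorial identity
\[
\sum_{\bs\omega_{\ast}=(m_{\lambda})_{\lambda\ne\varnothing}}\frac{(-1)^{l(\bs\omega_{\ast})}l(\bs\omega_{\ast})!}{N(\bs\omega_{\ast})}\prod_{\lambda\ne\varnothing}f_{\lambda}^{m_{\lambda}} = \frac{1}{1+\sum_{\lambda\ne\varnothing}f_{\lambda}},
\]
obtained from the multinomial expansion of $\sum_{l\ge 0}(-1)^l\bigl(\sum_{\lambda}f_{\lambda}\bigr)^l$. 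The coefficient $K(\bs\omega_{\ast})/N(\bs\omega_{\ast})$ produced by the character lemma is precisely of the form $(-1)^l l!/\prod m_{\lambda}!$, and identifying the corresponding $f_{\lambda}$ with the $\lambda$-summand of $\Omega_{\ast}(q)-1$ yields the quotient $1/\Omega_{\ast}(q)$. The delicate points are to verify that the plethysm $\mathbf{P}$ from Corollary \ref{XoXlBCType} produces exactly the substitution $s_{\alpha}[(\Xo+\Xl)/(1-q)]$ appearing in $\Omega_{\ast}$, and to match the exponents of $q$ (coming from the $n(\bs\lambda)$ factors and the powers of $H_{\bs\omega_{\ast}}$) with those in each $\Omega$; a consistent bookkeeping of these gives the final formula.
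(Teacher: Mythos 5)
Your proposal follows the paper's proof in overall structure — the Frobenius formula, the restriction to characters of type $\epsilon\bs\omega$ via Lemma \ref{restrictTypes} and Proposition \ref{Shu2Cor11.1.2}, the unnumbered lemma converting the character sum into Hall pairings, Corollary \ref{XoXlBCType}, and the multinomial inversion producing $1/\Omega_{\ast}(q)$ are all the same steps the paper uses. But your explanation of the $\mathbf{e}$-sum and of the origin of the product $\Omega_1\Omega_0$ is wrong, and this is the step that actually accounts for the different shapes of the odd and even formulas.

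You claim that for $n$ odd "the $\epsilon=-1$ contribution survives while the combination of $\epsilon=+1$ and $\epsilon=-1$ \ldots produces the product structure $\Omega_1\Omega_0$." This is backwards. The paper's equation (\ref{Delta}) computes $\sum_{\mathbf{e}\in\{\pm1\}^{2k}}\Delta_{\epsilon,\sgn\mathcal{C}_{\mathbf{e}}}$: because $\sgn\mathcal{C}_{\mathbf{e}}=\prod_j e_j$ is $+1$ and $-1$ equally often as $\mathbf{e}$ runs over $\mathbf{A}(\mathcal{C})\cong\{\pm1\}^{2k}$, the $\epsilon=-1$ terms cancel to $0$, while $\epsilon=+1$ yields $2^{2k}$. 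So only $\epsilon=+1$ survives when $n$ is odd. The factor $\Omega_1\Omega_0$ then comes from the single surviving value of $\epsilon$ through the map $\{~\}$ of (\ref{type-brace}): when $\epsilon=+1$, $\lambda_+=\{\bs\omega_+\}_1$ carries 2-core $(1)$ while $\lambda_-=\{\bs\omega_-\}_0$ carries 2-core $(0)$, so the (independent) sums over $\bs\omega_+$ and $\bs\omega_-$ produce $\Omega_1$ and $\Omega_0$ respectively. When $n$ is even, $\mathbf{A}(\mathcal{C})$ is trivial under \ref{CCL}, only $\epsilon=0$ arises, and both $\lambda_{\pm}$ have trivial 2-core, giving $\Omega_0^2$. (Swapping $\bs\omega_+\leftrightarrow\bs\omega_-$ with $\epsilon=+1$ reproduces $\epsilon=-1$, but $\Irr^{\sigma}_{\epsilon\bs\omega}$ is not double-counted, so there is nothing to "combine.") Related to this, your outline never accounts for the leftover factor $2^{2k}$; it is cancelled exactly by the $2k$ copies of $\tfrac{1}{2}$ in $\tfrac{|C_{j,+}|}{|G^F|}=\tfrac{1}{2}\,q^{-2a(\bs\beta_j)-|\bs\beta_j|}\,b_{\bs\Lambda(\bs\beta_j)}(q^{-1})^{-1}$ — the $\tfrac{1}{2}$ being the index of the identity component in the orthogonal centraliser — and making this cancellation explicit is essential to get the clean prefactor $q^{d/2}$.
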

\begin{proof}
Proposition \ref{M=U/G}, equation (\ref{RepCFq}) and Proposition \ref{Frob-Form} now give (\textit{cf.} Notation \ref{A(C)} and Notation \ref{Cj+-})
\begingroup
\allowdisplaybreaks
\begin{align}
\nonumber
|\Ch_{\mathcal{C}}(\mathbb{F}_q)|=&\sum_{\mathbf{e}=(e_j)\in\mathbf{A}(\mathcal{C})}\sum_{\chi\in\Irr(G^F)^{\sigma}}\left(\frac{|G^F|}{\chi(1)}\right)^{2g-2}\prod_{j=1}^{2k}\frac{|C_{j,e_j}|\tilde{\chi}(C_{j,e_j})}{\chi(1)}\\
=&\sum_{\mathbf{e}=(e_j)\in\mathbf{A}(\mathcal{C})}\sum_{|\epsilon\bs\omega|=N}\frac{|G^F|^{2g-2}\prod^{2k}_{j=1}|C_{j,e_j}|}{\chi(1)^{2g+2k-2}}\sum_{\chi\in\Irr^{\sigma}_{\epsilon\bs\omega}}\prod^{2k}_{j=1}\tilde{\chi}(C_{j,e_j}).
\label{eq-brute-E-poly}
\end{align}
\endgroup
In the second equality, we have used the fact that irreducible characters of the same type  have the same degree (see (\ref{eq-Mac6.7}) and (\ref{cd-brace})). We have also used Lemma \ref{restrictTypes} to restrict the summation to a small subset of $\Irr(G^F)^{\sigma}$. This formula holds for any generic tuple of conjugacy classes. 

We will only give the proof for $n$ odd. The even $n$ case is completely analogous. Since we assume $\mathcal{C}$ to satisfy \ref{CCL}, in the odd $n$ case, we have $\mathbf{A}(\mathcal{C})\cong\{\pm1\}^{2k}$, whereas in the even $n$ case, the group $\mathbf{A}(\mathcal{C})$ is trivial. We calculate
\begingroup
\allowdisplaybreaks
\begin{align}
\nonumber
&|\Ch_{\mathcal{C}}(\mathbb{F}_q)|\\
\nonumber
=&\sum_{\mathbf{e}=(e_j)\in\{\pm1\}^{2k}}\sum_{|\epsilon\bs\omega|=N}\frac{|G^F|^{2g-2}\prod^{2k}_{j=1}|C_{j,e_j}|}{\chi(1)^{2g+2k-2}}\sum_{\chi\in\Irr^{\sigma}_{\epsilon\bs\omega}}\prod^{2k}_{j=1}\tilde{\chi}(C_{j,e_j})\\
\nonumber
\stackrel{\circled{1}}{=}&2^{2k}\sum_{\bs\omega\in\mathfrak{T}}\frac{K(\bs\omega_{\ast})}{N(\bs\omega_{\ast})}\frac{|G^F|^{2g-2}\prod^{2k}_{j=1}|C_{j,+}|}{\chi(1)^{2g+2k-2}}\prod^{2k}_{j=1}\langle s_{\bs\omega}(\X_j),\tilde{H}_{\bs{\Lambda}(\bs{\beta}_j)}(\X_j,q)\rangle\\
\nonumber
=&2^{2k}\sum_{\bs\omega\in\mathfrak{T}}\frac{K(\bs\omega_{\ast})}{N(\bs\omega_{\ast})}\left(\frac{|G^F|}{\chi(1)}\right)^{2g+2k-2}\prod^{2k}_{j=1}\frac{|C_{j,+}|}{|G^F|}\prod^{2k}_{j=1}\langle s_{\bs\omega}(\X_j),\tilde{H}_{\bs{\Lambda}(\bs{\beta}_j)}(\X_j,q)\rangle\\
\nonumber
\stackrel{\circled{2}}{=}&2^{2k}\sum_{\bs\omega\in\mathfrak{T}}\frac{K(\bs\omega_{\ast})}{N(\bs\omega_{\ast})}(H_{\{\bs\omega\}}(q)q^{\frac{1}{2}n(n-1)-n(\{\bs\omega\})})^{2g+2k-2}\prod^{2k}_{j=1}\langle s_{\bs\omega}(\X_j),\frac{|C_{j,+}|}{|G^F|}\tilde{H}_{\bs{\Lambda}(\bs{\beta}_j)}(\X_j,q)\rangle\\
\nonumber
=&2^{2k}q^{\frac{1}{2}(n^2(2g+2k-2)-2kN)}\\
&\cdot\left\langle q^{-k(N+1)}\sum_{\bs\omega\in\mathfrak{T}}\!\frac{K(\bs\omega_{\ast})}{N(\bs\omega_{\ast})}q^{(1-g)|\{\bs\omega\}|}(H_{\{\bs\omega\}}(q)q^{-n(\{\bs\omega\})})^{2g+2k-2}\!\prod^{2k}_{j=1}\!s_{\bs\omega}(\X_j),\prod^{2k}_{j=1}\!\!\frac{|C_{j,+}|}{|G^F|}\tilde{H}_{\bs{\Lambda}(\bs{\beta}_j)}(\X_j,q)\rangle \right\rangle.\label{innerprod}
\end{align}
\endgroup
We have the factor $2^{2k}$ in \circled{1} because\begin{equation}\label{Delta}
\sum_{\mathbf{e}=(e_j)\in\{\pm\}^{2k}}\Delta_{\epsilon,\mathcal{C}_{\mathbf{e}}}=
\begin{cases}
2^{2k} & \text{if } \epsilon=+,\\
0 & \text{if }\epsilon=-,
\end{cases}
\end{equation}
and the sum can be taken over the entire $\mathfrak{T}$ because if $\bs\omega$ was not of size $N$ then the inner product of symmetric functions would vanish. Equality \circled{2} uses (\ref{eq-Mac6.7}).

Equations (\ref{mathbbG}) and (\ref{1/|GLn|}) show that
\begin{equation}
\frac{|C_{j,+}|}{|G^F|}=\frac{1}{2}q^{-2a(\bs\beta_j)-|\bs\beta_j|}b_{\bs{\Lambda}(\bs\beta_j)}(q^{-1})^{-1},
\end{equation}
where the factor $1/2$ comes from the two connected components of the orthogonal group. Combined with Corollary \ref{XoXlBCType}, this shows
\begin{equation}
\frac{1}{2}q^{-a(\bs\beta_j)}h_{\bs\beta^{\ast}_j}(\mathbf{P}\X_j)=
(-1)^{|\bs\beta_j|}\frac{|C_{j,+}|}{|G^F|}\tilde{H}_{\bs{\Lambda}(\bs\beta_j)}(\X_j;q),
\end{equation}
where we have specialised the variable $z$ in (\ref{P}) to $q$.

Note that for any symmetric functions $u(\Xo,\Xl)$ and $v(\Xo,\Xl)$, we have $$\left\langle u(\mathbf{P}\begin{bmatrix}
\Xo\\
\Xl
\end{bmatrix}),v(\begin{bmatrix}
\Xo\\
\Xl
\end{bmatrix})\right\rangle
=
\left\langle u(
\begin{bmatrix}
\Xo\\
\Xl
\end{bmatrix}),
v(\mathbf{P}\begin{bmatrix}
\Xo\\
\Xl
\end{bmatrix})
\right\rangle.$$ This can be checked on the basis of power sums. We will thus move $\mathbf{P}$ to the left hand side of the inner product.

Rewrite the left hand side of the inner product (\ref{innerprod}) as follows:
\begingroup
\allowdisplaybreaks
\begin{align}
\nonumber
&q^{-k(|\bs\omega_+|+|\bs\omega_-|+\sum_i|\omega_i|+1)}\sum_{\bs\omega\in\mathfrak{T}}\frac{K(\bs\omega_{\ast})}{N(\bs\omega_{\ast})}q^{(1-g)|\{\bs\omega\}|}(H_{\{\bs\omega\}}(q)q^{-n(\{\bs\omega\})})^{2g+2k-2}\prod^{2k}_{j=1}s_{\bs\omega}(\mathbf{P}\X_j)\\
\nonumber
=&~~\left(\sum_{\bs\omega_+\in\mathcal{P}^2}q^{k|\bs\omega_+|}q^{(1-g-k)|\{\bs\omega_+\}_1|}(H_{\{\bs\omega_+\}_1}(q)q^{-n(\{\bs\omega_+\}_1)})^{2g+2k-2}\prod^{2k}_{j=1}s_{\bs\omega_+}(\mathbf{P}\X_j)\right)\\
\nonumber
&\cdot\left(\sum_{\bs\omega_-\in\mathcal{P}^2}q^{k|\bs\omega_-|}q^{(1-g-k)|\{\bs\omega_-\}_0|}(H_{\{\bs\omega_-\}_0}(q)q^{-n(\{\bs\omega_-\}_0)})^{2g+2k-2}\prod^{2k}_{j=1}s_{\bs\omega_-}(\mathbf{P}\X_j)\right)\\
\nonumber
&\cdot\left(\sum_{\bs\omega_{\ast}}\frac{K(\bs\omega_{\ast})}{N(\bs\omega_{\ast})}q^{(2-2g-k)|\bs\omega_{\ast}|}(H_{\bs\omega_{\ast}}(q)^2q^{-2n(\bs\omega_{\ast})})^{2g+2k-2}\prod^{2k}_{j=1}s_{\bs\omega_{\ast}}(\mathbf{P}\X_j)\right).
\end{align}
\endgroup

By definition
\begingroup
\allowdisplaybreaks
\begin{align}
\nonumber
\frac{1}{\Omega_{\ast}(q)}=&\sum_{m\ge 0}(-1)^m\!\!\!\!\sum_{\substack{\bs\omega_{\ast}=(m_{\lambda})_{\lambda}\in\mathfrak{T}_{\ast}\\l(\bs\omega_{\ast})=m}}\frac{m!}{\prod_{\lambda}m_{\lambda}!}q^{(2-2g-k)|\bs\omega_{\ast}|}(H_{\bs\omega_{\ast}}(q)^2q^{-2n(\bs\omega_{\ast})})^{2g+2k-2}\prod^{2k}_{j=1}s_{\bs\omega_{\ast}}(\mathbf{P}\X_j)\\
=&\sum_{\bs\omega_{\ast}\in\mathfrak{T}_{\ast}}\frac{K(\bs\omega_{\ast})}{N(\bs\omega_{\ast})}q^{(2-2g-k)|\bs\omega_{\ast}|}(H_{\bs\omega_{\ast}}(q)^2q^{-2n(\bs\omega_{\ast})})^{2g+2k-2}\prod^{2k}_{j=1}s_{\bs\omega_{\ast}}(\mathbf{P}\X_j),
\end{align}
\endgroup
whence
\begingroup
\allowdisplaybreaks
\begin{align*}
&q^{-k(|\bs\omega_+|+|\bs\omega_-|+\sum_i|\omega_i|+1)}\sum_{\bs\omega\in\mathfrak{T}}\frac{K(\bs\omega_{\ast})}{N(\bs\omega_{\ast})}q^{(1-g)|\{\bs\omega\}|}(H_{\{\bs\omega\}}(q)q^{-n(\{\bs\omega\})})^{2g+2k-2}\prod^{2k}_{j=1}s_{\bs\omega}(\mathbf{P}\X_j)\\
=&\frac{\Omega_1(q)\Omega_0(q)}{\Omega_{\ast}(q)}
\end{align*}
\endgroup

Therefore,
\begin{equation}
|\Ch_{\mathcal{C}}(\mathbb{F}_q)|
=q^{\frac{1}{2}\left(n^2(2g+2k-2)-2kN\right)-\sum^{2k}_{j=1}a(\bs\beta_j)}\cdot\left\langle\frac{\Omega(q)_1\Omega(q)_{0}}{\Omega_{\ast}(q)},\prod^{2k}_{j=1}h_{\bs\beta^{\ast}_j}(\X_j)\right\rangle.
\end{equation}
Finally, we use \S \ref{dim-Ch}.
\end{proof}

\subsection{Connected Components}\label{Conn-Compo}\hfill

Here we give some explicit computations of E-polynomials in the case $n=2$ and $n=3$ using (\ref{eq-brute-E-poly}) and the character tables \cite[Table 1,2,3,4,5]{Shu2}. Unlike Theorem \ref{Main-Thm}, which assumes \ref{CCL}, we only assume genericity of conjugacy classes. This allows conjugacy classes with non-connected centralisers when $n$ is even. These computations lead to a conjecture on the number of connected components of $\Ch_{\mathcal{C}}$.
\subsubsection{$G=\GL_2$}

Suppose that $g=1$ and $2k=2$. Let $C_2$ be the conjugacy class of $\sigma$ and let $C_1$ be the conjugacy class of $\diag(a,a^{-1})\sigma$, with $a^q=a$ and $a^4\ne 1$, such that $\mathcal{C}=(C_1,C_2)$ is generic. The counting formula gives
\begingroup
\allowdisplaybreaks
\begin{align}
\nonumber
|\Ch_{\mathcal{C}}(\mathbb{F}_q)|=&\sum_{\chi}\frac{|C_1||C_2|\tilde{\chi}(C_1)\tilde{\chi}(C_2)}{\chi(1)^2}\\
\nonumber
=&2q^4-2q^3-2q+2.
\end{align}
\endgroup
Now let $C_{2,+}$ be the conjugacy class of $\diag(\mathfrak{i},-\mathfrak{i})\sigma$. Since its centraliser is not connected, there is another $G(q)$-conjugacy class $C_{2,-}$ that is $G$-conjugate to $\diag(\mathfrak{i},-\mathfrak{i})\sigma$. The counting formula gives
\begingroup
\allowdisplaybreaks
\begin{align*}
|\Ch_{\mathcal{C}}(\mathbb{F}_q)|=&\sum_{\chi}\left(\frac{|C_1||C_2|\tilde{\chi}(C_1)\tilde{\chi}(C_{2,+})}{\chi(1)^2}+\frac{|C_1||C_2|\tilde{\chi}(C_1)\tilde{\chi}(C_{2,-})}{\chi(1)^2}\right)\\
=&q^6-3q^4+4q^3-3q^2+1.
\end{align*}
\endgroup

Suppose that $g=1$ and $2k=4$. For $j\in\{1,2\}$, let $C_j$ be the conjugacy class of $\diag(a_j,a_j^{-1})\sigma$, with $a_j^q=a_j$ and $a_j^4\ne1$, and let $C_{3,+}$ and $C_{4,+}$ be the conjugacy class of $\diag(\mathfrak{i},-\mathfrak{i})\sigma$ such that $(C_1,C_2,C_{3,+},C_{4,+})$ is generic. The counting formula gives
\begingroup
\allowdisplaybreaks
\begin{align*}
|\Ch_{\mathcal{C}}(\mathbb{F}_q)|=&\sum_{\chi}\sum_{(e_3,e_4)\in\{\pm\}^2}\frac{|C_1||C_2||C_{3,e_3}||C_{4,e_4}|}{\chi(1)^4}\tilde{\chi}(C_1)\tilde{\chi}(C_2)\tilde{\chi}(C_{3,e_3})\tilde{\chi}(C_{4,e_4})\\
=&q^{12}-4q^{10}+3q^8+16q^7-32q^6+16q^5+3q^4-4q^2+1.
\end{align*}
\endgroup

\subsubsection{$G=\GL_3$}
Suppose that $g=0$ and $2k=4$. For any $j\in\{1,2,3,4\}$, let $C_{j,+}$ be the conjugacy class of $\diag(a_j,1,a_j^{-1})\sigma$, with $a_j^q=a_j$ and $a_j^4\ne 1$, such that $\mathcal{C}=(C_{1,+},C_{2,+},C_{3,+},C_{4,+})$ is generic. Let $c\in\mathbb{F}^{\ast}_q\setminus(\mathbb{F}_q^{\ast})^2$ and let $C_{j,-}$ be the conjugacy class of $\diag(a_j,c,a_j^{-1})\sigma$ for each $j$. Now the counting formula gives
\begingroup
\allowdisplaybreaks
\begin{align*}
|\Ch_{\mathcal{C}}(\mathbb{F}_q)|
=&\sum_{(e_j)\in\{\pm\}^4}\sum_{\chi}\left(\frac{|G(q)|}{\chi(1)}\right)^2\prod_j\frac{|C_{j,e_j}|}{|G(q)|}\prod_j\tilde{\chi}(C_{j,e_j})\\
=&q^{14}+2q^{13}+q^{12}-2q^{11}-3q^{10}-2q^9-15q^8+36q^7
\\
&-15q^6-2q^5-3q^4-2q^3+q^2+2q+1.
\end{align*}
\endgroup
Then we keep $C_{1,\pm}$, $C_{2,\pm}$ and $C_{3,\pm}$ as above but put $C_{4,\pm}$ to be the conjugacy classes of $\sigma$ and $\diag(1,c,1)\sigma$. The same counting formula gives
\begingroup
\allowdisplaybreaks
\begin{align*}
|\Ch_{\mathcal{C}}(\mathbb{F}_q)|
=&q^{12}+q^{11}-2q^9-q^8-7q^7+16q^6-7q^5-q^4-2q^3+q+1.
\end{align*}
\endgroup

\subsubsection{}
Observing the leading coefficients in the E-polynomials, we find that some of the character varieties have two connected components while others are connected. The isomorphism (\ref{Ch-rank-2}) and Remark \ref{conn-Ch-rank-2} in the appendix show that in the case $n=2$, the variety $\Ch_{\mathcal{C}}$ is connected exactly when a conjugacy class has non connected centraliser. Note that when $n$ is odd, a semi-simple class in $\GL_n\sigma$ always has non-connected centraliser.

\begin{Con}
Let $\mathcal{C}$ be a tuple of semi-simple conjugacy classes in $G\sigma$ that is not necessarily generic. Suppose that the character variety $\Ch_{\mathcal{C}}$ is non empty. Then it has two connected components if and only if every conjugacy class in $\mathcal{C}$ contains elements with connected centraliser, and is connected otherwise.
\end{Con}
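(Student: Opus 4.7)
The plan is to reduce the conjecture, via point-counting over finite fields, to a combinatorial statement about the leading coefficient of the E-polynomial in the generic case, and then to propagate the result to the non-generic regime by deformation. When $\Ch_{\mathcal{C}}$ is smooth of pure dimension $d$ — which holds for strongly generic $\mathcal{C}$ by \cite[Proposition 4.4]{Shu1} — the number of connected components coincides with the leading coefficient of $E(\Ch_{\mathcal{C}};q)$ by Deligne's theorem. Hence in the strongly generic case the conjecture becomes a symmetric-function statement.

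For strongly generic $\mathcal{C}$ satisfying \ref{CCL}, I would apply Theorem \ref{Main-Thm} and extract the top-degree term of the relevant inner product. For odd $n$, Lemma \ref{Shu2Lem6.2.1} guarantees that every centraliser $C_G(t\sigma)$ contains a disconnected factor $\Ort_{2N_++1}$, so the conjecture predicts one component; the combinatorial check is that the leading coefficient of $\langle \Omega_1\Omega_0/\Omega_{\ast},\prod_j h_{\bs\beta_j^{\ast}}\rangle$ equals $1$. For even $n$ with \ref{CCL}, every centraliser is a Levi of $\Sp_{2N}$, hence connected, and the prediction is two components; here one needs the leading contribution of $\langle \Omega_0^2/\Omega_{\ast},\prod_j h_{\bs\beta_j^{\ast}}\rangle$ to be $2q^{d/2}$. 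Both verifications are tractable by expanding in the Schur basis and isolating the empty-partition summand together with the constant term of $1/\Omega_{\ast}$.

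For strongly generic $\mathcal{C}$ violating \ref{CCL} (necessarily $n$ even) I would revert to the raw counting formula (\ref{eq-brute-E-poly}) with non-trivial $\mathbf{A}(\mathcal{C})$ and track the interaction between the sum over $\mathbf{e}\in\mathbf{A}(\mathcal{C})$ and the sign of the character extensions. The cancellation mechanism (\ref{Delta}), combined with Lemma \ref{theta-st}, should generalise: whenever some $C_j$ has disconnected centraliser, averaging over $e_j\in\{\pm1\}$ against the parity of $\tilde{\chi}(C_{j,e_j})$ kills half of the character types and forces the leading coefficient to drop from $2$ to $1$. This is precisely the dichotomy already visible in the $\GL_2$ and $\GL_3$ samples of \S \ref{Conn-Compo}.

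Finally, for arbitrary $\mathcal{C}$ with $\Ch_{\mathcal{C}}$ non-empty, one would deform $\mathcal{C}$ to a nearby strongly generic $\mathcal{C}'$ preserving the ``connectedness profile'' (the set of indices $j$ for which $C_j$ has disconnected centraliser) and keeping $\Ch_{\mathcal{C}'}$ non-empty; the component count is upper-semicontinuous under specialisation. The matching lower bound (when the prediction is two) should come from an explicit $\mathbb{Z}/2$-valued invariant built from $\det\bigl(Y_1\sigma(Y_2)Y_3\sigma(Y_4)\cdots\bigr)$ modulo squares, where $X_j=Y_j\sigma$; this is well-defined on $\Ch_{\mathcal{C}}$ because the computation in the proof of Lemma \ref{restrictTypes} shows that $\det Y_j$ is fixed modulo $(\kk^{\ast})^2$ by the $\GL_n$-class of $X_j$. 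The main obstacle will be exactly this non-generic case: ruling out extra components arising from the singular stratification of $\Ch_{\mathcal{C}}$ appears to require either an intrinsic geometric labelling of $\pi_0$ via the above invariant or a delicate analysis of limits of generic character varieties, since the smoothness argument that gave the upper bound in the generic case is no longer available.
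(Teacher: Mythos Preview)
The statement you are attempting to prove is explicitly a \emph{conjecture} in the paper, not a theorem: it appears at the end of \S\ref{Conn-Compo} as an open problem suggested by the sample E-polynomial computations for $n=2,3$ and by the explicit isomorphism (\ref{Ch-rank-2}) together with Remark~\ref{conn-Ch-rank-2}. The paper offers no proof and no proof sketch, only these low-rank evidences. So there is nothing to compare your proposal against; what you have written is a strategy for attacking an open conjecture.

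On the merits of your strategy: the generic, smooth, \ref{CCL} case is the only part where the argument is close to being an argument, and even there you assert the leading-coefficient identities without carrying them out. The step from strongly generic \ref{CCL}-violating $\mathcal{C}$ via the raw formula (\ref{eq-brute-E-poly}) is plausible but still heuristic: the analogue of (\ref{Delta}) when some $C_j$ has $N_-\neq 0$ involves $\Lambda_-^{\circ}$ and $\Lambda_-^{\circ\circ}$ from Lemma~\ref{theta-st}, and the paper's Corollary~\ref{Cor-theta-st} and Theorem~\ref{Main-Thm} deliberately avoid this case, so you would have to redo a substantial part of \S\ref{Section-Lem}--\ref{Section-E-poly}. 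The genuinely problematic step is the last one. Your appeal to upper-semicontinuity of $|\pi_0|$ under specialisation is not justified: the character varieties are affine, not proper, and without properness neither direction of semicontinuity for $\pi_0$ is automatic (a family of affine varieties can acquire or lose components at special fibres). Moreover, you have not shown that a non-empty $\Ch_{\mathcal{C}}$ can always be deformed to a non-empty strongly generic one with the same connectedness profile of centralisers. The $\det$-modulo-squares invariant is a reasonable idea for producing a lower bound of $2$ when all centralisers are connected, but as written it is only defined on $\Rep_{\mathcal{C}}$; you still need to check it descends to $\Ch_{\mathcal{C}}$ and actually takes both values, and you offer no mechanism for the upper bound in the non-generic case. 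In short, your proposal is a reasonable outline of where a proof \emph{might} go, but it does not close the conjecture, and the paper does not claim to either.
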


\section{Proof of the Main Theorem}\label{Section-Proof}
The goal of this section is to prove that our formula for the E-polynomial given by Theorem \ref{Main-Thm} agrees with the $t=-1$ specialisation of Conjecture \ref{The-Conj}. We also show that Conjecture \ref{The-Conj} implies Conjecture \ref{Cur-Poin}.

\subsection{Macdonald Polynomials}\label{subs-Mac}
\subsubsection{}
We begin by collecting some basic facts about the \textit{Macdonald polynomials}.

\begin{Defn}
Let $N\in\mathbb{Z}_{\ge0}$. The Macdonald polynomials are a family of symmetric functions$$\{H_{\lambda}(\Xo;z,w)\in\SymF_{z,w}[\Xo]\mid\lambda\in\mathcal{P}(N)\}$$ that are uniquely determined by the following properties:
\begin{itemize}
\item [(i)] $H_{\lambda}[(1-z)\Xo;z,w]\in\mathbb{Q}(z,w)\{ s_{\mu}(\Xo):\mu\ge\lambda\}$;

\item [(ii)] $H_{\lambda}[(1-w)\Xo;z,w]\in\mathbb{Q}(z,w)\{ s_{\mu}(\Xo):\mu\ge\lambda^{\ast}\}$;

\item [(iii)] $\langle H_{\lambda}(\Xo;z,w),s_{(n)}(\Xo)\rangle=1$,
\end{itemize}
where $\mathbb{Q}(z,w)\{-\}$ signifies the vector space over $\mathbb{Q}(z,w)$ spanned by the indicated elements.
\end{Defn}

The $z,w$-inner product on $\SymF_{z,w}[\Xo]$ is defined by: $$
\left\langle F[\Xo],G[\Xo]\right\rangle_{z,w}:=\left\langle F[\Xo],G[
(z-1)(1-w)\Xo]\right\rangle
$$
for any $F$, $G\in\SymF_{z,w}[\Xo]$. The Macdonald polynomials are orthogonal with respect to the $z,w$-inner product. Let $$N_{\lambda}(z,w)=\langle H_{\lambda}(\Xo;z,w),H_{\lambda}(\Xo;z,w)\rangle_{z,w}$$be the self-pairing of Macdonald polynomials. It has an explicit expression (See \cite[Corollary 5.4.8]{Hai}.)
\begin{equation}\label{Intro-N(q,t)}
N_{\lambda}(z,w)=\prod_{x\in\lambda}(z^{a(x)+1}-w^{l(x)})(z^{a(x)}-w^{l(x)+1}),
\end{equation}
where $x$ runs over the boxes in the Young diagram of $\lambda$, $a(x)$ and $l(x)$ denote the arm-length and leg-length respectively. We will need a deformation of $N_{\lambda}(z,w)$ defined by 
\begin{equation}\label{Intro-N(u,z,w)}
N_{\lambda}(u,z,w):=\prod_{x\in\lambda}(z^{a(x)+1}-uw^{l(x)})(z^{a(x)}-u^{-1}w^{l(x)+1}).\end{equation}
\begin{Rem}\label{Sym-Mac}
We have 
\begingroup
\allowdisplaybreaks
\begin{align*}
H_{\lambda}(\Xo;w,z)=&H_{\lambda^{\ast}}(\Xo;z,w),\\
N_{\lambda}(w,z)=&N_{\lambda^{\ast}}(z,w),\\
N_{\lambda}(zw,w^2,z^2)=&N_{\lambda^{\ast}}(zw,z^2,w^2).
\end{align*}
\endgroup
\end{Rem}

\begin{Lem}\label{Mac-t=q^-1}
For any $\lambda\in\mathcal{P}$, we have$$
H_{\lambda}(\Xo;z,z^{-1})=\frac{s_{\lambda}[\frac{\Xo}{1-z}]}{s_{\lambda}[\frac{1}{1-z}]}
$$
\end{Lem}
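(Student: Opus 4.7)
The plan is to verify that the right-hand side $F_{\lambda}(\Z) := s_{\lambda}[\Z/(1-q)]/s_{\lambda}[1/(1-q)]$ satisfies the three characterizing properties of $H_{\lambda}(\Z;q,q^{-1})$ listed in the definition, and conclude by uniqueness. Since $s_{\lambda}[1/(1-q)]$ is a nonzero rational function (by the hook content formula), $F_{\lambda}$ is a well-defined element of $\SymF_q[\Z]$.

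For condition (i), a direct plethystic computation gives
$$F_{\lambda}[(1-q)\Z] = \frac{s_{\lambda}(\Z)}{s_{\lambda}[1/(1-q)]},$$
a scalar multiple of $s_{\lambda}(\Z)$, which trivially lies in $\mathbb{Q}(q)\{s_{\mu}:\mu \ge \lambda\}$. For condition (ii) at $t=q^{-1}$, one uses $(1-q^{-1})/(1-q)=-q^{-1}$ together with the plethystic identity $s_{\lambda}[-\Y]=(-1)^{|\lambda|}s_{\lambda^{\ast}}(\Y)$ from (\ref{s(-z)}) and homogeneity of Schur functions to obtain
$$F_{\lambda}[(1-q^{-1})\Z] = \frac{(-1)^{|\lambda|}\,q^{-|\lambda|}\,s_{\lambda^{\ast}}(\Z)}{s_{\lambda}[1/(1-q)]},$$
which lies in $\mathbb{Q}(q)\{s_{\mu}:\mu \ge \lambda^{\ast}\}$.

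Condition (iii) is the crux. I need $\langle F_{\lambda}(\Z),s_{(n)}(\Z)\rangle=1$, equivalently
$$\bigl\langle s_{\lambda}[\Z/(1-q)],\;s_{(n)}(\Z)\bigr\rangle = s_{\lambda}[1/(1-q)].$$
The key observation is that the plethystic operator $f(\Z)\mapsto f[\Z/(1-q)]$ is self-adjoint for the Hall inner product: since $p_k[\Z/(1-q)]=p_k(\Z)/(1-q^k)$ and the power sums form an orthogonal basis with $\langle p_{\lambda},p_{\mu}\rangle=z_{\lambda}\delta_{\lambda\mu}$, one checks directly that $\langle p_{\lambda}[\Z/(1-q)],p_{\mu}\rangle=\langle p_{\lambda},p_{\mu}[\Z/(1-q)]\rangle$. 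Applying this and expanding $s_{(n)}=h_n=\sum_{\mu\vdash n}p_{\mu}/z_{\mu}$, the left-hand side becomes
$$\bigl\langle s_{\lambda}(\Z),\,h_n[\Z/(1-q)]\bigr\rangle = \sum_{\mu\vdash n}\frac{\chi^{\lambda}_{\mu}}{z_{\mu}\prod_i(1-q^{\mu_i})},$$
where I used $\langle s_{\lambda},p_{\mu}\rangle=\chi^{\lambda}_{\mu}$. On the other hand, expanding $s_{\lambda}=\sum_{\mu}(\chi^{\lambda}_{\mu}/z_{\mu})p_{\mu}$ and applying $p_k[1/(1-q)]=1/(1-q^k)$ yields precisely the same sum for $s_{\lambda}[1/(1-q)]$.

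With all three conditions verified, uniqueness of the Macdonald polynomial forces $H_{\lambda}(\Z;q,q^{-1})=F_{\lambda}(\Z)$. There is no real obstacle: everything reduces to elementary plethystic manipulations, the only slightly non-routine ingredient being the self-adjointness of $\Z\mapsto\Z/(1-q)$, which itself is a one-line check on power sums.
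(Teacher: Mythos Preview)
Your proof is correct and follows essentially the same strategy as the paper: verify that the proposed expression satisfies the three defining conditions of the Macdonald polynomial and invoke uniqueness. The paper does not write out this proof explicitly but refers to the analogous argument for wreath Macdonald polynomials (Lemma~\ref{wMac-t=q^-1}), which proceeds identically.

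One minor remark on condition~(iii): your detour through self-adjointness of $\Z\mapsto\Z/(1-q)$ is correct but unnecessary. The paper's wreath version simply observes that pairing with $h_n=s_{(n)}$ is the same as the plethystic specialisation $\Z\mapsto 1$; applying this directly gives
\[
\langle F_{\lambda},h_n\rangle = F_{\lambda}[1] = \frac{s_{\lambda}[1/(1-q)]}{s_{\lambda}[1/(1-q)]}=1
\]
in one step. Your power-sum expansion arrives at the same identity, just by a longer route.
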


\begin{Lem}\label{N-t=q^-1}
For any $\lambda\in\mathcal{P}$, write $$f_{\lambda}(z)=\frac{1}{s_{\lambda}[\frac{1}{1-z}]}$$ then we have$$
N_{\lambda}(z,z^{-1})=z^{-|\lambda|}f_{\lambda}(z)^2.$$
\end{Lem}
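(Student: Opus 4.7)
The plan is to specialize the explicit product formula (\ref{Intro-N(q,t)}) for $N_{\lambda}(q,t)$ at $t=q^{-1}$ and match it against the known principal specialization of the Schur function.

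First, I would substitute $t=q^{-1}$ directly into (\ref{Intro-N(q,t)}) and factor out powers of $q$:
\begin{equation*}
(q^{a(x)+1}-q^{-l(x)})(q^{a(x)}-q^{-l(x)-1}) \;=\; q^{-2l(x)-1}\bigl(q^{a(x)+l(x)+1}-1\bigr)^{2}\;=\;q^{-2l(x)-1}\bigl(q^{h(x)}-1\bigr)^{2},
\end{equation*}
where $h(x)=a(x)+l(x)+1$ is the hook length. Taking the product over $x\in\lambda$ and using $\sum_{x\in\lambda}l(x)=n(\lambda)$ (which follows from $n(\lambda)=\sum_j\binom{\lambda^{\ast}_j}{2}$ applied column-by-column), this gives
\begin{equation*}
N_{\lambda}(q,q^{-1})\;=\;q^{-2n(\lambda)-|\lambda|}\prod_{x\in\lambda}\bigl(q^{h(x)}-1\bigr)^{2}.
\end{equation*}

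Next I would invoke the classical principal specialization of the Schur function (\cite[Chapter I, \S 3, Example 1]{Mac}), namely $s_{\lambda}(1,q,q^{2},\ldots)=q^{n(\lambda)}/\prod_{x\in\lambda}(1-q^{h(x)})$. Since $p_{k}[1/(1-q)]=1/(1-q^{k})=\sum_{i\ge 0}q^{ki}=p_{k}(1,q,q^{2},\ldots)$, this principal specialization is exactly the plethystic substitution $s_{\lambda}[1/(1-q)]$. Therefore
\begin{equation*}
f_{\lambda}(q)\;=\;\frac{1}{s_{\lambda}[\tfrac{1}{1-q}]}\;=\;q^{-n(\lambda)}\prod_{x\in\lambda}(1-q^{h(x)}),
\end{equation*}
and squaring this produces the same product $\prod_{x}(q^{h(x)}-1)^{2}$ (the overall sign $(-1)^{2|\lambda|}$ being harmless).

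Comparing the two expressions gives $f_{\lambda}(q)^{2}=q^{-2n(\lambda)}\prod_{x}(q^{h(x)}-1)^{2}=q^{|\lambda|}N_{\lambda}(q,q^{-1})$, which is the claim. There is no real obstacle here: the only non-trivial input is the hook-length form of the principal specialization of $s_{\lambda}$, which is a standard identity. The short calculation above is essentially the same manipulation that gives Lemma \ref{Mac-t=q^-1}, so it is natural to present the two lemmas together.
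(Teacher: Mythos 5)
Your computation is correct, but the route is different from what the paper has in mind. The paper gives no written proof of Lemma \ref{N-t=q^-1} and instead points to the wreath analogue, Lemma \ref{N'-t=q^-1}, whose proof is structural: one first establishes the closed form of the Macdonald polynomial at $t=q^{-1}$ (Lemma \ref{Mac-t=q^-1}, via the triangularity and normalization axioms), obtaining $H_{\lambda}(\Z;q,q^{-1})=f_{\lambda}(q)\,s_{\lambda}[\tfrac{\Z}{1-q}]$, and then unwinds the $q,t$-inner product directly: the plethystic twist $(q-1)(1-q^{-1})=-\tfrac{(1-q)^2}{q}$ cancels the two $\tfrac{1}{1-q}$ factors, leaving $\langle s_{\lambda},\,s_{\lambda}[\tfrac{\Z}{q}]\rangle=q^{-|\lambda|}$. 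You instead take the explicit arm--leg product (\ref{Intro-N(q,t)}), specialize $t=q^{-1}$, extract the hook lengths via $a(x)+l(x)+1=h(x)$ and $\sum_{x}l(x)=n(\lambda)$, and then identify $f_{\lambda}(q)$ with the hook-length form of the principal specialization (Proposition \ref{Hai3.3.2}). Both proofs are short and correct, and yours is arguably more self-contained for this particular lemma. What the paper's approach buys is exact parallelism with the wreath case: for $\tilde N_{\bs\alpha}(q,t)$ there is no clean hook product (the wreath-$\nabla$ factor is not of this form), so the hook-length route has no analogue, whereas the argument via Lemma \ref{wMac-t=q^-1} and the inner product goes through verbatim. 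One small inaccuracy in your final remark: the manipulation is \emph{not} essentially the same as the proof of Lemma \ref{Mac-t=q^-1}; that lemma is proved from the defining triangularity conditions, not from a hook-length identity, even though Proposition \ref{Hai3.3.2} connects the two normalizations.
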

The proofs of the above two lemmas are simple. We will state and prove the corresponding results for wreath Macdonald polynomials below. We will also need:
\begin{Prop}(\cite[Proposition 3.3.2]{Hai})\label{Hai3.3.2}
Let $H_{\lambda}(z)$ be the hook polynomial (\ref{eq-hook}). Then,
$$s_{\lambda}[\frac{1}{1-z}]=z^{n(\lambda)}H_{\lambda}(z)^{-1}.$$
\end{Prop}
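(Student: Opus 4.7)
The plan is to identify the plethystic substitution with a principal specialization of the Schur function and then invoke Stanley's hook length formula for reverse plane partitions.

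First I would verify that $f \mapsto f[\tfrac{1}{1-q}]$ coincides with the principal specialization $f \mapsto f(1, q, q^2, \ldots)$. Since the plethystic structure on $\SymF[\Z]$ is determined by its action on the power sums, and since $p_n[\tfrac{1}{1-q}] = \tfrac{1}{1-q^n} = p_n(1, q, q^2, \ldots)$ as formal power series, the two operations agree on all of $\SymF[\Z]$. So the statement reduces to the classical identity
\begin{equation*}
s_\lambda(1, q, q^2, \ldots) = \frac{q^{n(\lambda)}}{\prod_{x\in\lambda}(1-q^{h(x)})}.
\end{equation*}

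Second, I would expand $s_\lambda$ as the sum $\sum_T \prod_i x_i^{m_i(T)}$ over semi-standard Young tableaux $T$ of shape $\lambda$ and specialize $x_i = q^{i-1}$, giving $s_\lambda(1, q, \ldots) = \sum_T q^{\sum_{(i,j)\in\lambda}(T(i,j)-1)}$. The standard bijection $T \mapsto \pi$ defined by $\pi(i,j) := T(i,j) - i$ identifies SSYT of shape $\lambda$ with reverse plane partitions of shape $\lambda$ (non-negative entries, weakly increasing along rows and columns: rows are clear, and columns become weakly increasing precisely because the column-strictness of $T$ is offset by subtracting $i$ in row $i$). A direct computation gives $\sum_{(i,j)\in\lambda}(T(i,j)-1) = |\pi| + n(\lambda)$, since the discrepancy is $\sum_i i\lambda_i - |\lambda| = \sum_i (i-1)\lambda_i = n(\lambda)$. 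So the previous sum becomes $q^{n(\lambda)} \sum_\pi q^{|\pi|}$.

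The final ingredient is Stanley's theorem for the generating function of reverse plane partitions,
\begin{equation*}
\sum_\pi q^{|\pi|} = \prod_{x\in\lambda}\frac{1}{1-q^{h(x)}} = \frac{1}{H_\lambda(q)},
\end{equation*}
which when combined with the previous step yields the claimed identity. This last step is the main obstacle: its cleanest proof is via the Hillman–Grassl correspondence, a (nontrivial) bijection between reverse plane partitions of shape $\lambda$ and multisets of hooks of $\lambda$ weighted by hook length. A cheaper alternative, avoiding any combinatorial bijection, would be to take $N \to \infty$ as formal power series in the finite hook-content formula $s_\lambda(1, q, \ldots, q^{N-1}) = q^{n(\lambda)} \prod_{x\in\lambda}\frac{1-q^{N+c(x)}}{1-q^{h(x)}}$; but this just relocates the main work into a proof of the hook-content formula.
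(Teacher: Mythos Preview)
Your proof is correct and follows the standard route to this classical identity. Note, however, that the paper does not give its own proof: the proposition is simply quoted from \cite[Proposition 3.3.2]{Hai} and used as a black box, so there is no argument in the paper to compare against. Your write-up is a self-contained justification of a result the paper takes for granted.
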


\subsection{Wreath Macdonald Polynomials}\hfill

Wreath Macdonald polynomials can be defined for $(\mathbb{Z}/r\mathbb{Z})^N\rtimes\mathfrak{S}_N$ for any $r>1$. We will only need the case $r=2$, and will therefore work in the ring $\SymF:=\SymF_{z,w}[\Xo,\Xl]$.
\subsubsection{}
Recall that a partition is uniquely determined by its 2-core and 2-quotient. Given a 2-core, then the dominance partial order on $\mathcal{P}$ induces an order on the set of 2-partitions, regarded as the 2-quotient of those partitions with the given 2-core. Different 2-cores may give different orders. There are only finitely many different such induced orders.

Recall that $\X$ is regarded as a column of variables.
\begin{Defn}\label{wMac}
Let $N\in\mathbb{Z}_{\ge0}$. Fix a 2-core and thus an order on $\mathcal{P}^2(N)$. The \textit{wreath Macdonald polynomials} are a family of symmetric functions $$\{\tilde{H}_{\bs\alpha}(\X;z,w)\in\SymF\mid\bs\alpha\in\mathcal{P}^2(N)\}$$ that are uniquely determined by the following properties:
\begin{itemize}
\item [(i)] $\tilde{H}_{\bs\alpha}(\left(
\begin{array}{cc}
1 & -z\\
-z & 1
\end{array}
\right)\X;z,w)\in\mathbb{Q}(z,w)\{ s_{\bs\beta}(\X):\bs\beta\ge\bs\alpha\}$;

\item [(ii)] $\tilde{H}_{\bs\alpha}(\left(
\begin{array}{cc}
1 & -w\\
-w & 1
\end{array}
\right)\X;z,w)\in\mathbb{Q}(z,w)\{ s_{\bs\beta}(\X):\bs\beta\ge\bs\alpha^{\ast}\}$;

\item [(iii)] $\langle\tilde{H}_{\bs\alpha}(\X;z,w),s_{((n),\varnothing)}(\X)\rangle=1$.
\end{itemize}
\end{Defn}

The first two conditions in the definition are called triangularity conditions and the last one is called the normalisation condition.  

\begin{Thm}(\cite[Conjecture 7.2.19]{Hai},\cite[Corollary 1.2]{BF})
Wreath Macdonald polynomials exist and they are Schur-positive.
\end{Thm}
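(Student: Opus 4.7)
The plan has three logical pieces: uniqueness, existence, and Schur-positivity, of which only the first is elementary. First I would deal with uniqueness by a straightforward triangularity argument. If $H$ and $H'$ both satisfy conditions (i), (ii), (iii), then $F = H - H'$ has vanishing coefficient of $s_{((n),\varnothing)}$ and, after either of the two plethystic substitutions in (i) and (ii), lies in the span of $s_{\bs\beta}$ with $\bs\beta$ \emph{strictly} larger than $\bs\alpha$ (respectively $\bs\alpha^{\ast}$). Expanding $F$ in the Schur basis and looking at the highest $\bs\beta$ appearing gives a contradiction with one of the two triangularity conditions, so $F=0$.

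For existence the situation is much more delicate: the two triangularity conditions impose linear constraints with respect to two \emph{different} orders on $\mathcal{P}^2(N)$ coming from two different choices of 2-core after the plethystic substitutions, and there is no elementary reason a common solution should exist — this is precisely the content of Haiman's original conjecture. The plan is to follow Bezrukavnikov-Finkelberg and construct $\tilde{H}_{\bs\alpha}(\X;q,t)$ geometrically. Fix the 2-core $(e)$; one then works on the Nakajima quiver variety $\mathfrak{M}_{\bs v,\bs w}$ associated to the affine Dynkin diagram of type $\widehat{A}_1$, a component of which is the Hilbert scheme $\mathrm{Hilb}^N(\widetilde{\mathbb{C}^2/(\mathbb{Z}/2\mathbb{Z})})$. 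The torus $T = (\mathbb{C}^\ast)^2$ acts with isolated fixed points indexed by 2-partitions of $N$, and to each $\bs\alpha$ one assigns a tautological coherent sheaf whose $T$-equivariant bigraded character is the candidate $\tilde{H}_{\bs\alpha}$.

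Schur-positivity is then to be extracted from the derived equivalence of Bezrukavnikov-Kaledin between $D^b\mathrm{Coh}^T(\mathfrak{M}_{\bs v,\bs w})$ and the $T$-equivariant derived category of $\mathfrak{W}_N$-equivariant modules over a suitable commuting scheme on $\mathbb{C}^{2N}$. Under this equivalence the tautological object becomes a genuine (not virtual) bigraded $\mathfrak{W}_N$-module; decomposing into isotypic components $V^{\bs\beta}$ for $\bs\beta \in \mathcal{P}^2(N)$ yields the Schur expansion $\tilde{H}_{\bs\alpha}(\X;q,t) = \sum_{\bs\beta} \tilde{K}_{\bs\beta,\bs\alpha}(q,t)\, s_{\bs\beta}(\X)$, with each $\tilde{K}_{\bs\beta,\bs\alpha}(q,t)$ equal to the bigraded dimension of a $\mathrm{Hom}$-space between honest coherent sheaves, hence a polynomial in $q,t$ with nonnegative integer coefficients.

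The hard part, and the main obstacle, is matching the geometric definition with the combinatorial one: one must verify that the bigraded character of the geometric object actually satisfies the two triangularity conditions of Definition \ref{wMac}. This is a wreath generalization of Haiman's $n!$-theorem, and in the Bezrukavnikov-Finkelberg approach it requires an exceptional-collection / $t$-structure analysis of the derived equivalence, together with vanishing theorems on an isospectral variant of $\mathfrak{M}_{\bs v,\bs w}$, to pin down the torus weights of the tautological sheaf at each fixed point $\bs\alpha$ and check triangularity against the order induced from the dominance order on $\mathcal{P}(N)$ via the 2-quotient bijection.
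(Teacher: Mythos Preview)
The paper does not prove this theorem; it is stated with citations to Haiman (as conjecture) and Bezrukavnikov--Finkelberg (as proof), and no argument is given beyond those references. So there is no ``paper's own proof'' to compare against. Your sketch is, appropriately, an outline of the Bezrukavnikov--Finkelberg approach.

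There is, however, a genuine error in your conclusion. You assert that each coefficient $\tilde{K}_{\bs\beta,\bs\alpha}(q,t)$ is ``a polynomial in $q,t$ with nonnegative integer coefficients.'' This is false for wreath Macdonald polynomials with $r>1$, and the paper flags exactly this point in the Remark immediately following the theorem: Schur positivity here means the coefficients are \emph{Laurent} polynomials with nonnegative integer coefficients, not polynomials. The explicit degree-$2$ examples in Appendix~\ref{App-WMac1-2} exhibit this concretely, for instance the coefficient $tq^{-1}$ of $s_{(\varnothing,(2))}$ in $\tilde{H}_{(\varnothing,(2))}$ for $2$-core $(0)$. Geometrically, the torus $T=(\mathbb{C}^\ast)^2$ acts with weights that are not bounded below, so the equivariant characters of the relevant objects are Laurent series in $q,t$; your inference ``bigraded dimension of a $\mathrm{Hom}$-space, hence a polynomial'' conflates $T$-grading with $\mathbb{Z}_{\ge 0}^2$-grading, and would prove more than is true.
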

\begin{Rem}
The Schur positivity for wreath Macdonald polynomials means that the coefficients in the Schur basis are \textit{Laurent polynomials} instead of polynomials. We will see examples of this in Appendix \ref{App-WMac1-2}.
\end{Rem}

\subsubsection{}\label{w-q,t-inner}
We define the $z,w$-inner product on $\SymF$ by: $$
\left\langle F[\X],G[\X]\right\rangle_{z,w}:=\left\langle F[\X],G[\left(
\begin{array}{cc}
0 & -1\\
-1 & 0
\end{array}
\right)
\left(
\begin{array}{cc}
1 & -z\\
-z & 1
\end{array}
\right)
\left(
\begin{array}{cc}
1 & -w\\
-w & 1
\end{array}
\right)\X]\right\rangle
$$
for any $F$, $G\in\SymF$. This inner product is symmetric in $F$ and $G$. Observe that$$s_{\bs\alpha}[\left(
\begin{array}{cc}
0 & -1\\
-1 & 0
\end{array}
\right)\X]=(-1)^{|\bs\alpha|}s_{\bs\alpha^{\ast}}(\X).$$ The wreath Macdonald polynomials are orthogonal with respect to this inner product. Let $$\tilde{N}_{\bs\alpha}(z,w)=\langle \tilde{H}_{\bs\alpha}(\X;z,w),\tilde{H}_{\bs\alpha}(\X;z,w)\rangle_{z,w}$$be the self-pairing of wreath Macdonald polynomials.

According to Orr and Shimozono \cite{OS}, the self-pairing $\tilde{N}_{\bs\alpha}(z,w)$ can be written as $$\tilde{N}_{\bs\alpha}(z,w)=\tilde{N}^{\nabla}_{\bs\alpha}(z,w)\tilde{N}^{cot}_{\bs\alpha}(z,w),$$where for $e=0$ or $1$,
$$
\tilde{N}^{\cot}_{\bs\alpha}(z,w)=\!\!\!\prod_{\substack{x\in\{\bs\alpha\}_e\\h(x)\equiv0\!\!\!\mod 2}}\!\!\!(z^{a(x)+1}-w^{l(x)})(z^{a(x)}-w^{l(x)+1})
$$is the contribution from the cotangent bundle of the Hilbert scheme along the $\mathbb{Z}/2\mathbb{Z}$-fixed point locus, and $\tilde{N}^{\nabla}_{\bs\alpha}(z,w)$ is a complicated term involving a "wreath-$\nabla$" operator. We define the deformation
$$
\tilde{N}^{cot}_{\bs\alpha}(u,z,w):=\!\!\!\prod_{\substack{x\in\{\bs\alpha\}_e\\h(x)\equiv0\!\!\!\mod 2}}\!\!\!(z^{a(x)+1}-uw^{l(x)})(z^{a(x)}-u^{-1}w^{l(x)+1}),
$$in agreement with (\ref{Intro-N(u,z,w)}), and then $$\tilde{N}_{\bs\alpha}(u,z,w)=\tilde{N}^{\nabla}_{\bs\alpha}(z,w)\tilde{N}^{cot}_{\bs\alpha}(u,z,w),$$i.e. the "wreath-$\nabla$" term is not deformed.
\begin{Rem}\label{Sym-wMac}
We have 
\begingroup
\allowdisplaybreaks
\begin{align*}
\tilde{H}_{\bs\alpha}(\X;w,z)=&\tilde{H}_{\bs\alpha^{\ast}}(\X;z,w),\\
\tilde{N}_{\bs\alpha}(w,z)=&\tilde{N}_{\bs\alpha^{\ast}}(z,w),\\
\tilde{N}_{\bs\alpha}(zw,w^2,z^2)=&\tilde{N}_{\bs\alpha^{\ast}}(zw,z^2,w^2).
\end{align*}
\endgroup
\end{Rem}

\subsubsection{}
In this paragraph, we work with a fixed 2-core, which can be either $(0)$ or $(1)$.
\begin{Lem}\label{wMac-t=q^-1}
For any $\bs\alpha\in\mathcal{P}^2$, we have$$
\tilde{H}_{\bs\alpha}(\X;z,z^{-1})=\frac{s_{\bs\alpha}[\left(
\begin{array}{cc}
1 & -z\\
-z & 1
\end{array}
\right)^{-1}\X]}{s_{\alpha^{(0)}}[\frac{1}{1-z^2}]s_{\alpha^{(1)}}[\frac{z}{1-z^2}]}
$$
\end{Lem}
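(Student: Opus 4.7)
The plan is to verify that the right-hand side satisfies the three defining properties of Definition \ref{wMac} at $t=q^{-1}$ and invoke the uniqueness of wreath Macdonald polynomials. Let $F_{\bs\alpha}(\X;q)$ denote the proposed right-hand side.

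For condition (i), observe that $\left(\begin{array}{cc} 1 & -q\\ -q & 1\end{array}\right)$ is the inverse of $\mathbf{P}$, so $s_{\bs\alpha}[\mathbf{P}\left(\begin{array}{cc} 1 & -q\\ -q & 1\end{array}\right)\X] = s_{\bs\alpha}(\X)$. Thus $F_{\bs\alpha}(\left(\begin{array}{cc} 1 & -q\\ -q & 1\end{array}\right)\X;q)$ is a scalar multiple of $s_{\bs\alpha}(\X)$, which trivially lies in $\mathbb{Q}(q)\{s_{\bs\beta}(\X):\bs\beta\ge\bs\alpha\}$.

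For condition (ii), a direct computation gives
\[
\mathbf{P}\left(\begin{array}{cc} 1 & -q^{-1}\\ -q^{-1} & 1\end{array}\right)=-q^{-1}\left(\begin{array}{cc} 0 & 1\\ 1 & 0\end{array}\right).
\]
Applying this matrix to $\X$ swaps $\Xo$ and $\Xl$ and scales by $-q^{-1}$, so using $s_{\lambda}[-\Z]=(-1)^{|\lambda|}s_{\lambda^{\ast}}(\Z)$ and the scalar property $s_{\lambda}[c\Z]=c^{|\lambda|}s_{\lambda}(\Z)$,
\[
s_{\bs\alpha}[-q^{-1}\Xl,-q^{-1}\Xo]=(-q^{-1})^{|\bs\alpha|}s_{\alpha^{(0)\ast}}(\Xl)s_{\alpha^{(1)\ast}}(\Xo)=(-q^{-1})^{|\bs\alpha|}s_{\bs\alpha^{\ast}}(\X),
\]
recalling $\bs\alpha^{\ast}=(\alpha^{(1)\ast},\alpha^{(0)\ast})$ from \S \ref{partitions}. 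Hence $F_{\bs\alpha}(\left(\begin{array}{cc} 1 & -q^{-1}\\ -q^{-1} & 1\end{array}\right)\X;q)$ is a scalar multiple of $s_{\bs\alpha^{\ast}}(\X)$, satisfying condition (ii).

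For condition (iii), I extract the coefficient of $s_{((n),\varnothing)}$ from $s_{\bs\alpha}[\mathbf{P}\X]$ by the single-variable specialization trick used in the classical case (Lemma \ref{Mac-t=q^-1}): set $\Xo=\epsilon$ and $\Xl=0$, where $\epsilon$ is a single plethystic letter. Since $s_{\mu^{(1)}}(0)=0$ unless $\mu^{(1)}=\varnothing$ and $s_{\mu^{(0)}}(\epsilon)=0$ unless $\mu^{(0)}$ has at most one row, among degree $n=|\bs\alpha|$ basis elements only $s_{((n),\varnothing)}$ survives, and it evaluates to $\epsilon^n$. On the other hand, $\mathbf{P}\X$ specialises to $\frac{1}{1-q^2}\left(\begin{array}{c}\epsilon\\ q\epsilon\end{array}\right)$, and the scalar-extraction identity for plethysm gives
\[
s_{\bs\alpha}[\mathbf{P}\X]\big|_{\Xo=\epsilon,\,\Xl=0}=\epsilon^{|\alpha^{(0)}|}s_{\alpha^{(0)}}[\tfrac{1}{1-q^2}]\cdot\epsilon^{|\alpha^{(1)}|}s_{\alpha^{(1)}}[\tfrac{q}{1-q^2}]=\epsilon^n s_{\alpha^{(0)}}[\tfrac{1}{1-q^2}]s_{\alpha^{(1)}}[\tfrac{q}{1-q^2}].
\]
Therefore the coefficient of $s_{((n),\varnothing)}$ in $s_{\bs\alpha}[\mathbf{P}\X]$ equals the denominator of $F_{\bs\alpha}$, and $\langle F_{\bs\alpha}(\X;q),s_{((n),\varnothing)}(\X)\rangle=1$. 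The three conditions hold regardless of the choice of 2-core, since each triangularity reduces to the trivial "equality" case. By uniqueness, $F_{\bs\alpha}(\X;q)=\tilde H_{\bs\alpha}(\X;q,q^{-1})$.

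The proof is essentially a mechanical verification; the only "hard" step is recognising the identity $\mathbf{P}\left(\begin{array}{cc} 1 & -q^{-1}\\ -q^{-1} & 1\end{array}\right)=-q^{-1}\left(\begin{array}{cc} 0 & 1\\ 1 & 0\end{array}\right)$, which makes condition (ii) collapse to a single Schur term and thus makes the Ansatz work.
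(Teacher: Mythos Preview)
Your proof is correct and follows essentially the same approach as the paper. Both arguments hinge on the observation that at $t=q^{-1}$ the two triangularity matrices become inverses up to the swap-and-scale factor $-q^{-1}\left(\begin{smallmatrix}0&1\\1&0\end{smallmatrix}\right)$, which collapses each triangularity condition to a single Schur term; the normalisation is then fixed by the specialisation $\Xo\mapsto 1$, $\Xl\mapsto 0$. The only difference is packaging: the paper starts from $\tilde H_{\bs\alpha}$ and uses conditions (i)--(ii) to force it into the form $f(q)\,s_{\bs\alpha}[\mathbf{P}\X]$, whereas you start from the candidate and verify (i)--(iii) and invoke uniqueness. Your write-up is in fact more explicit than the paper's, which asserts the consequence of the two triangularity conditions without displaying the matrix identity you isolate.
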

Note that this expression has nothing to do with the given 2-core.
\begin{proof}
The first two conditions in Definition \ref{wMac} imply that $$\tilde{H}_{\bs\alpha}(\X;z,z^{-1})=f(z)s_{\bs\alpha}[\left(
\begin{array}{cc}
1 & -z\\
-z & 1
\end{array}
\right)^{-1}\X]$$ for some $f(z)\in\mathbb{Q}(z)$. Taking inner product with $s_{((n),\varnothing)}(\X)=h_{((n),\varnothing)}(\X)$ is equivalent to setting $\Xo=1$ and $\Xl=0$. Therefore condition (iii) of Definition \ref{wMac} gives $$f(z)s_{\bs\alpha}[\left(
\begin{array}{cc}
1 & -z\\
-z & 1
\end{array}
\right)^{-1}
\begin{bmatrix}
1\\
0
\end{bmatrix}]=1,$$whence the lemma.
\end{proof}

\begin{Lem}\label{N'-t=q^-1}
For any $\bs\alpha\in\mathcal{P}^2$, write $$f_{\bs\alpha}(z)=\frac{1}{{s_{\alpha^{(0)}}[\frac{1}{1-z^2}]s_{\alpha^{(1)}}[\frac{z}{1-z^2}]}}$$ then we have$$
\tilde{N}_{\bs\alpha}(z,z^{-1})=z^{-|\bs\alpha|}f_{\bs\alpha}(z)^2.$$
\end{Lem}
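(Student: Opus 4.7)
The plan is to mimic the derivation of Lemma \ref{N-t=q^-1} in the wreath setting: the specialization $t=q^{-1}$ is the exact point where the triangularity conditions of Definition \ref{wMac} degenerate and one has the closed form of Lemma \ref{wMac-t=q^-1}, so I can bypass the complicated ``wreath-$\nabla$'' factor of $\tilde{N}_{\bs\alpha}(q,t)$ entirely and reduce everything to a plethystic manipulation of Schur functions. Concretely, I will use the definition of the $(q,t)$-inner product with $t=q^{-1}$ to write
\begin{equation*}
\tilde{N}_{\bs\alpha}(q,q^{-1}) = \bigl\langle \tilde{H}_{\bs\alpha}(\X;q,q^{-1}),\, \tilde{H}_{\bs\alpha}(M\X;q,q^{-1})\bigr\rangle, \qquad M := \left(\begin{smallmatrix} 0 & -1 \\ -1 & 0\end{smallmatrix}\right)\left(\begin{smallmatrix} 1 & -q \\ -q & 1\end{smallmatrix}\right)\left(\begin{smallmatrix} 1 & -q^{-1} \\ -q^{-1} & 1\end{smallmatrix}\right),
\end{equation*}
and then substitute the formula from Lemma \ref{wMac-t=q^-1} into both arguments, obtaining
$
\tilde{N}_{\bs\alpha}(q,q^{-1}) = f_{\bs\alpha}(q)^2\, \bigl\langle s_{\bs\alpha}(\mathbf{P}\X),\, s_{\bs\alpha}(\mathbf{P} M\X)\bigr\rangle,
$
where $\mathbf{P}$ is as in (\ref{P}).

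The next step is a generality about the Hall inner product on $\SymF[\Xo,\Xl]$: plethystic substitution by a $2\times 2$ scalar matrix $A$ is adjoint to substitution by $A^t$. This is straightforwardly verified on the power-sum basis, since on the degree-$r$ subspace spanned by $\{p_r(\Xo), p_r(\Xl)\}$ the Hall inner product is $r\cdot I$, and substitution by $A$ acts by a matrix whose transpose is the matrix of substitution by $A^t$. Using this and the symmetry $\mathbf{P}^t=\mathbf{P}$, I transfer the left substitution to the right and obtain
\begin{equation*}
\tilde{N}_{\bs\alpha}(q,q^{-1}) = f_{\bs\alpha}(q)^2\, \bigl\langle s_{\bs\alpha}(\X),\, s_{\bs\alpha}(\mathbf{P}^2 M\,\X)\bigr\rangle.
\end{equation*}

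The heart of the argument is then the matrix identity $\mathbf{P}^2 M = q^{-1}\,I$. I would prove this by direct computation: one has
\begin{equation*}
\mathbf{P}^2 = \tfrac{1}{(1-q^2)^2}\left(\begin{smallmatrix} 1+q^2 & 2q \\ 2q & 1+q^2 \end{smallmatrix}\right), \qquad M = \left(\begin{smallmatrix} q+q^{-1} & -2 \\ -2 & q+q^{-1}\end{smallmatrix}\right),
\end{equation*}
and multiplying gives diagonal entries $\bigl((1+q^2)(q+q^{-1})-4q\bigr)/(1-q^2)^2 = q^{-1}(q^2-1)^2/(1-q^2)^2 = q^{-1}$ and vanishing off-diagonals. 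Granted this, the homogeneity $s_{\bs\alpha}(q^{-1}\X)=q^{-|\bs\alpha|}s_{\bs\alpha}(\X)$ and orthonormality $\langle s_{\bs\alpha},s_{\bs\alpha}\rangle=1$ finish the proof:
\begin{equation*}
\tilde{N}_{\bs\alpha}(q,q^{-1}) = f_{\bs\alpha}(q)^2 \cdot q^{-|\bs\alpha|}\langle s_{\bs\alpha},s_{\bs\alpha}\rangle = q^{-|\bs\alpha|}f_{\bs\alpha}(q)^2.
\end{equation*}

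The only slightly subtle point is the cancellation $\mathbf{P}^2 M = q^{-1}I$, which looks somewhat miraculous but is really the wreath analogue of the scalar identity underlying Lemma \ref{N-t=q^-1}; everything else in the plan is formal. I do not expect any obstruction arising from the choice of $2$-core, since Lemma \ref{wMac-t=q^-1} is already independent of the $2$-core, and the argument above never uses the $2$-core either. This is consistent with the expectation that the self-pairing $\tilde{N}_{\bs\alpha}(q,t)$ at $t=q^{-1}$ is a universal quantity depending only on $\bs\alpha$.
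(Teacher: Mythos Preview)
Your proposal is correct and follows essentially the same route as the paper's proof: both substitute Lemma \ref{wMac-t=q^-1} into the definition of the $q,t$-inner product at $t=q^{-1}$, then use the self-adjointness of substitution by the symmetric matrix $\mathbf{P}$ (which the paper also invokes elsewhere, in the proof of Theorem \ref{Main-Thm}) together with the matrix identity $\mathbf{P}^2 M = q^{-1}I$ (equivalently $\mathbf{P}M = q^{-1}\mathbf{P}^{-1}$) to reduce to $q^{-|\bs\alpha|}\langle s_{\bs\alpha},s_{\bs\alpha}\rangle$. The paper's proof is simply a two-line compression of your argument, writing the outcome $\langle f_{\bs\alpha}(q)s_{\bs\alpha}(\X),\, f_{\bs\alpha}(q)s_{\bs\alpha}[\X/q]\rangle$ directly without spelling out the adjointness step or the matrix computation.
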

\begin{proof}
Using the previous lemma, we calculate
\begingroup
\allowdisplaybreaks
\begin{align*}
&\left\langle \tilde{H}_{\bs\alpha}(\X;z,z^{-1}),\tilde{H}_{\bs\alpha}[\left(
\begin{array}{cc}
0 & -1\\
-1 & 0
\end{array}
\right)
\left(
\begin{array}{cc}
1 & -z\\
-z & 1
\end{array}
\right)
\left(
\begin{array}{cc}
1 & -z^{-1}\\
-z^{-1} & 1
\end{array}
\right)\X;z,w]\right\rangle
\\
=&\langle f(z)s_{\bs\alpha}(\X),f(z)s_{\bs\alpha}[\frac{\X}{z}]\rangle
\\
=&z^{-|\bs\alpha|}f_{\bs\alpha}(z)^2.
\end{align*}
\endgroup
\end{proof}

\subsection{Proof of the Main Theorem}
\subsubsection{}
Recall the symmetric functions defined in  \S \ref{subsec-Conjecture} and \S \ref{Omega(q)}.
\begin{Lem}\label{Euler-Omega}
We have
\begingroup
\allowdisplaybreaks
\begin{align*}
\Omega_1(\sqrt{z},\frac{1}{\sqrt{z}})&=\Omega_1(z),\\
\Omega_0(\sqrt{z},\frac{1}{\sqrt{z}})&=\Omega_0(z),\\
\Omega_{\ast}(\sqrt{z},\frac{1}{\sqrt{z}})&=\Omega_{\ast}(z).\\
\end{align*}
\endgroup
\end{Lem}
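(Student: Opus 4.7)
The plan is to specialize $(z,w) = (\sqrt{q}, 1/\sqrt{q})$ directly in each series from \S \ref{subsec-series} and reduce every factor to the explicit evaluations supplied by Lemmas \ref{Mac-t=q^-1}, \ref{N-t=q^-1}, \ref{wMac-t=q^-1}, and \ref{N'-t=q^-1}. Under this specialization $zw = 1$, $z^2 = q$, $w^2 = q^{-1}$, and by definition the deformed self-pairings evaluated at $u = zw = 1$ collapse to their undeformed counterparts: $N_{\lambda}(1, q, q^{-1}) = N_{\lambda}(q, q^{-1})$ and $\tilde{N}_{\bs\alpha}(1, q, q^{-1}) = \tilde{N}_{\bs\alpha}(q, q^{-1})$.

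For $\Omega_{\ast}$ I would substitute $H_{\alpha}(\mathbf{z}; q, q^{-1}) = f_{\alpha}(q)\, s_{\alpha}[\mathbf{z}/(1-q)]$ from Lemma \ref{Mac-t=q^-1}, with $f_{\alpha}(q) := 1/s_{\alpha}[1/(1-q)]$, and $N_{\alpha}(q, q^{-1}) = q^{-|\alpha|} f_{\alpha}(q)^2$ from Lemma \ref{N-t=q^-1}. The $f_{\alpha}(q)^{2k}$ arising from the $2k$ Macdonald evaluations combines with $(q^{-|\alpha|} f_{\alpha}(q)^2)^{2g+k-1}$ in the numerator and $q^{-|\alpha|} f_{\alpha}(q)^2$ in the denominator to leave $f_{\alpha}(q)^{2(2g+2k-2)} q^{(2-2g-k)|\alpha|}$. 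Proposition \ref{Hai3.3.2} then rewrites $f_{\alpha}(q) = H_{\alpha}(q) q^{-n(\alpha)}$, producing exactly the factor $(H_{\alpha}(q)^2 q^{-2n(\alpha)})^{2g+2k-2}$ of $\Omega_{\ast}(q)$, and the $\Omega_{\ast}$ identity follows term by term.

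The calculation for $\Omega_e$ is parallel. Lemma \ref{wMac-t=q^-1} gives $\tilde{H}_{\bs\alpha}(\X_j; q, q^{-1}) = f_{\bs\alpha}(q)\, s_{\bs\alpha}[\mathbf{P}\X_j]$, contributing $f_{\bs\alpha}(q)^{2k}$ from the product over $j$. Lemma \ref{N'-t=q^-1} gives $\tilde{N}_{\bs\alpha}(q, q^{-1}) = q^{-|\bs\alpha|} f_{\bs\alpha}(q)^2$, and the collapse $\tilde{N}_{\bs\alpha}(1, q, q^{-1}) = \tilde{N}_{\bs\alpha}(q, q^{-1})$ turns the denominator $\tilde{N}_{\bs\alpha}(z^2, w^2) \tilde{N}_{\bs\alpha}(zw, z^2, w^2)^{k-1}$ into $\tilde{N}_{\bs\alpha}(q, q^{-1})^{k} = q^{-k|\bs\alpha|} f_{\bs\alpha}(q)^{2k}$. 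The wreath factor $f_{\bs\alpha}(q) = 1/(s_{\alpha^{(0)}}[1/(1-q^2)]\, s_{\alpha^{(1)}}[q/(1-q^2)])$, which depends intricately on both components of $\bs\alpha$, therefore cancels identically. The surviving $f_{\{\bs\alpha\}_e}(q)^{2(g+k-1)}$ from $N_{\{\bs\alpha\}_e}(1, q, q^{-1})^{g+k-1}$ involves the ordinary \emph{partition} $f$ (since $\{\bs\alpha\}_e$ is a partition), and Proposition \ref{Hai3.3.2} converts it to $(H_{\{\bs\alpha\}_e}(q) q^{-n(\{\bs\alpha\}_e)})^{2g+2k-2}$; matching the residual exponent $q^{k|\bs\alpha|-(g+k-1)|\{\bs\alpha\}_e|}$ with $q^{k|\bs\alpha|} q^{(1-g-k)|\{\bs\alpha\}_e|}$ completes the identification with $\Omega_e(q)$. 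The proof is essentially bookkeeping with no genuine obstacle; the one structurally significant point worth highlighting is that the exponents of $\tilde{N}_{\bs\alpha}$ in the denominator of $\Omega_e$ and of $\tilde{H}_{\bs\alpha}$ in the product are balanced precisely so that the wreath-level factor $f_{\bs\alpha}(q)$ cancels identically, leaving only the partition-level factor $f_{\{\bs\alpha\}_e}(q)$ demanded by the E-polynomial formula of Theorem \ref{Main-Thm}.
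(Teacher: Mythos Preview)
Your proposal is correct and follows essentially the same approach as the paper: specialize each series term by term using Lemmas \ref{Mac-t=q^-1}, \ref{N-t=q^-1}, \ref{wMac-t=q^-1}, \ref{N'-t=q^-1} and Proposition \ref{Hai3.3.2}, observing that at $u=zw=1$ the deformed self-pairings collapse to the undeformed ones so that the wreath factors $f_{\bs\alpha}(q)$ cancel and only the partition-level factors $f_{\{\bs\alpha\}_e}(q)$ survive. The paper's proof is more terse but records exactly the same two key identities (its equations \eqref{Euler-Omega-eq1} and \eqref{Euler-Omega-eq2}) that you derive.
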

\begin{proof}
Let us compute $\Omega_1(\sqrt{z},\frac{1}{\sqrt{z}})$. By Lemma \ref{wMac-t=q^-1} and Lemma \ref{N'-t=q^-1}, we get \begin{equation}\label{Euler-Omega-eq1}
\frac{\prod_{j=1}^{2k}\tilde{H}_{\bs\alpha}(\X_j;z,z^{-1})}{\tilde{N}_{\bs\alpha}(z,z^{-1})\tilde{N}_{\bs\alpha}(1,z,z^{-1})^{k-1}}=z^{k|\bs\alpha|}\prod_{j=1}^{2k}s_{\bs\alpha}[\left(
\begin{array}{cc}
1 & -z\\
-z & 1
\end{array}
\right)^{-1}\X_j].
\end{equation}
By Lemma \ref{N-t=q^-1} and Proposition \ref{Hai3.3.2}, we have 
\begin{equation}\label{Euler-Omega-eq2}
N_{\{\bs\alpha\}_1}(1,z,z^{-1})^{g+k-1}=z^{-(g+k-1)|\{\bs\alpha\}_1|}(z^{-n(\{\bs\alpha\}_1)}H_{\lambda}(z))^{2(g+k-1)},
\end{equation}
whence the equality for $\Omega_1$. The proof for $\Omega_0$ is similar. For $\Omega_{\ast}$, we use Lemma \ref{Mac-t=q^-1} and Lemma \ref{N-t=q^-1}.
\end{proof}

The following proposition finishes the proof of Theorem \ref{Main-Thm-intro}.
\begin{Prop}
The following equality holds:
\begin{equation*}
H_c(\Ch_{\mathcal{C}};q,-1)=q^{\frac{1}{2}d}\mathbb{H}_{\mathbf{B}}(\sqrt{q},\frac{1}{\sqrt{q}}).
\end{equation*}
\end{Prop}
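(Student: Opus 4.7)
The plan is to chain together three results already in the excerpt: Theorem \ref{Main-Thm}, Lemma \ref{Euler-Omega}, and Katz's theorem on $E$-polynomials via point counting. First I would invoke Katz's theorem applied to the $R$-model $\sCh_{\mathcal{C}} = \Spec\mathcal{A}^{\GL_n(R)}$ set up in \S \ref{R}. The strategy there is the usual one from \cite{HRV}: one shows that the counting function $\phi \mapsto |\sCh_{\mathcal{C}}(\mathbb{F}_q)|$ is given by a polynomial in $q$ whose coefficients do not depend on the choice of $\phi: R \to \mathbb{F}_q$. But this polynomial property is immediate from Theorem \ref{Main-Thm}: its right-hand side is an explicit rational expression in $q$ depending only on the combinatorial data $\mathbf{B} = (\bs\beta_j)_j$ (through the $h_{\bs\beta_j^\ast}$), not on the individual eigenvalues of the $C_j$. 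Once this polynomial-count property is in hand, Katz's theorem yields $E(\Ch_{\mathcal{C}}; q) = |\Ch_{\mathcal{C}}(\mathbb{F}_q)|$ and hence $H_c(\Ch_{\mathcal{C}}; q, -1) = |\Ch_{\mathcal{C}}(\mathbb{F}_q)|$ under assumption (ii) of Conjecture \ref{The-Conj}.

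Next I would substitute Theorem \ref{Main-Thm} for $|\Ch_{\mathcal{C}}(\mathbb{F}_q)|$ and apply Lemma \ref{Euler-Omega} term by term. For $n$ odd, this gives
\begin{equation*}
H_c(\Ch_{\mathcal{C}}; q, -1) = q^{d/2} \left\langle \frac{\Omega_1(q)\Omega_0(q)}{\Omega_\ast(q)}, \prod_{j=1}^{2k} h_{\bs\beta_j^\ast}(\X_j) \right\rangle = q^{d/2} \left\langle \frac{\Omega_1(\sqrt{q}, \tfrac{1}{\sqrt{q}}) \Omega_0(\sqrt{q}, \tfrac{1}{\sqrt{q}})}{\Omega_\ast(\sqrt{q}, \tfrac{1}{\sqrt{q}})}, \prod_{j=1}^{2k} h_{\bs\beta_j^\ast}(\X_j) \right\rangle,
\end{equation*}
and the even case is identical after replacing $\Omega_1 \Omega_0$ by $\Omega_0^2$. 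Comparing this with the definition of $\mathbb{H}_{\mathbf{B}}(z,w)$ in \S \ref{subsec-Conjecture}, the bracket on the right is exactly $\mathbb{H}_{\mathbf{B}}(\sqrt{q}, 1/\sqrt{q})$ in each parity, yielding the claimed formula.

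The main obstacle is really the first step: justifying that Katz's theorem applies, i.e. that $\Ch_{\mathcal{C}}$ is ``polynomial-count'' with respect to the specific $R$-model of \S \ref{R}. The sensitive point is that the generic condition travels well under the specialisation $R \to \mathbb{F}_q$ so that the count on both sides is computed by the same formula of Theorem \ref{Main-Thm}, and that the resulting polynomial in $q$ is independent of the spreading-out. Once this is established, the rest is bookkeeping: a direct substitution of Lemma \ref{Euler-Omega} followed by unfolding the definition of $\mathbb{H}_{\mathbf{B}}$, together with the dimension computation $d = n^2(2g+2k-2) - 2kN - \sum_j 2a(\bs\beta_j)$ from \S \ref{dim-Ch} matching the exponent $q^{d/2}$. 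No further combinatorial identity among the wreath Macdonald polynomials is needed for this specialisation, since the $t = -1$ specialisation collapses the wreath Macdonald polynomials to the simpler form given by Lemma \ref{wMac-t=q^-1}, precisely the content packaged into Lemma \ref{Euler-Omega}.
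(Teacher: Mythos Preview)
Your approach is essentially the same as the paper's: invoke Theorem \ref{Main-Thm} for the point count, then apply Lemma \ref{Euler-Omega} to identify the result with $\mathbb{H}_{\mathbf{B}}(\sqrt{q},1/\sqrt{q})$. The paper is simply terser, taking the Katz step (that $H_c(\Ch_{\mathcal{C}};q,-1)$ equals the point-counting polynomial) as understood from the setup in \S\ref{R}--\S\ref{A_0} and the discussion in the introduction; your explicit justification of polynomial-count via the $\phi$-independence of the formula in Theorem \ref{Main-Thm} is exactly what underlies this. One minor remark: you do not need to invoke assumption (ii) of Conjecture \ref{The-Conj}, since Katz's theorem already forces the $E$-polynomial $H_c(\Ch_{\mathcal{C}};x,y,-1)$ to be a polynomial in $q=xy$ once polynomial-count is established, so the notation $H_c(\Ch_{\mathcal{C}};q,-1)$ is unambiguous at $t=-1$ independently of the full conjecture.
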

\begin{proof}
By Theorem \ref{Main-Thm}, we have
$$H_c(\Ch_{\mathcal{C}};q,-1)=q^{\frac{1}{2}d}
\left\langle\frac{\Omega(q)_1\Omega(q)_{0}}{\Omega_{\ast}(q)},\prod^{2k}_{j=1}h_{\bs\beta^{\ast}_j}(\X_j)\right\rangle$$if $n$ is odd. Then the proposition follows from Lemma \ref{Euler-Omega}. The case of even $n$ is similar.
\end{proof}

\begin{Prop}
The formula in Conjecture \ref{The-Conj} (iii) implies Conjecture \ref{Cur-Poin}.
\end{Prop}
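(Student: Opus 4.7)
The plan is to translate Conjecture~\ref{Cur-Poin}, via the formula of Conjecture~\ref{The-Conj}(iii), into an elementary functional equation for $\mathbb{H}_{\mathbf{B}}(z,w)$, and then to verify that equation using the $(q,t)$-involutions of (wreath) Macdonald polynomials recorded in Remarks~\ref{Sym-Mac} and~\ref{Sym-wMac}.

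First I would substitute $q\mapsto 1/(qt^{2})$ in the formula $H_c(\Ch_{\mathcal{C}};q,t)=(t\sqrt{q})^{d}\mathbb{H}_{\mathbf{B}}(-t\sqrt{q},1/\sqrt{q})$. Since $\sqrt{q}\mapsto 1/(\sqrt{q}\,t)$, one has $-t\sqrt{q}\mapsto -1/\sqrt{q}$ and $1/\sqrt{q}\mapsto \sqrt{q}\,t$, so that
\[
H_c(\Ch_{\mathcal{C}};1/(qt^{2}),t)=(\sqrt{q})^{-d}\,\mathbb{H}_{\mathbf{B}}\bigl(-1/\sqrt{q},\sqrt{q}\,t\bigr).
\]
Combined with $(qt)^{-d}(t\sqrt{q})^{d}=(\sqrt{q})^{-d}$, and setting $z=-t\sqrt{q}$, $w=1/\sqrt{q}$, Conjecture~\ref{Cur-Poin} reduces to the single functional equation
\[
\mathbb{H}_{\mathbf{B}}(z,w)=\mathbb{H}_{\mathbf{B}}(-w,-z).
\]

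The substitution $(z,w)\mapsto(-w,-z)$ preserves the product $zw$ and interchanges $z^{2}$ with $w^{2}$. By Remark~\ref{Sym-Mac} this sends each factor $H_{\lambda}(\Z;z^{2},w^{2})$ appearing in $\Omega_{\ast}$ to $H_{\lambda^{\ast}}(\Z;z^{2},w^{2})$, and each (deformed) self-pairing $N_{\lambda}(u,z^{2},w^{2})$ (for $u\in\{1,zw\}$) to $N_{\lambda^{\ast}}(u,z^{2},w^{2})$; by Remark~\ref{Sym-wMac} the analogous statements hold for the wreath polynomials $\tilde{H}_{\bs\alpha}(\X;z^{2},w^{2})$ and their self-pairings $\tilde{N}_{\bs\alpha}(u,z^{2},w^{2})$ appearing in $\Omega_{e}$. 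Relabeling the summation index $\lambda\mapsto\lambda^{\ast}$ in $\Omega_{\ast}$ and $\bs\alpha\mapsto\bs\alpha^{\ast}$ in $\Omega_{0}$ and $\Omega_{1}$ (both bijections on the respective indexing sets) then recovers the original expressions, provided one has the 2-core/2-quotient compatibility
\[
\{\bs\alpha^{\ast}\}_{e}=\{\bs\alpha\}_{e}^{\ast}\qquad\text{for }e\in\{0,1\}.
\]
Since $h_{\bs\beta_{j}^{\ast}}(\X_{j})$ does not depend on $(z,w)$, it follows that both of the rational functions $\Omega_{1}\Omega_{0}/\Omega_{\ast}$ (odd $n$) and $\Omega_{0}^{2}/\Omega_{\ast}$ (even $n$) are invariant under $(z,w)\mapsto(-w,-z)$, whence so is $\mathbb{H}_{\mathbf{B}}$.

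The main obstacle is the 2-core/2-quotient identity $\{\bs\alpha^{\ast}\}_{e}=\{\bs\alpha\}_{e}^{\ast}$, which requires care in view of the paper's convention for the 2-quotient (Remark~\ref{conv-2-quot}). Both 2-cores $(0)$ and $(1)$ are self-dual, so the 2-core of $\lambda^{\ast}$ is again $(e)$; the 2-quotient construction of \S\ref{quotcore} interchanges the even and odd parts of $\lambda+\delta_{r}$ and conjugates each resulting factor, and under the choice of parity of $r$ prescribed for each 2-core one verifies that this recovers exactly $\bs\alpha^{\ast}=(\alpha^{(1)\ast},\alpha^{(0)\ast})$. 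This last step is the only nontrivial piece of parity bookkeeping; after that the proof is purely formal.
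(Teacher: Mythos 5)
Your proof is correct and takes essentially the same approach as the paper: the paper's proof establishes the two symmetries $\mathbb{H}_{\mathbf{B}}(z,w)=\mathbb{H}_{\mathbf{B}}(w,z)$ and $\mathbb{H}_{\mathbf{B}}(z,w)=\mathbb{H}_{\mathbf{B}}(-z,-w)$ from the conjugation identities in Remarks \ref{Sym-Mac} and \ref{Sym-wMac}, which together yield the functional equation $\mathbb{H}_{\mathbf{B}}(z,w)=\mathbb{H}_{\mathbf{B}}(-w,-z)$ that you verify in one step. You are merely more explicit than the paper in writing out the substitution $q\mapsto 1/(qt^2)$ and in flagging the needed 2-core/2-quotient compatibility $\{\bs\alpha^{\ast}\}_e=\{\bs\alpha\}_e^{\ast}$, which the paper leaves implicit.
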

\begin{proof}
By the definitions of $\Omega_e$, $e=0$, $1$, and $\Omega_{\ast}$ in \S \ref{subsec-series}, and Remark \ref{Sym-Mac} and Remark \ref{Sym-wMac}, we have
$$
\Omega_e(z,w)=\Omega_e(w,z),\quad\Omega_{\ast}(z,w)=\Omega_{\ast}(w,z).
$$
Then we see that
$$
\mathbb{H}_{\mathbf{B}}(z,w)=\mathbb{H}_{\mathbf{B}}(w,z),\quad\mathbb{H}_{\mathbf{B}}(z,w)=\mathbb{H}_{\mathbf{B}}(-z,-w).
$$
The proposition follows.
\end{proof}

\begin{Cor}
The E-polynomial $E(q)$ of $\Ch_{\mathcal{C}}$ satisfies
\begin{equation}
q^{d}E(q^{-1})=E(q).
\end{equation}
\end{Cor}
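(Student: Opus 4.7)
The plan is to derive this symmetry as an immediate corollary of Theorem~\ref{Main-Thm-intro} combined with the two symmetries of $\mathbb{H}_{\mathbf{B}}(z,w)$ already recorded in the proof of the preceding proposition. Namely, Theorem~\ref{Main-Thm-intro} gives
\[
E(q) = H_c(\Ch_{\mathcal{C}};q,-1) = q^{d/2}\,\mathbb{H}_{\mathbf{B}}(\sqrt{q},1/\sqrt{q}),
\]
and the preceding proposition observed, using Remarks~\ref{Sym-Mac} and~\ref{Sym-wMac}, that $\mathbb{H}_{\mathbf{B}}(z,w)=\mathbb{H}_{\mathbf{B}}(w,z)$ and $\mathbb{H}_{\mathbf{B}}(z,w)=\mathbb{H}_{\mathbf{B}}(-z,-w)$.

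The first step is a short parity check to ensure $q^{d/2}$ makes sense as a monomial in $q$. From the dimension formula in \S\ref{dim-Ch}, one has $d=n^2(2g+2k-2)-2kN-\sum_j 2a(\bs\beta_j)$; the last two summands are manifestly even, and $n^2(2g+2k-2)$ is also even since $2g+2k-2$ is, so $d\in 2\mathbb{Z}$. Next one should verify that $\mathbb{H}_{\mathbf{B}}(\sqrt{q},1/\sqrt{q})$ is actually a Laurent polynomial in $q$ (not merely in $\sqrt{q}$): this is immediate from the symmetry $\mathbb{H}_{\mathbf{B}}(-z,-w)=\mathbb{H}_{\mathbf{B}}(z,w)$, which forces invariance under $\sqrt{q}\mapsto -\sqrt{q}$. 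Hence the substitution $q\mapsto q^{-1}$ in the formula above is unambiguous.

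With these preliminaries, the computation is a one-liner. Applying $q\mapsto q^{-1}$ to Theorem~\ref{Main-Thm-intro} and then invoking the $(z,w)\leftrightarrow(w,z)$ symmetry of $\mathbb{H}_{\mathbf{B}}$ yields
\[
E(q^{-1}) \;=\; q^{-d/2}\,\mathbb{H}_{\mathbf{B}}(1/\sqrt{q},\sqrt{q}) \;=\; q^{-d/2}\,\mathbb{H}_{\mathbf{B}}(\sqrt{q},1/\sqrt{q}),
\]
and multiplying through by $q^d$ gives $q^d E(q^{-1}) = q^{d/2}\,\mathbb{H}_{\mathbf{B}}(\sqrt{q},1/\sqrt{q}) = E(q)$, as required.

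There is no substantive obstacle; all the real content has been absorbed into Theorem~\ref{Main-Thm-intro} and the preceding proposition. The only things to spell out are the parity of $d$ and the descent from $\sqrt{q}$ to $q$, both of which are elementary. If one preferred a more conceptual derivation, one could observe that the statement is simply the $t=-1$ specialisation of the Curious Poincar\'e Duality (Conjecture~\ref{Cur-Poin}): setting $t=-1$ in $H_c(\Ch_{\mathcal{C}};1/(qt^2),t)=(qt)^{-d}H_c(\Ch_{\mathcal{C}};q,t)$ gives $E(q^{-1})=(-q)^{-d}E(q)=q^{-d}E(q)$ since $d$ is even, and the $t=-1$ specialisation of Conjecture~\ref{Cur-Poin} follows from the $t=-1$ specialisation of Conjecture~\ref{The-Conj}(iii), which is precisely the content of Theorem~\ref{Main-Thm-intro}.
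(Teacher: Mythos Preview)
Your proof is correct and follows essentially the same approach as the paper, which simply cites Theorem~\ref{Main-Thm-intro} and the preceding proposition. You have just filled in the details (parity of $d$, descent from $\sqrt{q}$ to $q$) that the paper leaves implicit.
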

\begin{proof}
Use Theorem \ref{Main-Thm-intro} and the above proposition.
\end{proof}
This proves Theorem \ref{E-Cur-Poin}.

\addtocontents{toc}{\protect\setcounter{tocdepth}{1}}
\appendix

\section{Wreath Macdonald Polynomials of Degrees 1 and 2}\label{App-WMac1-2}
Here we give explicit expressions of Macdonald polynomials of degree 2 and wreath Macdonald polynomials of degrees 1 and 2. They will be used in the computation of the mixed Hodge polynomials of $\GL_n\lb\sigma\rb~$-character varieties in the appendices that follow. The reader will also observe some key features of wreath Macdonald polynomials.

\subsection{Young Diagrams}
\subsubsection{}
Young diagrams of the partitions of $2$, $3$, $4$ and $5$ are given below. The boxes defining their 2-quotients are indicated by "$\bullet$", and the second row of each table gives the corresponding 2-quotients. (See Remark \ref{conv-2-quot} for our convention on 2-quotient.) The partitions are ordered in such a way that a partition sits on the left of another if it is larger under the dominance order. We see that the 2-cores $(0)$ and $(1)$ induce the same order on the set of 2-partitions of size $1$, but different orders on the set of 2-partitions of size 2.
\begin{center}
{\renewcommand{\arraystretch}{1.2}
\begin{tabular}{cc}
\raisebox{-.5\normalbaselineskip}
{\begin{ytableau}
\bullet & {}
\end{ytableau}} 
& 
\begin{ytableau}
\bullet\\
{}
\end{ytableau}
\\[3ex] 
\hline
$((1),\varnothing)$ & $(\varnothing,(1))$\\
\end{tabular}}
\quad
{\renewcommand{\arraystretch}{1.2}
\begin{tabular}{cc}
\raisebox{-1\normalbaselineskip}
{\begin{ytableau}
{} & \bullet & {}
\end{ytableau}} 
& 
\begin{ytableau}
{} \\
\bullet\\
{}
\end{ytableau}
\\[6ex] 
\hline
$((1),\varnothing)$ & $(\varnothing,(1))$\\
\end{tabular}}
\end{center}

\begin{center}
{\renewcommand{\arraystretch}{1.2}
\begin{tabular}{ccccc}
\raisebox{-1.5\normalbaselineskip}
{\begin{ytableau}
\bullet & {} & \bullet & {}
\end{ytableau}} 
& 
\raisebox{-1\normalbaselineskip}
{\begin{ytableau}
\bullet & \bullet & {}\\
{}
\end{ytableau}}
&
\raisebox{-1\normalbaselineskip}
{\begin{ytableau}
{} & \bullet\\
\bullet & {}
\end{ytableau}}
&
\raisebox{-.5\normalbaselineskip}
{\begin{ytableau}
\bullet & {}\\
\bullet\\
{}
\end{ytableau}}
&
\begin{ytableau}
\bullet\\
{}\\
\bullet\\
{}
\end{ytableau}
\\[9.5ex] 
\hline
$((2),\varnothing)$ & $(\varnothing,(2))$ & $((1),(1))$ & $((1^2),\varnothing)$ & $(\varnothing,(1^2))$\\
\end{tabular}}
\end{center}

\begin{center}
{\renewcommand{\arraystretch}{1.2}
\begin{tabular}{ccccc}
\raisebox{-2\normalbaselineskip}
{\begin{ytableau}
{} & \bullet & {} & \bullet & {}
\end{ytableau}} 
& 
\raisebox{-1.5\normalbaselineskip}
{\begin{ytableau}
\bullet & {} & {}\\
\bullet & {}
\end{ytableau}}
&
\raisebox{-1.5\normalbaselineskip}
{\begin{ytableau}
{} & \bullet & {}\\
\bullet\\
{}
\end{ytableau}}
&
\raisebox{-1\normalbaselineskip}
{\begin{ytableau}
\bullet & \bullet\\
{} & {}\\
{}
\end{ytableau}}
&
\begin{ytableau}
{}\\
\bullet\\
{}\\
\bullet\\
{}
\end{ytableau}
\\[12.5ex] 
\hline
$((2),\varnothing)$ & $(\varnothing,(1^2))$ & $((1),(1))$ & $((2),\varnothing)$ & $(\varnothing,(1^2))$\\
\end{tabular}}
\end{center}

\subsection{Expressions}
\subsubsection{}\label{Express-Mac}
The wreath Macdonald polynomials of size $1$ can be directly computed using the definition. We find:
\begingroup
\allowdisplaybreaks
\begin{align}
\tilde{H}_{((1),\varnothing)}&=s_{((1),\varnothing)}+zs_{(\varnothing,(1))},\\
\tilde{H}_{(\varnothing,(1))}&=s_{((1),\varnothing)}+ws_{(\varnothing,(1))}.
\end{align}
\endgroup
Similarly, we can compute the Macdonald polynomials of degree 2:
\begingroup
\allowdisplaybreaks
\begin{align}
H_{(2)}&=s_{(2)}+zs_{(1^2)},\\
H_{(1^2)}&=s_{(2)}+ws_{(1^2)}.
\end{align}
\endgroup

The $z,w$-inner products of these wreath Macdonald polynomials are given by 
\begingroup
\allowdisplaybreaks
\begin{align}
\langle \tilde{H}_{((1),\varnothing)},\tilde{H}_{((1),\varnothing)}\rangle_{z,w}&=(z^2-1)(z-w),\\
\langle \tilde{H}_{(\varnothing,(1))},\tilde{H}_{(\varnothing,(1))}\rangle_{z,w}&=(w^2-1)(w-z).
\end{align}
\endgroup
And those of Macdonald polynomials are given by
\begingroup
\allowdisplaybreaks
\begin{align}
\langle H_{(2)},H_{(2)}\rangle_{z,w}&=(z^2-1)(z-w)(z-1)(1-w),\\
\langle H_{(1^2)},H_{(1^2)}\rangle_{z,w}&=(w^2-1)(w-z)(z-1)(1-w).
\end{align}
\endgroup
Note that these $z,w$-inner products differ from the corresponding products of wreath Macdonald polynomials by $(z-1)(1-w)$.

We will need to compute the Hall inner products of the (wreath) Macdonald polynomials against the complete symmetric functions. It suffices to notice
\begingroup
\allowdisplaybreaks
\begin{align}
h_{((1),\varnothing)}=s_{((1),\varnothing)},\quad &h_1(\Xo+\Xl)=s_{((1),\varnothing)}(\X)+s_{(\varnothing,(1))}(\X),\\
h_2=s_{(2)},\quad &h_1^2=s_{(2)}+s_{(1^2)}.
\end{align}
\endgroup

\subsubsection{}
We list below the wreath Macdonald polynomials of size 2 and their self-pairings. Observe that the coefficients can be a Laurent polynomial, and that in the self-pairings there are "wreath-$\nabla$" terms.

2-core $=(0)$:
\begingroup
\allowdisplaybreaks
\begin{align}
\tilde{H}_{((2),\varnothing)}=&s_{((2),\varnothing)}+z^2s_{(\varnothing,(2))}+z(z^2+1)s_{((1),(1))}+z^2s_{((1^2),\varnothing)}+z^4s_{(\varnothing,(1^2))}\\
\tilde{H}_{(\varnothing,(2))}=&s_{((2),\varnothing)}+wz^{-1}s_{(\varnothing,(2))}+(z+w)s_{((1),(1))}+z^2s_{((1^2),\varnothing)}+zws_{(\varnothing,(1^2))}\\
\tilde{H}_{((1),(1))}=&s_{((2),\varnothing)}+s_{(\varnothing,(2))}+(z+w)s_{((1),(1))}+zws_{((1^2),\varnothing)}+zws_{(\varnothing,(1^2))}\\
\tilde{H}_{((1^2),\varnothing)}=&s_{((2),\varnothing)}+zw^{-1}s_{(\varnothing,(2))}+(z+w)s_{((1),(1))}+w^2s_{((1^2),\varnothing)}+zws_{(\varnothing,(1^2))}\\
\tilde{H}_{(\varnothing,(1^2))}=&s_{((2),\varnothing)}+w^2s_{(\varnothing,(2))}+w(w^2+1)s_{((1),(1))}+w^2s_{((1^2),\varnothing)}+w^4s_{(\varnothing,(1^2))}
\end{align}
\endgroup

\begingroup
\allowdisplaybreaks
\begin{align}
\tilde{N}_{((2),\varnothing)}=&(z^4-1)(z^3-w)(z^2-1)(z-w)\\
\tilde{N}_{(\varnothing,(2))}=&z^{-2}(z^3-w)(z^2-w^2)(z^2-1)(z-w)\\
\tilde{N}_{((1),(1))}=&(z-w)(1-w^2)(z^2-1)(z-w)\\
\tilde{N}_{((1^2),\varnothing)}=&w^{-2}(z^2-w^2)(z-w^3)(z-w)(1-w^2)\\
\tilde{N}_{(\varnothing,(1^2))}=&(z-w^3)(1-w^4)(z-w)(1-w^2)
\end{align}
\endgroup

2-core $(1)$:
\begingroup
\allowdisplaybreaks
\begin{align}
\tilde{H}_{((2),\varnothing)}=&s_{((2),\varnothing)}+z^2s_{((1^2),\varnothing)}+z(z^2+1)s_{((1),(1))}+z^2s_{(\varnothing,(2))}+z^4s_{(\varnothing,(1^2))}\\
\tilde{H}_{((1^2),\varnothing)}=&s_{((2),\varnothing)}+wz^{-1}s_{((1^2),\varnothing)}+(z+w)s_{((1),(1))}+z^2s_{(\varnothing,(2))}+zws_{(\varnothing,(1^2))}\\
\tilde{H}_{((1),(1))}=&s_{((2),\varnothing)}+s_{((1^2),\varnothing)}+(z+w)s_{((1),(1))}+zws_{(\varnothing,(2))}+zws_{(\varnothing,(1^2))}\\
\tilde{H}_{(\varnothing,(2))}=&s_{((2),\varnothing)}+zw^{-1}s_{((1^2),\varnothing)}+(z+w)s_{((1),(1))}+w^2s_{(\varnothing,(2))}+zws_{(\varnothing,(1^2))}\\
\tilde{H}_{(\varnothing,(1^2))}=&s_{((2),\varnothing)}+w^2s_{((1^2),\varnothing)}+w(w^2+1)s_{((1),(1))}+w^2s_{(\varnothing,(2))}+w^4s_{(\varnothing,(1^2))}
\end{align}
\endgroup
The self-pairings are given by the same formulae as in the case of 2-core $(0)$, except that $\tilde{N}_{((2),\varnothing)}$ and $\tilde{N}_{((1^2),\varnothing)}$ should be exchanged.

\section{Conjectural Mixed Hodge Polynomial; $n=1$ and $n=2$}\label{App-12}
Our formula for the E-polynomial suggests that a two-parameter series $\Omega_{e}(z,w)$, $e=0$ or $1$, should roughly be of this form (See (\ref{Euler-Omega-eq1}) and (\ref{Euler-Omega-eq2}))$$\sum_{\bs\alpha\in\mathcal{P}^2}\frac{N_{\{\bs\alpha\}_1}(z^2,w^2)^{g+k-1}}{\tilde{N}_{\bs\alpha}(z^2,w^2)^k}\prod_{j=1}^{2k}\tilde{H}_{\bs\alpha}(\mathbf{x}_j;z^2,w^2).$$But the E-polynomial is unable to distinguish a self-pairing from its deformed version. The computations in this appendix suggest that there should be $g+k-1$ deformed self-pairings in the numerator, and $k-1$ deformed self-pairings in the denominator, with the remaining one undeformed. This explains our definition in \S \ref{subsec-series}.

We work with arbitrary $g$ and $k>0$.

\subsection{Character Varieties with $n=1$ and $n=2$.}\hfill

We first give explicit descriptions of the character varieties when $n=1$ and $n=2$.
\subsubsection{}\label{Ch-rank-1}
When $n=1$, $\GL_1\lb\sigma\rb$ is just the multiplicative group with $\sigma$ acting as the inversion. In this case, there is only one conjugacy class contained in $\GL_1\sigma$: the connected component itself. By definition, the tuple of conjugacy classes $\mathcal{C}=(C_j)_{1\le j\le 2k}$ with each $C_j=\GL_1\sigma$ is generic, since there is no proper parabolic subgroup except the trivial one. We claim that $$\Ch_{\mathcal{C}}\cong\mathbb{G}_m^{2(g+k-1)}.$$The defining equation of the character variety reads:$$1=\prod_i^g[A_i,B_i]\prod_j^{2k}X_j\sigma=X_1X_2^{-1}X_3X_4^{-1}\cdots X_{2k-1}X_{2k}^{-1}$$with all $A_i$, $B_i$ and $X_j$ lying in $\mathbb{G}_m$. The conjugation action of $t\in\mathbb{G}_m$ on the coordinates is given by:$$t:(A_i,B_i)_i(X_j)_j\longmapsto(A_i,B_i)_i(t^2X_j)_j.$$The quotient is a $(2g+2k-2)$-dimensional torus.

\subsubsection{}
Now let $n=2$. We will give a relation between $\GL_2\lb\sigma\rb~$-character varieties and $\GL_2$-character varieties.

The centraliser of $\sigma$ in $\GL_2$ is $\SL_2$. Therefore the $\GL_2$-conjugacy class of $\sigma$ is the variety $\mathbb{G}_m\sigma$, where elements of $\mathbb{G}_m$ are regarded as scalar matrices. In fact, we can see that each conjugacy class $C\subset\GL_2\sigma$ is of the form $\mathbb{G}_mC'\sigma$, for some $\SL_2$-conjugacy class $C'\subset\SL_2$. The semi-simple conjugacy classes in $\GL_2\sigma$ have representatives of the form $$s\sigma=\diag(a,a^{-1})\sigma.$$ We require that $a^2\ne-1$. Then there are two possibilities. Either $s\sigma$ is conjugate to $\sigma$, or $a^4\ne1$, in which case $$C_{\GL_2}(s\sigma)=\{\diag(x,x^{-1})\mid x\in\mathbb{G}_m\}\cong\mathbb{G}_m.$$ These two kinds of conjugacy classes in $\GL_2\sigma$ correspond to the conjugacy classes of $1$ and of $\diag(a,a^{-1})$ in $\SL_2$ respectively.

The inverse image of a conjugacy class $C\subset\GL_2\sigma$ under the double covering $\mathbb{G}_m\times\SL_2\rightarrow\GL_2\cong\GL_2\sigma$ is $(\mathbb{G}_m\times C')\sqcup(\mathbb{G}_m\times -C')$ for some conjugacy class $C'\subset\SL_2$. Given a tuple of semi-simple conjugacy classes $\mathcal{C}=(C_j)_{1\le j\le 2k}$ in $\GL_2\sigma$, we denote by $\mathcal{C}'=(C'_j)_{1\le j\le 2k}$ a tuple of conjugacy classes of $\SL_2$ that gives $\mathcal{C}$ in the above fashion. Note that each $\SL_2$-conjugacy class is also a $\GL_2$-conjugacy class. Thus it makes sense to consider the varieties: $$\Rep_{\mathcal{C}'}^{\pm}:=\left\{(A_i,B_i)_i\times (X'_j)_j\in \GL_2^{2g}\times \prod_{j=1}^{2k} C'_j\mid\prod_{i=1}^g[A_i,B_i]\prod_{j=1}^{2k}X'_j=\pm1\right\},$$and $\Ch_{\mathcal{C}'}^{\pm}:=\Rep_{\mathcal{C}'}^{\pm}\ds\GL_2$. Define the $(2k-1)$-dimensional tori: $$T^{\pm}:=\{(t_1,\ldots,t_{2k})\in\mathbb{G}_m^{2k}\mid t_1t_2^{-1}t_3t_4^{-1}\cdots t_{2k-1}t_{2k}^{-1}=\pm 1\}.$$

We have an isomorphism:
\begingroup
\allowdisplaybreaks
\begin{align*}
(\Rep_{\mathcal{C}'}^+\times T^+)\sqcup(\Rep_{\mathcal{C}'}^-\times T^-)&\lisom\Rep_{\mathcal{C}}\\
(A_i,B_i)_i(X'_j)_j(t_j)_j&\longmapsto(A_i,B_i)_i(t_jX'_j\sigma)_j.
\end{align*}
\endgroup
Indeed, if $(A_i,B_i)_i(X_j)_j$ lies in $\Rep_{\mathcal{C}}$, then each $X_j$ can be written as $t_jX'_j\sigma$ for a unique $X'_j\in C'_j$ and a $t_j\in\mathbb{G}_m$, and we have $$\prod_i[A_i,B_i]\prod_jX'_j(t_1t_2^{-1}\cdots t_{2k-1}t_{2k}^{-1})=1.$$Since $\SL_2\cap \mathbb{G}_m=\{\pm 1\}$, the tuple $(A_i,B_i)_i(X'_j)_j(t_j)_j$ lies in $\Rep_{\mathcal{C}'}^+\times T^+\sqcup\Rep_{\mathcal{C}'}^-\times T^-$. This isomorphism is equivariant for the conjugation action of $\GL_2$ on $\Rep_{\mathcal{C}}$ and the following action on $\Rep_{\mathcal{C}'}^{\pm}\times T^{\pm}$:
$$g:(A_i,B_i)_i(X'_j)_j(t_j)_j\longmapsto(gA_ig^{-1},gB_ig^{-1})_i(gX'_jg^{-1})_j(\det(g)t_j)_j.$$Therefore the quotient by $\GL_2$ is isomorphic to $\Ch_{\mathcal{C}'}^{\pm}\times\mathbb{G}_m^{2k-2}$. We conclude that there is an isomorphism
\begin{equation}\label{Ch-rank-2}
(\Ch_{\mathcal{C}'}^+\sqcup\Ch_{\mathcal{C}'}^-)\times \mathbb{G}_m^{2k-2}\lisom\Ch_{\mathcal{C}}.
\end{equation}
If $\mathcal{C}$ is generic, then it follows from the definition of generic conjugacy classes in $\GL_2\sigma$ that both of $\Ch_{\mathcal{C}'}^{\pm}$ have generic conjugacy classes.
\begin{Rem}\label{conn-Ch-rank-2}
If $C_j$ is the conjugacy class of $\diag(a_j,a_j^{-1})\sigma$ for some $a_j^2=-1$, then $C'_j=-C'_j$ and (\ref{Ch-rank-2}) is no longer an isomorphism, but a double covering with the action of $\mathbb{Z}/2\mathbb{Z}$ permuting the two connected components. It follows that $\Ch_{\mathcal{C}}$ is connected.
\end{Rem}

\subsection{Mixed Hodge Polynomials with $n=1$ and $n=2$}
\subsubsection{}
By \S \ref{Ch-rank-1}, we have $$\Ch_{\mathcal{C}}\cong (\mathbb{C}^{\ast})^{2(g+k-1)}$$ and so $$H_c(\Ch_{\mathcal{C}};q,t)=(t+qt^2)^{2(g+k-1)},$$by \cite[Example 2.1.11]{HRV} and the K\"unneth isomorphism for mixed Hodge structures. On the other hand, according to Conjecture \ref{The-Conj}, we have
$$H_c(\Ch_{\mathcal{C}};q,t)=(t\sqrt{q})^{2(g+k-1)}\left\langle\frac{\Omega_{1}\Omega_{0}}{\Omega_{\ast}},\prod^{2k}_{j=1}h_{\bs\beta^{\ast}_j}\right\rangle.$$Now $h_{\bs\beta^{\ast}_j}$ has degree 0, therefore only the constant term on the left hand side of the inner product contributes. The constant terms of $\Omega_{\ast}$ and $\Omega_0$ are 1. As for $\Omega_1$, we first note that $\tilde{N}_{\bs\alpha}=1$ for $\bs\alpha=(\varnothing,\varnothing)$. Since $\{\bs\alpha\}_1=(1)$, we have $N_{\{\bs\alpha\}_1}(zw,z^2,w^2)=(z-w)^2$. We find$$(t\sqrt{q})^d\mathbb{H}_{\mathbf{B}}(-t\sqrt{q},\frac{1}{\sqrt{q}})=(t\sqrt{q})^{2(g+k-1)}(t\sqrt{q}+\frac{1}{\sqrt{q}})^{2(g+k-1)}=(t+qt^2)^{2(g+k-1)},$$which is the correct mixed Hodge polynomial.

\subsubsection{}
Now we consider the case $n=2$. We are unable to compute the mixed Hodge polynomial of arbitrary $\GL_2\lb\sigma\rb~$-character varieties. Instead, we use (\ref{Ch-rank-2}):$$(\Ch_{\mathcal{C}'}^+\sqcup\Ch_{\mathcal{C}'}^-)\times \mathbb{G}_m^{2k-2}\lisom\Ch_{\mathcal{C}}$$ and show that under this isomorphism, Conjecture \ref{The-Conj} and Conjecture \ref{Conj-HLRV} predict the same mixed Hodge polynomial. In fact, we will show that they give the same rational function in $z$ and $w$ before setting $z=-t\sqrt{q}$ and $w=\sqrt{q}^{-1}$.

By Conjecture \ref{Conj-HLRV}, the two character varieties $\Ch_{\mathcal{C}'}^+$ and $\Ch_{\mathcal{C}'}^-$ have identical mixed Hodge polynomials. Therefore, we only need to consider $\Ch_{\mathcal{C}'}^+\times \mathbb{G}_m^{2k-2}$ and then multiply the result by $2$. Now we compute
$$
(z^2-1)(1-w^2)\Log\Omega_{HLRV}(z,w)
$$ following the procedure of \cite[\S 2.3.3]{HLR}. We get a sum of four terms:
\begingroup
\allowdisplaybreaks
\begin{align*}
\mathbb{H}_2:=&\frac{[(z^3-w)(z-w)]^{2g}}{(z^4-1)(z^2-w^2)}\prod_{j=1}^{2k}H_{(2)}(\X_j;z^2,w^2),\\
\mathbb{H}_{(1^2)}:=&\frac{[(z-w^3)(z-w)]^{2g}}{(z^2-w^2)(1-w^4)}\prod_{j=1}^{2k}H_{(1^2)}(\X_j;z^2,w^2),\\
\mathbb{H}_1:=&-\frac{1}{2}\frac{(z-w)^{4g}}{(z^2-1)(1-w^2)}\prod_{j=1}^{2k}H_{(1)}(\X_j;z^2,w^2)^2,\\
\mathbb{H}_1':=&-\frac{1}{2}\frac{(z^2-w^2)^{2g}}{(z^2+1)(1+w^2)}\prod_{j=1}^{2k}H_{(1)}(\X_j^2;z^4,w^4).
\end{align*}
\endgroup
In fact, the rational functions in front of the Macdonald polynomials were computed in \cite[\S 4.3]{HRV}. From $$H_{(1)}(\X_j^2;z^4,w^4)=p_2(\X_j)=s_{(2)}(\X_j)-s_{(1^2)}(\X_j),$$ we see that it is orthogonal to $h_1^2=s_{(2)}+s_{(1^2)}$, the symmetric function representing the regular semi-simple conjugacy class. According to the definition of the generic condition for $\mathcal{C}$ (not that for $\mathcal{C}'$), and the relation between $\mathcal{C}$ and $\mathcal{C}'$, there exists at least one regular semi-simple conjugacy class among $\mathcal{C}'$. Therefore, the term $\mathbb{H}_1'$ must vanish after taking the inner product.

Then we compute $$\frac{\Omega_{0}(z,w)^2}{\Omega_{\ast}(z,w)}.$$Write
\begingroup
\allowdisplaybreaks
\begin{align*}
\mathbf{H}_{((1),\varnothing)}:=&\frac{(z^3-w)^{2g}(z-w)^{2(g+k-1)}}{(z^4-1)(z^2-w^2)}\prod_{j=1}^{2k}\tilde{H}_{((1),\varnothing)}(\X_j;z^2,w^2),\\
\mathbf{H}_{(\varnothing,(1))}:=&\frac{(z-w^3)^{2g}(z-w)^{2(g+k-1)}}{(z^2-w^2)(1-w^4)}\prod_{j=1}^{2k}\tilde{H}_{(\varnothing,(1))}(\X_j;z^2,w^2),\\
\mathbf{H}_1:=&\frac{(z-w)^{2(2g+k-1)}}{(z^2-1)(1-w^2)}\prod_{j=1}^{2k}\tilde{H}_{(1)}(\mathbf{x}^{(0)}_j+\mathbf{x}^{(1)}_j;z^2,w^2),
\end{align*}
\endgroup
and 
$$
U=\mathbf{H}_{((1),\varnothing)}+\mathbf{H}_{(\varnothing,(1))},\quad U'=\mathbf{H}_1.
$$
Then 
\begingroup
\allowdisplaybreaks
\begin{align*}
\Omega_0^2&=1+2U+\text{higher degree terms},\\
\frac{1}{\Omega_{\ast}}&=1-U'+\text{higher degree terms},
\end{align*}
\endgroup

The contribution of the torus $\mathbb{G}_m^{2k-2}$ is $(z-w)^{2k-2}$. (Strictly speaking, there is an extra factor $(t\sqrt{q})^{2k-2}$, but this will be balanced by the difference between $\dim\Ch_{\mathcal{C}'}^+$ and $\dim\Ch_{\mathcal{C}}$.) Now we can readily see that $(z-w)^{2k-2}\mathbb{H}_2$ and $\mathbf{H}_{((1),\varnothing)}$ have the same rational function in front of the Macdonald polynomials. The same conclusion holds when matching the functions:
\begingroup
\allowdisplaybreaks
\begin{align*}
(z-w)^{2k-2}\mathbb{H}_{(1^2)}&\longleftrightarrow\mathbf{H}_{(\varnothing,(1))}\\
-2(z-w)^{2k-2}\mathbb{H}_1&\longleftrightarrow\mathbf{H}_1.
\end{align*}
\endgroup

According to \S \ref{Express-Mac}, under the correspondences:
\begingroup
\allowdisplaybreaks
\begin{align*}
H_{(2)}(\Z_j;z^2,w^2)&\longleftrightarrow\tilde{H}_{((1),\varnothing)}(\X_j;z^2,w^2),\\
H_{(1^2)}(\Z_j;z^2,w^2)&\longleftrightarrow\tilde{H}_{(\varnothing,(1))}(\X_j;z^2,w^2),\\
H_{(1)}(\Z_j;z^2,w^2)^2&\longleftrightarrow\tilde{H}_{(1)}(\mathbf{x}^{(0)}_j+\mathbf{x}^{(1)}_j;z^2,w^2),\\
h_2(\Z_j)&\longleftrightarrow h_{((1),\varnothing)}(\X_j),\\
h_{(1^2)}(\Z_j)&\longleftrightarrow h_1(\mathbf{x}^{(0)}_j+\mathbf{x}^{(1)}_j),
\end{align*}
\endgroup
the inner products are preserved. Therefore, we get the same functions in $z$ and $w$.

\section{Conjectural Mixed Hodge Polynomial; $n=4$ and $n=5$}\label{App-45}
We give two sample computations of the rational function $\mathbb{H}_{\mathbf{B}}(z,w)$ in Conjecture \ref{The-Conj}. We will see that they indeed satisfy part (i) of Conjecture \ref{The-Conj}. These are the simplest cases where appear the deformed self-pairings of wreath Macdonald polynomials of degree 2 and 2-core $(0)$ or $(1)$. We will see the contributions of the "wreath-$\nabla$" terms, which are absent if $n<4$. This is our evidence that the "wreath-$\nabla$" terms should not be deformed (See \S \ref{w-q,t-inner}).

\subsection{$n=4$, $g=0$, $k=2$}\hfill

There are four conjugacy classes: $\mathcal{C}=(C_1,C_2,C_3,C_4)$. We assume that $C_1$ and $C_2$ are regular, $C_3$ and $C_4$ are the conjugacy class of $\sigma$.

\begingroup
\allowdisplaybreaks
\begin{align*}
&\mathbb{H}_{\mathbf{B}}(z,w)\\
=&-16\frac{(z^3-w)^2(z-w)^2(z^2+1)}{(z^2-1)^2(z^2-w^2)(1-w^2)}
-16\frac{(z-w^3)^2(z-w)^2(1+w^2)}{(1-w^2)^2(z^2-w^2)(z^2-1)}
+64\frac{(z-w)^4}{(z^2-1)^2(1-w^2)^2}\\
&-32\frac{(z-w)^4(1+z^2)}{(z^2-1)^2(z^2-w^2)(1-w^2)}
-32\frac{(z-w)^4(1+w^2)}{(1-w^2)^2(z^2-w^2)(z^2-1)}\\
&+4\frac{(z-w)^4(z^2+1)^2}{(z^2-1)^2(z^2-w^2)^2}
+4\frac{(z-w)^4(1+w^2)^2}{(1-w^2)^2(z^2-w^2)^2}
+8\frac{(z-w)^4(z^2+1)(1+w^2)}{(z^2-1)(1-w^2)(z^2-w^2)^2}\\
&+2\frac{(z^5-w)^2(z-w)^2(z^4+1)(z^2+1)^2}{(z^6-w^2)(z^2-w^2)(z^2-1)^2}+2\frac{(z-w^5)^2(z-w)^2(1+w^4)(1+w^2)^2}{(z^2-w^6)(z^2-w^2)(1-w^2)^2}\\
&+2z^4\frac{(z-w)^4(z^2+w^2)(z^2+1)^3}{(z^6-w^2)(z^2-w^2)^2(z^2-1)}
+2w^4\frac{(z-w)^4(z^2+w^2)(1+w^2)^3}{(z^2-w^6)(z^2-w^2)^2(1-w^2)}\\
&+8\frac{(z^3-w^3)^2(z-w)^2(z^2+1)(1+w^2)}{(z^2-w^2)^2(z^2-1)(1-w^2)}\\
=&~2\,w^8-4\,w^7\,z+4\,w^6\,z^2+8\,w^6-4\,w^5\,z^3-16\,w^5\,z\\
&+4\,w^4\,z^4+16\,w^4\,z^2+12\,w^4-4\,w^3\,z^5-16\,w^3\,z^3-32\,w^3\,z\\
&+4\,w^2\,z^6+16\,w^2\,z^4+40\,w^2\,z^2+8\,w^2-4\,w\,z^7-16\,w\,z^5-32\,w\,z^3-16\,w\,z\\
&+2\,z^8+8\,z^6+12\,z^4+8\,z^2.
\end{align*}
\endgroup

\subsection{$n=5$, $g=0$, $k=2$}\hfill

There are four conjugacy classes: $\mathcal{C}=(C_1,C_2,C_3,C_4)$. We assume that $C_1$ is regular, $C_3$ and $C_4$ are the conjugacy classes of $\sigma$, and $C_2$ is the conjugacy class of $\diag(a,1,1,a^{-1})\sigma$ with some $a^4\ne1$.

\begingroup
\allowdisplaybreaks
\begin{align*}
&\mathbb{H}_{\mathbf{B}}(z,w)\\
=&-4\frac{(z-w)^4\left((z^5-w)^2+(z-w)^2\right)(3+z^2)}{(z^2-1)^2(1-w^2)(z^2-w^2)}
-4\frac{(z-w)^4\left((z-w^5)^2+(z-w)^2\right)(3+w^2)}{(z^2-1)(1-w^2)^2(z^2-w^2)}\\
&+4\frac{(z-w)^4(z^5-w)^2(1+z^2)}{(z^2-1)^2(z^2-w^2)^2}
+4\frac{(z-w)^4(z-w^5)^2(1+w^2)}{(z^2-w^2)^2(1-w^2)^2}\\
&+2\frac{(z-w)^4\left((z-w^5)^2+(z^5-w)^2\right)(2+z^2+w^2)}{(z^2-1)(z^2-w^2)^2(1-w^2)}+32\frac{(z-w)^6}{(z^2-1)^2(1-w^2)^2}\\
&-8\frac{(z-w)^4(z^3-w)^2(1+z^2)}{(z^2-w^2)(z^2-1)^2(1-w^2)}
-8\frac{(z-w)^4(z-w^3)^2(1+w^2)}{(z^2-w^2)(z^2-1)(1-w^2)^2}\\
&+\frac{(z-w)^2(z^5-w)^2\left((z^9-w)^2+(z-w)^2\right)(1+z^2)(1+z^4)}{(z^6-w^2)(z^2-1)^2(z^2-w^2)}\\
&+\frac{(z-w)^2(z-w^5)^2\left((z-w^9)^2+(z-w)^2\right)(1+w^2)(1+w^4)}{(z^2-w^6)(z^2-w^2)(1-w^2)^2}\\
&+z^4\frac{(1+z^2)(z-w)^4\left((1+z^2)(z^2+w^2)(z^3-w^3)^2+z^2(1+z^2+w^2+z^4)(z-w)^2\right)}{(z^6-w^2)(z^2-1)(z^2-w^2)^2}\\
&+w^4\frac{(1+w^2)(z-w)^4\left((1+w^2)(z^2+w^2)(z^3-w^3)^2+w^2(1+z^2+w^2+w^4)(z-w)^2\right)}{(z^2-w^6)(z^2-w^2)^2(1-w^2)}\\
&+2\frac{(z-w)^4\left((2+z^2+w^2)(z^5-w^5)^2+(1+z^2)(1+w^2)(z^3-w^3)^2\right)}{(z^2-w^2)^2(1-w^2)(z^2-1)}.
\end{align*}
\endgroup

The result is a polynomial:

$\mathbb{H}_{\mathbf{B}}(z,w)=w^{24}-2\,w^{23}\,z+2\,w^{22}\,z^2+3\,w^{22}-2\,w^{21}\,z^3-6\,w^{21}\,z+2\,w^{20}\,z^4+6\,w^{20}\,z^2+6\,w^{20}-2\,w^{19}\,z^5-6\,w^{19}\,z^3-14\,w^{19}\,z+2\,w^{18}\,z^6+6\,w^{18}\,z^4+17\,w^{18}\,z^2+10\,w^{18}-2\,w^{17}\,z^7-6\,w^{17}\,z^5-18\,w^{17}\,z^3-26\,w^{17}\,z+2\,w^{16}\,z^8+6\,w^{16}\,z^6+18\,w^{16}\,z^4+35\,w^{16}\,z^2+14\,w^{16}-2\,w^{15}\,z^9-6\,w^{15}\,z^7-18\,w^{15}\,z^5-38\,w^{15}\,z^3-42\,w^{15}\,z+2\,w^{14}\,z^{10}+6\,w^{14}\,z^8+18\,w^{14}\,z^6+38\,w^{14}\,z^4+63\,w^{14}\,z^2+18\,w^{14}-2\,w^{13}\,z^{11}-6\,w^{13}\,z^9-18\,w^{13}\,z^7-38\,w^{13}\,z^5-72\,w^{13}\,z^3-62\,w^{13}\,z+2\,w^{12}\,z^{12}+6\,w^{12}\,z^{10}+18\,w^{12}\,z^8+38\,w^{12}\,z^6+75\,w^{12}\,z^4+101\,w^{12}\,z^2+22\,w^{12}-2\,w^{11}\,z^{13}-6\,w^{11}\,z^{11}-18\,w^{11}\,z^9-38\,w^{11}\,z^7-76\,w^{11}\,z^5-120\,w^{11}\,z^3-84\,w^{11}\,z+2\,w^{10}\,z^{14}+6\,w^{10}\,z^{12}+18\,w^{10}\,z^{10}+38\,w^{10}\,z^8+76\,w^{10}\,z^6+129\,w^{10}\,z^4+148\,w^{10}\,z^2+26\,w^{10}-2\,w^9\,z^{15}-6\,w^9\,z^{13}-18\,w^9\,z^{11}-38\,w^9\,z^9-76\,w^9\,z^7-132\,w^9\,z^5-186\,w^9\,z^3-108\,w^9\,z+2\,w^8\,z^{16}+6\,w^8\,z^{14}+18\,w^8\,z^{12}+38\,w^8\,z^{10}+76\,w^8\,z^8+132\,w^8\,z^6+207\,w^8\,z^4+204\,w^8\,z^2+28\,w^8-2\,w^7\,z^{17}-6\,w^7\,z^{15}-18\,w^7\,z^{13}-38\,w^7\,z^{11}-76\,w^7\,z^9-132\,w^7\,z^7-216\,w^7\,z^5-270\,w^7\,z^3-124\,w^7\,z+2\,w^6\,z^{18}+6\,w^6\,z^{16}+18\,w^6\,z^{14}+38\,w^6\,z^{12}+76\,w^6\,z^{10}+132\,w^6\,z^8+218\,w^6\,z^6+306\,w^6\,z^4+245\,w^6\,z^2+23\,w^6-2\,w^5\,z^{19}-6\,w^5\,z^{17}-18\,w^5\,z^{15}-38\,w^5\,z^{13}-76\,w^5\,z^{11}-132\,w^5\,z^9-216\,w^5\,z^7-316\,w^5\,z^5-326\,w^5\,z^3-110\,w^5\,z+2\,w^4\,z^{20}+6\,w^4\,z^{18}+18\,w^4\,z^{16}+38\,w^4\,z^{14}+75\,w^4\,z^{12}+129\,w^4\,z^{10}+207\,w^4\,z^8+306\,w^4\,z^6+354\,w^4\,z^4+228\,w^4\,z^2+11\,w^4-2\,w^3\,z^{21}-6\,w^3\,z^{19}-18\,w^3\,z^{17}-38\,w^3\,z^{15}-72\,w^3\,z^{13}-120\,w^3\,z^{11}-186\,w^3\,z^9-270\,w^3\,z^7-326\,w^3\,z^5-282\,w^3\,z^3-44\,w^3\,z+2\,w^2\,z^{22}+6\,w^2\,z^{20}+17\,w^2\,z^{18}+35\,w^2\,z^{16}+63\,w^2\,z^{14}+101\,w^2\,z^{12}+148\,w^2\,z^{10}+204\,w^2\,z^8+245\,w^2\,z^6+228\,w^2\,z^4+66\,w^2\,z^2+w^2-2\,w\,z^{23}-6\,w\,z^{21}-14\,w\,z^{19}-26\,w\,z^{17}-42\,w\,z^{15}-62\,w\,z^{13}-84\,w\,z^{11}-108\,w\,z^9-124\,w\,z^7-110\,w\,z^5-44\,w\,z^3-2\,w\,z+z^{24}+3\,z^{22}+6\,z^{20}+10\,z^{18}+14\,z^{16}+18\,z^{14}+22\,z^{12}+26\,z^{10}+28\,z^8+23\,z^6+11\,z^4+z^2.$

\subsection*{Conflicts of Interest} None.

\subsection*{Financial Support} None.

\addtocontents{toc}{\protect\setcounter{tocdepth}{-1}}
\bibliographystyle{alpha}
\bibliography{BIB}
\end{document}